\theoremstyle{plain}
\newtheorem{theorem}{Theorem}
\newtheorem{lemma}[theorem]{Lemma}
\newtheorem{convention}[theorem]{Convention}
\newtheorem{proposition}[theorem]{Proposition}
\newtheorem{corollary}[theorem]{Corollary}
\numberwithin{theorem}{section}
\numberwithin{equation}{theorem}
\theoremstyle{definition}
\newtheorem{definition}[theorem]{Definition}
\newtheorem{example}[theorem]{Example}
\newtheorem{remark}[theorem]{Remark}
\newtheorem{question}[theorem]{Question}
\newtheorem{project}[theorem]{Project}
\newtheorem*{question*}{Question}
\newcommand{\Z}{\mathbb{Z}}
\DeclareMathOperator{\proj}{proj}
\DeclareMathOperator{\fpd}{fpd}
\DeclareMathOperator{\fpv}{fpv}
\DeclareMathOperator{\RHom}{RHom}
\DeclareMathOperator{\End}{End}
\DeclareMathOperator{\Ext}{Ext}
\DeclareMathOperator{\Tor}{Tor}
\DeclareMathOperator{\Hom}{Hom}
\DeclareMathOperator{\gr}{gr}
\DeclareMathOperator{\Repr}{rep}
\DeclareMathOperator{\Mod}{Mod}
\DeclareMathOperator{\Modfd}{mod}
\DeclareMathOperator{\rank}{rank}
\begin{document}
\title[Frobenius-Perron Theory of Quivers]
{Frobenius-Perron Theory of \\
Representations of Quivers}
\author{J.J. Zhang and J.-H. Zhou}

\address{J.J. Zhang: Department of Mathematics, Box 354350,
University of Washington, Seattle, Washington 98195, USA}

\email{zhang@math.washington.edu}

\address{J.-H. Zhou: School of Mathematics, 
Shanghai University of Finance and Economics, Shanghai 200433, 
Shanghai, China, $\quad$
Shanghai Center for Mathematical Sciences,
Fudan University, Shanghai 200433, Shanghai, China}

\email{zhoujingheng@mail.shufe.edu.cn}

\begin{abstract}
The Frobenius-Perron theory of an endofunctor of a category
was introduced in recent years \cite{CGWZZZ2017, CGWZZZ2019}.
We apply this theory to monoidal (or tensor) triangulated
structures of quiver representations.
\end{abstract}

\subjclass[2000]{Primary 16E10, 16G60, 16E35}



\keywords{Frobenius-Perron dimension, derived categories, quiver
representation, monoidal triangulated category, ${\mathbb{ADE}}$
Dynkin quiver}

\maketitle

\setcounter{section}{-1}
\section{Introduction}
\label{xxsec0}

Throughout let $\Bbbk$ be a base field that is algebraically
closed. Algebraic objects are defined over $\Bbbk$.

The Frobenius-Perron dimension of an object in a semisimple
finite tensor (or fusion) category was introduced by
Etingof-Nikshych-Ostrik in 2005 \cite{ENO2005}. Since then
it has become an extremely useful invariant in the study of
fusion categories and representations of semisimple (weak
and/or quasi-)Hopf algebras. By examining the Frobenius-Perron
dimension of all objects in a finite tensor category one can
determine whether the category is equivalent to the
representation category of a finite-dimensional quasi-Hopf
algebra \cite[Proposition 2.6]{EO2004}. The Frobenius-Perron
dimension of a fusion category is also a crucial invariant in
the classification of fusion categories as well as that of
semisimple Hopf algebras. An important project is to develop
the Frobenius-Perron theory for not-necessarily semisimple
tensor (or monoidal) categories. A step departing from
semisimple categories, or abelian categories of global
dimension 0, is to study the hereditary ones (of global
dimension one). Ultimately Frobenius-Perron theory should
provide powerful tools and useful invariants for projects like

\begin{project}
\label{xxpro0.1}
Describe and understand weak bialgebras
[Definition \ref{xxdef1.7}] and weak Hopf algebras
\cite[Definition 2.1]{BNS1999} that are hereditary
as associative algebras.
\end{project}

Note that an analogous classification project of hereditary
prime Hopf algebras was finished in a remarkable paper by
Wu-Liu-Ding \cite{WLD2016} a few years ago. Some recent
efforts pertaining on homological aspects of noetherian weak
Hopf algebras were presented in \cite{RWZ2020}. Recall from
\cite{NVY2019} that a {\it monoidal triangulated category}
is a monoidal category $\mathcal{T}$ in the sense of
\cite[Definition 2.2.1]{EGNO2015} that is triangulated and,
for which, the tensor product $\otimes_{\mathcal{T}}:
\mathcal{T} \times \mathcal{T} \to \mathcal{T}$ is an exact
bifunctor. Given a Hopf algebra (respectively, weak and/or
quasi- Hopf algebra/bialgebra) $H$, its comultiplication
induces a monoidal structure on the category of
representations of $H$. The corresponding derived category
has a canonical monoidal triangulated structure. Monoidal
triangulated structures appear naturally in several other
subjects.

{\it Algebraic geometry}.
A classical theorem of Gabriel \cite{Ga1962} states that a
noetherian scheme $\mathbb{X}$ can be reconstructed from the
abelian category of coherent sheaves over $\mathbb{X}$,
denoted by $coh(\mathbb{X})$. Hence the abelian category
$coh(\mathbb{X})$ captures all information about the space
$\mathbb{X}$. Recent development in derived algebraic geometry
suggests that the bounded derived category of coherent sheaves
over $\mathbb{X}$, denoted by $D^b(coh(\mathbb{X}))$, is
sometimes a better category to work with when we are
considering many geometric problems such as moduli problems.
When $\mathbb{X}$ is smooth, $D^b(coh(\mathbb{X}))$ is
equipped with a natural tensor (=symmetric monoidal)
triangulated structure in the sense of
\cite[Definition 1.1]{B2005}.

{\it Tensor triangulated geometry}.
Tensor triangulated categories have been studied by Balmer
\cite{B2005} and many others, where the study of tensor 
triangulated categories has been sometimes
referred to as {\it tensor triangulated geometry}. Balmer
defined the prime spectrum, denoted by ${\rm{Spc}}(\mathcal{T})$,
of a small tensor triangulated category $\mathcal{T}$ by using
the thick subcategories which behave like prime ideals under the
tensor product. Note that ${\rm{Spc}}(\mathcal{T})$ is a locally
ringed space \cite[Remark 6.4]{B2005}. This idea has been shown
to be widely applicable to algebraic geometry, homotopy theory
and representation theory. Recently Vashaw-Yakimov and
Nakano-Vashaw-Yakimov \cite{VY2018, NVY2019} developed a
noncommutative version of the Balmer spectrum, or {\it
noncommutative tensor triangulated geometry} (in the
words of the authors of \cite{NVY2019}).

{\it Noncommutative algebraic geometry}. Following
Grothendieck, {\it to do geometry you really don't need a space,
all you need is a category of sheaves on this would-be space}
\cite[p.78]{Ma2018}. Following \cite{VY2018, NVY2019},
we would like to consider
or view monoidal triangulated categories as appropriate categories
for doing a new kind of noncommutative geometry. For example, if
$T$ is a noetherian Koszul Artin-Schelter regular algebra, then
the bounded derived category of the noncommutative projective
scheme associated to $T$, denoted by ${\mathcal T}:=D^b(\proj T)$,
has at least two different monoidal triangulated structures
[Example \ref{xxex7.9}]. In this situation, it would
be very interesting to understand how the ``geometry'' of
$\proj T$ interacts with ``monoidal triangulated
structures'' on $\mathcal{T}$. Fix a general triangulated category,
still denoted by $\mathcal{T}$, it is common that there are many
different monoidal triangulated structures on $\mathcal{T}$
(with the same underlying triangulated structure) that reflect
on different hidden properties of $\mathcal{T}$. So it is
worth distinguishing different types of monoidal triangulated
structures on $\mathcal{T}$ and finding a definition of the
``size'' of these structures.

{\it Quiver representations}.
A related subject is the representation theory of quivers that
has become a popular topic since Gabriel's work in the 1970s
\cite{Ga1972, Ga197374, Ga1973}. For a given quiver, it is
naturally equipped with a monoidal structure in the category of
its representations, induced by the vertex-wise tensor product
of vector spaces \eqref{E2.1.1}. The monoidal structure of
quiver representations has been studied by Strassen \cite{St2000}
in relation with orbit-closure degeneration in 2000, and later
by Herschend \cite{He2005, He2008a, He2008b, He2009, He2010}
in the relation with the bialgebra structure on the path
algebra during 2005-2012. Herschend solved the Clebsch-Gordan
problem for quivers of type $\mathbb{A}_n$, $\mathbb{D}_n$
and $\mathbb{E}_{6,7,8}$ in \cite{He2008b, He2009}. As for
tame type, Herschend also gave solutions for type
$\mathbb{\tilde{A}}_n$ in \cite{He2005} and the quivers
with relations that correspond to string algebras in
\cite{He2010}. One of our basic objects in this paper is
the bounded derived category $D^b(A-\Modfd)$ for a
finite dimensional weak bialgebra $A$. (We usually consider
hereditary, but not semisimple, algebras.) Since $A$ is a
weak bialgebra, $A-\Modfd$ has an induced monoidal abelian
structure; and hence, $D^b(A-\Modfd)$ is a monoidal
triangulated category in the sense of \cite{NVY2019}. Note
that, even for a finite quiver $Q$, \cite[Proposition 4]{He2008a}
and \cite[Theorem 3.2]{HT2013} give different weak bialgebra
structures on $A:=\Bbbk Q$ which produce different monoidal
triangulated structures on $D^b(A-\Modfd)$.

{\it Connections between geometry, quiver representations,
and weak bialgebras}.
Going back to classical geometry, let $\mathbb{X}$ be a smooth 
projective scheme. If $\mathbb{X}$ is equipped with a full 
strongly exceptional sequence (also called strong full 
exceptional sequence by some authors) $\{\mathcal{E}_1,\cdots,
\mathcal{E}_n\}$ [Definition \ref{xxdef7.8}], then there
is a triangulated equivalence
\begin{equation}
\label{E0.1.1}\tag{E0.1.1}
D^b(coh(\mathbb{X}))\cong D^b(A-\Modfd)
\end{equation}
where $A$ is the finite dimensional algebra
$[\End_{D^b(coh(\mathbb{X}))}(\oplus_{i=1}^n \mathcal{E}_i)]^{op}$
(some details can be found at the end of Section \ref{xxsec7}).
Since $A$ is finite dimensional (and of finite global
dimension), it seems easier to study $A$ than to study
$\mathbb{X}$ in some aspects. Equivalence \eqref{E0.1.1} induces
a monoidal structure on $D^b(A-\Modfd)$ which usually does
not come from any weak bialgebra structures of $A$
[Example \ref{xxex7.9}]; or in some extreme cases, there
is no weak bialgebra structure on $A$ at all. For such an
algebra $A$, it is imperative to understand and even to
classify all possible monoidal triangulated structures of
$D^b(A-\Modfd)$ (though $A$ may not be a weak bialgebra).

Another well-known example of such a connection is from the 
study of weighted projective lines, introduced by Geigle-Lenzing 
\cite{GL1987} in 1985 (see Section \ref{xxsec6}). Since then 
weighted projective lines have been studied extensively by 
many researchers. Let $coh({\mathbb X})$ denote the category 
of coherent sheaves over a weighted projective line $\mathbb{X}$. 
When $\mathbb{X}$ is domestic, a version of \eqref{E0.1.1} holds 
and the representation type of $A$ (appeared in the right-hand 
side of \eqref{E0.1.1}) is tame, see Lemma \ref{xxlem6.1}(2). 
This is one of the key facts that we will use in this paper.

Recently, a new definition of Frobenius-Perron dimension was
introduced in \cite{CGWZZZ2017, CGWZZZ2019} where the authors
extended its original definition from an object in a semisimple
finite tensor category to an endofunctor of any $\Bbbk$-linear
category. (We refer to Definition \ref{xxdef1.3} for some relevant
definitions.) It turns out that new Frobenius-Perron invariants
are sensitive to monoidal structures; as a consequence, these
are crucial to distinguish different monoidal triangulated
structures. One general goal of this paper is to provide evidence
that the Frobenius-Perron invariants are effective to study
monoidal triangulated structures. Some basic properties and
interesting applications of Frobenius-Perron-type invariants
can be found in \cite{CGWZZZ2017, CGWZZZ2019}.

In this paper, we focus on different weak bialgebra structures
on the path algebras of finite quivers and the Frobenius-Perron
theory for finite dimensional hereditary weak bialgebras. As
mentioned above, this is one step beyond the semisimple case.

\begin{definition}
\label{xxdef0.2}
Let ${\mathcal T}$ be a monoidal category and let $\mathcal{P}$
be a function
$$\{{\text{endofunctors of $\mathcal{T}$}}\}
\longrightarrow  {\mathbb{R}}_{\geq 0}.$$
Note that $\mathcal{P}$ could be Frobenius-Perron dimension or 
Frobenius-Perron curvature
as given in Definition \ref{xxdef1.3}(6,7).  For every object
$M$ in ${\mathcal T}$, let $\mathcal{P}(M)$ denote the
$\mathcal{P}$-value of the tensor functor $M\otimes_{\mathcal{T}} -:
\mathcal{T}\to \mathcal{T}$.
\begin{enumerate}
\item[(1)]
We say $\mathcal{T}$ is {\it $\mathcal{P}$-finite} if
$\mathcal{P}(M)<\infty$ for all objects $M$. Otherwise,
$\mathcal{T}$ is called {\it $\mathcal{P}$-infinite}.
\item[(2)]
If $\mathcal{T}$ is $\mathcal{P}$-infinite and if
$\mathcal{P}(M)<\infty$ for all indecomposable objects $M$,
then $\mathcal{T}$ is called {\it $\mathcal{P}$-tame}.
\item[(3)]
If $\mathcal{T}$ is neither $\mathcal{P}$-finite nor
$\mathcal{P}$-tame, then it is called {\it $\mathcal{P}$-wild}.
\end{enumerate}
\end{definition}

Our first main result concerns the trichotomy of $\fpd$-finite/tame/wild
property. Let $\Repr(Q)$ be the category of finite dimensional
representations of a quiver $Q$.

\begin{theorem}
\label{xxthm0.3}
Let $Q$ be a finite acyclic quiver and let $\mathcal{T}$
be the triangulated category $D^b(\Repr(Q))$.
\begin{enumerate}
\item[(1)]
$Q$ is of finite type if and only if $\mathcal{T}$ is
$\fpd$-finite for every monoidal triangulated structure
on $\mathcal{T}$, and if and only if there is one monoidal
triangulated structure on $\mathcal{T}$ such that $\mathcal{T}$
is $\fpd$-finite.
\item[(2)]
$Q$ is of tame type if and only if there is a monoidal
triangulated structure on $\mathcal{T}$ such that $\mathcal{T}$
is $\fpd$-tame. In this case, there must be another monoidal
triangulated structure on $\mathcal{T}$ such that $\mathcal{T}$
is $\fpd$-wild.
\item[(3)]
$Q$ is of wild type if and only if $\mathcal{T}$ is
$\fpd$-wild for every monoidal triangulated structure
on $\mathcal{T}$.
\item[(4)]
If $Q$ is tame or wild, then every monoidal triangulated
structure on $\mathcal{T}$ is $\fpd$-infinite.
\end{enumerate}
\end{theorem}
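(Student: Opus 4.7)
The plan is to reduce the theorem to three core implications that match Gabriel's trichotomy for quivers with the $\fpd$-trichotomy of Definition \ref{xxdef0.2}: (a) if $Q$ is of finite type then every monoidal triangulated structure on $\mathcal{T}$ is $\fpd$-finite; (b) if $Q$ is tame then there exists at least one $\fpd$-tame monoidal structure on $\mathcal{T}$ and at least one $\fpd$-wild one, and every such structure is $\fpd$-infinite; (c) if $Q$ is wild then every monoidal structure on $\mathcal{T}$ is $\fpd$-wild. Since the three Gabriel types are exhaustive and mutually exclusive, and for a fixed monoidal structure the three $\fpd$-types are also mutually exclusive, the four parts of the statement all follow from (a), (b), (c) by contrapositive bookkeeping; part (4) is an immediate consequence of (b) and (c), since $\fpd$-wild and $\fpd$-tame are both $\fpd$-infinite.

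For (a), I would invoke Gabriel's classification: $Q$ of finite type means the underlying graph is ADE Dynkin, hence $\Repr(Q)$ has only finitely many indecomposables up to isomorphism, and $\mathcal{T} = D^b(\Repr(Q))$ has only finitely many indecomposables up to shift. Any tensor endofunctor $M \otimes_{\mathcal{T}} -$ acts on the finite-rank Grothendieck group $K_0(\mathcal{T})$ by a fixed integer matrix, so by using Definition \ref{xxdef1.3} with a generating set consisting of all indecomposables, $\fpd(M\otimes_{\mathcal{T}}-)$ is dominated by the spectral radius of that matrix and is therefore finite.

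For (b), I would produce the $\fpd$-tame structure via the weighted projective line picture: when $Q$ is Euclidean, Lemma \ref{xxlem6.1} exhibits $\mathcal{T}$ as equivalent to the derived category of coherent sheaves on a domestic weighted projective line, where the linear growth of Hom-dimensions among the tubes yields $\fpd$-tameness. For the $\fpd$-wild companion structure on the same underlying $\mathcal{T}$, I would use one of the alternative weak bialgebra structures on $\Bbbk Q$ from \cite{He2008a} or \cite{HT2013}, which are known to induce genuinely different monoidal structures on $D^b(\Repr(Q))$, and verify that some indecomposable has unbounded growth. The $\fpd$-infiniteness claim in (b) can be deduced from the existence of infinite families of indecomposables in tame type: summing all indecomposables up to a given dimension produces an object whose tensor action cannot have finite $\fpd$.

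For (c), the main obstacle of the theorem, I need to control every conceivable monoidal triangulated structure on $\mathcal{T}$, many of which may not arise from any weak bialgebra data. The strategy is to exploit the universal nature of wild representation type: $\Repr(Q)$ contains $\Repr(\Bbbk\langle x,y\rangle)$ as a full subcategory, so it admits indecomposables in arbitrarily large continuous families of arbitrarily large dimension. For any monoidal structure, the unit $\mathbf{1}$ is fixed and one can argue that the tensor endofunctor associated to a suitably chosen indecomposable $M$ witnesses infinite $\fpd$ on an unbounded set of indecomposables. The delicate point is that $\fpd$-wildness requires indecomposables with infinite $\fpd$ (not merely the ambient category being $\fpd$-infinite), so the witness $M$ must be constructed in a way that is independent of the particular monoidal structure; I expect this step to require either a structural result about how any monoidal structure must interact with the $\uExt^1$-pairing underlying wild behavior, or an explicit reduction to Kronecker-type subquivers where concrete computation is possible.
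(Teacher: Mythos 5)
Your overall architecture — reduce to three implications matching Gabriel's trichotomy, then close the statement by contrapositive bookkeeping — matches the paper's proof, but each of the three implications contains a genuine gap, and the one in step (c) is fatal.

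The most serious gap is in (c), the wild case, which is the heart of the theorem and where you candidly admit you do not have a complete argument. You correctly flag the difficulty: $\fpd$-wildness demands an \emph{indecomposable} object with infinite $\fpd$, and this must be produced for an arbitrary, unknown monoidal triangulated structure on $\mathcal{T}$. The missing ingredient (Lemma \ref{xxlem6.6} in the paper) is this: decompose the unit $\mathbf{1}=\bigoplus_{i=1}^{d}M_i$ of the arbitrary monoidal structure into indecomposables. For each indecomposable brick $X$, since $X=\mathbf{1}\odot X=\bigoplus_i (M_i\odot X)$ and $X$ is indecomposable, exactly one summand $M_i\odot X$ equals $X$ and the rest vanish. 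Now take an arbitrarily large \emph{connected} brick set $\phi^n$ (which exists because wild implies strictly wild, Lemma \ref{xxlem4.3}), and apply pigeonhole: some fixed index $j$ has $M_j\odot X=X$ for at least $|\phi^n|/d$ of the $X\in\phi^n$. For such $X,Y$ one then has $\Hom_{\mathcal{T}}(M_j[-1]\odot X,\,Y)=\Hom_{\mathcal{T}}(X,Y[1])=\Ext^1(X,Y)\neq 0$ by connectedness, so $\fpd(M_j[-1])\geq |\phi^n|/d$ and letting $n\to\infty$ gives $\fpd(M_j[-1])=\infty$. This is precisely the structural interaction with $\Ext^1$ you anticipated needing, but the unit-decomposition trick does not appear in your proposal and is not something one would stumble on by ``reduction to Kronecker-type subquivers.''

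There are also gaps in (a) and (b). For (a), the $K_0$-argument cannot succeed as stated: $\fpd$ is a supremum over \emph{all finite brick sets}, and in $D^b(\Repr(Q))$ a brick set can contain indecomposables at arbitrarily many distinct suspensions $X_i[m]$, so the adjacency matrix $A(\phi,M\otimes-)$ is unbounded in size even for Dynkin $Q$; moreover its entries $\dim\Hom(X_i,M\otimes X_j)$ are not determined by $K_0$ classes. The paper's Proposition \ref{xxpro4.9} instead uses that, because the heart is hereditary with finitely many indecomposables up to shift, every such adjacency matrix is a band matrix with uniformly bounded bandwidth and bounded entries, and applies the Gershgorin circle theorem (Lemma \ref{xxlem4.8}) to get a bound independent of the brick set. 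For (b), the $\fpd$-wild companion structure does not come from an exotic bialgebra: the \emph{canonical} vertex-wise tensor on $\Repr(Q)$ is discrete, so Lemma \ref{xxlem4.6} applied to the infinite brick set supplied by Lemma \ref{xxlem4.5} already produces a simple object of infinite $\fpd$. And your sketch of $\fpd$-infiniteness by ``summing all indecomposables'' is tied to a single tensor and cannot cover \emph{every} monoidal structure on $\mathcal{T}$; the paper obtains universal $\fpd$-infiniteness in the tame case from Proposition \ref{xxpro6.5} via the Serre-duality argument of Lemma \ref{xxlem6.4} on the weighted projective line model, and then part (4) is deduced from part (1) rather than argued directly.
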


Note that in part (2) of the above theorem, there are two
different monoidal triangulated structures on $\mathcal{T}$,
one of which is $\fpd$-tame and the other is not. We refer
to Definition \ref{xxdef3.1} for the definition of a discrete
monoidal structure. By the above theorem, it is
rare to have $\fpd$-finite monoidal triangulated structures
on $\mathcal{T}$. When it exists, we can say a bit more. The
canonical weak bialgebra structure on the path algebra
$\Bbbk Q$ is given in Lemma \ref{xxlem2.1}(1).

\begin{theorem}
\label{xxthm0.4}
Let $A$ be a finite dimensional hereditary weak bialgebra
such that the induced monoidal structure on $A-\Modfd$
is discrete. Then the following are equivalent:
\begin{enumerate}
\item[(a)]
$A$ is of finite representation type,
\item[(b)]
$\fpd(M)<\infty$ for every irreducible representation $M\in A-\Modfd$,
\item[(c)]
$\fpd(M)<\infty$ for every indecomposable representation $M\in A-\Modfd$,
\item[(d)]
$\fpd(M)<\infty$ for every representation $M\in A-\Modfd$,
\item[(e)]
$\fpd (X)<\infty$ for every indecomposable object $X\in D^b(A-\Modfd)$,
\item[(f)]
The induced monoidal triangulated structure on $D^b(A-\Modfd)$ is
$\fpd$-finite.
\end{enumerate}
\end{theorem}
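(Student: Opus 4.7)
The plan is to set up the easy implications first and then close the loop using Theorem \ref{xxthm0.3} together with the discreteness hypothesis.

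The immediate implications are (d) $\Rightarrow$ (c) $\Rightarrow$ (b) (stronger hypothesis on a wider class of modules); (f) $\Rightarrow$ (e) (restricting to indecomposable objects of $D^b(A-\Modfd)$); and (e) $\Rightarrow$ (c), since any nonzero indecomposable module $M \in A-\Modfd$ is still indecomposable as a stalk complex in degree zero. Moreover, the monoidal structure on $A-\Modfd$ coming from a weak bialgebra is built on the underlying $\Bbbk$-vector space tensor product, hence is exact in each variable, so it extends to an exact bifunctor on $D^b(A-\Modfd)$, and the Frobenius-Perron dimension of $M \otimes -$ on $A-\Modfd$ agrees with that of its extension to $D^b(A-\Modfd)$. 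This yields (f) $\Rightarrow$ (d) for free.

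For (a) $\Rightarrow$ (f), since $A$ is a finite-dimensional hereditary $\Bbbk$-algebra over an algebraically closed field, Gabriel's theorem gives that $A$ is Morita equivalent to the path algebra $\Bbbk Q$ of a finite acyclic quiver $Q$, and this Morita equivalence lifts to a triangulated equivalence $D^b(A-\Modfd) \simeq D^b(\Repr(Q))$ along which the monoidal triangulated structure transports (possibly to a structure not arising from any weak bialgebra on $\Bbbk Q$, but that is immaterial). If $A$ is of finite representation type, then $Q$ is a disjoint union of Dynkin quivers of type $\mathbb{A}$, $\mathbb{D}$ or $\mathbb{E}$, and Theorem \ref{xxthm0.3}(1) gives that $D^b(\Repr(Q))$ is $\fpd$-finite for \emph{every} monoidal triangulated structure, in particular for the transported one.

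The crux is (b) $\Rightarrow$ (a), which I would argue by contrapositive. Assume $A$ is not of finite representation type. Via the above Morita equivalence $Q$ is tame or wild, and Theorem \ref{xxthm0.3}(4) implies that the induced monoidal triangulated structure on $D^b(A-\Modfd)$ is $\fpd$-infinite. The remaining task is to descend from the existence of some $X \in D^b(A-\Modfd)$ with $\fpd(X) = \infty$ to the existence of an \emph{irreducible} $S \in A-\Modfd$ with $\fpd(S) = \infty$. This descent is exactly where the discreteness hypothesis of Definition \ref{xxdef3.1} is needed: under discreteness, the tensor product of two indecomposables decomposes in a sufficiently controlled way into indecomposables that the Frobenius-Perron ``multiplicity matrix'' of $M \otimes -$ is dominated entrywise by a nonnegative combination of those of the $S_i \otimes -$ for the simples $S_i$ in a composition series of $M$. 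By the Perron-Frobenius theory for nonnegative matrices, $\fpd(M)$ is then bounded in terms of $\{\fpd(S_i)\}$. Since shifts in $D^b$ do not affect $\fpd$ and arbitrary bounded complexes are built from modules by finitely many triangles, finiteness of $\fpd$ on all irreducibles would propagate to all of $D^b(A-\Modfd)$, contradicting the $\fpd$-infinite conclusion.

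The main obstacle, and the place where most of the technical work lies, is precisely this last step: rigorously upgrading the heuristic ``$\fpd(M)$ is controlled by $\{\fpd(S_i)\}$ over the composition series of $M$'' into a sharp inequality. This will require combining the discreteness hypothesis with the hereditary structure of $A$ (which controls $\Ext^1$-groups between simples), and verifying that the spectral-radius estimate passes through extensions and shifts in a clean enough way to close the contradiction.
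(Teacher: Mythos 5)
Your outline of the easy implications (d)$\Rightarrow$(c)$\Rightarrow$(b), (f)$\Rightarrow$(e), (e)$\Rightarrow$(c), and (f)$\Rightarrow$(d) is fine, modulo one point used implicitly: that $\fpd(M)$ computed in $A-\Modfd$ agrees with $\fpd(M)$ computed in $D^b(A-\Modfd)$, which is Lemma \ref{xxlem4.11}(2) and does use hereditariness. Your (a)$\Rightarrow$(f) via Theorem \ref{xxthm0.3}(1) is logically legitimate, since that forward implication is Corollary \ref{xxcor4.10}/Proposition \ref{xxpro4.9} and is independent of the statement at hand. The genuine gap is exactly where you place it, in (b)$\Rightarrow$(a), but the bridge you propose cannot be made to close. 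Entrywise dominance $A(\phi, M\otimes -)\le \sum_i c_i A(\phi, S_i\otimes -)$ together with monotonicity of the spectral radius gives $\rho(A(\phi, M\otimes -))\le \rho\bigl(\sum_i c_i A(\phi, S_i\otimes -)\bigr)$, but the spectral radius of nonnegative matrices is \emph{not} subadditive: two strictly triangular matrices of spectral radius $0$ can sum to a matrix of spectral radius $\gg 0$, and this is precisely the failure mode here. Under discreteness each $A(\phi, S_i\otimes -)$ is a rank-one matrix $u^{(i)}(v^{(i)})^{T}$ with $u^{(i)}_k=\dim\Hom(X_k,S_i)$ and $v^{(i)}_l$ the multiplicity of $S_i$ in $X_l$; $\fpd(S_i)<\infty$ only bounds the diagonal Gram entries $\sum_k u^{(i)}_k v^{(i)}_k$, and gives no control on the off-diagonal entries $\sum_k u^{(j)}_k v^{(i)}_k$ that govern $\rho$ of the sum. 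So this route does not yield a uniform bound on $\rho(A(\phi,M\otimes -))$ over all $\phi$. Your further claim that shifts do not affect $\fpd$ and that $\fpd$-finiteness propagates over triangles is also false as stated: Lemma \ref{xxlem4.11}(1) gives $\fpd(M[n])=0$ for $n\neq 0,1$ regardless of $\fpd(M)$, and $\fpd$ has no subadditivity over exact triangles.

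The paper's argument for (b)$\Rightarrow$(a) is fundamentally different: rather than bounding $\fpd$ from above, it exhibits a single simple with infinite $\fpd$. If $A$ is not of finite type, Lemma \ref{xxlem4.5} (via Gabriel/Nazarova and the ASS construction, plus Lemma \ref{xxlem4.4} to transport across Morita/tilting) shows $A-\Modfd$ contains an infinite brick set $\{N(c)\}$. Lemma \ref{xxlem4.6} then applies a pigeonhole argument to find a simple $S_1$ with $\Hom(N(c),S_1)\neq 0$ for infinitely many $c$, and uses discreteness to choose the simple $T$ with $T\otimes S_1\cong S_1$; every composition factor of $T\otimes N(c')$ is then isomorphic to $S_1$, forcing $\Hom(N(c),T\otimes N(c'))\neq 0$ for all such $c,c'$, so the adjacency matrix of $T\otimes -$ on arbitrarily large finite sub-brick-sets is entrywise $\geq 1$ and $\fpd(T)=\infty$. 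For (a)$\Rightarrow$(e,f), Proposition \ref{xxpro4.9} shows that with finitely many indecomposables and hereditariness, any brick set in $D^b$ embeds in a degree-banded set whose adjacency matrix has bounded bandwidth and entries, and Gershgorin's circle theorem (Lemma \ref{xxlem4.8}) gives a uniform spectral radius bound. Both steps avoid the additivity issues your sketch runs into.
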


Suppose further that $A$ is the path algebra $\Bbbk Q$ with canonical
weak bialgebra structure. It follows from Gabriel's theorem that 
any of conditions {\rm{(a)}} to {\rm{(e)}} is equivalent to
\begin{enumerate}
\item[(g)]
$Q$ is a finite union of quivers of type $\mathbb{ADE}$.
\end{enumerate}

Since condition (a) in the above theorem is an algebra property, the
$\fpd$-finiteness of $D^b(A-\Modfd)$ only depends on the algebra
structure of $A$, though the definition of $\fpd(X)$ uses the
coalgebra structure of $A$. Note that condition (a) is not equivalent
to condition (b) if we remove the hereditary hypothesis in the above
theorem [Remark \ref{xxrem7.3}(3)].

Following BGP-reflection functors \cite{BGP1973}, Happel showed
that, for Dynkin quivers with the same underlying Dynkin diagram,
their derived categories are triangulated equivalent \cite{Ha1987}.
This remarkable theorem is one of most beautiful results in
representation theory of finite dimensional algebras. In contrast,
the story is very different when we are working with {\it monoidal
triangulated} structures of the derived category of Dynkin quivers,
see Theorem \ref{xxthm0.5} below. As indicated in
\cite{CGWZZZ2017, CGWZZZ2019}, Frobenius-Perron-type
invariants are extremely useful to study derived
(or triangulated) categories. Using the Frobenius-Perron
curvature, denoted by $\fpv$, of objects in $D^b(A-\Modfd)$
we can prove the following.

\begin{theorem}
\label{xxthm0.5}
Let $A$ and $B$ be finite dimensional hereditary weak
bialgebras. Assume either $A$ is a bialgebra or
$A-\Modfd$ is discrete. Suppose that the monoidal
triangulated categories $D^b(A-\Modfd)$ and
$D^b(B-\Modfd)$ are equivalent. Then $A-\Modfd$ and
$B-\Modfd$ are equivalent as monoidal abelian categories.
\end{theorem}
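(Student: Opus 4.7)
Our plan is to show that the monoidal triangulated equivalence $F \colon D^b(A-\Modfd) \to D^b(B-\Modfd)$ restricts to an exact monoidal equivalence between the standard hearts. Because $A$ and $B$ are both hereditary, every object of $D^b(A-\Modfd)$ splits as $\bigoplus_{n} M_n[n]$ with each $M_n \in A-\Modfd$, and likewise on the $B$-side. Since $F$ is a triangulated equivalence it preserves indecomposability, so for each indecomposable module $M \in A-\Modfd$ there exist an indecomposable $B$-module $N_M$ and an integer $n_M \in \Z$ with $F(M) \cong N_M[n_M]$. Being monoidal, $F$ sends $\mathbf{1}_A$ to $\mathbf{1}_B$, so $n_{\mathbf{1}_A} = 0$. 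The goal is to show $n_M = 0$ for every indecomposable $M$; once this is done, the restriction of $F$ to $A-\Modfd$ lands in $B-\Modfd$, and applying the same reasoning to $F^{-1}$ yields the desired abelian monoidal equivalence, with associators and unitors inherited from the monoidal structure on $F$.

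The first step is to extract additivity of the shift function from tensor compatibility. For indecomposables $M, M'$, decompose the module $M \otimes M' = \bigoplus_i P_i$ in $A-\Modfd$ and compare
\[
F(M \otimes M') \cong F(M) \otimes F(M') \cong (N_M \otimes N_{M'})[n_M + n_{M'}]
\]
against $\bigoplus_i N_{P_i}[n_{P_i}]$. Since $B$-modules placed in different cohomological degrees cannot be isomorphic, this forces $n_{P_i} = n_M + n_{M'}$ for every indecomposable summand $P_i$. Thus $M \mapsto n_M$ is additive in the appropriate sense on the tensor monoid of indecomposable modules and is normalized at the unit.

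The main step is to rule out nonzero shifts, and this is where the Frobenius-Perron curvature $\fpv$, preserved under monoidal triangulated equivalences, does the work. The invariant $\fpv$ is sensitive to how tensor powers of an object are spread cohomologically, so the identity $\fpv(M) = \fpv(N_M[n_M])$ across all indecomposables $M$ constrains the possible values of $n_M$. In the discrete case, tensor products of indecomposables remain indecomposable, so additivity becomes a genuine $\Z$-valued monoid homomorphism on the set of isomorphism classes of indecomposables, and $\fpv$-comparison together with the normalization $n_{\mathbf{1}_A} = 0$ suffices to conclude. In the bialgebra case, $\mathbf{1}_A$ is the trivial module $\Bbbk$ acting as a two-sided identity for the tensor product; $\fpv$-invariance applied across the decompositions of tensor powers of indecomposables, combined with the established additivity, again forces the character $M \mapsto n_M$ to be identically zero.

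The main obstacle is expected in the bialgebra case, where tensor products of indecomposables can decompose in intricate ways, so that additivity alone does not pin the shifts down; here the numerical strength of $\fpv$, beyond mere shift-additivity, is indispensable. By contrast, the discrete case is largely formal once additivity is in hand. After $n_M = 0$ is established uniformly, upgrading the restricted functor to a monoidal equivalence of abelian categories is routine: exactness follows because the embedding of a heart into its derived category preserves and reflects short exact sequences, and the monoidal data transfers directly from the monoidal structure on $F$.
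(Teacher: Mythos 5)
Your approach is morally the same as the paper's: use $\fpv$-invariance under the monoidal triangulated equivalence to force $F$ to carry $A-\Modfd$ into $B-\Modfd$, after which the monoidal abelian equivalence is formal. The paper packages this through the uniqueness of hereditary $mtt$-structures with deviation zero (Theorem~\ref{xxthm5.10} via Lemma~\ref{xxlem5.9}), whereas you argue directly on indecomposables; that repackaging is not in itself an objection. However, there is a genuine gap in the $\fpv$ step.

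The $\fpv$-argument has two halves: (i) if $n_M \neq 0$ then $(N_M[n_M])^{\otimes k} \cong N_M^{\otimes k}[k n_M]$ drifts to unbounded cohomological degree, so for any fixed brick set the adjacency matrices eventually vanish by heredity and $\fpv(N_M[n_M]) = 0$; and (ii) $\fpv(M) > 0$ for every nonzero $M \in A-\Modfd$, so that $\fpv(M) = \fpv(N_M[n_M])$ forces $n_M = 0$. You gesture at (i) (``how tensor powers are spread cohomologically''), but you never establish (ii), and (ii) is precisely where the dichotomy ``$A$ is a bialgebra or $A-\Modfd$ is discrete'' enters the proof. The positivity is not automatic: the paper proves it in Lemma~\ref{xxlem5.7}(4) for the discrete case (by exhibiting a simple $S$ with $\Hom(S, M^{\otimes n} \otimes S) \neq 0$ for all $n$) and in Remark~\ref{xxrem5.8}(4) for the bialgebra case (an inductive cycle argument using $\dim(M\otimes S)=\dim M\cdot\dim S$). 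Without this lower bound, the identity $\fpv(M) = \fpv(N_M[n_M])$ constrains nothing — both sides could be zero — so your ``main step'' does not close.

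Two secondary problems. First, the claim that in the discrete case ``tensor products of indecomposables remain indecomposable'' is false: tensor the indecomposable $D_4$-representation with dimension vector $(1,1,1,2)$ with itself and you get a decomposable of dimension vector $(1,1,1,4)$; more basically, $M \otimes S_i \cong S_i^{\oplus \dim (M)_i}$ is already decomposable whenever $\dim(M)_i \geq 2$. Consequently the isomorphism classes of indecomposables do not form a monoid under $\otimes$. What does hold, and would repair that paragraph, is the following simpler form of your additivity argument (which for the discrete case removes the need for $\fpv$ entirely): for indecomposable $M$ pick a simple $S$ with $M \otimes S \cong S^{\oplus k}$ for some $k \geq 1$; then $n_S = 0$ since $F(\mathbf{1}_A) = \mathbf{1}_B$ lies in degree zero and $\mathbf{1}_A = \bigoplus S_i$, and additivity gives $n_S = n_M + n_S$, hence $n_M = 0$. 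Second, in the bialgebra case the additivity step really gives nothing: $M \otimes \mathbf{1}_A = M$ yields only the tautology $n_M = n_M + n_{\mathbf{1}_A}$, so you cannot ``combine'' additivity with $\fpv$ as a substitute for the missing positivity bound — you need $\fpv$-positivity outright.
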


As a consequence, we have

\begin{corollary}
\label{xxcor0.6}
Suppose that the bounded derived categories of representations of
two finite acyclic quivers are equivalent as monoidal triangulated
categories. Then the quivers are isomorphic.
\end{corollary}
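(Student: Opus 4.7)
The plan is to reduce Corollary~\ref{xxcor0.6} to Theorem~\ref{xxthm0.5} and then invoke the classical reconstruction of a basic hereditary algebra (and its Gabriel quiver) from its category of representations. Let $Q_{1},Q_{2}$ be the two finite acyclic quivers and set $A_{i}:=\Bbbk Q_{i}$, equipped with the canonical weak bialgebra structure of Lemma~\ref{xxlem2.1}(1), so that $A_{i}-\Modfd$ is a monoidal abelian category under the vertex-wise tensor product \eqref{E2.1.1}. Under the identification $\Repr(Q_{i})=A_{i}-\Modfd$, the hypothesis becomes $D^{b}(A_{1}-\Modfd)\cong D^{b}(A_{2}-\Modfd)$ as monoidal triangulated categories.

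The first step is to verify that the hypothesis of Theorem~\ref{xxthm0.5} is met for at least one of the $A_{i}$, say $A_{1}$. The simple representation $S_{v}$ at a vertex $v$ is one-dimensional and supported only at $v$, so the vertex-wise tensor product gives $S_{v}\otimes S_{w}\cong \delta_{vw} S_{v}$. This is exactly the pattern singled out by Definition~\ref{xxdef3.1}, so the monoidal category $A_{1}-\Modfd$ is discrete. (Note that $\Bbbk Q_{1}$ is typically only a weak bialgebra and not a bialgebra, so discreteness is the relevant branch of the hypothesis.)

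With this in hand, Theorem~\ref{xxthm0.5} upgrades the derived equivalence to an equivalence $A_{1}-\Modfd\cong A_{2}-\Modfd$ of monoidal abelian categories. Forgetting the monoidal structure, this is a plain abelian equivalence, so $\Bbbk Q_{1}$ and $\Bbbk Q_{2}$ are Morita equivalent. Both algebras are basic, because the idempotents attached to the vertices form a complete set of primitive orthogonal idempotents summing to $1$, and Morita equivalence of basic finite dimensional algebras forces $\Bbbk Q_{1}\cong \Bbbk Q_{2}$ as $\Bbbk$-algebras. From the algebra one then recovers the quiver via the Gabriel quiver construction: the vertices are labeled by isomorphism classes of simple modules and the number of arrows $v\to w$ equals $\dim_{\Bbbk}\Ext^{1}_{\Bbbk Q_{i}}(S_{v},S_{w})$; for a finite acyclic quiver this reconstructs $Q_{i}$ faithfully, so $Q_{1}\cong Q_{2}$.

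The single non-formal step is the verification of discreteness for the canonical monoidal structure; everything else is a concatenation of Theorem~\ref{xxthm0.5} with textbook facts about basic finite dimensional hereditary algebras. Once discreteness is in place against whichever formulation of Definition~\ref{xxdef3.1} the paper adopts, the remaining chain of implications is automatic, so I do not expect any serious obstacle beyond this preliminary compatibility check.
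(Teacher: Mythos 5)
Your proposal is correct and follows essentially the same route as the paper: invoke Theorem~\ref{xxthm0.5} (after noting discreteness, which is Lemma~\ref{xxlem3.2}) to descend to a monoidal abelian equivalence $\Repr(Q_1)\cong\Repr(Q_2)$, and then reconstruct the quiver. The only difference is cosmetic: the paper concludes directly by matching simples and reading off arrow counts from $\dim\Ext^1(S_i,S_j)$, whereas you insert an intermediate Morita-equivalence-of-basic-algebras step before invoking the Gabriel quiver; both endgames amount to the same $\Ext^1$-counting argument.
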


There is also a result concerning an analogue of a $t$-structure
in the monoidal triangulated setting, see Theorem \ref{xxthm0.7}
below. We introduce the notion of an $mtt$-structure on a monoidal
triangulated category in Section 5. Undefined terminologies
can be found in Sections 4 and 5.

\begin{theorem}
\label{xxthm0.7}
Let $A$ be a finite dimensional weak bialgebra that is hereditary
as an algebra. Suppose that the induced monoidal structure on
$A-\Modfd$ is discrete. Then there is a unique hereditary
$mtt$-structure with deviation zero on the monoidal triangulated
category $D^b(A-\Modfd)$.
\end{theorem}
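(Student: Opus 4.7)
The plan is to construct the mtt-structure from the standard t-structure on $D^b(A-\Modfd)$ and then invoke discreteness to verify its monoidal compatibility. Since $A$ is hereditary, $\gldim A \le 1$, so every object $X \in D^b(A-\Modfd)$ splits canonically as $X \simeq \bigoplus_{i\in\Z} H^i(X)[-i]$. This splitting is the structural feature that makes a hereditary mtt-structure with deviation zero the natural candidate. I would take the aisle and coaisle to be the standard ones (objects with cohomology concentrated in nonpositive, resp.\ nonnegative, degrees), so the heart is $A-\Modfd$ with its induced monoidal abelian structure coming from the weak bialgebra $A$.

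For existence, the nontrivial point is that $\otimes_A$ must interact correctly with truncation, i.e.\ that it restricts to a tensor product on the heart up to the shift behaviour encoded by ``deviation zero.'' This is exactly where the discreteness hypothesis of Definition \ref{xxdef3.1} enters: it guarantees that the tensor product of two indecomposable modules is, up to isomorphism, a direct sum of (shifted) indecomposable modules in a controlled way, so that for $M,N \in A-\Modfd$ the derived tensor product $M \otimes^{\mathbf{L}}_{A} N$ still lies in the heart (rather than producing higher Tor classes or extensions with non-zero deviation). Combined with the hereditariness of $A$, this shows that the proposed t-structure is closed under $\otimes_{\mathcal{T}}$, giving an mtt-structure; the heart being abelian of global dimension one makes the mtt-structure hereditary, and the fact that tensor products in the heart do not spread across shifts is exactly deviation zero.

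For uniqueness, I would use the splitting $X \simeq \bigoplus H^i(X)[-i]$ together with the Frobenius-Perron theory developed in the paper. A hereditary mtt-structure with deviation zero is determined by its heart $\mathcal{A}$, an abelian monoidal subcategory of $D^b(A-\Modfd)$ whose indecomposable objects are, up to shift, precisely the indecomposables of $A-\Modfd$ (by the canonical splitting and the hereditary assumption on the mtt-structure). Applying $\fpd$ and $\fpv$ to indecomposables and invoking the discrete case of Theorem \ref{xxthm0.5}, any alternative such heart must be equivalent to $A-\Modfd$ as a monoidal abelian category, and since both sit as the degree-zero part of a fixed triangulated category $D^b(A-\Modfd)$ compatibly with the cohomology splitting, the two mtt-structures coincide.

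The main obstacle will be the existence step, specifically checking that discreteness really suppresses the ``spreading'' of $\otimes^{\mathbf{L}}_{A}$ across multiple cohomological degrees enough to produce deviation zero. Without discreteness, even for nice bialgebras one can tensor two modules in $A-\Modfd$ and obtain objects with cohomology in several degrees, destroying the property; the careful part of the proof is to use Lemma \ref{xxlem2.1} and the structure of discrete monoidal categories to rule this out and to see that $\Tor^{A}_{\ge 1}(M,N)$ does not contribute non-trivially at the level of the mtt-structure, rather than just as a module.
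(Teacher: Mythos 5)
Your existence step rests on a misidentification of the monoidal structure. The tensor product on $A\text{-}\Modfd$ for a weak bialgebra $A$ is not $\otimes_A$ (or its derived version $\otimes^{\mathbf{L}}_A$): it is $M\otimes^l N = \Delta(1)(M\otimes_{\Bbbk} N)$ from Definition~\ref{xxdef1.8}, coming from the coalgebra structure, not the algebra structure. This functor is a direct summand of $M\otimes_{\Bbbk} N$ and hence automatically biexact in each argument (Lemma~\ref{xxlem1.9}); there are no higher $\Tor$ groups to suppress. Consequently the standard t-structure is an $mtt$-structure of deviation zero \emph{without} invoking discreteness at all --- this is precisely the content of Example~\ref{xxex5.3}(1). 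The whole ``spreading across cohomological degrees'' concern you flag as the main obstacle is a red herring, and discreteness is not needed for existence.

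The uniqueness step has two problems. First, it is circular: you invoke Theorem~\ref{xxthm0.5}, but the paper's proof of Theorem~\ref{xxthm0.5} is itself a direct consequence of the uniqueness statement you are trying to prove (Theorem~\ref{xxthm5.10}). Second, and more fundamentally, the closing sentence ``since both sit as the degree-zero part of a fixed triangulated category \dots{} compatibly with the cohomology splitting, the two mtt-structures coincide'' is not an argument: the paper explicitly emphasizes that hereditary t-structures are far from unique (already for $\mathbb{A}_3$), so the cohomology splitting alone cannot pin down the heart. What is missing is the actual mechanism of Lemma~\ref{xxlem5.9}: if an indecomposable $M\in A\text{-}\Modfd$ sat in $\mathcal{T}^{\le b}\cap\mathcal{T}^{\ge b}$ for some $b\ne 0$ in an alternative hereditary $mtt$-structure of deviation zero, then $M^{\otimes n}$ would lie in $\mathcal{T}^{\le nb}\cap\mathcal{T}^{\ge nb}$, which forces $\fpv(M)=0$ by the hereditary vanishing; but discreteness gives $\fpv(M)\ge 1$ (Lemma~\ref{xxlem5.7}(5)), a contradiction. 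This $\fpv$-based dichotomy is the entire content of the proof and is absent from your proposal; discreteness enters here, not in the existence step.
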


It is clear that Theorem \ref{xxthm0.7} applies to $D^b(\Repr(Q))$
where $Q$ is a finite acyclic quiver. A $t$-structure on a
triangulated category has been studied extensively since it was
introduced by Beilinson-Bernstein-Deligne in \cite{BBD1981}. It
is natural to study all $mtt$-structures of a monoidal
triangulated category. In fact, $mtt$-structures service
as a compelling system of a monoidal triangulated category. It
is well-known that (hereditary) $t$-structures on $D^b(\Repr(Q))$
are not unique even for quivers of type ${\mathbb A}_n$, \
defined below, for $n\geq 3$. Therefore it is surprising that
certain $mtt$-structures (see Theorem \ref{xxthm0.7}) are unique.
This uniqueness result would have other significant consequences 
than Theorem \ref{xxthm0.5} and Corollary \ref{xxcor0.6}. It is also 
interesting to search for other classes of monoidal triangulated 
categories such that the uniqueness property holds for certain 
$mtt$-structures.

Though there are more than one tensor structures on the path
algebra $\Bbbk Q$ for a quiver $Q$, one of these structures
is from the natural coalgebra structure on $\Bbbk Q$,
similar to group algebras [Lemma \ref{xxlem2.1}(1)].
We will present more results concerning the Frobenius-Perron
dimensions of indecomposable representations under such
tensor structure. Before that we need to introduce some
notation. By definition, a type $\mathbb{A}$ quiver (or more
precisely, type $\mathbb{A}_n$ quiver) is a quiver of the
following form
\begin{equation}
\label{E0.7.1}\tag{E0.7.1}
\xymatrix{
1 \ar@{-}[r]^{\alpha_1}&2\ar@{-}[r]^{\alpha_2}
&\cdots\ar@{-}[r]^{\alpha_{i-1}}&i\ar@{-}[r]^{\alpha_i}
&\cdots\ar@{-}[r]^{\alpha_{n-1}}&n}
\end{equation}
where each arrow $\alpha_i$ is either $\longrightarrow$ or
$\longleftarrow$. For each quiver of type $\mathbb{A}_n$,
the arrows $\alpha_i$ will be specified. It is easy to see
that, for each $n\geq 3$, there are more than one
quivers of type $\mathbb{A}_n$ up to isomorphisms.
Let us fix a quiver of type $\mathbb{A}_n$, say $Q$,
as above. For $1\leq i\leq j\leq n$, we define a thin representation
of $Q$, denoted by $M\{i,j\}$, by
\begin{equation}
\label{E0.7.2}\tag{E0.7.2}
(M\{i,j\})_s=\begin{cases} \Bbbk & i\leq s\leq j,\\
0 & {\text{otherwise}}\end{cases}
\end{equation}
and
\begin{equation}
\label{E0.7.3}\tag{E0.7.3}
(M\{i,j\})_{\alpha_s}=\begin{cases} Id_{\Bbbk} & i\leq s<j,\\
0& {\text{otherwise}}.\end{cases}
\end{equation}
(This thin representation is sometimes called an interval module 
by other researchers.) Then by \cite[p.63]{GR1992},
all such $M\{i,j\}$ form the complete list
of indecomposable representations of $Q$ [Lemma \ref{xxlem1.10}].
For all $i\leq j$, we say
$$M\{i,j\}\;\; {\text{is}} \;\;
\begin{cases}
{\text{a $sink$ $\qquad$ if $\alpha_{i-1}=\longrightarrow$ (or $i=1$) and
$\alpha_{j}=\longleftarrow$ (or $j=n$)}},\\
{\text{a $source\;$ $\quad$ if $\alpha_{i-1}=\longleftarrow$ (or $i=1$) and
$\alpha_{j}=\longrightarrow$  (or $j=n$)}},\\
{\text{a $flow$ $\qquad$ if $\alpha_{i-1}=\alpha_{j}$, and it is either
$\longrightarrow$ or $\longleftarrow$}}.
\end{cases}$$
Since $\Repr(Q)$ is hereditary, every indecomposable
object in the bounded derived category $D^b(\Repr(Q))$
is of the form $M\{i,j\}[m]$ for some $m\in {\mathbb Z}$.
We have the following result for type $\mathbb{A}_n$.
Some computation in the case of type $\mathbb{D}_n$ quivers
is given in \cite{ZWD2020}.

\begin{theorem}
\label{xxthm0.8}
Let $Q$ be a quiver of type $\mathbb{A}_n$ for some
positive integer $n$. Then the following
hold in the bounded derived
category $D^b(\Repr(Q))$ with tensor defined as in \eqref{E2.1.1}.
\begin{enumerate}
\item[(1)]
$\fpd (M\{i,j\}[m])=0$ for all $m<0$ and $m>1$.
\item[(2)]
$\fpd(M\{i,j\}[0])=\begin{cases}
1 & {\text{if $M\{i,j\}$ is a sink}},\\
\min\{i, n-j+1\} & {\text{if $M\{i,j\}$ is a source}},\\
1 &{\text{if $M\{i,j\}$ is a flow}}.
\end{cases}$
\item[(3)]
$\fpd(M\{i,j\}[1])=\begin{cases}
\min\{i-1,n-j\} & {\text{if $M\{i,j\}$ is a sink}},\\
0 & {\text{if $M\{i,j\}$ is a source}},\\
0 &{\text{if $M\{i,j\}$ is a flow}}.
\end{cases}$
\end{enumerate}
\end{theorem}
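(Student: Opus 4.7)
\noindent\emph{Proof plan.}
The plan is to exploit two key ingredients: the explicit description of the tensor product on interval modules, and the hereditary property of $\Repr(Q)$. First I would verify, via \eqref{E2.1.1}, that the vertex-wise tensor of thin modules satisfies $M\{i,j\}\otimes M\{k,l\} = M\{\max(i,k),\min(j,l)\}$ when $[i,j]\cap[k,l]\neq\emptyset$ (and $0$ otherwise), since both the vector spaces at each vertex and the identity arrow maps are preserved under vertex-wise tensor. By exactness of the vertex-wise tensor on vector spaces, this lifts to $M\{i,j\}[m]\otimes M\{k,l\}[p] = M\{\max(i,k),\min(j,l)\}[m+p]$ in $D^b(\Repr(Q))$. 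Combined with hereditariness, which forces $\Hom_{D^b}(X[p],Y[q]) = \Ext^{q-p}_{\Repr(Q)}(X,Y) = 0$ unless $q-p\in\{0,1\}$, this places tight constraints on the Hom matrices entering every Frobenius--Perron computation.

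For part (1), fix $m\notin\{0,1\}$. For any finite test family $\Phi = \{M\{a_I,b_I\}[p_I]\}$, a nonzero entry of the Hom matrix of $F:=M\{i,j\}[m]\otimes-$ requires $p_I-p_J\in\{m-1,m\}$. For $m\geq 2$ every such difference is strictly positive; for $m\leq -1$ it is strictly negative. No closed cycle of nonzero entries can then occur, so the matrix is nilpotent (after a suitable reordering of $\Phi$) and its spectral radius vanishes. Hence $\fpd(F)=0$ for every $\Phi$, proving (1).

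For parts (2) and (3), entries require $p_I-p_J\in\{-1,0\}$ (when $m=0$) or $\{0,1\}$ (when $m=1$), so I would restrict $\Phi$ to a single shift and reduce to $\Hom$ or $\Ext^1$ matrices on interval modules in $\Repr(Q)$, which are at most $1$-dimensional with explicit boundary-orientation conditions (compare Lemma \ref{xxlem1.10}). I would then carry out a case analysis. For the sink and flow cases at shift $0$, the boundary orientations obstruct cycles in the Hom graph, forcing $\fpd\leq 1$, and $\Phi=\{M\{i,j\}\}$ attains $1$. For the source case at shift $0$, the source boundary guarantees a nonzero map $M\{k,l\}\to M\{i,j\}$ for every $k\leq i$, $l\geq j$; choosing $\Phi$ from the shorter of the chains $\{M\{k,j\} : 1\leq k\leq i\}$ or $\{M\{i,l\} : j\leq l\leq n\}$ should realise the lower bound $\min\{i,n-j+1\}$. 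The sink case at shift $1$ is a formal shift-by-one analogue, with $\Ext^1$ replacing $\Hom$ and strict enclosure replacing containment, producing the adjusted tails $i-1$ and $n-j$ and hence $\min\{i-1,n-j\}$. The source and flow cases at shift $1$ vanish because the relevant $\Ext^1$'s between interval modules are zero, a consequence of projectivity/injectivity properties of these modules at the boundary orientations.

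The principal obstacle will be the \emph{upper bound} in the source/shift-$0$ case (and its dual sink/shift-$1$ case): proving that no finite $\Phi$ exceeds the claimed minimum. The idea is to show that the directed graph of nonzero Hom (resp.\ $\Ext^1$) maps between intervals containing (resp.\ strictly enclosing) $[i,j]$ decouples into independent left and right chains, with cycles confined to a single chain; the effective spectral radius is then controlled by the shorter chain length. Verifying this decoupling --- in particular the vanishing of certain cross-Hom maps between intervals extending to opposite sides of $[i,j]$ --- will require a careful analysis of the orientation-dependent Hom conditions in type $\mathbb{A}_n$, and forms the technical heart of the proof.
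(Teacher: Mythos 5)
Your overall architecture — the interval tensor formula $M\{i,j\}\otimes M\{k,l\}=M\{\max(i,k),\min(j,l)\}$ (or $0$), the hereditary shift constraint on Hom spaces, the triangularity argument for part~(1), and the explicit brick-set construction for the lower bounds — matches the paper (their Lemma~\ref{xxlem4.11} for part~(1), and the first halves of Theorems~\ref{xxthm8.11} and~\ref{xxthm8.16}). However, your proposed mechanism for the upper bound is off-target in two ways.

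First, the ``decoupling into independent left and right chains, with cycles confined to a single chain'' is not what controls the spectral radius. After restricting attention (as one must, but see below) to a brick set $\phi_{ij}$ all of whose members $M\{k,l\}$ contain $[i,j]$, the tensor $M\{i,j\}\otimes N_y$ equals $M\{i,j\}$ for every $N_y\in\phi_{ij}$, so the adjacency entry $a_{xy}=\dim\Hom(N_x,M\{i,j\})$ is \emph{independent of $y$}. There is no cycle structure to analyze: in the source case every such entry is~$1$, so the matrix is the all-ones matrix and $\rho=|\phi_{ij}|$; the bound $\rho\le\min\{i,n-j+1\}$ comes solely from the fact that the left endpoints of the bricks in $\phi_{ij}$ are pairwise distinct (so there are at most $i$ of them) and similarly for the right endpoints (at most $n-j+1$), which is the paper's Lemma~\ref{xxlem8.3} plus a Gershgorin/row-sum bound. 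For sink and flow at shift~$0$, the same column-independence shows only $N_x\cong M\{i,j\}$ contributes a~$1$, so $\rho\le 1$ trivially. For the sink at shift~$1$, the distinctness count applies to intervals that \emph{strictly} enclose $[i,j]$ (since $\Ext^1$ vanishes when $i_1=i$ or $j_1=j$ by Lemma~\ref{xxlem8.12}(4)), giving $\min\{i-1,n-j\}$.

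Second, and more importantly, you pass silently over the reduction from an arbitrary brick set $\phi$ to the restricted set $\phi_{ij}$. This is where most of the technical weight of the paper's proof lies: an induction on $|\phi|+n$, peeling off bricks $N$ with $M\{i,j\}\otimes N=0$ or $\cong N$ (Lemma~\ref{xxlem8.8}), then handling the one-sided pieces $\phi_i,\phi_j$ via a totally-ordered-chain argument that makes the adjacency matrix block triangular (Lemmas~\ref{xxlem8.9}, \ref{xxlem8.10}, \ref{xxlem8.14}, \ref{xxlem8.15}), and finally a case split on the boundary orientations. Without this reduction, the column-independence observation does not apply, and the upper bound is not established. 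You should make this inductive step an explicit part of the plan rather than a vague ``case analysis.''
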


Related to Project \ref{xxpro0.1} we are also very much interested
in the following questions.

\begin{question}
\label{xxque0.9}
Let $A$ be a finite dimensional weak bialgebra or just an algebra,
or $\Bbbk Q$ where $Q$ is a finite acyclic quiver.
\begin{enumerate}
\item[(1)]
How to determine all monoidal abelian structures on the abelian
category $A-\Modfd$?
\item[(2)]
How to determine all monoidal triangulated structures on the derived
category $D^b(A-\Modfd)$?
\end{enumerate}
\end{question}

The paper is organized as follows. Section 1 contains some basic
definitions. In particular, we recall the definition of the
Frobenius-Perron dimension of an endofunctor. In Section 2 we review
some preliminaries on quiver representations. The notion
of a discrete monoidal abelian category is introduced in
Section 3. A natural example of a discrete monoidal structure
is $\Repr(Q)$ which is the main object in this paper. Theorem
\ref{xxthm0.4} is proved in Section 4. In Section 5, we
introduce the notion of an $mtt$-structure of a monoidal
triangulated category that is a monoidal version of the
$t$-structure of a triangulated category. Theorems \ref{xxthm0.5}
and \ref{xxthm0.7}, and Corollary \ref{xxcor0.6} are
proved near the end of Section 5. Section 6 focuses on
the proof of Theorem \ref{xxthm0.3} which uses some
detailed information about weighted projective lines. Section
7 contains various examples which indicate the richness
of monoidal triangulated structures from different
subjects. Section 8 consists of the proof of Theorem
\ref{xxthm0.8} with some non-essential details left out.

\section{Some basic definitions}
\label{xxsec1}

This section contains several basic definitions which will be used
in later sections.

Recall from \cite[Definition 2.1.1]{EGNO2015} that
a {\it monoidal category} ${\mathcal C}$ consists of the following data:
\begin{enumerate}
\item[($\bullet$)]
a category ${\mathcal C}$,
\item[($\bullet$)]
a bifunctor $\otimes: {\mathcal C}\times {\mathcal C}\to
{\mathcal C}$, called {\it tensor functor},
\item[($\bullet$)]
for each triple $(X,Y, Z)$ in ${\mathcal C}$, a natural isomorphism
$$\alpha_{X,Y,Z}: (X\otimes Y)\otimes Z\xrightarrow{\cong} X\otimes(Y\otimes Z),$$
\item[($\bullet$)]
an object ${\bf 1}\in {\mathcal C}$, called {\it unit object},
\item[($\bullet$)]
natural isomorphisms $l_X: {\bf 1}\otimes X \xrightarrow{\cong} X$
and $r_X: X\otimes {\bf} \xrightarrow{\cong} X$ for each $X$ in ${\mathcal C}$,
\end{enumerate}
such that the pentagon axiom \cite[(2.2)]{EGNO2015} and the triangle axiom
\cite[(2.10)]{EGNO2015} hold. The definitions of a braiding
$\{c_{X,Y}\}_{X,Y\in {\mathcal C}}$ on a monoidal category ${\mathcal C}$
and of a braided monoidal category are given in
\cite[Definitions 8.1.1 and 8.1.2]{EGNO2015} respectively.

By \cite[Definition 8.1.12]{EGNO2015}, a braided monoidal category
${\mathcal C}$ is called {\it symmetric}  if
$$c_{Y,X}\circ c_{X,Y} = id_{X\otimes Y}$$
for all objects $X, Y\in {\mathcal  C}$.

We are usually considering $\Bbbk$-linear categories. Now we
recall some definitions.

\begin{definition}
\label{xxdef1.1}
Let ${\mathcal C}$ be a monoidal category.
\begin{enumerate}
\item[(1)]
We say ${\mathcal C}$ is {\it monoidal $\Bbbk$-linear}
if
\begin{enumerate}
\item[(1a)]
${\mathcal C}$ is $\Bbbk$-linear,
\item[(1b)]
morphisms and functors involving in the definition of
monoidal category are all $\Bbbk$-linear, and
\item[(1c)]
the tensor functor preserves direct sums in each argument.
\end{enumerate}
\item[(2)]
We say ${\mathcal C}$ is {\it monoidal abelian}
if
\begin{enumerate}
\item[(2a)]
${\mathcal C}$ is a $\Bbbk$-linear abelian category,
\item[(2b)]
${\mathcal C}$ is monoidal $\Bbbk$-linear in the sense of
part (1),
\item[(2c)]
the tensor functor preserves exact sequences in each argument.
\end{enumerate}
\item[(3)] \cite{NVY2019}
We say ${\mathcal C}$ is {\it monoidal triangulated}
if
\begin{enumerate}
\item[(3a)]
${\mathcal C}$ is $\Bbbk$-linear triangulated category,
\item[(3b)]
${\mathcal C}$ is monoidal $\Bbbk$-linear in the sense of
part (1),
\item[(3c)]
the tensor functor preserves exact triangles and commutes
with the suspension in each argument.
\item[(3d)]
the suspension satisfies the anti-commuting diagram given
at the end of the definition of a {\it suspended monoidal}
category \cite[Definition 1.4]{Su-Al}.
\end{enumerate}
\end{enumerate}
\end{definition}

By the way, we will not be using the axiom (3d)
in the above definition in this paper.
A tensor triangulated category in the sense of \cite[Definition 1.1]{B2005}
is just a symmetric monoidal triangulated category. We refer the
reader to \cite{EGNO2015} for other details.

Let $({\mathcal C}, \otimes, {\bf 1})$ be a monoidal category
and ${\mathcal A}$ be another category. Following \cite[p.62]{JK2001},
by an {\it action}  of ${\mathcal C}$ on ${\mathcal A}$ we mean a
strong monoidal functor
$$F = (f, \tilde{f}, f^{\circ}): {\mathcal C}\longrightarrow
[{\mathcal A}, {\mathcal A}],$$
where $[{\mathcal A}, {\mathcal A}]$ is the category of
endofunctors of ${\mathcal A}$, provided with a monoidal structure
$([{\mathcal A}, {\mathcal A}], \circ, Id_{\mathcal A})$ which is strict,
wherein $\circ$ denotes composition and $Id_{\mathcal A}$ is the identity endofunctor.
Here, to give the functor $f : {\mathcal C}\to [{\mathcal A}, {\mathcal A}]$
is equally to give a functor $\odot: {\mathcal C}\times {\mathcal A}
\to {\mathcal A}$ where $X\odot A = (fX)A$ for all $X\in {\mathcal C}$
and $A\in {\mathcal A}$; to give
the invertible and natural $\tilde{f}_{X,Y} : (fX) \circ (fY )
\to  f(X \otimes Y )$ (or rather their inverses) is
equally to give a natural isomorphism with components
$$\alpha_{X,Y,A}: (X\otimes Y )\odot A\to X\odot (Y \odot A);$$
to give the invertible $f^{\circ}: Id_{\mathcal A}\to  f {\bf 1}$
(or rather its inverse) is equally to give a natural
isomorphism with components $\lambda_{A} : {\bf 1} \odot A\to  A$;
and the coherence conditions for $F$ become the commutativity
of the three diagrams \cite[(1.1), (1.2) and (1.3)]{JK2001}
which are the pentagon axiom involving the associator of
${\mathcal C}$ and the triangle axioms for the action of the
unit object ${\bf 1}$ on ${\mathcal A}$ compatible with the left
unitor of ${\mathcal C}$ respectively. It is clear that a monoidal
category ${\mathcal C}$ acts on itself by defining $\odot=\otimes$.
We refer to \cite{JK2001} for more details.

\begin{convention}
\label{xxcon1.2}
Let ${\mathcal C}$ be a monoidal category acting on another
category ${\mathcal A}$.
\begin{enumerate}
\item[(1)]
If both ${\mathcal C}$ and ${\mathcal A}$ are {\it $\Bbbk$-linear},
we automatically assume that
\begin{enumerate}
\item[(1a)]
morphisms and functors involving in the definition of
action are all $\Bbbk$-linear, and
\item[(1b)]
$\odot$ preserves direct sums in each argument.
\end{enumerate}
\item[(2)]
If both ${\mathcal C}$ and ${\mathcal A}$ are {\it abelian},
we automatically assume that
\begin{enumerate}
\item[(2a)]
morphisms and functors involving in the definition of
action are all $\Bbbk$-linear,
\item[(2b)]
${\mathcal C}$ is monoidal abelian in the sense of
Definition \ref{xxdef1.1}(2),
\item[(2c)]
$\odot$ preserves exact sequences in each argument.
\end{enumerate}
\item[(3)]
If both ${\mathcal C}$ and ${\mathcal A}$ are {\it triangulated},
we automatically assume that
\begin{enumerate}
\item[(3a)]
morphisms and functors involving in the definition of
action are all $\Bbbk$-linear,
\item[(3b)]
${\mathcal C}$ is monoidal triangulated in the sense of
Definition \ref{xxdef1.1}(3),
\item[(3c)]
$\odot$ preserves exact triangles and commutes
with the suspension in each argument.
\end{enumerate}
\end{enumerate}
\end{convention}

Next we recall some definitions concerning the Frobenius-Perron
dimension of an endofunctor. We refer to \cite{CGWZZZ2017} for
other related definitions. Let $\dim$ be $\dim_{\Bbbk}$.

\begin{definition} \cite{CGWZZZ2017}
\label{xxdef1.3}
Let $\mathcal{C}$ be a $\Bbbk $-linear category.
\begin{enumerate}
\item[(1)]
An object $X$ in $\mathcal{C}$ is called a {\it brick} if
$$\Hom_{\mathcal{C}}(X,X)=\Bbbk.$$
\item[(2)]
Let $\phi:=\{X_1,\ldots, X_n\}$ be a finite subset of nonzero objects
in $\mathcal{C}$. We say that $\phi$ is a {\it brick set} if each
$X_i\in \phi$ is a brick and
$$\Hom_{\mathcal{C}}(X_i,X_j)=0, \forall \; i\neq j.$$
\item[(3)]
Let $\phi:=\{X_1,\ldots, X_n\}$ and let $\sigma$ be an endofunctor
of $\mathcal{C}$. The {\it adjacency matrix} of $(\phi,\sigma)$
is defined to be
$$A(\phi,\sigma)=(a_{ij})_{n\times n},
\quad {\text{where}} \quad
a_{ij}=\dim \Hom_{\mathcal{C}}(X_i, \sigma(X_j))\;\;
\forall \; i,j.$$
\item[(4)]
Let $\Phi_b$ be the collection of all finite brick sets in
$\mathcal{C}$. The {\it Frobenius-Perron dimension} of an
endofunctor $\sigma$ is defined to be
$$\fpd(\sigma)
:= \sup\limits_{\phi\in \Phi_b}\{\rho(A(\phi,\sigma))\}$$
where $\rho(A)$ is the spectral radius of a square matrix $A$
\cite[Section 1]{CGWZZZ2017}, i.e. the largest absolute 
value of $A$.
\item[(5)]
The {\it Frobenius-Perron curvature} of $\sigma$ is defined to be
$$\fpv (\sigma):=\sup_{\phi\in \Phi_{b}} \{\limsup_{n\to\infty} \;
(\rho(A(\phi,\sigma^n)))^{1/n} \}.$$
\item[(6)]
If $\mathcal{C}$ is a monoidal $\Bbbk$-linear category
acting on a $\Bbbk$-linear category ${\mathcal A}$
and $M$ is an object in
$\mathcal{C}$, the {\it Frobenius-Perron dimension} of $M$
is defined to be
$$\fpd(M):=\fpd(M\odot -)$$
where $M\odot -$ is considered as an endofunctor of
${\mathcal A}$ and $\fpd(M\odot -)$ is defined in part (4).
Similarly, the {\it Frobenius-Perron curvature} of $M
\in {\mathcal C}$ is
defined to be
$$\fpv(M):=\fpv(M\odot -)$$
where $M\odot -$ is considered as an endofunctor of
${\mathcal A}$ and $\fpd(M\odot -)$ is defined in part (5).
\item[(7)]
As a special case of (6),
if $\mathcal{C}$ is a monoidal $\Bbbk$-linear category
and $M$ is an object in
$\mathcal{C}$, the {\it Frobenius-Perron dimension} of $M$ is
defined to be
$$\fpd(M):=\fpd(M\otimes -)$$
where $\fpd(M\otimes -)$ is defined in part (4).
Similarly, the {\it Frobenius-Perron curvature} of $M$ is
defined to be
$$\fpv(M):=\fpv(M\otimes -)$$
where $\fpv(M\otimes -)$ is defined in part (5).
\end{enumerate}
\end{definition}

When $\mathcal{C}$ is $R-\Modfd$ for an algebra $R$, a brick set 
is also called a semibrick \cite{As2020}. If both ``full'' and 
``exceptional'' conditions [Definition \ref{xxdef7.8}(2,3)] are 
satisfied, this is also known as a simple-minded collection, see 
\cite[Definition 3.2]{KY2014}.

Now we recall the definition of representation types.

\begin{definition}
\label{xxdef1.4}
Let $A$ be a finite dimensional algebra over $\Bbbk$.
\begin{enumerate}
\item[(1)]
We say $A$ is of {\it finite type} or {\it finite representation
type} if there are only finitely many isomorphism classes of
finite dimensional indecomposable left $A$-modules.
\item[(2)]
We say $A$ is {\it tame} or {\it of tame representation type}
if it is not of finite representation type, and
for every $n\in {\mathbb N}$, all but finitely many
isomorphism classes of $n$-dimensional indecomposables occur in
a finite number of one-parameter families.
\item[(3)]
We say $A$ is {\it wild} or {\it of wild representation type}
if, for every finite dimensional $\Bbbk$-algebra $B$, the
representation theory of $B$ can be representation embedded 
into that of $A$.
\end{enumerate}
\end{definition}

We always assume that the base field $\Bbbk$ is algebraically closed.
A famous trichotomy result due to Drozd \cite{D1979} states
that every finite dimensional algebra is either of finite,
tame, or wild representation type. By classical theorems
of Gabriel \cite{Ga1972} and Nazarova \cite{N1973}, the quivers
of finite and tame representation types correspond to the
$\mathbb{ADE}$ and $\widetilde{\mathbb{A}}\widetilde{\mathbb{D}}
\widetilde{\mathbb{E}}$ diagrams respectively.
By \cite[Theorem 0.3]{CGWZZZ2017}, the representation type
of a quiver $Q$ is indicated by the value of the Frobenius-Perron
dimension of the suspension functor of the derived category
$D^b(\Repr(Q))$.

To show some monoidal structure is $\fpd$-infinite
[Definition \ref{xxdef0.2}(1)], we need the following concepts.

\begin{definition}
\label{xxdef1.5}
Let $\mathcal{C}$ be a $\Bbbk$-linear category.
\begin{enumerate}
\item[(1)]
Let $\phi$ be an infinite set of objects in $\mathcal{C}$.
We say $\phi$ is an {\it infinite brick set} if
$$\Hom_{\mathcal{C}}(X,Y)=\begin{cases} \Bbbk & {\text{  if  }}
X=Y\quad {\text{in $\phi$}},\\
0 & {\text{  if  }}
X\neq Y \quad {\text{in $\phi$}}.\end{cases}$$
\item[(2)]
Suppose $\mathcal{C}$ is abelian or triangulated. A brick set 
$\phi$ (either finite or infinite) is called a {\it connected brick set}
if $\Ext^1_{\mathcal{C}}(X,Y)\neq 0$ for
all $X,Y\in \phi$.
\end{enumerate}
\end{definition}

The next is about the definition of a weak bialgebra.

\begin{definition}
\label{xxdef1.6}
Let $A$ be an algebra with a $\Bbbk$-linear morphism
$\Delta: A\rightarrow A\otimes A$. We say $\Delta$ is a
{\it prealgebra morphism} if
\begin{equation}
\notag
\Delta(ab)=\Delta(a)\Delta(b)
\end{equation}
for all $a,b\in A.$
\end{definition}

A prealgebra morphism is an algebra morphism if and
only if $\Delta(1)=1\otimes 1$ where $1$ is the
identity (or unit) element of $A$.

\begin{definition} \cite[Definition 2.1]{BNS1999}
\label{xxdef1.7}
A {\it weak bialgebra} is a vector space $B$ over the
base field $\Bbbk$ with the structures of
\begin{enumerate}
\item[(a)]
an associative algebra $(B, m, 1 )$ with multiplication
$m: B\otimes B\to B$ and unit $1 \in B$, and
\item[(b)]
a coassociative coalgebra $(B, \Delta, \varepsilon)$ with
comultiplication $\Delta: B\to B\otimes B$ and couint
$\varepsilon: B\to \Bbbk$
\end{enumerate}
satisfying the following conditions.
\begin{enumerate}
\item[(i)]
The comultiplication $\Delta: B\to B\otimes B$ is a
prealgebra morphism.
\item[(ii)]
The unit and counit satisfy
\begin{equation}
\label{E1.7.1}\tag{E1.7.1}
(\Delta(1 )\otimes 1 )(1 \otimes \Delta(1 ))
=(\Delta\otimes Id) \Delta(1 )
=(1 \otimes \Delta(1 ))(\Delta(1 )\otimes 1 )
\end{equation}
and
\begin{equation}
\label{E1.7.2}\tag{E1.7.2}
\varepsilon( xyz)=\sum\varepsilon(x y_{(1)})\varepsilon(y_{(2)}z)=
\sum\varepsilon(x y_{(2)})\varepsilon(y_{(1)}z),
\end{equation}
where $\Delta(y)=\displaystyle\sum y_{(1)}\otimes y_{(2)}$ is the Sweedler notation.
\end{enumerate}
\end{definition}

We refer to \cite{BCJ2011, BNS1999, NTV2003, NV2002} for many
other basic definitions related to weak bialgebras and weak Hopf
algebras. The tensor structure of left modules over a weak
bialgebra \cite[Proposition 2]{NTV2003} is given below.

\begin{definition}
\label{xxdef1.8}
Let $A$ be a weak bialgebra over $\Bbbk$. For two left $A$-modules
$M$ and $N$, define $M\otimes^l N=\Delta(1)(M\otimes_{\Bbbk} N)$
where $\otimes_{\Bbbk}$ is the tensor product over $\Bbbk$.
\end{definition}

The following lemma is clear.

\begin{lemma}
\label{xxlem1.9}
Let $A$ be a weak bialgebra.
\begin{enumerate}
\item[(1)]
With the tensor product $-\otimes^l-$ given in
Definition \ref{xxdef1.8}, both $A-\Modfd$ and $A-\Mod$ are
monoidal abelian categories.
\item[(2)]
Both
$D^b(A-\Modfd)$ and $D^b(A-\Mod)$ are monoidal triangulated.
\end{enumerate}
\end{lemma}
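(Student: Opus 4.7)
The plan is to check the axioms of Definition \ref{xxdef1.1}(2) and (3) directly, using standard facts about weak bialgebras (as worked out in \cite{NTV2003, BNS1999}).

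For (1), the first step is to verify that $M \otimes^l N := \Delta(1)(M \otimes_{\Bbbk} N)$ is indeed a left $A$-module under the action induced by $\Delta$. The key observation is that $\Delta(1)$ is an idempotent in $A \otimes A$: since $\Delta$ is a prealgebra morphism, $\Delta(1) = \Delta(1 \cdot 1) = \Delta(1)\Delta(1)$. Hence $\Delta(1)$ acts as an idempotent projector on $M \otimes_{\Bbbk} N$, and one checks that its image is stable under the diagonal action $a \cdot (m \otimes n) = \sum a_{(1)} m \otimes a_{(2)} n$, because $\Delta$ is coassociative and a prealgebra morphism. The associator $\alpha_{M,N,P}\colon (M \otimes^l N) \otimes^l P \to M \otimes^l (N \otimes^l P)$ is then induced by the associator of $\otimes_{\Bbbk}$, but one must verify that both sides cut out the same subspace of $M \otimes_{\Bbbk} N \otimes_{\Bbbk} P$; this is exactly the content of \eqref{E1.7.1}, which equates the two composite idempotents $(\Delta(1) \otimes 1)(1 \otimes \Delta(1))$ and $(1 \otimes \Delta(1))(\Delta(1) \otimes 1)$ with $(\Delta \otimes \mathrm{Id})\Delta(1)$.

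Next I would take as unit object the target counital subalgebra $A_t$ (the image of $\varepsilon_t\colon a \mapsto \sum \varepsilon(1_{(1)} a) 1_{(2)}$), with unitors built from the counit as in \cite[Proposition 2]{NTV2003}; the compatibility \eqref{E1.7.2} is precisely what is needed for the triangle axiom, and the pentagon is a routine diagram chase from \eqref{E1.7.1}. $\Bbbk$-linearity of $\otimes^l$ and preservation of direct sums in each argument are immediate. For exactness in each argument (condition (2c) of Definition \ref{xxdef1.1}(2)), note that $M \otimes^l -$ factors as the exact functor $M \otimes_{\Bbbk} -$ (exact because $\Bbbk$ is a field) followed by the projection onto the image of the idempotent $\Delta(1)$; projection onto the image of an idempotent is exact, so the composition is exact. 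The same argument works in $A-\Modfd$, noting that $\dim(M \otimes^l N) \leq \dim M \cdot \dim N$ is finite.

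For (2), since $\otimes^l$ is already exact in each argument on the abelian level, no derived functor is needed: for bounded complexes $X^\bullet, Y^\bullet$, one defines $X^\bullet \otimes^l Y^\bullet$ as the total complex of the termwise tensor double complex (with standard Koszul signs), and exactness in each argument on $A-\Modfd$ implies this construction is invariant under quasi-isomorphisms in each variable, hence descends to a well-defined bifunctor on $D^b$. The associator and unitors of part (1) extend termwise and descend automatically, giving the pentagon and triangle axioms. Exactness in each argument (condition (3c) of Definition \ref{xxdef1.1}(3)) follows because an exact functor on the abelian category sends short exact sequences to short exact sequences, and therefore sends mapping cones to mapping cones; the commutation with suspension is built into the Koszul sign rule on total complexes. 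The same construction applies verbatim to $A-\Mod$. The main obstacle throughout, as usual for weak bialgebras, is keeping track of where \eqref{E1.7.1} and \eqref{E1.7.2} enter to replace the failure of $\Delta(1) = 1 \otimes 1$; once this bookkeeping is set up, each axiom verification is routine, which is why the paper declares the lemma clear.
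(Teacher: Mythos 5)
The paper gives no proof of this lemma --- it is preceded by the single sentence ``The following lemma is clear'' --- so there is nothing to compare against line by line. Your verification is correct and follows the standard route one would expect for weak bialgebras: $\Delta(1)$ is idempotent because $\Delta$ is a prealgebra morphism; the associator comes from identifying $(M\otimes^l N)\otimes^l P$ and $M\otimes^l(N\otimes^l P)$ with the image of the common idempotent $(\Delta\otimes\mathrm{Id})\Delta(1)=(\Delta(1)\otimes 1)(1\otimes\Delta(1))=(1\otimes\Delta(1))(\Delta(1)\otimes 1)$ inside $M\otimes_\Bbbk N\otimes_\Bbbk P$ via \eqref{E1.7.1}; the unit object is $A_t$ as in \cite[Proposition 2]{NTV2003}; exactness in each variable follows because $-\otimes_\Bbbk-$ is exact over a field and cutting by the idempotent $\Delta(1)$ is a direct-summand functor, hence exact; and the derived-level assertion follows by forming total complexes, using biexactness on the abelian level to pass to $D^b$. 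One small point you could make sharper: identifying the two parenthesizations as the same subspace requires not just \eqref{E1.7.1} but also that the common value equals the composite idempotents $(\Delta\otimes\mathrm{Id})\Delta(1)\cdot(\Delta(1)\otimes 1)$ and $(\mathrm{Id}\otimes\Delta)\Delta(1)\cdot(1\otimes\Delta(1))$ --- this does follow from \eqref{E1.7.1} together with idempotence of $\Delta(1)$, but it is a short computation rather than ``exactly the content'' of \eqref{E1.7.1} as you phrase it.
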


Finally we mention a fact in quiver representations.

\begin{lemma} \cite[p.63]{GR1992}
\label{xxlem1.10}
Let $Q$ be a quiver of type $\mathbb{A}_n$. Then
$M\{i,j\}$, for $1\leq i<j\leq n$, defined as in
\eqref{E0.7.2}-\eqref{E0.7.3}, form the
complete list of indecomposable representations of $Q$,
up to isomorphisms.
\end{lemma}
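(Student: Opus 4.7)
The plan is to verify two standard facts: each $M\{i,j\}$ is indecomposable (in fact a brick), and every finite-dimensional indecomposable representation of $Q$ is isomorphic to some $M\{i,j\}$. Since the result is attributed to \cite[p.63]{GR1992}, the role of the sketch is to record the main checks.

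First I would compute $\End_{\Repr(Q)}(M\{i,j\})$ directly. Any endomorphism is a tuple of scalars $(f_s)_{i\le s\le j}$ with $f_s \in \Bbbk$ at the vertices in the support and $0$ elsewhere, and compatibility with each arrow $\alpha_s$ for $i \le s < j$ (whose action on $M\{i,j\}$ is $Id_{\Bbbk}$) forces $f_s = f_{s+1}$ regardless of the orientation of $\alpha_s$. Hence $\End_{\Repr(Q)}(M\{i,j\}) = \Bbbk$, so $M\{i,j\}$ is a brick in the sense of Definition \ref{xxdef1.3}(1), and in particular indecomposable. Pairwise non-isomorphism is immediate from comparing dimension vectors, since the dimension vector of $M\{i,j\}$ has $1$'s precisely in positions $i$ through $j$.

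For exhaustion of the list, the cleanest route is to invoke Gabriel's theorem, which for a Dynkin quiver gives a bijection between isomorphism classes of indecomposable representations and positive roots of the associated root system via $N \mapsto \underline{\dim} N$. The positive roots of type $\mathbb{A}_n$ are precisely the vectors $\sum_{s=i}^{j} e_s$ for $1 \le i \le j \le n$, which match the dimension vectors of the $M\{i,j\}$; combined with the indecomposability above this identifies the interval modules as the complete list. Note that Gabriel's theorem is already part of the paper's toolkit, since Definition \ref{xxdef1.4} and the surrounding discussion explicitly use the $\mathbb{ADE}$ classification.

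The main potential obstacle, and the reason an elementary argument is less attractive, is the varying orientation of the arrows $\alpha_s$: a direct induction on $\sum_s \dim N_s$ that peels off a thin summand requires careful book-keeping across the orientations, typically handled by reducing to a sink-source orientation via BGP-reflection functors \cite{BGP1973} and then decomposing. Since Gabriel's theorem bypasses this entirely and is already assumed, the appeal to root systems is the most economical path and is the one I would take.
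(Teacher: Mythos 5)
The paper gives no proof of Lemma~\ref{xxlem1.10}; it is stated as a citation to Gabriel--Roiter, so there is no internal argument to compare against. Your sketch is a correct and standard way to justify the claim: the computation that every endomorphism of $M\{i,j\}$ is forced to be a single scalar on the supporting interval (since each arrow acts as the identity there, regardless of orientation) does establish that $M\{i,j\}$ is a brick, and the appeal to Gabriel's theorem together with the explicit description of the positive roots of type $\mathbb{A}_n$ as the interval vectors $e_i+\cdots+e_j$ correctly closes the exhaustion step. Your remark that an elementary induction would require orientation book-keeping, typically handled via BGP reflections, is also accurate, and invoking Gabriel's theorem is indeed the most economical route given that the paper already uses the $\mathbb{ADE}$ classification.

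One small point worth flagging: the lemma as printed uses the range $1\leq i<j\leq n$, which is inconsistent with the definition in \eqref{E0.7.2}--\eqref{E0.7.3} (which uses $i\leq j$) and would omit the $n$ simple representations $M\{i,i\}$. The correct range is $1\leq i\leq j\leq n$, giving the expected $n(n+1)/2$ indecomposables in bijection with the positive roots, and your proof correctly uses this range throughout; you are silently correcting a typo in the statement, which is fine, but you might note it explicitly.
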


\begin{convention}
\label{xxcon1.11}
For the rest of the paper, we will use $A$ for an algebra over
$\Bbbk$. It could have a bialgebra or weak bialgebra structure.
We will use $\mathcal{A}$ for the abelian category of finite
dimensional left $A$-modules, also denoted by $A-\Modfd$.
Let $\mathcal{T}$ be a triangulated category that could have
extra monoidal triangulated structure. Sometimes $\mathcal{T}$
denotes the bounded derived category $D^b(\mathcal{A})$.
A general $\Bbbk$-linear or monoidal category is denoted by
$\mathcal{C}$.
\end{convention}

\section{Preliminaries on quiver representations}
\label{xxsec2}

We refer to \cite{ASS2006} for some basic concepts in quiver
representation theory. Here we fix some convention. Let $Q=
(Q_0, Q_1, s, t)$ be a quiver where $Q_0$ is the set of
vertices of $Q$, $Q_1$ is the set of arrows of $Q$, and
$s,t: Q_1\to Q_0$ are source and target maps of $Q$ respectively.
Let $M$ be a representation of $Q$. For each vertex $i\in Q_0$,
let $(M)_i$ denote the vector space at $i$. For each arrow
$\alpha\in Q_1$ from vertex $i:=s(\alpha)$ to vertex
$j:=t(\alpha)$, let $(M)_{\alpha}$ denote the $\Bbbk$-linear map
from $(M)_{i}$ to $(M)_{j}$ corresponding to $\alpha$. Let
${\text{Rep}}(Q)$ be the category of all representations of $Q$
and $\Repr(Q)$ be the full subcategory of ${\text{Rep}}(Q)$
consisting of finite dimensional representations. By
\cite[Theorem 1.7 in Chapter VII]{ASS2006}, every finite
dimensional hereditary algebra $A$ is Morita equivalent to
the path algebra $\Bbbk Q$ of a finite acyclic quiver $Q$.

The definition of a weak bialgebra is given in
Definition \ref{xxdef1.7}.
The path algebra $\Bbbk Q$ is naturally equipped with a
coalgebra structure that makes it a weak bialgebra, see
\cite[Example 2.5]{NV2002} and \cite[Section 3]{He2008a}.
We state this known fact as follows.

\begin{lemma}
\label{xxlem2.1}
Let $Q$ be a finite quiver.
\begin{enumerate}
\item[(1)]
Its path algebra $\Bbbk Q$ is a cocommutative weak bialgebra
whose coalgebra structure is determined by
$$\Delta(p)=p\otimes p \quad \mathrm{and }\quad \varepsilon(p)=1$$
for any path $p=\alpha_1\alpha_2\cdots \alpha_m$ of length
$m\geq 0$.
\item[(2)]
The weak bialgebra structure in part {\rm{(1)}} is a bialgebra if
and only if $|Q_0|=1$.
\end{enumerate}
\end{lemma}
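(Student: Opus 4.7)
The plan is to verify the weak bialgebra axioms directly on the basis of paths in $\Bbbk Q$, exploiting the fact that the candidate comultiplication $\Delta(p) = p\otimes p$ is ``group-like'' on every path. First I would check that $\Delta$ and $\varepsilon$ define a coassociative coalgebra: coassociativity follows because both $(\Delta\otimes Id)\Delta(p)$ and $(Id\otimes \Delta)\Delta(p)$ equal $p\otimes p\otimes p$, while the counit axioms $(\varepsilon \otimes Id)\Delta(p) = (Id \otimes \varepsilon)\Delta(p) = p$ follow from $\varepsilon(p) = 1$. Cocommutativity is immediate from the symmetry of $p\otimes p$. Next I would verify that $\Delta$ is a prealgebra morphism: for paths $p,q$ the product $pq$ is either again a path (when $t(p)=s(q)$) or zero, and in both cases $\Delta(pq) = (p\otimes p)(q\otimes q) = \Delta(p)\Delta(q)$; linear extension gives Definition \ref{xxdef1.6}.

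The main work is verifying the two conditions \eqref{E1.7.1} and \eqref{E1.7.2}. Writing $1 = \sum_{i\in Q_0} e_i$ in terms of the length-zero paths (vertex idempotents), one has $\Delta(1) = \sum_i e_i\otimes e_i$. Using $e_i e_j = \delta_{ij} e_i$, I would compute
\[
(\Delta(1)\otimes 1)(1\otimes \Delta(1)) = \sum_{i,j,k,l} e_i e_k\otimes e_i e_l \otimes e_j e_l = \sum_i e_i\otimes e_i\otimes e_i,
\]
and symmetrically the other product on the right-hand side of \eqref{E1.7.1} collapses to $\sum_i e_i\otimes e_i\otimes e_i$, which also equals $(\Delta\otimes Id)\Delta(1)$. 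For \eqref{E1.7.2} it suffices to test on paths $x,y,z$; since $\Delta(y)=y\otimes y$, the sum reduces to $\varepsilon(xy)\varepsilon(yz)$, and a case check on whether $t(x) = s(y)$ and $t(y)=s(z)$ shows both sides equal $1$ precisely when $xyz$ is a nonzero path, and $0$ otherwise. This step is the most routine yet delicate; the main obstacle is keeping the vertex-idempotent bookkeeping straight when checking \eqref{E1.7.1}, but it reduces to the Kronecker-delta identities above.

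For part (2), the only axiom of a bialgebra not subsumed by the weak bialgebra axioms already checked is that $\Delta$ preserves the unit, i.e.\ $\Delta(1) = 1\otimes 1$. Since $\Delta(1) = \sum_i e_i\otimes e_i$ and $1\otimes 1 = \sum_{i,j} e_i\otimes e_j$, and since the $e_i\otimes e_j$ are linearly independent in $\Bbbk Q\otimes \Bbbk Q$, the two agree if and only if the off-diagonal terms $e_i\otimes e_j$ with $i\ne j$ are absent, which happens precisely when $|Q_0| = 1$. In that case $\Bbbk Q$ is the free algebra on the loops at the single vertex and $\Delta(1) = 1\otimes 1$ together with the prealgebra morphism property of part (1) makes $\Delta$ into an honest algebra map, so $\Bbbk Q$ is a bialgebra; otherwise it is only a weak bialgebra.
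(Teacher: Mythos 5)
The paper states Lemma \ref{xxlem2.1} as a known fact, citing \cite[Example 2.5]{NV2002} and \cite[Section 3]{He2008a}, without supplying a proof; so your direct verification is the natural route to take and does not deviate from anything the paper does. Your checks of coassociativity, the counit axioms, cocommutativity, the prealgebra morphism property, and the two weak bialgebra identities \eqref{E1.7.1} and \eqref{E1.7.2} via $\Delta(1)=\sum_i e_i\otimes e_i$ and the Kronecker delta collapse are all correct.

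There is one small gap in part (2). You assert that the only bialgebra axiom not covered by the weak bialgebra structure is $\Delta(1)=1\otimes 1$. That is not quite the whole story: in a weak bialgebra $\varepsilon$ is not required to be an algebra homomorphism — it only satisfies \eqref{E1.7.2}. For a genuine bialgebra one needs both $\Delta(1)=1\otimes 1$ and that $\varepsilon$ is multiplicative with $\varepsilon(1)=1_{\Bbbk}$. In the present coalgebra $\varepsilon(1)=|Q_0|\cdot 1_{\Bbbk}$ and $\varepsilon(pq)=0\neq \varepsilon(p)\varepsilon(q)=1$ whenever $t(p)\neq s(q)$, so the counit condition also fails when $|Q_0|>1$; conversely when $|Q_0|=1$ every product of paths is a nonzero path and $\varepsilon$ is evidently an algebra map. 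Your conclusion is still correct because the $\Delta(1)$ condition alone already pins down $|Q_0|=1$, and in the single-vertex case one can check $\varepsilon$ directly; alternatively, one may invoke Nill's observation that a weak bialgebra with $\Delta(1)=1\otimes 1$ is automatically a bialgebra (\cite[Lemma 8.2]{N1998}), which is exactly the device the paper uses in the proof of Proposition \ref{xxpro7.7}. Either way, you should either verify $\varepsilon$ explicitly or cite that fact rather than claim $\Delta(1)=1\otimes 1$ is the only missing axiom.
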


Since $\Bbbk Q$ is a cocommutative weak bialgebra,
$\Repr(Q)(\cong \Bbbk Q-\Modfd)$ is a symmetric monoidal
abelian category where the tensor product is given in Definition
\ref{xxdef1.8}. For two representations $M=((M)_i,(M)_\alpha)$
and $N=((N)_i,(N)_\alpha)$ of $Q$ where $i\in Q_0$ and
$\alpha\in Q_1$, we can define the {\it vertex-wise tensor
product} $M\otimes^{v} N$ by
\begin{equation}
\label{E2.1.1}\tag{E2.1.1}
(M\otimes^{v} N)_i=(M)_i \otimes_{\Bbbk} (N)_i, \quad {\text{and}}\quad
(M\otimes^{v} N)_\alpha=(M)_\alpha \otimes_{\Bbbk} (N)_\alpha,
\end{equation}
for all $i\in Q_0$ and $\alpha\in Q_1$. Then the tensor product
$M\otimes^l N$ given in Definition \ref{xxdef1.8} is exactly equal to
the vertex-wise tensor product $M\otimes^{v} N$ give in \eqref{E2.1.1}.
Therefore, we do not distinguish these two tensors and denote them by
$M\otimes N$. The tensor structure of quiver representations has been
studied by many researchers, see, for example, \cite{He2005,
He2008a, He2008b, He2009, KS2012, Ki2010}. Note that the
bounded derived category $D^b(\Repr(Q))$ is a tensor triangulated
category in the sense of \cite[Definition 1.1]{B2005}; consequently,
it is a monoidal triangulated category.

In this paper we study more than one tensor structures of the quiver
representations. But, in this section, we are only working on the
tensor structure defined by \eqref{E2.1.1}. We start with some
details about quiver representations.

We have defined the Frobenius-Perron dimension, denoted by
$\fpd$, of an object in a monoidal category $\mathcal{C}$
in Definition \ref{xxdef1.3}(4). A nice property of $\fpd$
is a duality property when applied to objects in $\Repr(Q)$.

\begin{definition}
\label{xxdef2.2}
Let $Q=(Q_0,Q_1,s_Q,t_Q)$ be a quiver and $M$ be a finite-dimensional
representation of $Q$.
\begin{enumerate}
\item[(1)]
Define the {\it opposite quiver} of $Q$, denoted by $Q^{op}$,
to be the quiver which reverses all arrows in $Q_1$, that is
$$Q^{op}_0=Q_0, Q^{op}_1=Q_1, s_{Q^{op}}=t_Q, t_{Q^{op}}=s_Q.$$
\item[(2)]
Define the {\it dual} of $M$, denoted by $M^*$, to be the
representation of $Q^{op}$ that is determined by
$$(M^*)_i=((M)_i)^*, (M^*)_{\alpha}=((M)_{\alpha})^*,$$
for all vertices $i$ and arrows $\alpha$.
\end{enumerate}
\end{definition}

We give an easy example.

\begin{example}
\label{xxex2.3}
Let $Q$ be $\xymatrix{1 \ar[r] & 2}$ and $M$ be
$\xymatrix{\Bbbk \ar[r]^{(1,0)^T} & \Bbbk^2}$. Then we have
$$Q^{op}: \xymatrix{1  & 2\ar[l]} \quad
{\text{and}} \quad
M^*= \xymatrix{\Bbbk &  \Bbbk^2\ar[l]_{(1,0)} }.$$
\end{example}

For two finite dimensional $\Bbbk$-vector spaces $U,V$, we
have
$$(V\otimes U)^*= U^*\otimes V^*\cong V^*\otimes U^*.$$
Furthermore, if we have linear maps between finite dimensional
$\Bbbk$-vector spaces, say $f:V \rightarrow V'$ and
$g: U\rightarrow U'$, then we have the commutative diagram
$$
\xymatrix{
(V\otimes U)^* \ar[d]^{\simeq }&
(V'\otimes U')^*\ar[d]^{\simeq }\ar[l]_{(f\otimes g)^*} \\
V^*\otimes U^* & V'^* \otimes U'^*. \ar[l]_{f^*\otimes g^*}}
$$
The above commutative diagram holds for objects in
$\Repr(Q)$ since $\Bbbk Q$ is a commutative weak bialgebra
[Lemma \ref{xxlem2.1}(1)]. It is clear that the $\Bbbk$-linear
dual induces a contravariant equivalence between the abelian
categories $\Repr(Q)$ and $\Repr(Q^{op})$. Combining these
two facts, we have
\begin{align}
\label{E2.3.1}\tag{E2.3.1}
\Hom_{(\Repr(Q))^{op}}(X,M\otimes N)
&\cong \Hom_{\Repr(Q)}(M\otimes N, X)\\
\notag &\cong
\Hom_{\Repr(Q^{op})}(X^*, M^*\otimes N^*)
\end{align}
for $M,N,X\in \Repr(Q)$. Now the following
lemma follows from \eqref{E2.3.1}.

\begin{lemma}
\label{xxlem2.4}
Let $Q$ be a finite quiver and $M$ be a finite
representation of $Q$. Then
$$\fpd(M\otimes_{{\Repr(Q)}^{op}}-)
=\fpd(M^*\otimes_{{\Repr(Q^{op})}}-)$$
where $M$ is considered as an object in the tensor
category ${\Repr(Q)}^{op}$ and $M^*$ an object
in $\Repr(Q^{op})$.

The same statement holds for other Frobenius-Perron
invariants such as $\fpv$.
\end{lemma}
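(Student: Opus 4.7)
The plan is to transport the computation of Frobenius--Perron invariants through the contravariant duality $(-)^*\colon \Repr(Q)\to \Repr(Q^{op})$. Since this duality is a $\Bbbk$-linear equivalence $\Repr(Q)^{op}\simeq \Repr(Q^{op})$, it sends brick sets to brick sets and preserves $\dim_\Bbbk \Hom$-spaces, so the collection $\Phi_b$ of finite brick sets in $\Repr(Q)^{op}$ is carried bijectively onto the collection of finite brick sets in $\Repr(Q^{op})$ via $\phi=\{X_1,\dots,X_n\}\mapsto \phi^*=\{X_1^*,\dots,X_n^*\}$.

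Next I would match adjacency matrices. Given a brick set $\phi=\{X_1,\dots,X_n\}\subset \Repr(Q)^{op}$, the entries of $A(\phi,M\otimes_{\Repr(Q)^{op}}-)$ are, by definition,
\[
a_{ij}=\dim \Hom_{\Repr(Q)^{op}}(X_i,M\otimes X_j).
\]
Applying the chain of isomorphisms \eqref{E2.3.1}, each such number equals
\[
\dim \Hom_{\Repr(Q^{op})}(X_i^{*},M^{*}\otimes X_j^{*}),
\]
which is precisely the $(i,j)$-entry of $A(\phi^*,M^*\otimes_{\Repr(Q^{op})}-)$. Thus $A(\phi,M\otimes_{\Repr(Q)^{op}}-)=A(\phi^*,M^*\otimes_{\Repr(Q^{op})}-)$ as matrices, so their spectral radii agree. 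Taking the supremum over $\phi\in\Phi_b$ on the left corresponds under the bijection $\phi\leftrightarrow\phi^*$ to taking the supremum over brick sets in $\Repr(Q^{op})$ on the right, yielding the desired equality of $\fpd$.

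For the $\fpv$ statement, the only extra ingredient is that the identification of functors is compatible with iteration: because the duality is strong monoidal (up to the swap already accounted for in \eqref{E2.3.1}) and $\Bbbk Q$ is cocommutative by Lemma~\ref{xxlem2.1}(1), one verifies by induction on $n$ that $M^{\otimes n}\otimes X$ corresponds under $(-)^*$ to $(M^*)^{\otimes n}\otimes X^*$, so the same matching of adjacency matrices applies with $\sigma$ replaced by $\sigma^n$. The result then follows by taking $\limsup_{n\to\infty}(\cdot)^{1/n}$ and supremum over brick sets exactly as before.

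The only real point requiring care is the compatibility of tensor with duality, i.e.\ checking that $(M\otimes N)^*\cong M^*\otimes N^*$ holds naturally as representations of $Q^{op}$ (not merely as vector spaces) and that the natural isomorphisms assemble into a monoidal equivalence. This is the content of the commutative diagram displayed just before \eqref{E2.3.1}, so the argument is essentially bookkeeping once that compatibility is in hand; no further difficulty is expected.
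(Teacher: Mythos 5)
Your proposal is correct and follows exactly the route the paper intends: the paper's own proof is just the one-line remark that the lemma follows from \eqref{E2.3.1}, and your argument is the natural unpacking of that remark, matching brick sets and adjacency-matrix entries under the contravariant duality and then passing to spectral radii (and to $\limsup$ of $n$-th roots for $\fpv$).
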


Next we study some brick sets of quiver representations.
Let $S(i)$ denote the simple representation (of $Q$) at
vertex $i$ where
\begin{equation}
\label{E2.4.1}\tag{E2.4.1}
S(i)_j=\begin{cases}
\Bbbk & j=i\\
0 & j\neq i
\end{cases}
\quad \mathrm{and} \quad
S(i)_\alpha=0,\;\; \forall \;\; \alpha\in Q_1,
\end{equation}
and $e_i$ denote the trivial path at vertex $i$.
By the tensor structure of $\Repr(Q)$ \eqref{E2.1.1},
we have the following.

\begin{lemma}
\label{xxlem2.5}
Let $S(i)$ be the simple left $\Bbbk Q$-module defined as above
and $M$ in $\Repr(Q)$. Then $S(i)\otimes M$ is 
isomorphic to a direct sum of finitely many copies of $S(i)$.
\end{lemma}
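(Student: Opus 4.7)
The plan is to compute $S(i) \otimes M$ directly from the vertex-wise definition of the tensor product given in \eqref{E2.1.1}, using the explicit description of the simple module $S(i)$ in \eqref{E2.4.1}. Since the statement is about an isomorphism of representations and all the defining data are finite-dimensional vector spaces together with linear maps between them, everything reduces to a pointwise calculation at each vertex and each arrow.

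First I would evaluate $(S(i)\otimes M)_j$ at an arbitrary vertex $j\in Q_0$. By \eqref{E2.1.1} this equals $S(i)_j \otimes_{\Bbbk} M_j$. By \eqref{E2.4.1}, $S(i)_j$ is $\Bbbk$ if $j=i$ and zero otherwise, so $(S(i)\otimes M)_j$ is canonically isomorphic to $M_i$ when $j=i$ and is zero otherwise. Next I would evaluate $(S(i)\otimes M)_\alpha$ at an arbitrary arrow $\alpha\in Q_1$: by \eqref{E2.1.1} it is $S(i)_\alpha\otimes_{\Bbbk} M_\alpha$, and since $S(i)_\alpha=0$ by \eqref{E2.4.1}, this linear map is zero.

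Thus $S(i)\otimes M$ is the representation whose vector space at vertex $i$ is $M_i$, whose vector spaces at all other vertices are zero, and all of whose structure maps vanish. Choosing a $\Bbbk$-basis of $M_i$ of size $d:=\dim_{\Bbbk} M_i$ decomposes this representation as a direct sum of $d$ copies of $S(i)$. This gives the desired isomorphism $S(i)\otimes M \cong S(i)^{\oplus \dim_{\Bbbk} M_i}$, which is in particular a direct sum of finitely many copies of $S(i)$ because $M$ is finite-dimensional.

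There is no real obstacle here: the lemma is essentially a restatement of the definition of the tensor product in \eqref{E2.1.1} applied to a simple representation that is concentrated at a single vertex with zero arrow data. The only thing to be slightly careful about is recording that the arrow maps of $S(i)\otimes M$ genuinely vanish (so that the resulting representation really is a direct sum of copies of $S(i)$ rather than some nontrivial extension), but this is immediate from $S(i)_\alpha=0$.
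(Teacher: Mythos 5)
Your computation is correct and coincides with what the paper intends; the paper states the lemma as an immediate consequence of \eqref{E2.1.1} without writing out a proof, and your vertex-wise calculation showing $(S(i)\otimes M)_j = S(i)_j\otimes_\Bbbk M_j$ vanishes off $j=i$ with all arrow maps zero is exactly that argument spelled out. In particular you correctly obtain $S(i)\otimes M\cong S(i)^{\oplus\dim_\Bbbk (M)_i}$, which also covers the paper's remark that the tensor may be zero.
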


In the above lemma, $S(i)\otimes M$ could be 0.

\begin{proposition}
\label{xxpro2.6}
Let $M$ be in $\Repr(Q)$. Then
$$\fpd(M)\geq d,$$
where $d=\max\limits_{v\in Q_0}\{\dim ((M)_v)\}$.
\end{proposition}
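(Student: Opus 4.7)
The plan is to exhibit an explicit brick set $\phi$ together with a lower bound on the spectral radius of $A(\phi, M\otimes -)$. The cleanest choice is the set of simple representations concentrated at single vertices.

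First, I would observe that $\phi:=\{S(v)\mid v\in Q_0\}$ is a finite brick set in $\Repr(Q)$: each $S(v)$ is clearly a brick since $\End_{\Bbbk Q}(S(v))=\Bbbk$, and for $v\neq w$ any morphism $S(v)\to S(w)$ of representations is forced to be zero at every vertex, so $\Hom_{\Repr(Q)}(S(v),S(w))=0$. (Alternatively, one may just use the singleton brick set $\{S(v_0)\}$ for a vertex $v_0$ achieving the maximum $d$; either choice gives the same bound.)

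Next I would compute the adjacency matrix $A(\phi, M\otimes -)$. By Lemma \ref{xxlem2.5} (applied with the roles of the two factors symmetric, using cocommutativity of $\Bbbk Q$ from Lemma \ref{xxlem2.1}(1), so that $M\otimes S(v)\cong S(v)\otimes M$), the object $M\otimes S(v)$ is a direct sum of copies of $S(v)$. The vertex-wise tensor formula \eqref{E2.1.1} pins down the multiplicity: at vertex $v$ we get $(M)_v\otimes_{\Bbbk}\Bbbk=(M)_v$, and the other vertices vanish. Therefore
\begin{equation*}
M\otimes S(v)\;\cong\; S(v)^{\oplus \dim((M)_v)}.
\end{equation*}
Consequently $\dim\Hom_{\Repr(Q)}(S(u), M\otimes S(v)) = \dim((M)_v)\,\delta_{uv}$, so that $A(\phi, M\otimes-)$ is the diagonal matrix with diagonal entries $\dim((M)_v)$ for $v\in Q_0$.

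Finally, the spectral radius of this diagonal matrix is $\max_{v\in Q_0}\dim((M)_v)=d$, so $\fpd(M)=\fpd(M\otimes -)\geq \rho(A(\phi, M\otimes -))=d$. There is no real obstacle here; the only point that requires care is to justify that $M\otimes S(v)$ genuinely decomposes as copies of $S(v)$ with the correct multiplicity, which is immediate from the vertex-wise formula \eqref{E2.1.1}.
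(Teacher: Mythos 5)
Your proof is correct and takes essentially the same approach as the paper: the paper uses the singleton brick set $\{S(v)\}$ for each vertex $v$ (yielding a $1\times 1$ adjacency matrix $(\dim(M)_v)$), while you use the full set of simples $\{S(v)\mid v\in Q_0\}$ at once (yielding a diagonal matrix with the same entries); both rest on the computation $M\otimes S(v)\cong S(v)^{\oplus \dim(M)_v}$ from \eqref{E2.1.1} and give the identical bound, and you even flag the singleton variant yourself.
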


\begin{proof}
Let $a=\dim (M)_v$ and let $\phi_0=\{S(v)\}$ for $v\in Q_0$. Then
$$\Hom_{\Repr(Q)}(S(v),M\otimes S(v))
=\Hom_{\Repr(Q)}(S(v),S(v)^{\oplus a})=\Bbbk^{\oplus a}$$
which implies that $A(\phi_0, M\otimes -)=(a)_{1\times 1}$. Therefore
$\fpd(M)\geq a$ for all $a$. The assertion follows.
\end{proof}

Note that $\fpd(M)$ may be infinite as the next example
shows (and as predicted by Lemma \ref{xxlem6.4}).

\begin{example}
\label{xxex2.7}
Let $Q$ be the Kronecker quiver $\xymatrix{1 \ar@<1ex>[r]^{\alpha}
\ar[r]_{\beta} & 2}$. Let $S(1)$ be defined as in \eqref{E2.4.1}.
For every $c\in \Bbbk$, we define an object in $\Repr(Q)$:
\begin{equation}
\label{E2.7.1}\tag{E2.7.1}
M_c:=\xymatrix{\Bbbk \ar@<1ex>[r]^{\alpha=Id} \ar[r]_{\beta=c Id} & \Bbbk}.
\end{equation}
Then $M_c$ is a brick object (and such an object is also called a band 
module of $Q$ \cite[pp.160-161]{BR1987}). It is easy to see that
$\Hom(M_c, S(1))\cong \Bbbk$ and that $\{M_c,M_{c'}\}$ is a brick set
if $c\neq c'$. As a consequence, $\{M_c, \mid c\in \Bbbk\}$
is an infinite brick set.

Let $T$ be any finite subset of $\Bbbk$ and let $\phi:=
\{M_c \mid c\in T\}$. The $A(\phi, S(1)\otimes -)$ is
a $|T|\times|T|$ matrix in which all entries are 1. Then
$\rho(A(\phi, S(1)\otimes -))=|T|$. Since $\Bbbk$ is infinite,
we obtain that $\fpd(S(1))=\infty$.
\end{example}

Let us consider a slightly more general situation.

\begin{example}
\label{xxex2.8}
Suppose $Q$ is another quiver and $p_1$ and $p_2$ are two paths from
vertex $i$ to vertex $j$ that do not intersect except at
the two endpoints. Then we can consider a similar brick object
$M_c$ so that
$$\begin{aligned}
(M_c)_v&=\begin{cases}
\Bbbk & \quad {\textrm{if $v$ is in either $p_1$ or $p_2$}},\\
0 & \quad {\textrm{otherwise,}}\end{cases}
\\
(M_c)_{\alpha}&=\begin{cases}
Id & \quad {\textrm{if $\alpha$ is in either $p_1$ or $p_2$, but
not the first arrow in $p_2$}},\\
cId & \quad {\textrm{if $\alpha$ is the first arrow in $p_2$}},\\
0 & \quad {\textrm{otherwise,}}\end{cases}
\end{aligned}
$$
or, similar to \eqref{E2.7.1}, we can write it as
$$M_c:=\xymatrix{\Bbbk \ar@<1ex>[r]^{p_1=Id} \ar[r]_{p_2=cId }
& \Bbbk}.$$
Then $\{M_c, \mid c\in \Bbbk\}$ is an infinite brick set.
\end{example}

We will use this example later.

\section{Discrete categories}
\label{xxsec3}

In this section we will prove some basic lemmas for
monoidal abelian categories that are needed in the
proof of Theorem \ref{xxthm0.4}. We start with a definition.

\begin{definition}
\label{xxdef3.1}
Let $\mathcal{C}$ be a monoidal abelian category.
We say $\mathcal{C}$ is {\it discrete} if
\begin{enumerate}
\item[(a)]
$\mathcal{C}$ is $\Hom$-finite, namely
$\Hom_{\mathcal{C}}(M,N)$ is finite dimensional over $\Bbbk$ for objects
$M,N$ in ${\mathcal C}$,
\item[(b)]
every object in $\mathcal{C}$ has finite length,
\item[(c)]
$\mathcal{C}$ has finitely many simple objects, say
$\{S_1,\cdots,S_n\}$, up to isomorphisms, and
\item[(d)]
for all simple objects $S_i$ and $S_j$ in
$\mathcal{C}$,
\begin{equation}
\label{E3.1.1}\tag{E3.1.1}
S_i\otimes S_j\cong \begin{cases} S_i & {\textrm{ if $i=j$}}\\
0 & {\textrm{ if $i\neq j$}}.\end{cases}
\end{equation}
\end{enumerate}
\end{definition}

Note that an essentially small category $\mathcal{C}$ satisfying 
condition (b) is called a {\it length} category \cite{Ga1973,KV2018}. 

Let $Q$ be a finite quiver. Then there is a canonical monoidal
abelian structure on $\Repr(Q)$ induced by the weak bialgebra
structure defined in Lemma \ref{xxlem2.1}. The following lemma
follows immediately from the definition, see \eqref{E2.1.1}.

\begin{lemma}
\label{xxlem3.2}
Let $Q$ be a finite acyclic quiver. Then the canonical
monoidal abelian structure on $\Repr(Q)$ is discrete.
\end{lemma}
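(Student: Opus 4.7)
The plan is to verify the four defining conditions (a)--(d) of Definition \ref{xxdef3.1} for $\mathcal{C} = \Repr(Q)$, equipped with the vertex-wise tensor product \eqref{E2.1.1} induced by the weak bialgebra structure of Lemma \ref{xxlem2.1}(1). Since $Q$ is a finite acyclic quiver, the path algebra $A = \Bbbk Q$ is finite dimensional, and $\Repr(Q) \cong A\text{-}\Modfd$. The first three conditions are then standard facts about finite-dimensional module categories: every Hom space is a subspace of a $\Bbbk$-vector space of dimension at most $(\dim_{\Bbbk} A)^2$, giving (a); every finite-dimensional $A$-module has a composition series of bounded length, giving (b); and the complete list of simple modules up to isomorphism is $\{S(i)\}_{i \in Q_0}$ defined by \eqref{E2.4.1}, a finite set because $Q_0$ is finite, giving (c).

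The only condition requiring a direct computation is (d). By the explicit formula \eqref{E2.1.1}, the tensor product of representations is computed vertex-wise: for any two simples $S(i)$ and $S(j)$ and any vertex $v \in Q_0$,
\[
(S(i) \otimes S(j))_v = (S(i))_v \otimes_{\Bbbk} (S(j))_v,
\]
which equals $\Bbbk \otimes_{\Bbbk} \Bbbk = \Bbbk$ precisely when $v = i = j$, and is $0$ otherwise. The corresponding linear maps on arrows vanish since $(S(i))_\alpha = 0$ for every $\alpha \in Q_1$. Comparing with \eqref{E2.4.1}, this yields $S(i) \otimes S(j) \cong S(i)$ when $i = j$ and $0$ otherwise, which is exactly \eqref{E3.1.1}.

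There is no serious obstacle here; the statement is essentially a bookkeeping exercise once one observes that the tensor is pointwise and that each simple is supported at a single vertex. The only small care needed is in identifying the tensor product coming from Definition \ref{xxdef1.8} with the vertex-wise tensor \eqref{E2.1.1}, which is already asserted in the paragraph following \eqref{E2.1.1} and uses that $\Delta(e_i) = e_i \otimes e_i$ by Lemma \ref{xxlem2.1}(1). With that identification in hand, conditions (a)--(d) follow directly, completing the proof.
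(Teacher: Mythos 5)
Your proof is correct and matches the paper's (omitted, "immediate from the definition") argument: conditions (a)--(c) are standard for $\Bbbk Q\text{-}\Modfd$ with $Q$ finite acyclic, and (d) follows from the vertex-wise tensor \eqref{E2.1.1} applied to the simples $S(i)$, which are each supported at a single vertex.
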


For the rest of this section we assume that $\mathcal{C}$
is discrete.
As a consequence, $\mathcal{C}$ is a Krull-Schmidt category.
For $M\in \mathcal{C}$, let $\ell(M)$ denote the length of
$\mathcal{C}$. Let $IC(M)$ denote the {\it isomorphism class}
of all (possibly repeated) simple subquotients of $M$.
This can be obtained by considering any composition
series of $M$. Even though composition series of $M$ is not
unique, $IC(M)$ is unique, so well-defined.

\begin{lemma}
\label{xxlem3.3}
Let $\mathcal{C}$ be a $\Hom$-finite monoidal abelian category with
finitely many simple objects. Let $\mathbf{1}$ be the unit object.
\begin{enumerate}
\item[(1)]
$\ell(-)$ is additive.
\item[(2)]
For every nonzero object $M$ there is a simple object
$S\in IC(\mathbf{1})$ such that $S\otimes M\neq 0$. By symmetry,
there is a simple object $T\in IC(\mathbf{1})$ such that
$M\otimes T\neq 0$.
\item[(3)]
If $S\in IC(\mathbf{1})$ and $T$ is
a simple object in $\mathcal{C}$,
then $S\otimes T$ is either 0 or a simple object. For each $T$,
there is only one $S\in IC(\mathbf{1})$ such that
$S\otimes T\neq 0$.
\item[(4)]
The multiplicity of any simple object $S$ in $IC(\mathbf{1})$ is 1.
\item[(5)]
If $S, T\in IC(\mathbf{1})$, then
$$S\otimes T\cong \begin{cases} S & {\textrm{ if $S=T$}}\\
0 & {\textrm{ if $S\neq T$}}.\end{cases}
$$
\item[(6)]
$\mathcal{C}$ is discrete if and only if
$S\in IC(\mathbf{1})$ for all simple objects $S$.
\end{enumerate}
\end{lemma}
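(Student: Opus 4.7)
The plan is to reduce all six parts to a single technique: use exactness of $\otimes$ in each argument to transport a chosen composition series of the unit object $\mathbf{1}$ through the tensor action, then extract information from the resulting filtration of $\mathbf{1}\otimes M\cong M$. Concretely, I would fix throughout the proof a composition series $0=U_0\subset U_1\subset\cdots\subset U_k=\mathbf{1}$ with simple quotients $S_i=U_i/U_{i-1}$ (these represent the multiset $IC(\mathbf{1})$), and then tensor it on either side with the object currently under consideration.

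Part (1) is then just the Jordan--Hölder theorem in any abelian category with finite composition series. For part (2), applying $-\otimes M$ (respectively $M\otimes -$) to the series above produces a filtration of $M\cong \mathbf{1}\otimes M$ whose successive quotients are the $S_i\otimes M$; if $M\neq 0$, these cannot all vanish. For part (3), tensoring on the right with a simple $T$ exhibits $T\cong\mathbf{1}\otimes T$ as a filtered object with subquotients $S_i\otimes T$; each such subquotient is itself a subquotient of the simple object $T$, hence is either $0$ or isomorphic to $T$, and by additivity of length (part (1)) exactly one index contributes $T$ and the rest vanish.

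For part (4) I would argue by contradiction: if some simple class $S$ occurs with multiplicity $\geq 2$ in $IC(\mathbf{1})$, then two different positions in the composition series of $\mathbf{1}$ have subquotients isomorphic to $S$. For any simple $T$, both corresponding $S\otimes T$ are either simultaneously $0$ or simultaneously isomorphic to $T$, but part (3) allows only one of them to be $T$, forcing both to be $0$. Thus $S\otimes T=0$ for every simple $T$, and then filtering $S\cong S\otimes\mathbf{1}$ through the composition series of $\mathbf{1}$ makes every subquotient vanish, contradicting $S\neq 0$. The main obstacle of the entire lemma is this multiplicity-tracking step: one must carefully distinguish between absolute positions in the series of $\mathbf{1}$ and underlying isomorphism classes in $IC(\mathbf{1})$, since part (3)'s uniqueness statement is at the level of positions while part (4)'s is at the level of iso-classes.

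Part (5) then follows by symmetry: from part (3) applied on either side, $S\otimes T$ is either $0$ or simultaneously isomorphic to $S$ and to $T$, so $S\otimes T\neq 0$ forces $S\cong T$; conversely, part (2) applied to $M=S$ yields some $T'\in IC(\mathbf{1})$ with $T'\otimes S\neq 0$, which the previous sentence forces to be $S$ (and by part (4) genuinely equal, not merely isomorphic to a different entry in the multiset), giving $S\otimes S\cong S$. For part (6), if $\mathcal{C}$ is discrete, the unique $S_i\in IC(\mathbf{1})$ with $S_i\otimes T\neq 0$ (from part (3)) must equal $T$ by the discreteness relation \eqref{E3.1.1}, so every simple lies in $IC(\mathbf{1})$; the converse direction repackages parts (4) and (5) to verify axiom (d) of Definition \ref{xxdef3.1}, with axioms (a) and (c) given by hypothesis and (b) used as a standing assumption on objects for which $IC(-)$ is defined.
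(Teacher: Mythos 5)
Your proof is correct and follows the same basic skeleton as the paper's — additivity of length, equation $\ell(M)=\ell(\mathbf{1}\otimes M)=\sum_{S\in IC(\mathbf{1})}\ell(S\otimes M)$, and induction off a fixed composition series of $\mathbf{1}$ — but you extract a strictly stronger conclusion in part (3), and this makes your part (5) genuinely simpler than the paper's. Specifically, where the paper only observes that $S\otimes T$ has length $1$ (hence is simple), you observe that $S\otimes T$ is a \emph{subquotient of $T$ itself} (since the filtration of $\mathbf{1}\otimes T\cong T$ by $U_j\otimes T$ lives inside the simple object $T$), so $S\otimes T$ is $0$ or $\cong T$. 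Applying the same observation on the other side gives $S\otimes T\cong S$ when nonzero, whence $S\cong T$ and, by multiplicity one from part (4), $S=T$. The paper's argument for (5) instead introduces a third simple $U:=S\otimes T$, locates it in $IC(\mathbf{1})$ using $\mathbf{1}\otimes\mathbf{1}\cong\mathbf{1}$, and then derives a contradiction with the uniqueness in part (3) by producing a $W$ that tensors nontrivially with two distinct elements of $IC(\mathbf{1})$; your route avoids that detour entirely. Your part (4) is argued by contradiction rather than directly from (E3.3.1), which is slightly longer but harmless, and parts (1), (2), (6) match the paper's. One small point worth flagging: you are right that the lemma's hypotheses as stated omit the finite-length condition (Definition \ref{xxdef3.1}(b)), which is needed for $IC(\mathbf{1})$ and $\ell(-)$ to make sense — the paper's proof has the same implicit dependence.
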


\begin{proof}
(1) Clear from the definition.

(2) By part (1), we have

\begin{equation}
\label{E3.3.1}\tag{E3.3.1}
\ell(M)=\ell(\mathbf{1}\otimes M)=\sum_{S\in IC(\mathbf{1})}
\ell(S\otimes M).
\end{equation} Therefore there is an $S\in IC(\mathbf{1})$
such that $S\otimes M\neq 0$.

(3) If $S\otimes T\neq 0$, then, by \eqref{E3.3.1}, we have
$$1=\ell(T)=\ell(\mathbf{1}\otimes T)
= \sum_{S'\in IC(\mathbf{1})} \ell(S'\otimes T)
\geq \ell(S\otimes T)\geq 1.$$
Therefore $\ell(S\otimes T)= 1$ and $\ell(S'\otimes T)=0$
for all other $S'\in IC(\mathbf{1})$.

(4) This follows from \eqref{E3.3.1} by taking a simple
object $M$ with $S\otimes M\neq 0$.

(5) It remains to show that $S\otimes T=0$ if
$S$ and $T$ are distinct elements in $IC(\mathbf{1})$.
Suppose on the contrary that $U:=S\otimes T\neq 0$.
By part (3), it is a simple object. Since $U=S\otimes T$ is
a subquotient of $\mathbf{1}\otimes \mathbf{1}$,
$U$ is in $IC(\mathbf{1})$. Since $S\neq T$, we have
either $U\neq S$ or $U\neq T$. By symmetry, we assume
that $U\neq S$. By part (3), there is only one
$W\in IC(\mathbf{1})$ such that $W\otimes U\neq 0$.
This implies that $W\otimes S\neq 0$ as $U=S\otimes T$.
There are two different objects, namely, $S,U\in IC(\mathbf{1})$
such that $W\otimes S\neq 0$ and $W\otimes U\neq 0$.
By the left-version of part (3) this is impossible.
The assertion follows.

(6) If $IC(\mathbf{1})$ contains all simple objects, then
by part (5), $\mathcal{C}$ is discrete.
Conversely, suppose $\mathcal{C}$ is discrete.
For every simple object $T$, by part (2), there is
an $S\in IC(\mathbf{1})$ such that $S\otimes T\neq 0$.
By the definition of discreteness, $T=S$.
So $T\in IC(\mathbf{1})$.
\end{proof}

\begin{proposition}
\label{xxpro3.4}
Let $A$ be a finite dimensional algebra of finite
global dimension. Suppose that $(A-\Modfd, \otimes)$
is a discrete monoidal abelian category. Then, for any simple
left $A$-module $S$ and any $M\in A-\Modfd$,
$$M\otimes S\cong S^{\oplus n}$$
where $n$ is the number of copies of $S$ in the
the composition series of $M$.
\end{proposition}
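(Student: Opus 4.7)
The plan is to induct on the length $\ell(M)$, using exactness of $-\otimes S$ (Definition \ref{xxdef1.1}(2c)) together with the explicit products of simples from Lemma \ref{xxlem3.3}(5)--(6).

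For the base case $\ell(M)=1$, $M$ is simple; by Lemma \ref{xxlem3.3}(6) discreteness gives $M\in IC(\mathbf{1})$, and Lemma \ref{xxlem3.3}(5) yields $M\otimes S\cong S$ (so $n=1$) when $M\cong S$, and $M\otimes S=0$ (so $n=0$) otherwise. For the inductive step, pick a simple submodule $T\hookrightarrow M$ with quotient $M'=M/T$, and apply $-\otimes S$ to $0\to T\to M\to M'\to 0$ to obtain
\[0\to T\otimes S\to M\otimes S\to M'\otimes S\to 0.\]
When $T\not\cong S$, the left-hand term vanishes and $M\otimes S\cong M'\otimes S\cong S^{\oplus n}$ by induction (the $S$-multiplicities of $M$ and $M'$ agreeing). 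When $T\cong S$, induction gives $M'\otimes S\cong S^{\oplus(n-1)}$, and we are reduced to splitting an extension
\[0\to S\to M\otimes S\to S^{\oplus(n-1)}\to 0.\]

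The splitting follows at once from the auxiliary claim $\Ext^{1}_{A}(S,S)=0$ for every simple $S$. To establish this claim I would argue by contradiction: suppose $\xi\in\Ext^{1}(S,S)$ is a nonzero class, represented by a nonsplit extension $0\to S\to N\to S\to 0$. The exact functor $-\otimes S$ produces $0\to S\to N\otimes S\to S\to 0$, classified by $\xi\otimes\mathrm{id}_{S}\in\Ext^{1}(S\otimes S,S\otimes S)\cong\Ext^{1}(S,S)$, and a coherence check in the monoidal structure should identify $\xi\otimes\mathrm{id}_{S}$ with $\xi$ itself. Combined with the compatibility of Yoneda and tensor products on $\Ext^{*}(S,S)$ (forced by $S\otimes S\cong S$), the Yoneda powers $\xi^{k}\in\Ext^{k}(S,S)$ should then be nonzero for all $k$, contradicting $\gldim A<\infty$.

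The main obstacle is to carry out the coherence identification $\xi\otimes\mathrm{id}_{S}=\xi$ and the Yoneda--tensor compatibility rigorously; here both hypotheses of the proposition---discreteness (which supplies $S\otimes S\cong S$) and finite global dimension (which forces eventual vanishing of $\Ext^{k}(S,S)$)---must be combined precisely. If this route proves awkward, an alternative approach is to first prove that the unit $\mathbf{1}$ is semisimple under the stated hypotheses so that $\mathbf{1}\cong\bigoplus_{i}S_{i}$; the resulting orthogonal idempotents of $\mathbf{1}$ yield a canonical decomposition $M\cong\bigoplus_{i}(S_{i}\otimes M)$, reducing the assertion to the semisimplicity of each summand $S_{i}\otimes M$.
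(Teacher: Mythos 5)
Your reduction is exactly the one the paper uses: run a composition series of $M$ through the exact functor $-\otimes S$, note by discreteness that every simple subquotient of $M\otimes S$ is $\cong S$, and then conclude $M\otimes S\cong S^{\oplus n}$ once $\Ext^1_A(S,S)=0$ is known. Up to that point the two arguments agree.

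The gap is in how you try to obtain $\Ext^1_A(S,S)=0$. You are, in effect, trying to reprove Igusa's theorem (the \emph{no loops conjecture} for finite-dimensional algebras): if $S$ has finite projective dimension, then $\Ext^1_A(S,S)=0$. This is precisely what the paper invokes, citing \cite{Ig1990}; it is a genuinely hard theorem, and it requires no monoidal input at all --- the hypothesis that $\gldim A<\infty$ alone suffices. Your proposed coherence argument has two unproven and non-trivial steps. First, $\xi\otimes\mathrm{id}_S=\xi$ under the identification $\Ext^1(S\otimes S,S\otimes S)\cong\Ext^1(S,S)$ is not a formal consequence of $S\otimes S\cong S$: the exact endofunctor $-\otimes S$ induces a map on $\Ext^1(S,S)$ that is the identity on $\Ext^0$, but nothing forces it to be the identity (rather than, say, zero) on $\Ext^1$ --- in fact showing it is nonzero essentially amounts to computing $N\otimes S$ for the middle term $N$, which is the very question at issue. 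Second, even granting that, the claim that all Yoneda powers $\xi^k\in\Ext^k(S,S)$ are nonzero does not follow from ``Yoneda--tensor compatibility'': the $k$-fold Yoneda splice of $\xi$ is a $k$-step extension, while the $k$-fold tensor power $N^{\otimes k}$ carries a $2^k$-step filtration, and these are different objects; the Hilton--Eckmann argument (which the paper cites for graded-commutativity of the Ext-algebra) gives commutativity but not nonnilpotence. Your alternative route through semisimplicity of $\mathbf{1}$ has the same problem in disguise: deducing semisimplicity of $\mathbf{1}$ and of each $S_i\otimes M$ again requires vanishing of $\Ext^1$ between simples, which is not supplied by the discreteness axiom (E3.1.1) alone.

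The fix is short: replace the entire second half of your argument with a citation to \cite{Ig1990}. With $\Ext^1_A(S,S)=0$ in hand (from finite global dimension), your inductive argument is complete and matches the paper's proof.
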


\begin{proof}
By the 'no loops conjecture', which was proved by Igusa \cite{Ig1990},
\begin{equation}
\label{E3.4.1}\tag{E3.4.1}
\Ext^1_{A}(S,S)=0.
\end{equation}

By definition, $-\otimes-$ is biexact. Hence
$M\otimes S$ has a composition series that is induced
by the composition series of $M$. Let $T$ be a simple
subquotient of $M$. Then $T\otimes S$ is either $S$
when $T\cong S$ or $0$ if $T\not\cong S$. Thus
$M\otimes S$ has a composition series with each
simple subquotient being $S$. The assertion follows
from \eqref{E3.4.1}.
\end{proof}

Recall that $\otimes^v$ is the
canonical tensor given in \eqref{E2.1.1}.
We have an immediate consequence.

\begin{corollary}
\label{xxcor3.5}
Let $Q$ be a finite acyclic quiver.
If $(\Repr(Q), \otimes)$ is another discrete monoidal abelian
structure on $\Repr(Q)$, then for any $M\in \Repr(Q)$
and any simple representation $S$ over $Q$,
$$M\otimes S\cong M\otimes^v S$$
where $\otimes^v$ is defined as in \eqref{E2.1.1}.
\end{corollary}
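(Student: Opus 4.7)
The plan is to reduce the corollary to Proposition \ref{xxpro3.4} applied to the path algebra $A = \Bbbk Q$. First I would verify the hypotheses of that proposition: since $Q$ is a finite acyclic quiver, $A$ is finite dimensional, and since $Q$ is acyclic, $A$ is hereditary, so $\gldim A \leq 1 < \infty$. The monoidal abelian category $(A-\Modfd, \otimes) \cong (\Repr(Q), \otimes)$ is discrete by assumption. Note that the simple objects of the underlying abelian category $\Repr(Q)$ are exactly $\{S(i) : i \in Q_0\}$ (a purely abelian datum, independent of which tensor structure we put on the category), so the simple object $S$ in the statement is necessarily some $S(i)$.

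Next I would apply Proposition \ref{xxpro3.4} to conclude
\[
M \otimes S(i) \cong S(i)^{\oplus n_i(M)},
\]
where $n_i(M)$ is the number of copies of $S(i)$ in any composition series of $M$. For a representation of a quiver, a composition series is obtained by successively quotienting out one-dimensional pieces of the vertex spaces, so $n_i(M) = \dim_{\Bbbk} (M)_i$.

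On the other hand, the vertex-wise tensor $M \otimes^v S(i)$ can be read off directly from \eqref{E2.1.1}: one has $(S(i))_j = \Bbbk$ if $j = i$ and $0$ otherwise, and $(S(i))_\alpha = 0$ for every arrow $\alpha \in Q_1$. Hence $(M \otimes^v S(i))_j = (M)_j \otimes_{\Bbbk} (S(i))_j$ equals $(M)_i$ when $j = i$ and vanishes otherwise, while every arrow map $(M \otimes^v S(i))_\alpha$ is forced to be zero. This representation is manifestly isomorphic to $S(i)^{\oplus \dim_{\Bbbk}(M)_i}$, matching the expression for $M \otimes S(i)$ above and yielding the claimed isomorphism.

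There is essentially no serious obstacle here: everything reduces to Proposition \ref{xxpro3.4} together with the trivial computation of $M \otimes^v S(i)$. The only point that merits a line of justification is the identification $n_i(M) = \dim_{\Bbbk} (M)_i$, which rests on the standard fact that the simples of $\Repr(Q)$ are the vertex simples $S(i)$ and that composition length at vertex $i$ equals the $\Bbbk$-dimension of $(M)_i$.
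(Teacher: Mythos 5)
Your proof is correct and follows exactly the route the paper intends: the corollary is labeled in the paper as an ``immediate consequence'' of Proposition~\ref{xxpro3.4}, and your write-up simply makes that reduction explicit (checking that $\Bbbk Q$ is finite dimensional of finite global dimension, identifying the multiplicity $n_i(M)$ with $\dim_\Bbbk (M)_i$, and computing $M\otimes^v S(i)$ from \eqref{E2.1.1}). No gaps or deviations.
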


There are a lot of monoidal categories that are not
discrete. For example, for a finite quiver $Q$,
if $\Repr(Q)$ is equipped with other
bialgebra structure, it may not be discrete,
see Proposition \ref{xxpro7.7}(a-d).
We conclude this section with the definition of
a discrete action.

\begin{definition}
\label{xxdef3.6}
Let $\mathcal{C}$ be a monoidal abelian category acting on
an abelian category ${\mathcal A}$. Assume that both
${\mathcal C}$ and ${\mathcal A}$ satisfy Definition
\ref{xxdef3.1}(a,b,c). Let $\{T_1,\cdots,T_n\}$ (respectively,
$\{S_1,\cdots,S_m\}$) be the complete list of simple objects
in ${\mathcal C}$ (respectively, ${\mathcal A}$), where $m\geq n$.
The action of $\mathcal{C}$ on ${\mathcal A}$ is called
{\it discrete} if
\begin{enumerate}
\item[(d)']
there is a permutation $\sigma\in S_n$ such that
\begin{equation}
\label{E3.6.1}\tag{E3.6.1}
T_i\odot S_j\cong \begin{cases} S_j & {\textrm{ if $j=\sigma(i)$}}\\
0 & {\textrm{ if $j\neq \sigma(i)$}}.\end{cases}
\end{equation}
\end{enumerate}
\end{definition}

\section{Proof of Theorem \ref{xxthm0.4}}
\label{xxsec4}

The aim of this section is to prove Theorem \ref{xxthm0.4}.
We need first recall some facts from representation theory
of quivers.

\begin{proposition}
\cite[Proposition 2.5 in Chapter VII]{ASS2006}
\label{xxpro4.1}
Let $Q$ be a finite, connected, and acyclic quiver and $M$
be a brick such that there exists $a\in Q_0$ with
$\dim (M)_a >1$. Let $Q'$ be the quiver defined as
follows: $Q'=(Q'_0,Q'_1)$, where $Q'_0=Q_0\cup \{b\}$;
$Q'_1=Q_1\cup \{\alpha\}$; and $\alpha:b\rightarrow a$.
Then $\Bbbk Q'$ is of infinite representation type.
\end{proposition}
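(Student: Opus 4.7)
The plan is to exhibit an infinite family of pairwise non-isomorphic indecomposable representations of $Q'$ parametrized essentially by the points of $\mathbb{P}((M)_a)$. For each nonzero $v\in (M)_a$, I define a representation $M_v$ of $Q'$ by setting $(M_v)_i=(M)_i$ for $i\in Q_0$, $(M_v)_\beta=(M)_\beta$ for $\beta\in Q_1$, $(M_v)_b=\Bbbk$, and $(M_v)_\alpha:\Bbbk\to (M)_a$ the linear map sending $1$ to $v$. Since $\dim (M)_a\geq 2$ and $\Bbbk$ is algebraically closed (hence infinite), the set of lines in $(M)_a$ is infinite.

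First I would check that each $M_v$ is indecomposable. Suppose $M_v=X\oplus Y$ with $X,Y$ nonzero. Since $(M_v)_b=\Bbbk$, exactly one summand, say $X$, has $(X)_b=\Bbbk$ while $(Y)_b=0$. Restricting to $Q$ yields $M=X|_Q\oplus Y|_Q$, and since $M$ is a brick (in particular indecomposable) one of these restrictions vanishes. Combined with the vertex-$b$ data this forces one of $X,Y$ to be zero, a contradiction.

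Next I would prove that $M_v\cong M_{v'}$ as representations of $Q'$ if and only if $v$ and $v'$ are scalar multiples of each other. Any isomorphism $\phi:M_v\to M_{v'}$ restricts to an endomorphism $\phi|_Q$ of $M$, which by the brick assumption is a nonzero scalar $\lambda\in \Bbbk^\times$. Its component $\mu$ at vertex $b$ is a nonzero scalar as well, and commutativity of $\phi$ with $\alpha$ forces $\lambda v=\mu v'$, so $v'\in \Bbbk^\times\cdot v$; conversely any such pair of scalars patches to an isomorphism. It follows that the isomorphism classes of the $M_v$ are in bijection with the points of $\mathbb{P}((M)_a)$, producing infinitely many non-isomorphic indecomposable representations of $Q'$. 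Hence $\Bbbk Q'$ has infinite representation type in the sense of Definition \ref{xxdef1.4}.

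The main delicate point is the use of the brick hypothesis in the third step: if $\End_{\Repr(Q)}(M)$ were larger than $\Bbbk$, distinct vectors $v,v'$ could be conjugated to one another by a nontrivial automorphism of $M$ acting on the vertex-$a$ space, potentially collapsing the family to finitely many isomorphism classes. The connectedness and acyclicity of $Q$ play no essential role in this construction — they matter only to the extent that $\Bbbk Q'$ is again a path algebra of a finite acyclic quiver, for which ``representation type'' is the usual notion.
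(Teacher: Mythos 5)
Your proof is correct and follows essentially the same construction as the cited source (and as the paper itself uses later in the proof of Lemma 4.5), namely extending the brick $M$ by a one-dimensional space at the new vertex $b$ with a varying structure map $\lambda\colon\Bbbk\to (M)_a$, and using $\End_{\Repr(Q)}(M)=\Bbbk$ to see that the resulting indecomposables fall into infinitely many isomorphism classes parametrized by $\mathbb{P}\bigl((M)_a\bigr)$. One small phrasing note: in your indecomposability step, when $X|_Q=0$ the contradiction comes not from $X$ being zero but from the arrow map $\alpha$ failing to respect the purported decomposition (since $(M_v)_\alpha$ sends $(X)_b$ outside $(X)_a=0$); the argument is sound, just slightly imprecisely worded.
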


By duality and Proposition \ref{xxpro4.1}, if $\alpha$
is an arrow of the form $a\rightarrow b$, then $\Bbbk Q'$
is also of infinite representation type.	

\begin{lemma}\cite[Corollary 5.14 in Chapter VII]{ASS2006}
\label{xxlem4.2}
If $Q$ is a quiver of type $\mathbb{ADE}$, see
\cite[p.252]{ASS2006}, then every indecomposable
representation of $Q$ is a brick.
\end{lemma}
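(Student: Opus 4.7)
The plan is to combine Gabriel's theorem with the Euler form identity for hereditary algebras. For any finite-dimensional representations $M, N$ of the acyclic quiver $Q$, one has
\[
\langle \underline{\dim} M, \underline{\dim} N\rangle_Q = \dim_\Bbbk\Hom(M,N) - \dim_\Bbbk\Ext^1(M,N),
\]
where $\langle -,-\rangle_Q$ is the Euler bilinear form on the Grothendieck group of $\Repr(Q)$ (this is valid because $\Repr(Q)$ is hereditary, so projective dimensions are at most one). Specializing to $N = M$ and writing $q_Q$ for the associated Tits quadratic form, the key identity becomes
\[
\dim_\Bbbk \End(M) - \dim_\Bbbk \Ext^1(M,M) = q_Q(\underline{\dim} M).
\]

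Next I would invoke Gabriel's theorem: for $Q$ of type $\mathbb{ADE}$, the map $M \mapsto \underline{\dim} M$ is a bijection between isomorphism classes of indecomposables in $\Repr(Q)$ and positive roots of the associated simply-laced root system. In particular $q_Q$ is positive definite and every positive root $d$ satisfies $q_Q(d)=1$, so for every indecomposable $M$ one obtains $\dim_\Bbbk\End(M) - \dim_\Bbbk\Ext^1(M,M) = 1$. Since $\Bbbk$ is algebraically closed and $M$ is indecomposable, Fitting's lemma makes $\End(M)$ a local $\Bbbk$-algebra with residue field $\Bbbk$, forcing $\dim_\Bbbk\End(M) \geq 1$. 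Hence to conclude, it suffices to prove $\Ext^1(M,M) = 0$.

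For that vanishing step, I would argue via BGP reflection functors. Simple representations $S(i)$ satisfy $\End(S(i)) = \Bbbk$ and $\Ext^1(S(i), S(i)) = 0$, the latter because $Q$ is acyclic and so has no loops at any vertex. A BGP reflection at a sink or a source is a categorical equivalence between appropriate subcategories of $\Repr(Q)$ and $\Repr(Q')$ which acts as a simple reflection on dimension vectors and preserves both $\dim \End$ and $\dim \Ext^1$. In $\mathbb{ADE}$ type every positive root lies in the Weyl group orbit of a simple root, so every indecomposable can be produced from a simple by iterating BGP reflections, yielding $\End(M) = \Bbbk$ and $\Ext^1(M,M) = 0$, hence a brick.

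The principal obstacle is precisely the reachability claim used in the last step: that iterating BGP reflections chosen compatibly with the orientation does exhaust every indecomposable. This rests on the finite-orbit structure of the Coxeter element on the $\mathbb{ADE}$ root lattice together with the compatibility between BGP functors and the Weyl group action on dimension vectors; both are standard facts about ADE root systems, and together with the positive definiteness of $q_Q$ they close the Euler-form computation above.
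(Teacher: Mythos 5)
The paper offers no proof of this lemma; it is stated purely as a citation to \cite[Corollary VII.5.14]{ASS2006}, so there is no internal argument to compare against. Your proposal is a genuine, essentially standard proof, and the main steps are sound: the Euler-form identity $\dim\Hom(M,M)-\dim\Ext^1(M,M)=q_Q(\mathbf{dim}\, M)$ valid for hereditary algebras, Gabriel's bijection between indecomposables and positive roots, the fact that $q_Q$ takes value $1$ on roots, and the observation $\Ext^1(S(i),S(i))=0$ from acyclicity all hold, and they correctly reduce the statement to proving $\Ext^1(M,M)=0$.

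One point worth making explicit: the assertion that BGP reflection functors preserve $\dim\Ext^1$ is true but does not follow automatically from the equivalence of the relevant full subcategories, which only controls $\Hom$ a priori. Inside your own framework the cleanest justification is via the Euler form: the Tits form $q$ depends only on the underlying graph and is invariant under the simple reflection $s_i$, so $q_Q(\mathbf{dim}\, M)=q_{\sigma_i Q}(\mathbf{dim}\, R_i^{\pm}M)$; since the reflection functor preserves $\dim\End$, the Euler identity then forces $\dim\Ext^1$ to match as well. (Alternatively, BGP reflections extend to derived equivalences $D^b(\Repr Q)\simeq D^b(\Repr \sigma_i Q)$, which preserve all $\Ext$ groups.) The reachability step you flag as the obstacle is indeed the crux, and your appeal to the finite order of the Coxeter element is the right input: iterating the Coxeter transformation (as a composition of reflections at sinks in admissible order) eventually sends any positive root out of the positive cone, and at the first failing reflection the intermediate dimension vector must be a simple root $e_i$, hence the module one step earlier must be $S(i)$. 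Tracking $\End$ and $\Ext^1$ backward along this chain closes the argument. In effect you have re-proved Gabriel's theorem with extra bookkeeping on $\End$ and $\Ext^1$, where the paper simply defers to the textbook for the whole package.
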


Recall from Definition \ref{xxdef1.4}(3) that an algebra $A$
is {\it wild} or {\it of wild representation type}
if there is a faithful exact embedding of abelian categories
\begin{equation}
\label{E4.2.1}\tag{E4.2.1}
Emb : \Bbbk\langle x_1, x_2 \rangle -\Modfd
\longrightarrow  \mathcal{A}:=A-\Modfd
\end{equation}
that preserves indecomposables and respects isomorphism classes
(namely, for all objects $M_1,M_2$ in $\Bbbk\langle
x_1, x_2\rangle-\Modfd$, $Emb(M_1) \cong Emb(M_2)$ if and only if
$M_1\cong M_2$). A stronger notion of wildness is the following.
An algebra A is called {\it strictly wild}, or {\it fully wild},
if $Emb$ in \eqref{E4.2.1} is a fully faithful embedding
\cite[Proposition 5]{Ar2005}. By definition, strictly wild is
wild, but the converse is not true. It is well-known that a wild
path algebra $\Bbbk Q$ is always strictly wild, see a comment of
Gabriel \cite[p.140]{Ga197374} or \cite[Proposition 7]{Ar2005}.

\begin{lemma}
\label{xxlem4.3}
Let $A$ be a finite dimensional algebra that is strictly wild.
Let $\mathcal{C}$ be an abelian category containing
$\mathcal{A}$ as a full subcategory. Then
$\mathcal{C}$ contains an infinite connected brick set. As a
consequence, if $Q$ is a finite acyclic quiver that is wild,
then $\Repr(Q)$ contains an infinite connected brick set.
\end{lemma}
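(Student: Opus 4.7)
The plan is to reduce to constructing an infinite connected brick set in $\Bbbk\langle x_1, x_2\rangle-\Modfd$, and then to exhibit one explicitly.

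By the strictly wild hypothesis, there is a fully faithful exact embedding $Emb \colon \Bbbk\langle x_1, x_2\rangle-\Modfd \to \mathcal{A}$. Full faithfulness implies that $Emb$ preserves Hom-spaces, so any brick set in the source is transported to a brick set in $\mathcal{A}$, and hence in $\mathcal{C}$ by fullness of $\mathcal{A}\subseteq \mathcal{C}$. Moreover, if $0\to M\to X\to N\to 0$ is a nonsplit short exact sequence in the source, its image under $Emb$ is a nonsplit exact sequence in $\mathcal{A}$: any section would, by full faithfulness, pull back to a section in the source. Therefore infinite connected brick sets transfer from $\Bbbk\langle x_1, x_2\rangle-\Modfd$ into $\mathcal{C}$.

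To build one in $\Bbbk\langle x_1, x_2\rangle-\Modfd$, for each $a\in \Bbbk$ let $S_a$ denote the one-dimensional module on which both $x_1$ and $x_2$ act as the scalar $a$. Distinct $S_a$'s are pairwise non-isomorphic simples, so $\{S_a : a\in \Bbbk\}$ is automatically an infinite brick set. For connectedness, I would invoke the Euler form on the hereditary algebra $\Bbbk\langle x_1, x_2\rangle$ (the tensor algebra $T(\Bbbk^2)$, equivalently the path algebra of the quiver with one vertex and two loops, of global dimension at most $1$): for $a\neq b$,
\begin{equation*}
\dim\Hom(S_a,S_b) - \dim\Ext^1(S_a,S_b) \;=\; 1\cdot 1 - 2\cdot 1\cdot 1 \;=\; -1,
\end{equation*}
and $\dim\Hom(S_a,S_b)=0$ forces $\dim\Ext^1(S_a,S_b)=1>0$. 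Hence $\{S_a : a\in \Bbbk\}$ is an infinite connected brick set in $\Bbbk\langle x_1, x_2\rangle-\Modfd$, proving the first assertion.

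For the consequence: when $Q$ is a finite acyclic wild quiver, $\Bbbk Q$ is strictly wild by Gabriel's remark recalled just before the lemma, so the first part applies with $A=\Bbbk Q$ and $\mathcal{C}=\mathcal{A}=\Repr(Q)$. The main subtlety, in my view, is ensuring that nonvanishing of $\Ext^1$ survives the strictly wild embedding; the Euler form computation then elegantly sidesteps any explicit construction of extension classes between the simples $S_a$.
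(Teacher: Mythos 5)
Your proof is correct and follows the same strategy as the paper: take a family of pairwise non-isomorphic one-dimensional modules over $\Bbbk\langle x_1,x_2\rangle$ (the paper uses $M(c)$ with $x_1=c$, $x_2=0$; you use $x_1=x_2=a$, an inessential change), verify $\Ext^1\neq 0$ between any two of them, and transport the resulting infinite connected brick set along the strictly wild exact fully faithful embedding into $\mathcal{A}\subseteq\mathcal{C}$. Your Euler-form computation is simply a cleaner packaging of the paper's ``by taking a free resolution, one can check'' step; both ultimately rest on the length-one projective resolution of a finite-dimensional module over the hereditary free algebra $\Bbbk\langle x_1,x_2\rangle$.
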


\begin{proof} The consequence follows from the fact that
a wild quiver is strictly wild. So we only prove the main
assertion.

Let $A$ be strictly wild. By definition, there is
a fully faithful embedding
\begin{equation}
\notag
Emb : \Bbbk\langle x_1, x_2 \rangle -\Modfd\longrightarrow
\mathcal{A}\longrightarrow \mathcal{C}.
\end{equation}
For each $c\in \Bbbk$, let $M(c)$ denote the 1-dimensional simple
module $\Bbbk\langle x_1, x_2\rangle/(x_1-c,x_2)$ and let $N_c$
be $Emb(M_c)$. By taking a free resolution $M(c)$, one can check
that $\Ext^1_{\Bbbk\langle x_1, x_2\rangle}(M(c),M(c'))\neq 0$
for all $c,c'$. Hence $\{M(c)\mid c\in\Bbbk\}$ is an infinite
connected brick set in  $\Bbbk\langle x_1, x_2 \rangle-\Modfd$.
Since $Emb$ a fully faithful embedding, $\{N_c\mid c\in\Bbbk\}$
is an infinite connected brick set of $\mathcal{C}$.
\end{proof}

\begin{lemma}
\label{xxlem4.4}
Let $\mathcal{C}$ be an abelian category of finite
global dimension and let $\mathcal{T}$ be the bounded
derived category $D^b(\mathcal{C})$. Suppose that
\begin{enumerate}
\item[(1)]
$\mathcal{T}$ is triangulated equivalent to
$D^b(B-\Modfd)$ for a finite dimensional hereditary
algebra $B$ via tilting object $X$, namely,
$$\RHom_{\mathcal{T}}(X,-):\mathcal{T}
\to D^b(B-\Modfd)$$
is a triangulated equivalence where $B\cong
\RHom_{\mathcal{T}}(X,X)$, and
\item[(2)]
$\mathcal{C}$ contains an infinite {\rm{(}}respectively,
infinite connected{\rm{)}} brick set.
\end{enumerate}
Then $B-\Modfd$ contains an infinite {\rm{(}}respectively,
infinite connected{\rm{)}} brick set.
\end{lemma}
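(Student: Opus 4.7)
The plan is to transport the given brick set through the equivalence $F := \RHom_{\mathcal{T}}(X,-) \colon \mathcal{T} \to D^b(B-\Modfd)$ and then use the hereditary hypothesis on $B$, together with a pigeonhole argument, to land the resulting set inside $B-\Modfd$ itself.

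First I would recall that $\mathcal{C}$ embeds as a full subcategory of $\mathcal{T} = D^b(\mathcal{C})$ as the heart of the standard $t$-structure, so for $M,N \in \mathcal{C}$ one has $\Hom_{\mathcal{T}}(M,N) = \Hom_{\mathcal{C}}(M,N)$ and $\Hom_{\mathcal{T}}(M,N[1]) = \Ext^1_{\mathcal{C}}(M,N)$. Thus any infinite (respectively, infinite connected) brick set $\{M_i\}_{i\in I}$ in $\mathcal{C}$ is automatically such a set in $\mathcal{T}$, and applying the equivalence $F$ the image $\{F(M_i)\}_{i\in I}$ is an infinite (respectively, infinite connected) brick set in $D^b(B-\Modfd)$.

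Next, each $F(M_i)$ is a brick and therefore indecomposable. Because $B$ is finite dimensional and hereditary, every indecomposable object of $D^b(B-\Modfd)$ is of the form $N[m]$ for some indecomposable $N\in B-\Modfd$ and some $m \in \mathbb{Z}$, so one may write $F(M_i) \cong N_i[m_i]$. The crux is then to show that the shifts $m_i$ take only finitely many values; once this is done, pigeonhole yields an infinite subset $I' \subseteq I$ and a fixed integer $m$ with $m_i = m$ for all $i \in I'$, and $\{N_i\}_{i \in I'}$ lies in $B-\Modfd$.

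I expect establishing the boundedness of $\{m_i\}$ to be the main obstacle. My plan is to handle it via a hypercohomology spectral sequence
$$E_2^{p,q} = \Ext^p_{\mathcal{C}}(H^{-q}(X), M_i) \Longrightarrow \Hom_{\mathcal{T}}(X, M_i[p+q]) = H^{p+q}(F(M_i)).$$
Since $X$ is a fixed bounded complex in $\mathcal{T}$ and $\gldim \mathcal{C} < \infty$ by hypothesis, the $E_2$-page is supported in a fixed finite rectangle of $(p,q)$, which confines the cohomological support of each $F(M_i)$ to a single finite interval depending only on $X$ and $\gldim\mathcal{C}$, independent of $i$. Once $m_i = m$ has been arranged on $I'$, the identities
$$\Hom_B(N_i,N_j) = \Hom_{\mathcal{T}}(M_i,M_j), \qquad \Ext^1_B(N_i,N_j) = \Ext^1_{\mathcal{C}}(M_i,M_j)$$
follow from $F$ being an equivalence and the shifts by $m$ cancelling, delivering the desired infinite (respectively, infinite connected) brick set in $B-\Modfd$.
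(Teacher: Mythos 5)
Your proposal is correct and follows essentially the same route as the paper's proof: transport the brick set through $F = \RHom_{\mathcal T}(X,-)$, use hereditariness of $B$ to write each image as a shift $N_i[m_i]$ of an indecomposable module, establish a uniform bound on the shifts $m_i$, and apply the pigeonhole principle. The only difference is presentational: you justify the uniform bound on cohomological amplitude via the hypercohomology spectral sequence, whereas the paper simply cites the fact that $X$ has finite projective dimension.
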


Note that, if $\mathcal{C} =A-\Modfd$ for some
finite dimensional algebra $A$ and if $\mathcal{T}$
is triangulated equivalent to $D^b(B-\Modfd)$,
then, by tilting theory, the existence of $X$ is
automatic.

\begin{proof}[Proof of Lemma \ref{xxlem4.4}]
We only prove the assertion for ``infinite brick set''.
The proof for ``infinite connected brick set''
is similar.

Let
$$F:=\RHom_{\mathcal{T}}(X, -): \mathcal{T}
\longrightarrow D^b(B-\Modfd)$$
be an equivalence of triangulated categories. Let
$\{N(c)\mid c\in U\}$ be an infinite brick set of $\mathcal{C}$
by hypothesis. Then
$$\{F(N(c)) \mid c\in U\}$$
is an infinite brick set of $D^b(B-\Modfd)$.
Since $X$ has finite projective dimension, there is an integer
$n$ independent of $c\in U$ such that
\begin{equation}
\label{E4.4.1}\tag{E4.4.1}
{\text{$H^i(F(N(c)))=0$ for all $|i| > n$.}}
\end{equation}
Note that $B-\Modfd$ is hereditary, which implies that
every indecomposable object in $D^b(B-\Modfd)$ is of the
form $M[i]$ for some indecomposable object $M\in B-\Modfd$
and for some $i$ \cite[Section 2.5]{Ke2007}. By \eqref{E4.4.1},
$F(N(c))=M_c[i_c]$ for some indecomposable object
$M_c\in B-\Modfd$ and some integer $|i_c|\leq n$.
Since $U$ is infinite, there is an infinite subset $U'\subseteq U$
such that $i_c$ is a constant for all $c\in U'$. Let $i_0$ denote
such $i_c$. Thus $\{M_c[i_0]\mid c\in U'\}$ is an infinite brick
set in $D^b(B-\Modfd)$. Since the suspension $[1]$ is an
isomorphism of $D^b(B-\Modfd)$, $\{M_c\mid c\in U'\}$ is an
infinite brick set in $D^b(B-\Modfd)$. Finally, using the
fact that $B-\Modfd$ is a full subcategory of
$D^b(B-\Modfd)$, we obtain that $\{M_c\mid c\in U'\}$ is an
infinite brick set in $B-\Modfd$.
\end{proof}

\begin{lemma}
\label{xxlem4.5}
Let $A$ be a finite dimensional hereditary algebra that is not
of finite representation type. Then
the abelian category $A-\Modfd$ contains an infinite brick
set. As a consequence, if $Q$ is a finite acyclic quiver not of type
$\mathbb{ADE}$, then $\Repr(Q)$ contains an infinite brick set.
\end{lemma}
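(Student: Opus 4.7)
The plan is to reduce to a path algebra via Morita equivalence and then invoke Drozd's trichotomy. Since $A$ is finite dimensional and hereditary, by \cite[Theorem 1.7 in Chapter VII]{ASS2006} (recalled at the start of Section \ref{xxsec2}) $A$ is Morita equivalent to the path algebra $\Bbbk Q$ of some finite acyclic quiver $Q$. Morita equivalence is an equivalence of abelian categories and hence preserves the property of containing an infinite brick set, so I may assume $A=\Bbbk Q$. The consequence clause is then immediate: if $Q$ is not of type $\mathbb{ADE}$, then by Gabriel's theorem \cite{Ga1972} $A=\Bbbk Q$ is hereditary but not of finite representation type, so the main claim applies.

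For the main claim, since $A$ is not of finite representation type, Drozd's trichotomy \cite{D1979} forces $Q$ to be either wild or tame. In the wild case, $\Bbbk Q$ is in fact strictly wild (the Gabriel--Arnold fact recalled immediately before Lemma \ref{xxlem4.3}), so Lemma \ref{xxlem4.3} applied to $\mathcal{C}=\Repr(Q)$ directly yields an infinite (indeed connected) brick set; in particular an infinite brick set exists.

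In the tame case, by Nazarova's theorem \cite{N1973} the underlying graph of $Q$ is of extended Dynkin type $\widetilde{\mathbb{A}}$, $\widetilde{\mathbb{D}}$ or $\widetilde{\mathbb{E}}$. Here I would use the classical structure theory of tame hereditary algebras due to Dlab--Ringel and Ringel: the regular indecomposables of $\Repr(Q)$ form an abelian subcategory that decomposes as a disjoint union of tubes, whose homogeneous tubes are parametrized by a cofinite subset of $\mathbb{P}^{1}(\Bbbk)$. For each such parameter $\lambda$ the unique quasi-simple regular $R_{\lambda}$ at the mouth of the corresponding homogeneous tube is a brick, and quasi-simples lying in distinct tubes are non-isomorphic simple objects of the regular abelian subcategory, so $\Hom_{\Repr(Q)}(R_{\lambda},R_{\mu})=0$ for $\lambda\neq\mu$. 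Since $\Bbbk$ is algebraically closed (hence infinite), $\{R_{\lambda}\}$ is an infinite brick set in $\Repr(Q)$. As a more hands-on alternative in the $\widetilde{\mathbb{A}}_{n}$ case, any acyclic orientation of the underlying $(n{+}1)$-cycle must contain a source and an adjacent sink joined by two internally disjoint directed paths, and Example \ref{xxex2.8} (generalizing Example \ref{xxex2.7}) produces the desired one-parameter family $\{M_{c}\mid c\in\Bbbk\}$ directly.

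The main obstacle is the tame case: the uniform argument depends on the non-trivial tube decomposition of the regular component of the Auslander--Reiten quiver of a tame hereditary algebra, while a type-by-type explicit construction would require additional hand computations for $\widetilde{\mathbb{D}}_{n}$ and $\widetilde{\mathbb{E}}_{6,7,8}$ in the spirit of Example \ref{xxex2.8}. For the purposes of the present paper, invoking the tube decomposition seems cleanest, since it produces the desired infinite family uniformly and without case-by-case analysis.
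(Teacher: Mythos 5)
Your proposal is correct and agrees with the paper's proof on the overall skeleton (Morita reduction to $\Bbbk Q$, Drozd's trichotomy, the wild case handled by Lemma~\ref{xxlem4.3}, and the type $\widetilde{\mathbb{A}}$ case handled via Example~\ref{xxex2.8}). Where it genuinely diverges is the $\widetilde{\mathbb{D}}\widetilde{\mathbb{E}}$ case. The paper stays elementary: it invokes Proposition~\ref{xxpro4.1} (i.e.\ \cite[Proposition 2.5 in Chapter VII]{ASS2006}) to produce a family $M(\lambda)$ of bricks on the extended Dynkin quiver by gluing a one-dimensional space at the new vertex onto a brick over a Dynkin subquiver with a coordinate of dimension $>1$, and then reads off from the proof in \cite{ASS2006} that the $M(\lambda)$'s are pairwise $\Hom$-orthogonal. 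You instead invoke the Dlab--Ringel/Ringel tube decomposition of the regular module category for a tame hereditary algebra, using quasi-simples at the mouths of distinct homogeneous tubes as the brick set. Both arguments are valid. The paper's argument is more self-contained relative to the references it already cites and matches the hands-on style of the rest of Section~\ref{xxsec4}; your argument is cleaner and uniform across all tame types (including $\widetilde{\mathbb{A}}$, making the separate Example~\ref{xxex2.8} argument optional), at the cost of importing a nontrivial structure theorem that the paper otherwise does not need. Either would be accepted; if you adopt the tube-decomposition route in a write-up it would be worth adding an explicit reference (e.g.\ Ringel's book \cite{Ri1984} or Dlab--Ringel) since the paper does not cite the relevant result.
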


\begin{proof} By \cite[Theorem 1.7 in Chapter VII]{ASS2006}
every such $A$ is Morita equivalent to a path algebra $\Bbbk Q$ for
some finite acyclic quiver $Q$. By Lemma \ref{xxlem4.4}, we may
assume that $A$ is $\Bbbk Q$.

Since $A$ is not of finite type, $Q$ is not of finite type.
Lemma 4.3 settles the case where Q is of wild representation type.

Case 1: $Q$ is of type $\widetilde{\mathbb{A}}$. Since $Q$ is acyclic,
there exist two different paths $p_1$ and $p_2$ from $v$ to $u$,
where $v\neq u\in Q_0$. We can further assume that
the length $p_1$ is smallest among all such choices. In this case,
$\Repr(Q)$ contains an infinite brick set by Example \ref{xxex2.8}.

Case 2: $Q$ is of type $\widetilde{\mathbb{D}}\widetilde{\mathbb{E}}$.
We consider a slightly more general situation and then apply the
assertion to the special case (see quivers in
\cite[Corollary 2.7 in Chapter VII]{ASS2006}). If there exists a
subquiver $Q'$ of $Q$ and an indecomposable representation $M$
of $Q'$ satisfying:
\begin{enumerate}
\item[(a)]
$Q'$ is a quiver of type $\mathbb{D}$ or $\mathbb{E}$,
\item[(b)]
there exists $x\in Q'_0$, $\dim (M)_x>1$,
\item[(c)]
$\{y\} \in Q_0\setminus Q'_0$,
\item[(d)]
there exists an arrow $\alpha\in Q_1$ such that $\alpha:y\rightarrow x$,
\end{enumerate}
then we construct a new representation $M(\lambda)$ as follows:
\[(M(\lambda))_v=
\begin{cases}
(M)_v & \mathrm{if}~ v\in Q'_0\\
\Bbbk & \mathrm{if}~ v=y\\
0 & \mathrm{otherwise},
\end{cases}
\qquad\qquad
(M(\lambda))_{\beta}=
\begin{cases}
(M)_{\beta} & \mathrm{if}~ \beta\in Q'_1\\
\lambda & \mathrm{if}~ \beta=\alpha\\
0 & \mathrm{otherwise},
\end{cases}
\]where $\lambda: \Bbbk \rightarrow (M)_x$ is a $\Bbbk$-linear map.

Then by the proof of \cite[Proposition 2.5 in Chapter VII]{ASS2006},
each $M(\lambda)$ is a brick and there exists infinitely many
pairwise non-isomorphic bricks of the form $M(\lambda)$. In fact,
the proof of \cite[Proposition 2.5 in Chapter VII]{ASS2006} shows that
there is an infinite set of $U:=\{\lambda: \Bbbk \to M_x\}$ such that
$\Hom_{\Repr(Q)}(M(\lambda), M(\lambda'))=0$ for all
$\lambda,\lambda'\in U$. This means that $\Repr(Q)$ contains an
infinite brick set. Dually, If we change the condition (d) into (d)':
\begin{enumerate}
\item[(d)']
there exists an arrow $\alpha\in Q_1$ such that $\alpha:x\rightarrow y$,
\end{enumerate}
we can still construct an infinite brick set as above.

Now we go back to a quiver of type $\widetilde{\mathbb{D}}\widetilde{\mathbb{E}}$.
By Lemma \ref{xxlem4.4}, we can
assume that
\begin{enumerate}
\item[(e)]
$Q'$ is one of the quivers in
\cite[Corollary 2.6 in Chapter VII.2]{ASS2006}, and that
\item[(f)]
(c) and (d) hold.
\end{enumerate}
Note that (e) implies that (a) holds.
By \cite[Corollary 2.6 in Chapter VII.2]{ASS2006}, (b) holds.
Therefore we proved that $\Repr(Q)$ contains an infinite brick
set.
\end{proof}

\begin{lemma}
\label{xxlem4.6}
Let $\mathcal{C}$ be a monoidal abelian category acting on
an abelian category ${\mathcal A}$. Assume that both
${\mathcal C}$ and ${\mathcal A}$ satisfy Definition
\ref{xxdef3.1}(a,b,c). Suppose that
\begin{enumerate}
\item[(a)]
$\mathcal{A}$ contains an infinite brick set, and that
\item[(b)]
the action of ${\mathcal C}$ on ${\mathcal A}$ is
discrete.
\end{enumerate}
Then there is a simple $T\in {\mathcal C}$ such that
$\fpd(T)=\infty$.
\end{lemma}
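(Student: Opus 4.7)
My plan is to use the pigeonhole principle to pass from the given infinite brick set to a subfamily sharing a common simple quotient $S\in\mathcal{A}$, to invoke the discrete action to extract a simple $T\in\mathcal{C}$ whose tensor action ``concentrates on'' $S$, and finally to bound the Frobenius--Perron adjacency matrices from below by all-ones matrices, forcing $\fpd(T)=\infty$.

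First I would fix the infinite brick set $\phi_{\infty}=\{N_c:c\in U\}\subseteq\mathcal{A}$ guaranteed by hypothesis~(a). Each $N_c$ has finite length (Definition~\ref{xxdef3.1}(b)), hence a nonzero top, and therefore admits some simple quotient; since $\mathcal{A}$ has only finitely many simples up to isomorphism (Definition~\ref{xxdef3.1}(c)), pigeonhole produces an infinite subset $U'\subseteq U$ and a simple $S\in\mathcal{A}$ such that $S$ is a quotient of $N_c$ for every $c\in U'$. Writing $S=S_j$, the discreteness of the action together with Definition~\ref{xxdef3.6}(d)' provides a simple $T=T_i\in\mathcal{C}$ with $\sigma(i)=j$, characterized by $T\odot S\cong S$ and $T\odot S'=0$ for every other simple $S'\in\mathcal{A}$. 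Because the action is exact (Convention~\ref{xxcon1.2}(2c)), the endofunctor $T\odot-$ preserves composition series, so every $T\odot N_c$ has all composition factors isomorphic to $S$. Applying $T\odot-$ to the surjection $N_c\twoheadrightarrow S$ yields $T\odot N_c\twoheadrightarrow S$; in particular $T\odot N_c\neq 0$, and its socle (a nonzero semisimple subobject whose only possible simple summand is $S$) necessarily contains a copy of $S$.

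For the key estimate, given any integer $N\geq 1$ I would pick distinct $c_1,\dots,c_N\in U'$ and form the sub-brick set $\phi_N:=\{N_{c_1},\dots,N_{c_N}\}$. For each pair $(r,s)$, composing a surjection $\pi_r\colon N_{c_r}\twoheadrightarrow S$ with the socle inclusion $\iota_s\colon S\hookrightarrow T\odot N_{c_s}$ produces a nonzero morphism $\iota_s\pi_r\colon N_{c_r}\to T\odot N_{c_s}$. Therefore every entry of $A(\phi_N,T\odot-)$ is at least $1$, i.e.\ $A(\phi_N,T\odot-)\geq J_N$ entrywise, where $J_N$ denotes the $N\times N$ all-ones matrix. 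Perron--Frobenius monotonicity of the spectral radius on nonnegative matrices then gives $\rho(A(\phi_N,T\odot-))\geq\rho(J_N)=N$, and letting $N\to\infty$ yields $\fpd(T)=\infty$. The main subtlety I expect is in the second paragraph: one must combine the exactness of the action with Definition~\ref{xxdef3.6}(d)' to guarantee simultaneously that $T\odot N_c$ remains nonzero and that $S$ embeds into its socle, which is what makes the Hom groups in the spectral radius computation genuinely nonzero; once this is in place, the entrywise domination by $J_N$ and Perron--Frobenius monotonicity finish the argument.
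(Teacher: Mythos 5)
Your proposal is correct and follows essentially the same route as the paper's proof: pigeonhole on simple quotients (the paper phrases this as $\Hom_{\mathcal A}(N(c),S_i)\neq 0$ for some common $i$), discreteness to pick the simple $T\in\mathcal{C}$ with $T\odot S\cong S$ and $T\odot S'=0$ otherwise, exactness to conclude all composition factors of $T\odot N_c$ are $S$ and that $T\odot N_c$ is nonzero hence contains $S$, and then an all-ones lower bound on the adjacency matrix to force the spectral radius above any $N$. The only cosmetic difference is that you spell out the socle argument and the entrywise comparison with $J_N$ plus Perron--Frobenius monotonicity, which the paper leaves implicit.
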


Lemma \ref{xxlem4.6} may fail if the action is not discrete.
Let $Q$ be the Kronecker quiver in Example \ref{xxex2.7} and
$A$ be its path algebra equipped with the cocommutative bialgebra
structure in Proposition \ref{xxpro7.7}(a). Then $S(1)$ is the unit object
in $\mathcal{A}$ and $S(2)\otimes M=S(2)^{\oplus \dim(M)}$ for
any $M\in \mathcal{A}$. Since all indecomposables in $\Repr(Q)$
are well-understood, one can check that $\fpd(S(1))=\fpd(S(2))=1$
(details are omitted).

\begin{proof}[Proof of Lemma \ref{xxlem4.6}]
Let $\{N(c)\mid c\in U\}$ is an infinite brick set of
$\mathcal{A}$ and let $\{S_1,\cdots,S_n\}$ be the
complete list of simple objects in $\mathcal{A}$
up to isomorphism. For each $1\leq i\leq n$, define
$$U_i:=\{c\in U\mid \Hom_{\mathcal{A}}(N(c),S_i)\neq 0\}.$$
For each $c\in U$, there is an $i$ such that
$\Hom_{\mathcal{A}}(N(c),S_i)\neq 0$. This implies
that $U=\bigcup_{i=1}^n U_i$. Therefore there is
an $i$ such that $U_i$ is infinite. Without loss of
generality, we may assume that $U=U_1$ is infinite.

Since the action is discrete, there is a simple object
$T\in {\mathcal C}$ such that $T\odot S_1\cong S_1$.
Now $\Hom_{\mathcal A}(N(c), S_1)\neq 0$ implies
that every simple subquotient of $T\odot N(c)$ is
isomorphic to $S_1$. In particular, $T\odot N(c)$
contains a copy of $S_1$ for all $c$.

Let $W$ be any finite subset of $U$ and let
$\phi=\{N(c) \mid c\in W\}$. Using the above paragraph,
$$\Hom_{\mathcal{A}}(N(c), T\odot N(c'))
\neq 0$$
for all $c,c'\in W$. This implies that
$\rho(A(\phi, T\odot -))\geq |W|$ and $\fpd(T)
\geq |W|$. Since $|W|$ can be arbitrarily
larger, $\fpd (T)=\infty$.
\end{proof}

The following is a part of Theorem \ref{xxthm0.4}.

\begin{theorem}
\label{xxthm4.7}
Let $A$ be a finite dimensional hereditary algebra
and let ${\mathcal A}=A-\Modfd$.
Let ${\mathcal C}$ be a monoidal abelian category
satisfying Definition \ref{xxdef3.1}(a,b,c).
Suppose that there is an action of $\mathcal{C}$
on ${\mathcal A}$ that is discrete. Then the following
are equivalent:
\begin{enumerate}
\item[(a)]
$A$ is of finite representation type,
\item[(b)]
$\fpd(M)<\infty$ for every irreducible object $M\in \mathcal{C}$,
\item[(c)]
$\fpd(M)<\infty$ for every indecomposable object $M\in \mathcal{C}$,
\item[(d)]
$\fpd(M)<\infty$ for every object $M\in \mathcal{C}$
\end{enumerate}
\end{theorem}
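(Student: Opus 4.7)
I will close the cycle (a)$\Rightarrow$(d)$\Rightarrow$(c)$\Rightarrow$(b)$\Rightarrow$(a). The two middle implications are immediate from the definitions, since every irreducible object is indecomposable and every indecomposable object is an object.

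For the implication (a)$\Rightarrow$(d), I will exploit that if $A$ has finite representation type then $\mathcal{A}=A-\Modfd$ contains only finitely many isomorphism classes of indecomposable modules; since every brick is indecomposable, the collection of isomorphism classes of bricks in $\mathcal{A}$ is a finite set, of cardinality some $N$. In particular, every finite brick set $\phi$ in $\mathcal{A}$ satisfies $|\phi|\leq N$, and there are only finitely many such $\phi$ up to isomorphism. Fix $M\in\mathcal{C}$; for each brick set $\phi=\{X_1,\ldots,X_r\}$ the adjacency matrix $A(\phi,M\odot-)$ has entries $\dim\Hom_{\mathcal{A}}(X_i,M\odot X_j)$, all finite since $M\odot X_j$ lies in $\mathcal{A}$ and $\mathcal{A}$ is $\Hom$-finite. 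Hence each spectral radius $\rho(A(\phi,M\odot-))$ is finite, and the supremum over a finite family of brick sets is finite, yielding $\fpd(M)<\infty$.

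For (b)$\Rightarrow$(a), I will argue by contrapositive. Suppose $A$ is not of finite representation type. Lemma \ref{xxlem4.5} then provides an infinite brick set in $\mathcal{A}$. Since the action of $\mathcal{C}$ on $\mathcal{A}$ is discrete by hypothesis, Lemma \ref{xxlem4.6} produces a simple object $T\in\mathcal{C}$ with $\fpd(T)=\infty$; as simple objects are irreducible, (b) fails.

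The substantive content has already been isolated in Lemmas \ref{xxlem4.5} and \ref{xxlem4.6}, so no further obstacle arises beyond the finite-counting argument used in (a)$\Rightarrow$(d). The only point requiring care is the verification that brick sets in a representation-finite $\mathcal{A}$ form a finite family of bounded cardinality, which follows at once from the finiteness of indecomposables.
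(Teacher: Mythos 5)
Your proof is correct and follows essentially the same route as the paper's: the trivial chain (d)$\Rightarrow$(c)$\Rightarrow$(b), (a)$\Rightarrow$(d) via the finiteness of brick sets in a representation-finite $\mathcal{A}$, and (b)$\Rightarrow$(a) by contrapositive using Lemmas \ref{xxlem4.5} and \ref{xxlem4.6}. You have merely spelled out the intermediate steps in (a)$\Rightarrow$(d) (bricks are indecomposable, brick sets consist of pairwise non-isomorphic objects, $\Hom$-finiteness of $\mathcal{A}$) that the paper leaves implicit.
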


\begin{proof}
%
(a) $\Longrightarrow$ (d):
If $A$ is of finite representation type, then $\mathcal{A}$
has only finitely many indecomposable objects. This means that there
are only finitely many brick sets. Then, by definition, $\fpd(\sigma)$
is finite for every endofunctor $\sigma$ of $\mathcal{A}$. In particular,
$\fpd(M)$ is finite for every representation $M\in \mathcal{C}$.

(d) $\Longrightarrow$ (c) $\Longrightarrow$ (b): Clear.

(b) $\Longrightarrow$ (a): It suffices to show that if $A$ is not
of finite representation type, then $\fpd(M)=\infty$ for some irreducible
representation $M\in \mathcal{C}$. The assertion follows from Lemmas
\ref{xxlem4.5} and \ref{xxlem4.6}.
\end{proof}

We will use the following lemma concerning a bound of
spectral radius of a matrix.

\begin{lemma}
[Gershgorin Circle Theorem \cite{Ger1931}]
\label{xxlem4.8}
Let $A$ be a complex $n\times n$ matrix, with entries $a_{ij}$.
For $i\in \{1,\dots ,n\},$ let
$R_{i}=\sum\limits_{j\neq i}\left|a_{{ij}}\right|$
be the sum of the absolute values of the non-diagonal entries in
the $i$-th row. Let $D(a_{ii},R_{i})\subseteq \mathbb {C}$ be a
closed disc centered at $a_{ii}$ with radius $R_{i}$. Then every
eigenvalue of $A$ lies within at least one of the Gershgorin
discs $D(a_{ii},R_{i}).$ As a consequence,
$\rho(A)\leq \max_i\{|a_{ii}|+R_i\}$.
\end{lemma}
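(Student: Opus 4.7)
The plan is to prove this via the standard eigenvector-componentwise argument: take any eigenvalue, look at where its eigenvector attains its maximum modulus, and read off the inequality from the corresponding row of $Ax = \lambda x$.

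More precisely, let $\lambda \in \mathbb{C}$ be an eigenvalue of $A$ and let $x = (x_1, \dots, x_n)^T \neq 0$ be a corresponding eigenvector. Choose an index $i$ with $|x_i| = \max_{k} |x_k|$; since $x \neq 0$, we have $|x_i| > 0$. The $i$-th coordinate of the equation $Ax = \lambda x$ reads
\[
\sum_{j=1}^n a_{ij} x_j = \lambda x_i,
\]
which I would rearrange as
\[
(\lambda - a_{ii}) x_i \;=\; \sum_{j \neq i} a_{ij} x_j.
\]
Taking absolute values and applying the triangle inequality together with the maximality of $|x_i|$ gives
\[
|\lambda - a_{ii}|\,|x_i| \;\leq\; \sum_{j \neq i} |a_{ij}|\,|x_j| \;\leq\; \Bigl(\sum_{j \neq i} |a_{ij}|\Bigr) |x_i| \;=\; R_i\, |x_i|.
\]
Dividing by $|x_i| > 0$ yields $|\lambda - a_{ii}| \leq R_i$, so $\lambda \in D(a_{ii}, R_i)$.

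For the ``as a consequence'' statement, I would simply apply the triangle inequality: if $|\lambda - a_{ii}| \leq R_i$, then $|\lambda| \leq |a_{ii}| + R_i$. Taking the maximum over $i$ and then over all eigenvalues $\lambda$ yields $\rho(A) \leq \max_i (|a_{ii}| + R_i)$. There is no real obstacle here; the only subtle point is remembering to pick $i$ so that $|x_i|$ is \emph{maximal} among the entries of the eigenvector (and noting $|x_i| > 0$), which is exactly what makes the key inequality $\sum_{j \neq i}|a_{ij}||x_j| \leq R_i |x_i|$ valid.
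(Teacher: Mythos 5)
Your proof is correct and is the standard textbook argument for the Gershgorin Circle Theorem. The paper itself does not prove this lemma; it simply cites the classical reference \cite{Ger1931} and moves directly to its applications (e.g., Proposition \ref{xxpro4.9} and Theorem \ref{xxthm6.11}), so there is no proof in the paper to compare against. Your argument---choosing the index $i$ where the eigenvector attains its maximum modulus, isolating the $i$-th row of $Ax=\lambda x$, and applying the triangle inequality---is exactly the canonical derivation, and the deduction of the spectral-radius bound $\rho(A)\leq \max_i(|a_{ii}|+R_i)$ from the disc containment via $|\lambda|\leq |\lambda - a_{ii}| + |a_{ii}|$ is likewise sound.
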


\begin{proposition}
\label{xxpro4.9}
Suppose $\mathcal{T}$ is a triangulated category
satisfying
\begin{enumerate}
\item[(a)]
$\mathcal{T}$ is $\Hom$-finite and hence Krull-Schmidt,
\item[(b)]
there are objects $\{X_1,\cdots,X_N\}$ such that every
indecomposable object in $\mathcal{T}$ is of the form
$X_i[m]$ for some $1\leq i\leq N$ and $m\in \mathbb{Z}$, and
\item[(c)]
for every two indecomposable objects $X,Y$ in $\mathcal{T}$,
$\Hom_{\mathcal{T}}(X,Y[m])=0$ for $|m|\gg 0$.
\end{enumerate}
Then the following hold.
\begin{enumerate}
\item[(1)]
$\fpd(\sigma)<\infty$ for every endofunctor $\sigma$ of
$\mathcal{T}$.
\item[(2)]
If ${\mathcal C}$ is a monoidal triangulated category acting
on ${\mathcal T}$, then $\fpd(M)<\infty$ for
every object $M\in {\mathcal C}$.
\end{enumerate}
\end{proposition}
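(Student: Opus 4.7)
The plan is to bound the spectral radius $\rho(A(\phi,\sigma))$ uniformly in the finite brick set $\phi$, so that the supremum defining $\fpd(\sigma)$ is finite. I would use the column-sum estimate $\rho(A)\le\max_k\sum_j a_{jk}$, obtained by applying the Gershgorin bound of Lemma \ref{xxlem4.8} to the transpose of $A$ (which has the same spectrum, and whose row sums are the column sums of $A$); then I would exploit the commutation of $\sigma$ with the suspension $[1]$ to reduce everything to the finite amount of data supplied by hypotheses (b) and (c).

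More precisely, when $\sigma$ is a triangulated endofunctor (which is the relevant case for (2), where $\sigma=M\odot-$ commutes with $[1]$ by Convention \ref{xxcon1.2}(3)), I would write each brick $Y_k=X_{i_k}[m_k]$ and decompose $\sigma(X_i)=\bigoplus_{s,\ell}X_s[\ell]^{n_{i,s,\ell}}$ via Krull--Schmidt. The $k$-th column sum of $A(\phi,\sigma)$ then becomes
\[
\sum_{j=1}^{n}\dim\Hom(Y_j,\sigma(Y_k))=\sum_{s,\ell}n_{i_k,s,\ell}\sum_{j=1}^{n}\dim\Hom\bigl(X_{i_j},X_s[\ell+m_k-m_j]\bigr).
\]
Because $\phi$ is a brick set, the pairs $(i_j,m_j)$ are pairwise distinct, so the inner sum is bounded by $\sum_{i=1}^{N}\sum_{r\in\mathbb{Z}}\dim\Hom(X_i,X_s[r])=:D_s$, a quantity that is finite by (c) and independent of $\phi$, $k$, and $\ell$. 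Setting $E_i:=\sum_{s,\ell}n_{i,s,\ell}D_s$, the $k$-th column sum is at most $E_{i_k}\le\max_{1\le i\le N}E_i$, a single constant depending only on $\sigma$. Therefore $\rho(A(\phi,\sigma))\le\max_{i}E_i$ for every $\phi$, giving $\fpd(\sigma)<\infty$ and proving (1); part (2) is the immediate specialization to $\sigma=M\odot-$.

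The hard part is recognizing the right source of uniformity: the individual objects $\sigma(Y_k)$ can grow as $Y_k$ ranges over the infinitely many indecomposables of $\mathcal{T}$, so a priori neither the size $n$ of the brick set nor the column sums of $A(\phi,\sigma)$ are bounded. What saves the argument is that combining the shift-equivariance $\sigma(X_i[m])=\sigma(X_i)[m]$ with the brick-set condition prevents any ``coordinate'' $(i,m)$ from being used twice, after which hypothesis (c) collapses each otherwise-infinite series $\sum_{r}\dim\Hom(X_i,X_s[r])$ to a finite number, and hypothesis (b) caps the number of indices $i$ at $N$.
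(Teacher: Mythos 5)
Your proof is correct and rests on the same tool as the paper: the Gershgorin bound of Lemma \ref{xxlem4.8}, driven by hypotheses (b) and (c). The packaging differs in a pleasant way. The paper embeds $\phi$ into a large ordered superset $\Phi=\bigcup_{j=-D}^{D-1}\{X_1[j],\dots,X_N[j]\}$, bounds the entries of $A(\Phi,\sigma)$ uniformly by some $\alpha$, exhibits a band structure (entries vanish once $|i-j|$ exceeds a fixed threshold), invokes the monotonicity $\rho(A(\phi,\sigma))\le\rho(A(\Phi,\sigma))$ for nonnegative matrices, and then reads off a uniform bound from the Gershgorin row sums. You instead bound the column sums of $A(\phi,\sigma)$ directly: the distinctness of the shift-classes $(i_j,m_j)$ in a brick set, combined with hypothesis (c), collapses each column sum to a quantity controlled by $D_s=\sum_i\sum_r\dim\Hom(X_i,X_s[r])$ and the finitely many multiplicities $n_{i,s,\ell}$, giving a bound depending only on $\sigma$. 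This avoids the auxiliary set $\Phi$ and the band-matrix bookkeeping, and is arguably cleaner. One further point in your favour: both arguments require $\sigma$ to commute with the suspension, and you say so explicitly. The paper uses this silently when it passes from $\sigma(X_{s_j}[w_j])$ to a direct summand of $X[w_j]$; as literally stated, part (1) asserts the bound for \emph{every} endofunctor, and neither your argument nor the paper's establishes that (indeed, a non-shift-commuting endofunctor can make $\fpd$ infinite even in these categories). For part (2), where $\sigma=M\odot-$, shift-equivariance is guaranteed by Convention \ref{xxcon1.2}(3), and your argument, like the paper's, is complete.
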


\begin{proof}
Let $\sigma$ be an endofunctor of $\mathcal{T}$. Since there are
only finitely many $X_i$ in hypothesis (b),
we can assume that every $\sigma(X_i)$ is a direct summand
of
\begin{equation}
\label{E4.9.1}\tag{E4.9.1}
X=\left(\bigoplus_{i=1}^{N} \bigoplus_{j=-\delta}^{\delta-1}
X_i[j]\right)^{\oplus \xi}
\end{equation}
for some fixed $\delta$ and $\xi$.

We make some definitions. Let
$$
\begin{aligned}
\alpha&=\max\{\dim \Hom_{\mathcal{T}}(X_i[s],X)\mid \;\forall \;\; s,i\},\\
\gamma&=\max\{|s| \mid \; \Hom_{\mathcal{T}}(X_i[s],X)\neq 0
{\text{ for some $i$}}\}.
\end{aligned}
$$

For any given finite brick set $\phi$, it is always is a subset of
$$\Phi:=\bigcup_{j=-D}^{D-1} \{X_1[j],\cdots,X_N[j]\}$$
for some large $D\gg 0$. Since $\phi$ is a subset of $\Phi$,
we have
$$\rho(A(\phi, \sigma))
\leq \rho(A(\Phi, \sigma)).$$
By Definition \ref{xxdef1.3}(4), it is enough to show that
$\rho(A(\Phi, \sigma))$ is uniformly bounded
on $\Phi$ (for each fixed $X$ as given in \eqref{E4.9.1}).
For the next calculation we make a linear order on the
objects in $\Phi$ as
\begin{align}
\label{E4.9.2}\tag{E4.9.2}
\Phi&=\{X_1[-D],\cdots,X_N[-D]\} \cup
\{X_1[-D+1],\cdots,X_N[-D+1]\} \cup \\
&\qquad \cdots \cup
\{X_1[D-2],\cdots,X_N[D-2]\} \cup
\{X_1[D-1],\cdots,X_N[D-1]\} \notag
\end{align}
and write is as $\Phi=\{Y_1,\cdots, Y_{2ND}\}$.
Write the adjacency matrix $A(\Phi, \sigma)$
as $(a_{ij})$. For each pair $(i,j)$, by definition,
$$a_{ij}=\dim \Hom_{\mathcal T}(X_{s_i}[w_i], \sigma(X_{s_j}[w_j]))
\leq \dim \Hom_{\mathcal T}(X_{s_i}[w_i], X[w_j])
\leq \alpha,$$
for some $s_i,s_j,w_i,w_j$; and by the ordering in \eqref{E4.9.2},
we obtain
$$a_{ij}=0 \quad {\text{if $|i-j|>2N\delta +\gamma+2.$}}$$
Then each $R_i$ in the Lemma \ref{xxlem4.8} is bounded
by $(2N\delta +\gamma+2)\alpha$.
By Lemma \ref{xxlem4.8} (Gershgorin Circle Theorem),
there is a bound of $\rho(A(\Phi, \sigma))$
which is independent of $D$. Since every finite brick set
$\phi$ is a subset of $\Phi$ for some large $D$,
$\rho(A(\phi, \sigma))$ has a bound
that is independent of $\phi$. Therefore
$\fpd(\sigma)$ is finite as desired.
\end{proof}

We will use the following special case.
Recall that ${\mathcal A}=A-\Modfd$
and that ${\mathcal T}=D^b({\mathcal A})$.

\begin{corollary}
\label{xxcor4.10}
Let $A$ be a finite dimensional hereditary algebra that
is of finite representation type. Then every
monoidal triangulated structure on $\mathcal{T}$ is
$\fpd$-finite.
\end{corollary}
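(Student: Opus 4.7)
The plan is to deduce this directly from Proposition \ref{xxpro4.9}, applied with $\mathcal{T} = D^b(\mathcal{A})$ acting on itself via the given monoidal triangulated structure. So the task reduces to checking that hypotheses (a), (b), (c) of Proposition \ref{xxpro4.9} hold for $\mathcal{T}$, independently of which monoidal triangulated structure is placed on it (the proposition's hypotheses concern only the underlying triangulated/$\Bbbk$-linear structure).

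First I would verify (a): since $A$ is finite-dimensional, $\mathcal{A} = A\text{-}\Modfd$ is $\Hom$-finite, and the standard computation $\Hom_{D^b(\mathcal{A})}(X,Y) = \bigoplus_n \Ext^n_{\mathcal{A}}(H^{-n}(\cdots),\cdots)$ together with boundedness and $\Hom$-finiteness of $\mathcal{A}$ shows $\mathcal{T}$ is $\Hom$-finite; hence Krull--Schmidt. Next, for (b), I use the assumption that $A$ is hereditary together with the assumption that $A$ is of finite representation type: by Morita equivalence and Gabriel's theorem (or directly from \cite[Section 2.5]{Ke2007} cited in the proof of Lemma \ref{xxlem4.4}), every indecomposable object in $D^b(\mathcal{A})$ is of the form $X[m]$ for some indecomposable $X \in \mathcal{A}$ and some $m \in \mathbb{Z}$; finite representation type then guarantees finitely many isomorphism classes $X_1,\dots,X_N$ of indecomposables of $\mathcal{A}$, giving the list required in (b). Finally, for (c), hereditariness forces $\Ext^n_{\mathcal{A}}(X,Y)=0$ for $n \geq 2$, so for indecomposables $X_i, X_j$ of $\mathcal{A}$,
\[
\Hom_{\mathcal{T}}(X_i[s], X_j[t]) \;=\; \Ext^{t-s}_{\mathcal{A}}(X_i, X_j)
\]
vanishes whenever $t-s \notin \{0,1\}$; applied to arbitrary indecomposables $X_i[s]$ and $X_j[t]$ of $\mathcal{T}$, this gives $\Hom_{\mathcal{T}}(U, V[m]) = 0$ for $|m| \gg 0$, which is exactly (c).

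With (a), (b), (c) verified, I invoke Proposition \ref{xxpro4.9}(2): for any monoidal triangulated category $\mathcal{C}$ acting on $\mathcal{T}$, every $M \in \mathcal{C}$ has $\fpd(M) < \infty$. Specializing $\mathcal{C} = \mathcal{T}$ with the given monoidal triangulated structure, acting on itself by $\otimes$, yields $\fpd(M) < \infty$ for every $M \in \mathcal{T}$; by Definition \ref{xxdef0.2}(1) this says $\mathcal{T}$ is $\fpd$-finite. Since the argument made no use of the particular monoidal triangulated structure, this holds for every such structure.

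I do not anticipate a real obstacle: the only nontrivial ingredient is the decomposition of indecomposables in $D^b(\mathcal{A})$ into shifts of indecomposables of $\mathcal{A}$, which is standard for hereditary algebras, and the rest is a direct application of Proposition \ref{xxpro4.9}. The spectral-radius bookkeeping has already been handled inside that proposition, so no further calculation is required here.
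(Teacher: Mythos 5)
Your proof is correct and follows essentially the same route as the paper: verify hypotheses (a), (b), (c) of Proposition \ref{xxpro4.9} using Hom-finiteness of $A\text{-}\Modfd$, the hereditary decomposition of indecomposables in $D^b(\mathcal{A})$ as shifts $X_i[s]$ of the finitely many indecomposable modules, and vanishing of $\Hom_{\mathcal{T}}(X_i, X_j[m])$ for $m\neq 0,1$, then apply Proposition \ref{xxpro4.9}(2) with $\mathcal{C}=\mathcal{T}$ acting on itself. You spell out hypothesis (a) more explicitly than the paper does, but the argument is the same.
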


\begin{proof} Since $A$ is of finite type, we can list
all indecomposable left $A$-modules $\{X_1,\cdots,X_N\}$.
Since $A$ is hereditary, every indecomposable
object in $\mathcal{T}$ is of the form $X_i[s]$
for some $1\leq i\leq N$ and $s\in \mathbb{Z}$
\cite[Lemma 3.3]{CGWZZZ2017}. Finally,
since $A$ is hereditary, then
$\Hom_{\mathcal{T}}(X_i, X_j[m])=0$ for $m\neq 0,1$.
Thus $\mathcal{T}$ satisfies hypotheses (a,b,c) in
Proposition \ref{xxpro4.9}. Then the assertion follows from
Proposition \ref{xxpro4.9}(2) by setting ${\mathcal C}=
{\mathcal T}$ and $\odot=\otimes$.
\end{proof}

\begin{lemma}
\label{xxlem4.11}
Let $A$ be a finite dimensional hereditary algebra.
Let ${\mathcal C}$ be a monoidal abelian category
satisfying Definition \ref{xxdef3.1}(a,b,c).
Suppose that $\mathcal{C}$ acts on ${\mathcal A}$
via $\odot$. Let $\odot_{D}$ be the induced action
of $D^b({\mathcal C})$ on ${\mathcal T}$.
Let $M$ be an object in $\mathcal{C}$, also viewed
as an object in $D^b({\mathcal C})$.
\begin{enumerate}
\item[(1)]
If $n\neq 0,1$, then $\fpd(M[n]\odot_{D}-)=0$.
\item[(2)]
$\fpd(M\odot_{D}-)=\fpd(M\odot -)$.
\end{enumerate}
\end{lemma}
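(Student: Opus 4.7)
The plan is to reduce everything to a shift-graded analysis of adjacency matrices. Two facts drive the argument: since $A$ is hereditary, every indecomposable in $\mathcal{T} = D^b(\mathcal{A})$ is a shift $X[m]$ of some indecomposable $X \in \mathcal{A}$, and $\Hom_{\mathcal{T}}(X, Y[k]) = \Ext^k_{\mathcal{A}}(X, Y)$ vanishes unless $k \in \{0,1\}$. Combined with the exactness of the action in both variables (which gives $M[n] \odot_D X[m] = (M \odot X)[n+m]$), the adjacency matrices under consideration will be either nilpotent (for part (1)) or block upper triangular (for part (2)), and each claim follows from a standard spectral-radius computation.

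For part (1), fix any finite brick set $\phi = \{X_1[m_1], \dots, X_r[m_r]\}$ in $\mathcal{T}$ with each $X_i$ indecomposable in $\mathcal{A}$. The $(i,j)$-entry of $A(\phi, M[n] \odot_D -)$ equals $\dim \Ext^{n+m_j-m_i}_{\mathcal{A}}(X_i, M \odot X_j)$, which vanishes unless $m_i - m_j \in \{n-1, n\}$. When $n > 1$ this forces $m_i > m_j$; when $n < 0$ it forces $m_i < m_j$. Ordering $\phi$ by non-decreasing (respectively, non-increasing) $m$-value therefore renders the matrix strictly triangular, hence nilpotent, hence of spectral radius zero. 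Passing to the supremum over $\phi$ yields $\fpd(M[n] \odot_D -) = 0$.

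For part (2), the inequality $\fpd(M \odot_D -) \geq \fpd(M \odot -)$ is immediate: a finite brick set in $\mathcal{A}$ is also a brick set in $\mathcal{T}$ via the full embedding at degree zero, and the two adjacency matrices coincide. For the reverse inequality I take an arbitrary finite brick set $\phi = \{X_i[m_i]\}$ in $\mathcal{T}$ and partition it by shift, setting $\phi_m^{un} := \{X_i : m_i = m\}$, which is a brick set in $\mathcal{A}$. The vanishing estimate above, applied with $n = 0$, shows that when $\phi$ is ordered by non-decreasing $m_i$ the matrix $A(\phi, M \odot_D -)$ is block upper triangular, with diagonal blocks exactly $A(\phi_m^{un}, M \odot -)$ and the only non-zero off-diagonal blocks sitting in the ``shift by $+1$'' slots (encoding $\Ext^1$ contributions). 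Since the spectral radius of a block upper triangular matrix is the maximum of the spectral radii of its diagonal blocks, $\rho(A(\phi, M \odot_D -)) = \max_m \rho(A(\phi_m^{un}, M \odot -)) \leq \fpd(M \odot -)$, and taking the supremum over $\phi$ finishes the proof.

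The main obstacle is the bookkeeping needed to identify the diagonal blocks precisely and to confirm that, under the chosen ordering, all remaining non-zero entries lie strictly above the block diagonal. Once the matrix is rewritten in this shift-graded form the rest is standard linear algebra, and neither step uses anything beyond heredity of $A$ plus exactness of the action.
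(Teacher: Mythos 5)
Your proof is correct and follows essentially the same shift-graded approach as the paper: use heredity to write each brick object as $X[m]$, compute each adjacency entry as $\dim\Ext^{n+m_j-m_i}_{\mathcal{A}}(X_i,M\odot X_j)$, and conclude strict (block-)triangularity by ordering along the shift. The only surface difference is that the paper phrases the resulting matrix as lower rather than upper triangular, which is just the opposite ordering convention and changes nothing.
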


\begin{proof}
(1) Suppose $n\geq 2$.
Let $\phi$ be a (finite) brick set. Since $\mathcal{A}$ is
hereditary, every indecomposable object is of the form $X[m]$. Then
we can write $\phi=\bigcup_{\lambda \in \mathbb{Z}} \phi_{\lambda}$ where
$\phi_{\lambda}$ is either empty or
$\{X_{\lambda,1}[\lambda],X_{\lambda,2}[\lambda],\cdots,
X_{\lambda,t_\lambda}[\lambda]\}$.
Since $\mathcal{A}$ is hereditary,
$$\Hom_{{\mathcal{T}}}(X_{\lambda,s}[\lambda], M[n]\odot_{D}
X_{\delta,s'}[\delta])=
\Hom_{{\mathcal{T}}}(X_{\lambda,s}[\lambda], (M\odot X_{\delta,s'})[n+\delta])
=0$$
for all $\lambda\leq \delta$.
Then $A(\phi, M[n]\odot_{D}-)$ is strictly upper
triangular. Therefore $\rho(A(\phi, M[n]\odot_{D}-))=0$.
As a consequence the assertion follows.

The proof for $n<0$ is similar.

(2) Let $\phi$ be a brick set as in part (1).
Similar to the proof of part (1), also see
\cite[Lemma 6.1]{CGWZZZ2017}, we obtain that
$A(\phi, M\odot_{D}-)$ is a block lower triangular
matrix. So we only need to consider the case that
$\phi=\{X_1[d],X_2[d],\cdots X_t[d]\}$ for the same
$d$. In this case, $A(\phi, M\odot_{D}-)
=A(\phi[-d],M\odot-)$. Therefore the assertion
follows.
\end{proof}

Now we are ready to prove Theorem \ref{xxthm0.4}.
We will use the notation introduced in Theorem
\ref{xxthm4.7} and Lemma \ref{xxlem4.11}.

\begin{theorem}
\label{xxthm4.12}
Let $A$ be a finite dimensional hereditary algebra
and let ${\mathcal A}=A-\Modfd$.
Let ${\mathcal C}$ be a monoidal abelian category
satisfying Definition \ref{xxdef3.1}(a,b,c).
Suppose that there is an action of $\mathcal{C}$
on ${\mathcal A}$ that is discrete. Then the following
are equivalent:
\begin{enumerate}
\item[(a)]
$A$ is of finite representation type,
\item[(b)]
$\fpd(M)<\infty$ for every irreducible object $M\in \mathcal{C}$,
\item[(c)]
$\fpd(M)<\infty$ for every indecomposable object $M\in \mathcal{C}$,
\item[(d)]
$\fpd(M)<\infty$ for every object $M\in \mathcal{C}$,
\item[(e)]
$\fpd(M \odot_{D} -)<\infty$ for every indecomposable object
$M\in D^b(\mathcal{C})$,
\item[(f)]
$\fpd(M \odot_{D} -)<\infty$ for every object $M\in D^b(\mathcal{C})$.
\end{enumerate}
Suppose $A$ is the path algebra $\Bbbk Q$ for some finite
quiver $Q$. Then any of
conditions {\rm{(a)}} to {\rm{(f)}} is equivalent to
\begin{enumerate}
\item[(g)]
$Q$ is a finite union of quivers of type $\mathbb{ADE}$.
\end{enumerate}
\end{theorem}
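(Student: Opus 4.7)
The plan is to piece this together from Theorem \ref{xxthm4.7} plus two extra ingredients: a finiteness statement at the derived level that packages condition (f), and a lifting/descent comparison between $\odot$ and $\odot_D$ that connects (e) back to (c). Concretely, I would first invoke Theorem \ref{xxthm4.7} to obtain the equivalences (a) $\Leftrightarrow$ (b) $\Leftrightarrow$ (c) $\Leftrightarrow$ (d), so that the real work is limited to inserting (e) and (f) into this chain.

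For (a) $\Rightarrow$ (f), I would essentially rerun the proof of Corollary \ref{xxcor4.10}, but with the action of $D^b(\mathcal{C})$ on $\mathcal{T}$ replacing the self-action. Since $A$ has finite representation type and is hereditary, the category $\mathcal{T}=D^b(\mathcal{A})$ satisfies the three hypotheses of Proposition \ref{xxpro4.9}: $\Hom$-finiteness and Krull-Schmidt property come from finite-dimensionality of $A$; indecomposables are of the form $X_i[m]$ for $X_i$ ranging over the finite list of indecomposable $A$-modules because $A$ is hereditary; and the vanishing of $\Hom_{\mathcal{T}}(X,Y[m])$ for $|m|\gg 0$ holds for the same reason. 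Then Proposition \ref{xxpro4.9}(2) applied with the monoidal triangulated category $D^b(\mathcal{C})$ acting on $\mathcal{T}$ via $\odot_D$ yields (f). The implication (f) $\Rightarrow$ (e) is trivial.

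For (e) $\Rightarrow$ (c), let $M\in\mathcal{C}$ be indecomposable, viewed inside $D^b(\mathcal{C})$ in degree zero; it remains indecomposable there. By Lemma \ref{xxlem4.11}(2), $\fpd(M\odot_D -)=\fpd(M\odot -)$, and the right-hand side is $\fpd(M)$. So hypothesis (e) specializes to (c), which closes the loop through Theorem \ref{xxthm4.7}. The final clause, that (a)--(f) are equivalent to (g) when $A=\Bbbk Q$, is immediate from Gabriel's theorem: a path algebra of a finite acyclic quiver has finite representation type if and only if every connected component of $Q$ is of type $\mathbb{ADE}$.

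I do not foresee a genuine obstacle; the only subtlety is being careful that the ``action'' framework in Proposition \ref{xxpro4.9}(2) actually applies to $D^b(\mathcal{C})\curvearrowright\mathcal{T}$, i.e.\ that the extended action $\odot_D$ is well-defined, biexact in the triangulated sense, and commutes with suspension in each argument. This is essentially formal from the fact that $\odot$ is biexact on $\mathcal{C}\times\mathcal{A}$ and both $\mathcal{C}$ and $\mathcal{A}$ have finite global dimension (the latter because $A$ is hereditary), so the derived functor exists and preserves indecomposables up to shift in the required manner; I would record this briefly rather than verify every coherence.
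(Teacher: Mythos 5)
Your proof follows the paper's argument essentially line by line: Theorem~\ref{xxthm4.7} for (a)--(d), Proposition~\ref{xxpro4.9}(2) via the mechanism of Corollary~\ref{xxcor4.10} for (a)$\Rightarrow$(f), trivially (f)$\Rightarrow$(e), and Lemma~\ref{xxlem4.11}(2) for (e)$\Rightarrow$(c), with Gabriel's theorem for (g). Your extra remark about verifying that $\odot_D$ genuinely defines a triangulated action is a sensible caution, but otherwise this is the same proof.
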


\begin{proof}
By Theorem \ref{xxthm4.7}, the
first four conditions are equivalent.

(a) $\Longrightarrow$ (f): This follows from
Proposition \ref{xxpro4.9} and the proof of Corollary
\ref{xxcor4.10}.

(f) $\Longrightarrow$ (e): Clear.

(e) $\Longrightarrow$ (c): This follows from Lemma
\ref{xxlem4.11}(2).
\end{proof}

Clearly Theorem \ref{xxthm0.4} is a special case of Theorem
\ref{xxthm4.12}.

\section{$mtt$-structures of a monoidal triangulated category}
\label{xxsec5}
First we recall the definition on a $t$-structure on a triangulated
category. The notion of a $t$-structure was introduced by
Beilinson-Bernstein-Deligne in \cite{BBD1981}. We make a
small change in the definition below.

\begin{definition}
\label{xxdef5.1}
Let $\mathcal{T}$ be a triangulated category.
\begin{enumerate}
\item[(1)]
A {\it $t$-structure} on $\mathcal{T}$ is a pair of full
subcategories $(\mathcal{T}^{\leq 0},
\mathcal{T}^{\geq 0})$ satisfying the following conditions.
\begin{enumerate}
\item[(1a)]
$\mathcal{T}^{\leq 0}\subseteq \mathcal{T}^{\leq 1}$ and
$\mathcal{T}^{\geq 0}\supseteq \mathcal{T}^{\geq 1}$ where
we use notation $\mathcal{T}^{\leq n}=\mathcal{T}^{\leq 0}[-n]$
and $\mathcal{T}^{\geq n}=\mathcal{T}^{\geq 0}[-n]$.
\item[(1b)]
If $M\in \mathcal{T}^{\leq 0}$ and $N\in \mathcal{T}^{\geq 1}$,
then $\Hom_{\mathcal{T}}(M,N)=0$.
\item[(1c)]
For any object $X\in \mathcal{T}$, there is a distinguished
(exact) triangle
$$M\to X\to N\to M[1]$$
with $M\in \mathcal{T}^{\leq 0}$ and $N\in \mathcal{T}^{\geq 1}$.
\end{enumerate}
\item[(2)]
The {\it heart} of the $t$-structure is the
full subcategory
$$\mathcal{T}^{\geq 0}\cap \mathcal{T}^{\leq 0}$$
which is denoted by $\mathcal{H}$ or $\mathcal{H}(\mathcal{T})$.
\item[(3)]
\cite[p.1427]{CR2018}
A $t$-structure is called {\it bounded} if for each $X\in
\mathcal{T}$, there exist $m\leq n$
such that $X\in \mathcal{T}^{\leq n} \cap \mathcal{T}^{\geq m}$.
\item[(4)]
\cite[p.1427]{CR2018}
A bounded $t$-structure is called {\it hereditary} if
$\Hom_{\mathcal{T}}(X, Y[n])=0$ for $n\geq 2$ and
$X,Y\in \mathcal{H}$.
\end{enumerate}
\end{definition}

As a classical example, if $\mathcal{T}$ is the derived category
$D^b(A-\Modfd)$, there is a natural $t$-structure on
$\mathcal{T}$ by setting $\mathcal{T}^{\leq 0}$ to be the complexes
concentrated in degrees less than or equal to 0 (and similarly for
$\mathcal{T}^{\geq 0}$). In this case the heart of this
$t$-structure is $A-\Modfd$.

Note that hereditary $t$-structures are very special. Even for the
path algebra of a quiver $Q$ of type $\mathbb{A}_3$, there is a
$t$-structure in $D^b(\Repr(Q))$ that is not hereditary, see
\cite{KV1988} for a classification of $t$-structures of $D^b(\Repr(Q))$
of a quiver of Dynkin type.

We would like to introduce a version of the $t$-structure in a monoidal
triangulated category. We use $mtt$ for ``monoidal triangulated $t$''
in the next definition.

\begin{definition}
\label{xxdef5.2}
Let $\mathcal{T}$ be a monoidal triangulated category
in parts (1,2,3) and a triangulated category in part (4).
\begin{enumerate}
\item[(1)]
A $t$-structure $(\mathcal{T}^{\leq 0},\mathcal{T}^{\geq 0})$
on $\mathcal{T}$ is called an {\it $mtt$-structure}
if the following conditions hold.
\begin{enumerate}
\item[(a)]
$\mathcal{T}^{\leq 0}\otimes \mathcal{T}^{\leq 0}\subseteq
\mathcal{T}^{\leq 0}$ and
$\mathcal{T}^{\leq 0}\otimes \mathcal{T}^{\leq 0}\not\subseteq
\mathcal{T}^{\leq -1}$.
\item[(b)]
Both $\mathcal{T}^{\leq 0}$ and $\mathcal{T}^{\geq 0}$
are closed under taking direct summands.
\item[(c)]
There is an integer $D\geq 0$ such that
$\mathcal{T}^{\geq D}\otimes \mathcal{T}^{\geq D}\subseteq
\mathcal{T}^{\geq D}$.
\end{enumerate}
\item[(2)]
The minimal integer $D$ in condition (c) is called the
{\it deviation} of the $mtt$-structure of $\mathcal{T}$.
\item[(3)]
The {\it deviation} of $(\mathcal{T}, {\bf 1}, \otimes)$
is defined to be
$$D_{\otimes}({\mathcal T})=
\inf \{ {\text{deviations of all possible $mtt$-structures of
$(\mathcal{T}, {\bf 1}, \otimes)$}}\}.$$
\item[(4)]
Suppose ${\mathcal T}$ is a triangulated category. The
{\it upper deviation} of ${\mathcal T}$ is defined to be
$$UD({\mathcal T})=
\sup \{ D_{\otimes}({\mathcal T})
\mid {\text{all possible monoidal triangulated structures
on ${\mathcal T}$}}\}.$$
The
{\it lower deviation} of ${\mathcal T}$ is defined to be
$$LD({\mathcal T})=
\inf \{ D_{\otimes}({\mathcal T})
\mid {\text{all possible monoidal triangulated structures
on ${\mathcal T}$}}\}.$$
\end{enumerate}
\end{definition}

\begin{example}
\label{xxex5.3} We give two classical examples.
\begin{enumerate}
\item[(1)]
If $A$ is a finite dimensional weak Hopf algebra (or a weak
bialgebra), then $A-\Modfd$ has a natural monoidal abelian
structure, and consequently,
$\mathcal{T}:=D^b(A-\Modfd)$ has an induced
monoidal triangulated structure. It is clear that
${\mathcal T}$ has a canonical $mtt$-structure
by setting $\mathcal{T}^{\leq 0}$ (respectively,
$\mathcal{T}^{\geq 0}$) to be the complexes concentrated in
degrees less than or equal to 0 (respectively, greater than
or equal to 0). In this case the deviation of the $mtt$-structure
is $0$. If $A$ is hereditary as an algebra, then the above
$t$-structure is hereditary.

By definition, $D_{\otimes}(\mathcal{T})=0$ when we consider
the monoidal triangulated structure given above. As a consequence,
$LD({\mathcal T})=0$ when ${\mathcal T}$ is considered as a
triangulated category. A special case is $LD(D^b(\Repr(Q)))=0$
for all finite acyclic quivers $Q$.
\item[(2)]
If $\mathbb{X}$ is a smooth projective scheme of dimension $d$,
then ${\mathcal T}:=D^b(coh(\mathbb{X}))$ has a canonical $mtt$-structure
by setting $\mathcal{T}^{\leq 0}$ (respectively,
$\mathcal{T}^{\geq 0}$) to be the complexes concentrated in
degrees less than or equal to 0 (respectively, greater than
or equal to 0).
If $\mathbb{X}$ is of dimension 1, then the above $t$-structure
is hereditary.

Note that the deviation of the canonical $mtt$-structure of
${\mathcal T}$ is at most $d$.
By definition, $D_{\otimes}(\mathcal{T})\leq d$ with the natural
monoidal triangulated structure. As a consequence,
$LD({\mathcal T})\leq d$ when $\mathcal{T}$ is considered as
a triangulated category.
\end{enumerate}
\end{example}

\begin{lemma}
\label{xxlem5.4}
Let ${\mathcal T}$ be a monoidal triangulated category
with an $mtt$-structure $(\mathcal{T}^{\leq 0},\mathcal{T}^{\geq 0})$
of deviation zero. Suppose that
$(\mathcal{T}^{\leq 0},\mathcal{T}^{\geq 0})$
is a hereditary $t$-structure of $\mathcal{T}$.
Then the heart of the $mtt$-structure is a monoidal
abelian category.
\end{lemma}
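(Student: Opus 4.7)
The plan is to verify the three defining conditions of a monoidal abelian category (Definition \ref{xxdef1.1}(2)) for the heart $\mathcal{H} := \mathcal{T}^{\leq 0} \cap \mathcal{T}^{\geq 0}$, equipped with the restriction of the ambient tensor product on $\mathcal{T}$. The $\Bbbk$-linear abelian structure on $\mathcal{H}$ is the classical theorem of Beilinson--Bernstein--Deligne for the heart of any $t$-structure, so condition (2a) is free.

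For closure of $\mathcal{H}$ under $\otimes$, take $X, Y \in \mathcal{H}$. Since $X, Y \in \mathcal{T}^{\leq 0}$, condition (a) of the $mtt$-structure (Definition \ref{xxdef5.2}) gives $X \otimes Y \in \mathcal{T}^{\leq 0}$; since $X, Y \in \mathcal{T}^{\geq 0}$ and the deviation is zero, condition (c) with $D = 0$ gives $X \otimes Y \in \mathcal{T}^{\geq 0}$. Hence $X \otimes Y \in \mathcal{H}$. For the unit $\mathbf{1}$, the fact that $\otimes$ commutes with the suspension in each argument promotes (a) and (c) to $\mathcal{T}^{\leq n} \otimes \mathcal{T}^{\leq m} \subseteq \mathcal{T}^{\leq n+m}$ and $\mathcal{T}^{\geq n} \otimes \mathcal{T}^{\geq m} \subseteq \mathcal{T}^{\geq n+m}$ for all $n, m$. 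Applying these to $\mathbf{1} \otimes \mathbf{1} \cong \mathbf{1}$, with $k_0$ the minimal integer such that $\mathbf{1} \in \mathcal{T}^{\leq k_0}$ and $m_0$ the maximal integer such that $\mathbf{1} \in \mathcal{T}^{\geq m_0}$ (both finite by boundedness of the $t$-structure), yields $k_0 \geq 0$ and $m_0 \leq 0$. Combining with the non-vanishing clause $\mathcal{T}^{\leq 0} \otimes \mathcal{T}^{\leq 0} \not\subseteq \mathcal{T}^{\leq -1}$ of (a) together with the unit isomorphism, one then forces $k_0 = m_0 = 0$, so $\mathbf{1} \in \mathcal{H}$. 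The associator and unitor of $\mathcal{T}$ restrict to $\mathcal{H}$, establishing (2b).

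For biexactness (condition (2c)), a short exact sequence $0 \to X' \to X \to X'' \to 0$ in $\mathcal{H}$ corresponds to an exact triangle $X' \to X \to X'' \to X'[1]$ in $\mathcal{T}$. Tensoring with any $Y \in \mathcal{H}$, using Definition \ref{xxdef1.1}(3c), yields an exact triangle whose three vertices $X' \otimes Y$, $X \otimes Y$, $X'' \otimes Y$ all lie in $\mathcal{H}$ by the previous paragraph. The standard fact that an exact triangle with all three vertices in the heart corresponds to a short exact sequence in the heart then gives exactness of $- \otimes Y$; a symmetric argument handles $X \otimes -$. The main obstacle is the unit step of the preceding paragraph: the shifted tensor containments alone yield only the bounds $m_0 \leq 0 \leq k_0$ on the cohomological support of $\mathbf{1}$, and forcing both inequalities to equalities requires genuine use of the non-vanishing clause in axiom (a). The hereditary hypothesis plays essentially no role in the present lemma itself; it is present as the natural setting supporting the downstream uniqueness and reconstruction results such as Theorem \ref{xxthm0.7}, Theorem \ref{xxthm0.5}, and Corollary \ref{xxcor0.6}.
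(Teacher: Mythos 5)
Your treatment of the closure and biexactness steps is correct and matches the paper in spirit. The gap is in the unit step, which is where the paper actually does all its work.

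The shifted tensor containments applied to $\mathbf{1}\otimes\mathbf{1}\cong\mathbf{1}$ give exactly, and only, the inequalities $m_0\leq 0\leq k_0$: if $\mathbf{1}\in\mathcal{T}^{\leq k_0}$ then $\mathbf{1}[k_0]\in\mathcal{T}^{\leq 0}$, so $\mathbf{1}[2k_0]=\mathbf{1}[k_0]\otimes\mathbf{1}[k_0]\in\mathcal{T}^{\leq 0}$, whence $\mathbf{1}\in\mathcal{T}^{\leq 2k_0}$ and minimality forces $k_0\geq 0$; dually for $m_0$. This says only that the cohomological amplitude of $\mathbf{1}$ brackets degree $0$. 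Your assertion that the non-vanishing clause $\mathcal{T}^{\leq 0}\otimes\mathcal{T}^{\leq 0}\not\subseteq\mathcal{T}^{\leq -1}$ then ``forces $k_0=m_0=0$'' is not an argument, and I do not see how to make it one: that clause produces a pair $X,Y\in\mathcal{T}^{\leq 0}$ with $H^0(X\otimes Y)\neq 0$, but it imposes no direct constraint on where $\mathbf{1}$ lies in the $t$-structure, and inserting $\mathbf{1}$ into $X\otimes Y\cong X\otimes\mathbf{1}\otimes Y$ only reproduces the same containments. So the key fact $\mathbf{1}\in\mathcal{H}$ is asserted rather than proved.

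The paper's route is genuinely different and does use the hereditary hypothesis, contrary to your closing remark. One takes the truncation triangle $M\to\mathbf{1}\to N\to M[1]$ with $M\in\mathcal{T}^{\leq 0}$, $N\in\mathcal{T}^{\geq 1}$, tensors with $X\in\mathcal{H}$, and observes that $\Hom(X,X\otimes N)=0$ (since $X\in\mathcal{T}^{\leq 0}$, $X\otimes N\in\mathcal{T}^{\geq 1}$ by deviation zero), so the triangle splits as $X\otimes M[1]\cong(X\otimes N)\oplus X[1]$ (equation (E5.4.2)). At this point the hereditary hypothesis is invoked via \cite[Lemma 2.1]{CR2018}, which says every object of $\mathcal{T}$ decomposes as a direct sum of shifts of heart objects; since (E5.4.2) is preserved by direct sums and shifts in $X$, it therefore holds for all $X\in\mathcal{T}$. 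Taking $X=\mathbf{1}$ yields $M[1]\cong N\oplus\mathbf{1}[1]$, so the map $\mathbf{1}\to N$ vanishes and $\mathbf{1}$ is a direct summand of $M\in\mathcal{T}^{\leq 0}$; axiom (b) then gives $\mathbf{1}\in\mathcal{T}^{\leq 0}$, and a dual argument gives $\mathbf{1}\in\mathcal{T}^{\geq 0}$. So the hereditary hypothesis is precisely what lets one pass from heart objects to $\mathbf{1}$ itself. (One could replace the direct-sum decomposition with an induction over the cohomological filtration using only boundedness, but boundedness is itself built into the definition of ``hereditary $t$-structure'' in Definition \ref{xxdef5.1}(3,4), and you implicitly use it too --- so the hypothesis is not idle in any reading.) You should replace the unsupported ``forcing'' step with an argument along these lines.
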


\begin{proof}
By \cite[Theorem 1.3.6]{BBD1981}, the heart $\mathcal{H}$
is an abelian category.

Since $\mathcal{T}$ is a monoidal triangulated category, there
is a unit object ${\bf 1}\in \mathcal{T}$. First we claim
that ${\bf 1}\in \mathcal{H}$. By definition, there is a
distinguished triangle
\begin{equation}
\label{E5.4.1}\tag{E5.4.1}
M\to {\bf 1}\to N\to M[1]
\end{equation}
where $M\in \mathcal{T}^{\leq 0}$ and $N\in \mathcal{T}^{\geq 1}$.
For any object $X\in \mathcal{H}$, since $X\otimes-$ is an exact
functor, $$X\otimes M\to X\to X\otimes N\to X\otimes M[1]$$
is a distinguished triangle. However, $X\otimes N\in
\mathcal{T}^{\geq 1}$ as the deviation is zero. Then
$\Hom(X,X\otimes N)=0$ by the definition of $t$-structure,
and
\begin{equation*}
\label{E5.4.2}\tag{E5.4.2}
X\otimes M[1]\cong (X\otimes N)\oplus X[1]=X\otimes (N\oplus {\bf 1}[1]).
\end{equation*}

By hypothesis the $mtt$-structure is hereditary. By
\cite[Lemma 2.1]{CR2018}, \eqref{E5.4.2} holds for all
$X\in \mathcal{T}$.
Take $X={\bf 1}$, then $M[1]\cong N\oplus {\bf 1}[1]$
and in \eqref{E5.4.1}, the morphism from {\bf 1} to $N$ is zero.
Hence ${\bf 1}$ is isomorphic to a direct summand of $M$,
which is in $\mathcal{T}^{\leq 0}$.

Similarly, for $Y\in \mathcal{T}^{\leq 0}$ and $f: Y\to {\bf 1}[-1]$,
there is a distinguished triangle:
$$Y \stackrel{f}{\longrightarrow} {\bf 1}[-1]\to Z\to Y[1].$$
Apply the exact functor $X\otimes-$ on the above triangle for
all $X\in \mathcal{H}$, and then we obtain $f=0$, i.e.
$\Hom(Y, {\bf 1}[-1])=0$ for all $Y\in \mathcal{T}^{\leq 0}$.
Therefore, ${\bf 1}\in \mathcal{T}^{\geq 0}$.
Finally, ${\bf 1}\in \mathcal{T}^{\geq 0}\cap \mathcal{T}^{\leq 0}=\mathcal{H}$.
Thus we proved the claim.

As for the tensor product bifunctor $\otimes$, since the deviation is zero,
$\mathcal{H}$ is closed under $\otimes$. Hence $\mathcal{H}$ is a monoidal
category with the induced tensor product $\otimes$. The exactness of
$\otimes$ in $\mathcal{H}$ follows from the exactness
of $\otimes$ in ${\mathcal T}$, see \cite[p.1426]{CR2018}.
\end{proof}

\begin{lemma}
\label{xxlem5.5}
Let $\mathbb{X}$ be a smooth projective curve
and let ${\mathcal T}$ be the monoidal triangulated
category $D^b(coh(\mathbb{X}))$.
\begin{enumerate}
\item[(1)]
The deviation of every hereditary $mtt$-structure
on ${\mathcal T}$ is positive.
\item[(2)]
For any finite dimensional weak bialgebra $A$,
$D^b(A-\Modfd)$ with canonical monoidal structure
is not isomorphic to
$\mathcal{T}$ as monoidal triangulated categories.
\end{enumerate}
\end{lemma}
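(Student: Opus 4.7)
The plan is to establish (1) by contradiction using the tensor behavior of skyscraper sheaves, and to deduce (2) by transporting the canonical $mtt$-structure along any hypothetical monoidal triangulated equivalence.

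For (1), suppose $\mathcal{T}=D^b(coh(\mathbb{X}))$ carries a hereditary $mtt$-structure $(\mathcal{T}^{\leq 0},\mathcal{T}^{\geq 0})$ of deviation zero. By Lemma \ref{xxlem5.4} the heart $\mathcal{H}$ is a monoidal abelian category whose unit is $\mathcal{O}_\mathbb{X}$. Because the structure is hereditary, $\mathcal{H}$ is a hereditary abelian category, and therefore every object of $\mathcal{T}$ splits as a direct sum of shifts of objects of $\mathcal{H}$; in particular each indecomposable object $Z$ of $\mathcal{T}$ admits a unique integer $n_Z$ with $Z[n_Z] \in \mathcal{H}$. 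Apply this to a skyscraper sheaf $k(x)$ at a closed point $x\in\mathbb{X}$ to obtain $n_x$. Using the locally free resolution $0\to\mathcal{O}_\mathbb{X}(-x)\to\mathcal{O}_\mathbb{X}\to k(x)\to 0$, one computes the key tensor identity
$$k(x)\otimes^L_{\mathcal{O}_\mathbb{X}} k(x) \;\cong\; k(x) \oplus k(x)[1].$$
Since the deviation is zero, $\mathcal{H}$ is closed under $\otimes^L$, so the tensor square $k(x)[n_x]\otimes^L k(x)[n_x] = k(x)[2n_x]\oplus k(x)[2n_x+1]$ lies in $\mathcal{H}$; by Definition \ref{xxdef5.2}(1b) both indecomposable summands then lie in $\mathcal{H}$. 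Uniqueness of $n_x$ now forces the impossible system $2n_x=n_x$ and $2n_x+1=n_x$, yielding the contradiction.

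For (2), assume $F:D^b(A-\Modfd)\to\mathcal{T}$ is a monoidal triangulated equivalence. When $A$ is hereditary, the canonical $mtt$-structure on $D^b(A-\Modfd)$ from Example \ref{xxex5.3}(1) is a hereditary $mtt$-structure of deviation zero, and $F$ transports it to a hereditary $mtt$-structure of deviation zero on $\mathcal{T}$, contradicting part (1). When $A$ is not hereditary one argues separately: if $\gldim(A)=\infty$, some pair $M,N \in A-\Modfd$ satisfies $\Ext^n_A(M,N)\neq 0$ for infinitely many $n$, which is incompatible with the eventual vanishing of $\Hom_{\mathcal{T}}(F(M),F(N)[n])$ for $|n|\gg 0$ in $\mathcal{T}$; if $\gldim(A)$ is finite but at least $2$, one invokes the classification result that every bounded $t$-structure heart on $D^b(coh(\mathbb{X}))$ for a smooth projective curve is hereditary abelian, ruling out a transferred heart with nonzero $\Ext^2$. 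The main technical obstacle lies in this last point, a classification statement for $t$-structures on derived categories of curves; in the hereditary setting central to the paper, this difficulty does not arise and part (1) alone suffices.
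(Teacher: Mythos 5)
Your part (1) follows the paper's argument exactly: use the derived tensor square of a skyscraper sheaf, invoke Lemma \ref{xxlem5.4} to put the unit in the heart, and invoke the hereditary property (via \cite[Lemma 2.1]{CR2018}) to place every indecomposable in a unique shift of the heart. Your display $k(x)\otimes^L k(x)\cong k(x)\oplus k(x)[1]$ is the correct form (the $\Tor_1$ term sits in cohomological degree $-1$, hence a $[+1]$ shift after splitting); the paper's $\mathcal{O}_x[2n]\oplus\mathcal{O}_x[2n-1]$ has an apparent sign slip in the shift, but, as you observe, the contradiction is insensitive to the direction.

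For part (2) your treatment of the hereditary case coincides with the paper's one-line proof: transport the canonical hereditary deviation-zero $mtt$-structure and contradict part (1). You correctly flag that the canonical $t$-structure on $D^b(A-\Modfd)$ is hereditary if and only if $A$ is hereditary, so the paper's unqualified assertion ``This $mtt$-structure is also hereditary'' silently assumes the hereditary hypothesis that the lemma's statement does not impose. Your argument for $\gldim A=\infty$ is valid: in $D^b(coh(\mathbb{X}))$ with $coh(\mathbb{X})$ hereditary, $\Hom(X,Y[n])$ vanishes for $|n|\gg0$ for any two fixed objects, which excludes a pair $(M,N)$ with $\Ext^n_A(M,N)\neq 0$ for infinitely many $n$ (which does exist, by a pigeonhole argument on the simples). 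However, your argument for $2\leq\gldim A<\infty$ does not go through: it rests on the claim that every bounded $t$-structure heart on $D^b(coh(\mathbb{X}))$ is hereditary abelian, which is not a theorem. HRS-tilting of $coh(\mathbb{X})$ at a torsion pair $(\mathcal{T},\mathcal{F})$ will in general introduce $\Ext^2$ in the tilted heart unless the extra vanishing $\Ext^1(\mathcal{F},\mathcal{T})=0$ (or $\Ext^1(\mathcal{T},\mathcal{F})=0$, depending on the tilting convention) is available, which is not automatic. You acknowledge this as the main obstacle, and you are right to do so. It should be noted that the paper's own proof of (2) does not supply this missing step either; the lemma as stated appears to require either an implicit hereditary hypothesis on $A$ (consistent with the hypotheses of Theorems \ref{xxthm0.5} and \ref{xxthm0.7}), or a separate rigidity argument that any finite-dimensional algebra derived-equivalent to $coh(\mathbb{X})$ for a smooth projective curve $\mathbb{X}$ is automatically hereditary.
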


\begin{proof}
(1) Suppose on the contrary that there is a hereditary
$mtt$-structure on $\mathcal{T}$ with deviation zero.

Let $\mathcal{H}$ be its heart. By Lemma \ref{xxlem5.4},
${\mathcal H}$ is a monoidal abelian category.
Let ${\mathcal O}_x$ be the skyscraper sheaf
at a point $x\in {\mathbb X}$. There is an integer
$n$ such that $M:={\mathcal O}_x [n]$ is in
${\mathcal H}$. Then $M\otimes M$ is in
${\mathcal H}$. By an easy
computation,
$$M\otimes M\cong {\mathcal O}_x [2n] \oplus {\mathcal O}_x [2n-1]
\cong M[n]\oplus M[n-1]$$
which cannot be in ${\mathcal H}$ for any $n$.
This yields a contradiction. Therefore the assertion
follows.

(2) It is clear that the deviation of the
canonical $mtt$-structure of $D_{\otimes}(D^b(A-\Modfd))$
is zero [Example \ref{xxex5.3}(1)].
This $mtt$-structure is also hereditary. Now
the assertion follows from part (1).
\end{proof}

For the rest of this section, we will use Frobenius-Perron
curvature, see Definition \ref{xxdef1.3}(5), to study the
uniqueness of $mtt$-structures with deviation zero, and
then prove Theorems \ref{xxthm0.5} and \ref{xxthm0.7}.

\begin{definition}
\label{xxdef5.6}
Let ${\mathcal C}$ be a monoidal abelian category and
$M\in {\mathcal C}$. The {\it curvature} of $M$ is defined to be
$$v(M)=\overline{\lim\limits_{n\rightarrow \infty}}
( \ell(M^{\otimes n}))^{\frac{1}{n}}$$
where $\ell(-)$ denotes the length of an object.
\end{definition}

\begin{lemma}
\label{xxlem5.7}
Let ${\mathcal C}$ be a monoidal abelian category
satisfying Definition \ref{xxdef3.1}(a,b,c).
Let $A$ be a finite dimensional weak bialgebra
and $\mathcal{A}$ be $A-\Modfd$.
Let $M$ be
an object in $\mathcal{C}$ or $\mathcal{A}$.
\begin{enumerate}
\item[(1)]
If $M$ is in ${\mathcal C}$, then
\begin{equation}
\notag
\fpv(M)\leq v(M)<\infty.
\end{equation}
\item[(2)]
If $M$ is in ${\mathcal A}$, then
\begin{equation}
\label{E5.7.1}\tag{E5.7.1}
\fpv(M)\leq v(M)\leq \dim M.
\end{equation}
\item[(3)]
If $A=\Bbbk Q$ for some finite acyclic quiver $Q$ with the
tensor defined as in \eqref{E2.1.1}, then
\begin{equation}
\label{E5.7.2}\tag{E5.7.2}
\fpv(M)=v(M)=\max_{i\in Q_0} \{ \dim (M)_{i}\}.
\end{equation}
\item[(4)]
If $\mathcal{C}$ is discrete, then,
for every nonzero object $M\in \mathcal{C}$,
$\fpv(M)$ is positive.
\item[(5)]
Suppose that ${\mathcal C}$ acts on a general abelian
category ${\mathcal A}$ such that the action is
discrete in the sense of Definition \ref{xxdef3.6}.
Then, for every object $M$ in ${\mathcal C}$,
$$1\leq \fpv(M)< \infty.$$
\end{enumerate}
\end{lemma}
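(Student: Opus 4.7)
The plan is to handle all five parts through a common strategy: bound the adjacency matrix entries $a_{ij} = \dim\Hom(X_i, M^{\otimes n}\otimes X_j)$ by a constant (depending only on the finite brick set $\phi$) times $\ell(M^{\otimes n})$, then apply the Gershgorin bound of Lemma \ref{xxlem4.8} to control $\rho(A(\phi, M^{\otimes n}\otimes-))$, and finally take $n$-th roots. Two ingredients drive the estimate. First, in any $\Hom$-finite length category with finitely many isomorphism classes of simples, d\'evissage on a composition series of $Y$ yields $\dim\Hom(X,Y) \leq c_X \ell(Y)$ with $c_X := \max_i \dim\Hom(X,S_i) < \infty$. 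Second, the tensor product is sub-multiplicative on length: for a discrete $\mathcal{C}$, \eqref{E3.1.1} together with biexactness gives $\ell(M\otimes N) = \sum_i [M:S_i][N:S_i] \leq \ell(M)\ell(N)$, while for $\mathcal{A}=A-\Modfd$ one instead uses that $\otimes^l$ is a subspace of $\otimes_\Bbbk$, so $\dim(M\otimes^l N) \leq \dim M \cdot \dim N$, together with $\ell(Y) \leq \dim Y$. Iterating, $\ell(M^{\otimes n}) \leq \ell(M)^n$ in the discrete case and $\dim M^{\otimes n} \leq (\dim M)^n$ in the weak bialgebra case, which already shows $v(M) < \infty$ and $v(M) \leq \dim M$. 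The combined row-sum bound then gives $\rho(A(\phi, M^{\otimes n}\otimes-))^{1/n}$ limsupping to at most $v(M)$, settling parts (1) and (2).

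For part (3) I would compute both $v$ and $\fpv$ directly. Vertex-wise, $(M^{\otimes n})_v = (M)_v^{\otimes n}$ so $\dim M^{\otimes n} = \sum_{v\in Q_0}(\dim(M)_v)^n$, and hence $v(M) = \limsup (\sum_v d_v^n)^{1/n} = \max_v \dim(M)_v$. For the matching lower bound on $\fpv(M)$, I would test with the singleton brick set $\phi_0 = \{S(v)\}$ exactly as in Proposition \ref{xxpro2.6}: by Lemma \ref{xxlem2.5}, $M^{\otimes n}\otimes S(v) \cong S(v)^{\oplus(\dim(M)_v)^n}$, so the $1\times 1$ matrix $A(\phi_0, M^{\otimes n}\otimes-)$ has entry $(\dim(M)_v)^n$ and $\rho^{1/n} = \dim(M)_v$. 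Taking the supremum over $v$ and combining with the upper bound from part (2) pins down all three quantities in \eqref{E5.7.2}.

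Parts (4) and (5) call for a positivity lower bound of a uniform sort. In part (4), Lemma \ref{xxlem3.3}(2) supplies a simple $T$ with $M\otimes T \neq 0$. Using biexactness and \eqref{E3.1.1}, a composition series of $M$ induces one on $M\otimes T$ whose nonzero factors are all copies of $T$; so $M\otimes T \neq 0$ forces $[M:T] \geq 1$, and induction gives $[M^{\otimes n}:T] = [M:T]^n \geq 1$. Hence $M^{\otimes n}\otimes T \neq 0$, its socle is nonzero and lies in the isotypic component of $T$, and $\dim\Hom(T, M^{\otimes n}\otimes T) \geq 1$, whence $\fpv(M) \geq 1 > 0$. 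Part (5) combines the upper and lower bounds in the action setting: discreteness \eqref{E3.6.1} plus biexactness of $\odot$ yields $\ell_{\mathcal{A}}(T_i\odot Y) \leq [Y:S_{\sigma(i)}]$, hence $\ell_{\mathcal{A}}(M^{\otimes n}\odot Y) \leq \ell_{\mathcal{C}}(M^{\otimes n})\cdot\ell_{\mathcal{A}}(Y)$, from which the Gershgorin bound gives $\fpv(M) \leq \ell_{\mathcal{C}}(M) < \infty$; for the lower bound, pick a simple $T_i$ of $\mathcal{C}$ with $[M:T_i] \geq 1$ and rerun the part (4) argument using $T_i \odot S_{\sigma(i)} = S_{\sigma(i)}$ to conclude $M^{\otimes n}\odot S_{\sigma(i)} \neq 0$ for every $n$. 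The main obstacle I anticipate is the cross-category book-keeping in part (5) — keeping the lengths and simples of $\mathcal{C}$ clearly distinct from those of $\mathcal{A}$ — together with checking that the constants $c_{X_i}$ stay uniformly bounded across the finite brick set, so that Gershgorin produces an $n$-independent coefficient in front of $\ell(M^{\otimes n})^{1/n}$.
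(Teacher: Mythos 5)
Your overall strategy — bound the adjacency entries by a constant times $\ell(M^{\otimes n})$, apply the Gershgorin estimate, and extract the $n$-th-root asymptotics, with direct brick-set computations for the sharp values in (3) and the positivity lower bounds in (4),(5) — is the same route the paper takes, and parts (2)–(5) of your argument are sound. There is, however, a genuine gap in part (1): the hypothesis there is only that $\mathcal{C}$ satisfies Definition \ref{xxdef3.1}(a,b,c), \emph{not} (d), so you may not invoke \eqref{E3.1.1}. Your sub-multiplicativity statement $\ell(M\otimes N) = \sum_i [M:S_i][N:S_i] \leq \ell(M)\ell(N)$ relies on $S_i\otimes S_j = 0$ for $i\neq j$, i.e.\ on discreteness, and you only offer an alternative bound in the weak-bialgebra setting of part (2). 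As written, you have not shown $v(M) < \infty$ for a general $\mathcal{C}$ satisfying (a,b,c), which is the whole content of the last inequality in part (1).

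The fix is the one the paper uses: set $\alpha := \max\{\ell(S_i\otimes S_j)\mid S_i,S_j\ \text{simple}\}$, which is finite by Definition \ref{xxdef3.1}(b,c). D\'evissage on composition series of $X$ and $Y$ together with biexactness of $\otimes$ gives the weaker but always valid sub-multiplicativity $\ell(X\otimes Y) \leq \alpha\,\ell(X)\ell(Y)$, hence $\ell(M^{\otimes n}) \leq \alpha^{n-1}\ell(M)^n$ and $v(M)\leq \alpha\,\ell(M) < \infty$. With this replacement, your Gershgorin estimate (which, as you note, only needs $a_{ij}$ to be bounded by a $\phi$-dependent constant times $\ell(M^{\otimes n})$) goes through unchanged. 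One small stylistic difference worth noting: you bound $\dim\Hom(X_i,-)$ by $c_{X_i}\ell(-)$ with $c_{X_i}=\max_j\dim\Hom(X_i,S_j)$, whereas the paper uses $\beta\,\ell(X_i)\ell(-)$ with $\beta=\max_i\dim\End(S_i)$; both are correct consequences of d\'evissage in a $\Hom$-finite length category with finitely many simples, so this is a cosmetic choice rather than a deviation.
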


\begin{proof}
(1)
Let $\Hom$ denote $\Hom_{\mathcal{C}}$.
Let
$$\alpha:=\max\{\ell(X_i\otimes X_j)\mid {\text{
$X_i$ and $X_j$ are simple}}\},$$
and
$$\beta:=\max\{\dim \End(X_i) \mid {\text{
$X_i$ is simple}}\}.$$
Then, for any objects $X$ and $Y$ in
${\mathcal C}$, we have
\begin{equation}
\label{E5.7.3}\tag{E5.7.3}
\ell(X\otimes Y)\leq \alpha \ell(X)\ell(Y),
\end{equation}
and
\begin{equation}
\label{E5.7.4}\tag{E5.7.4}
\dim \Hom(X, Y)\leq \beta\ell(X)\ell(Y).
\end{equation}
By induction,
$\ell(X^{\otimes n})\leq \alpha^{n-1}
\ell(X)^n$ which implies that
$v(X)\leq \alpha \ell(X)<\infty$.

Given a brick set $\phi=\{X_1,\cdots, X_r\}$, define
$\ell (\phi):=\max\limits_{X\in \phi} \{\ell( X)\}$. By
Lemma \ref{xxlem4.8},
$$\rho(A(\phi, M^{\otimes n}\otimes_{\mathcal{C}}-))
\leq \max\limits_{i=1,\cdots, r} \{\sum_{j=1}^r \dim
\Hom(X_i, M^{\otimes n}\otimes X_j)\}.$$
By \eqref{E5.7.3} and \eqref{E5.7.4}, we have
\begin{eqnarray*}
\dim \Hom(X_i, M^{\otimes n}\otimes X_j)
&\leq & \alpha \beta \ell( X_i) (\ell(M^{\otimes n}) \ell( X_j))\\
&\leq & \alpha \beta (\ell(\phi))^2 (\ell(M^{\otimes n}))\\
&\leq & \alpha \beta (\ell(\phi))^2 (v(M)+\varepsilon)^n)
\end{eqnarray*}
for arbitrary small $\varepsilon>0$ and for $n\gg 0$.
Therefore,
$$\rho(A(\phi, M^{\otimes n}\otimes_{\mathcal{C}}-))
\leq \alpha \beta r (\ell( \phi))^2 (v(M)+\varepsilon)^n),$$
which implies that
\begin{equation}
\label{E5.7.5}\tag{E5.7.5}
\rho(A(\phi, M^{\otimes n}\otimes_{\mathcal{C}}-))^{\frac{1}{n}}
\leq (\alpha \beta r (\dim \phi)^2)^{\frac{1}{n}}(v(M)+\varepsilon),
\end{equation}
for $n\gg 0$. When $n\rightarrow \infty$, the limit of right side of
inequality \eqref{E5.7.5} is $v(M)+\varepsilon$, so $\fpv(M)\leq v(M)+
\varepsilon$ for every small $\varepsilon$. The assertion follows.

(2) It follows from Definition \ref{xxdef1.8} that
\begin{equation}
\label{E5.7.6}\tag{E5.7.6}
\dim M\otimes N\leq (\dim M)(\dim N)
\end{equation}
for all $M,N\in \mathcal{A}$. It is also clear that
\begin{equation}
\label{E5.7.7}\tag{E5.7.7}
\dim \Hom_{\mathcal{A}}(M, N)\leq \dim \Hom_{\Bbbk}(M,N)=(\dim M)
(\dim N).
\end{equation}
By \eqref{E5.7.6}, $\dim M^{\otimes n}
\leq (\dim M)^n$, which implies that
$v(M)\leq \dim M$. Now the assertion follows from
part (1).

(3) Let $\phi=\{S(i)\}$ where $i$ is a vertex of $Q$.
Write $\dim (M)_{i}=d_i$. Then $\rho(A(\phi, M^{\otimes n}))$ is the
integer $d_i^n$ and
$\lim\limits_{n\rightarrow \infty} \rho(A(\phi, M^{\otimes n}))^{\frac{1}{n}}=d_i.$
Hence $\fpv(M)\geq d_i$ for all $i$. It is clear that $v(M)=
\max\{d_i\mid i\in Q_0\}$. Therefore part (1) implies that $\fpv(M)=v(M).$

(4) Suppose $\mathcal{A}$ is discrete. Then
there is a simple object $S$ such that $M\otimes S\neq 0$
and $\Hom_{\mathcal{A}}(S, M\otimes S)\neq 0$. By induction,
one can show that $\Hom_{\mathcal{A}}(S, M^{\otimes n}\otimes S)\neq 0$
for all $n$. Therefore $\fpv(M)\geq 1$.

(5) Using a similar proof of part (1), one sees that
$\fpv(M)< \infty$. Using the proof of part (4),
one can show that
$\fpv(M)\geq 1$. Details are omitted.
\end{proof}

\begin{remark}
\label{xxrem5.8}
\begin{enumerate}
\item[(1)]
Let $\mathcal{C}$ be a monoidal abelian category acting on
an abelian category ${\mathcal A}$. Assume that ${\mathcal C}$ 
satisfies Definition \ref{xxdef3.1}(a,b,c). The action of 
$\mathcal{C}$ on ${\mathcal A}$ is called {\it $\fpv$-positive} if
\begin{enumerate}
\item[(e)]
$\fpv(M)>0$ for every nonzero object
$M$ in $\mathcal{C}$. We say $\mathcal{C}$ is
$\fpv$-positive if the natural action of $\mathcal{C}$
on itself is $\fpv$-positive.
\end{enumerate}
\item[(2)]
By Lemma \ref{xxlem5.7}(5) if an action of $\mathcal{C}$
on ${\mathcal A}$ is discrete, then it is {\it $\fpv$-positive}.
\item[(3)]
Suppose an action of $\mathcal{C}$
on ${\mathcal A}$ is $\fpv$-positive.
Let $\mathcal{C}'$ be a monoidal abelian subcategory
$\mathcal{C}$. Then the induced action of $\mathcal{C}'$
on ${\mathcal A}$ is $\fpv$-positive. In general,
such an action is not discrete.
\item[(4)]
There are other natural examples that the action of $\mathcal{C}$
on ${\mathcal A}$ is not discrete, but $\fpv$-positive, see
below.

If $A$ is a finite-dimensional bialgebra and let
${\mathcal C}=A-\Modfd$, then $\mathcal{C}$
is $\fpv$-positive. Let $0\neq M\in \mathcal{C}$ and
let $S_0$ be a simple submodule of $M$. Then $M\otimes S_0\neq 0$
as $\dim M\otimes S_0=\dim M \dim S_0$ (when $A$ is
a bialgebra). For each $i \geq 1$, we define $S_{i}$
inductively to be a simple module of $M\otimes S_{i-1}$.
So $S_i\subseteq M\otimes S_{i-1}$ for all $i\geq 1$.
Continuing this process, we will obtain a set of simple object
$\Gamma=\{S_0,S_1,S_2,\cdots\}$.
Since $A$ has finite many simples, $|\Gamma|<\infty$. Hence,
there exists $m<n\in \Z^{+}$, such that $S_n\cong S_m$.
For all $i\geq n$, we redefine $S_i$ to be $S_{i-k(n-m)}$
where $k$ is an integer such that $m\leq i-k(m-n)<n$.

By the construction, we have, for all $i\geq m$ and all
$s\geq 0$,
$$\dim \Hom(S_{i+1}, M\otimes S_{i})\geq 1 \text{ and }
\dim \Hom(S_{i+s}, M^{\otimes s}\otimes S_{i})\geq 1.$$
Therefore, by taking the brick set $\phi=\{S_m,S_{m+1},\cdots, S_{n}\}$,
one sees that, for each $s$,
$A(\phi, M^{\otimes s}\otimes-)$ is a non-negative matrix
that contains a permutation matrix. As a consequence,
$\rho(A(\phi, M^{\otimes s}\otimes-))\geq 1$ for all $s\geq 1$,
which implies that $\fpv(M)\geq 1$.
\item[(5)]
In $\Repr(Q)$ where the tensor defined as in \eqref{E2.1.1},
$\fpv(M)$, unlike $\fpd(M)$, is an invariant only dependent on the
the dimension vector of $M$ (which is independent of the orientations
of arrows in the quiver).
\end{enumerate}
\end{remark}

Next we investigate $mtt$-structures on
$D^b(\mathcal{C})$.

\begin{lemma}
\label{xxlem5.9}
Suppose that a monoidal abelian category ${\mathcal C}$
acts on an arbitrary abelian category ${\mathcal A}$.
Let ${\mathcal T}=D^b(\mathcal{C})$. Assume that
\begin{enumerate}
\item[(a)]
the above action is either discrete or $\fpv$-positive,
\item[(b)]
${\mathcal C}$ is hereditary, and
\item[(c)]
$(\mathcal{T}^{\leq 0},\mathcal{T}^{\geq 0})$
is any hereditary $mtt$-structure of deviation zero
on ${\mathcal T}$.
\end{enumerate}
Let $\mathcal{H}$ be the heart of the above $mtt$-structure.
Then the following hold.
\begin{enumerate}
\item[(1)]
\cite[Lemma 2.1]{CR2018}
If $M$ is an indecomposable object in $\mathcal{T}$,
then $M$ is in $\mathcal{T}^{\leq b}\cap
\mathcal{T}^{\geq b}$ for some integer $b$.
\item[(2)]
If $M\in \mathcal{C}$, then $M$ is in the heart $\mathcal{H}$.
\item[(3)]
The $mtt$-structure $(\mathcal{T}^{\leq 0},\mathcal{T}^{\geq 0})$
given in (c) is the canonical $mtt$-structure of ${\mathcal T}$.
\end{enumerate}
\end{lemma}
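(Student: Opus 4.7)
The plan is to prove the three parts in order, with Part (1) by citation, Part (2) as the main technical work, and Part (3) as a corollary.

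For Part (1), I would invoke \cite[Lemma 2.1]{CR2018}, which says that in a hereditary bounded $t$-structure every indecomposable lies at a single level, producing the unique integer $b$ with $M[b]\in\mathcal{H}$.

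For Part (2), closure of $\mathcal{H}$ under direct summands reduces us to the case $M$ indecomposable in $\mathcal{C}$, and closure under extensions further reduces, via composition series in $\mathcal{C}$, to the case of simple $M$. First, Lemma \ref{xxlem5.4} gives $\mathbf{1}\in\mathcal{H}$. For each simple $T\in IC(\mathbf{1})$, the idempotent identity $T\otimes T\cong T$ from Lemma \ref{xxlem3.3}(5), combined with the deviation-zero axiom and the compatibility of tensor with suspension, yields $(T[b])^{\otimes 2}=T[2b]\in\mathcal{H}$; then uniqueness from Part (1) forces $2b=b$, hence $b=0$. For a simple $S\notin IC(\mathbf{1})$, I would use hypothesis (a) to guarantee $S^{\otimes n}\neq 0$ for every $n$: in the $\fpv$-positive case this is forced because otherwise $\fpv(S\odot-)$ would vanish, and in the discrete-action case it follows by iterating Definition \ref{xxdef3.6}(d)'. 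The identity $(S[b])^{\otimes n}=S^{\otimes n}[nb]\in\mathcal{H}$ together with closure of $\mathcal{H}$ under summands then pins every indecomposable summand $M_j$ of $S^{\otimes n}$ at the shift $nb$ (by Part (1)). Using Lemma \ref{xxlem3.3}(3) to locate the unique $T(S)\in IC(\mathbf{1})$ with $T(S)\otimes S\cong S$, so that every composition factor of $S^{\otimes n}$ shares the $IC(\mathbf{1})$-type of $S$, and comparing the $n$-dependent shift $nb$ against the level-zero anchor for $T(S)$ already established, I would derive $b=0$ uniformly in $n$.

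For Part (3), Part (2) gives $\mathcal{C}\subseteq\mathcal{H}$. For the reverse inclusion, let $X\in\mathcal{H}$; since $\mathcal{C}$ is hereditary, $X\cong\bigoplus_i H^i(X)[-i]$ in $\mathcal{T}$ with each $H^i(X)\in\mathcal{C}$ being canonical cohomology. Each summand $H^i(X)[-i]$ lies in $\mathcal{H}$ by closure under summands, and $H^i(X)$ itself lies in $\mathcal{H}$ by Part (2); but for $i\neq 0$ having both shifts of the nonzero object $H^i(X)$ in $\mathcal{H}$ contradicts Part (1), so $H^i(X)=0$ for $i\neq 0$. Hence $X=H^0(X)\in\mathcal{C}$, giving $\mathcal{H}=\mathcal{C}$ and identifying the $mtt$-structure as the canonical one.

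The main obstacle is the simple case for $S\notin IC(\mathbf{1})$ in Part (2): the idempotent trick from the $IC(\mathbf{1})$ case breaks down, and hypothesis (a) must be leveraged not merely to ensure the nonvanishing of $S^{\otimes n}$ but to tether the potentially unbounded shifts $nb$ back to the level-zero anchors supplied by $\mathbf{1}$ and its composition factors. Making this bookkeeping precise---translating "every indecomposable summand of $S^{\otimes n}$ sits at shift $nb$ for all $n$" into the rigid conclusion $b=0$---is the delicate technical point of the argument.
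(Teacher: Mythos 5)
Your overall strategy — tether the level $b$ of an indecomposable object of $\mathcal C$ to zero by tracking the levels of tensor powers — is the right one, but it departs from the paper, and the departure is where the gap lies. The paper does not reduce to simple objects and does not use the tensor-idempotent identity for $IC(\mathbf{1})$ at all. Instead it argues directly with $\fpv$: for $0\neq M\in\mathcal C$ sitting at level $b\neq 0$, the hereditary property of the $mtt$-structure together with the deviation-zero axiom force $A(\phi, M^{\otimes n}\odot -)=0$ for $n\gg 0$ for every fixed brick set $\phi$ (because $M^{\otimes n}$ is pushed to level $nb$ while the finitely many objects of $\phi$ sit at fixed levels, so the relevant Homs leave the window $\{0,1\}$ supplied by hereditarity), whence $\fpv(M\odot -)=0$, contradicting the lower bound from Lemma \ref{xxlem5.7}(5) and Remark \ref{xxrem5.8}. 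Your Case A ($T\in IC(\mathbf{1})$) and your Part (3) argument are both correct, and the latter is a nice self-contained replacement for the paper's appeal to \cite[Lemma 3.6]{SR2016}.

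The genuine gap is Case B. The proposed bridge — locating $T(S)\in IC(\mathbf{1})$ with $T(S)\otimes S\cong S$ and ``comparing shifts'' — does not close: $T(S)$ at level $0$ and $S$ at level $-b$ are compatible with $T(S)\otimes S$ at level $0+(-b)=-b$, which is the level of $S$, for every $b$, so no contradiction results. What you need, and almost say, is a uniform bound on the levels of all objects of $\mathcal C$: since $\mathcal C$ has only finitely many simple objects (hypothesis (a) forces $\mathcal C$ to satisfy Definition \ref{xxdef3.1}(a,b,c)), each at a single level by Part (1), and since $\mathcal T^{\leq a}$ and $\mathcal T^{\geq c}$ are closed under extensions, there are fixed integers $c\leq a$ with $\mathcal C\subseteq\mathcal T^{\leq a}\cap\mathcal T^{\geq c}$. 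Then for $b>0$ and $n\gg 0$ one gets $S^{\otimes n}\in\mathcal T^{\geq nb}\cap\mathcal T^{\leq a}=0$, contradicting the nonvanishing you correctly extracted from $\fpv$-positivity (and symmetrically for $b<0$). With this bounded-level observation in hand, the reduction to simples and the $IC(\mathbf{1})$ idempotent detour become superfluous — the argument runs directly on an arbitrary indecomposable $M\in\mathcal C$, which is in substance the paper's route packaged through the invariant $\fpv$.
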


\begin{proof}

(2,3) We only prove this when the action is discrete.
First we claim that $\fpv(M \odot_{\mathcal T} -)>0$ if
$0\neq M\in {\mathcal C}$. It is clear that
$$\fpv(M \odot_{\mathcal T} -)\geq \fpv(M \odot_{\mathcal C} -).$$
Now the claim follows from Lemma \ref{xxlem5.7}(5).

Let $M$ be an indecomposable object in $\mathcal{C}$.
Then $M\in \mathcal{T}^{\leq b}\cap\mathcal{T}^{\geq b}$ for
some $b$. If $b\neq 0$, then $M^{\otimes n}\in \mathcal{T}^{\leq nb}
\cap \mathcal{T}^{\geq nb}$ by Definition \ref{xxdef5.2}(a,c).
For any fixed brick set $\phi$, $A(\phi, M^{\otimes n}
\odot_{\mathcal T} -)$ is zero for $n\gg 0$ by the hereditary
property of Definition \ref{xxdef5.1}(4). Therefore
$\fpv(M\odot_{\mathcal T} -)=0$. By the first paragraph,
$\fpv(M\odot_{\mathcal T} -)>0$, yielding a contradiction.
Therefore $b=0$, or equivalently, $M\in \mathcal{T}^{\leq 0}
\cap\mathcal{T}^{\geq 0}=:\mathcal{H}$. This implies that ${\mathcal C}
\subseteq {\mathcal H}$. By \cite[Lemma 2.1]{CR2018}
and \cite[Lemma 3.6]{SR2016}, ${\mathcal C} ={\mathcal H}$,
and consequently, the $mtt$-structure $(\mathcal{T}^{\leq 0},
\mathcal{T}^{\geq 0})$ in hypothesis (c) must be the canonical
$mtt$-structure of ${\mathcal T}$.
\end{proof}

The following is basically Theorem \ref{xxthm0.7}.

\begin{theorem}
\label{xxthm5.10}
Let $A$ be a finite dimensional hereditary weak bialgebra.
Suppose that the monoidal abelian category $\mathcal{A}$ is
either discrete or $\fpv$-positive.
\begin{enumerate}
\item[(1)]
There is a unique hereditary $mtt$-structure with deviation
zero on $D^b(\mathcal{A})$.
\item[(2)]
The $\mathcal{A}$ is the heart of any hereditary $mtt$-structure
with deviation zero on $D^b(\mathcal{A})$.
\item[(3)]
The $\mathcal{A}$ is uniquely determined by the monoidal
triangulated structure on $D^b(\mathcal{A})$.
\end{enumerate}
\end{theorem}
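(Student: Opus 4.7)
The plan is to reduce Theorem \ref{xxthm5.10} directly to Lemma \ref{xxlem5.9} by taking $\mathcal{C}=\mathcal{A}$ acting on itself via the tensor product $\otimes^{l}$ of Definition \ref{xxdef1.8}. First I would verify that the canonical $mtt$-structure of $\mathcal{T}:=D^{b}(\mathcal{A})$ (where $\mathcal{T}^{\leq 0}$ and $\mathcal{T}^{\geq 0}$ are complexes concentrated in the usual degrees) is a hereditary $mtt$-structure with deviation zero: heredity follows because $A$ is hereditary as an algebra, so $\mathrm{Ext}^{n}_{\mathcal{A}}(X,Y)=0$ for $n\geq 2$; and deviation zero was observed in Example \ref{xxex5.3}(1). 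This provides existence in (1) and also identifies $\mathcal{A}$ as the heart of the canonical structure.

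Next I would check that the hypotheses of Lemma \ref{xxlem5.9} are met. Hypothesis (b) of the lemma is that $\mathcal{C}$ is hereditary, which is our standing assumption. Hypothesis (a) requires the natural action of $\mathcal{A}$ on itself to be discrete or $\fpv$-positive; if $\mathcal{A}$ is discrete, then by Lemma \ref{xxlem5.7}(4) the action is automatically $\fpv$-positive, and the $\fpv$-positive case is granted directly. Hypothesis (c) is what is to be varied: let $(\mathcal{T}^{\leq 0},\mathcal{T}^{\geq 0})$ be an arbitrary hereditary $mtt$-structure of deviation zero on $\mathcal{T}$.

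With these hypotheses in place, Lemma \ref{xxlem5.9}(3) asserts directly that the arbitrary hereditary $mtt$-structure of deviation zero under consideration must coincide with the canonical one. This proves the uniqueness statement (1). For (2), Lemma \ref{xxlem5.9}(2) says that every object of $\mathcal{A}$ lies in the heart $\mathcal{H}$ of any such structure, and since the structure is canonical by (1), $\mathcal{H}=\mathcal{A}$. For (3), note that the definition of a hereditary $mtt$-structure with deviation zero (Definitions \ref{xxdef5.1} and \ref{xxdef5.2}) uses only the triangulated structure, the suspension, and the tensor product $\otimes$ on $\mathcal{T}$; thus the unique such structure, and hence its heart $\mathcal{A}$, depends only on the monoidal triangulated structure on $D^{b}(\mathcal{A})$.

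The main obstacle, which is essentially already handled by Lemma \ref{xxlem5.9}, is showing that no nonstandard hereditary $mtt$-structure of deviation zero can exist. The mechanism behind Lemma \ref{xxlem5.9}(2,3) is the contradiction one obtains by computing $\fpv$ of an object shifted to a nonzero cohomological degree: if an indecomposable $M\in\mathcal{A}$ lay in $\mathcal{T}^{\leq b}\cap\mathcal{T}^{\geq b}$ with $b\neq 0$, then $M^{\otimes n}\in\mathcal{T}^{\leq nb}\cap\mathcal{T}^{\geq nb}$ would force $\fpv(M)=0$, contradicting the $\fpv$-positivity supplied by hypothesis. Beyond invoking this mechanism, nothing further is needed in the present theorem, since Lemma \ref{xxlem5.9} has already been established and applies verbatim to the self-action of $\mathcal{A}$.
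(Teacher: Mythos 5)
Your proof is correct and follows essentially the same route as the paper's: set $\mathcal{C}=\mathcal{A}$ acting on itself, check the hypotheses of Lemma~\ref{xxlem5.9}, and read off (1) from Lemma~\ref{xxlem5.9}(3) and (2)--(3) as immediate consequences. The paper's proof is identical in substance, just stated more tersely (``Let ${\mathcal C}={\mathcal A}$. Then we can easily check all hypotheses in Lemma \ref{xxlem5.9}\dots''); your extra unpacking of the existence of the canonical $mtt$-structure, the reduction of the discrete case to the $\fpv$-positive one via Lemma~\ref{xxlem5.7}(4), and the invariance argument for part~(3) fills in details the paper leaves implicit.
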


\begin{proof}
Let ${\mathcal C}={\mathcal A}$. Then we can easily check
all hypotheses in Lemma \ref{xxlem5.9}. Then part (1) follows
from Lemma \ref{xxlem5.9}(3).

(2,3) Follow directly from part (1).
\end{proof}

\begin{proof}[Proof of Theorem \ref{xxthm0.5}]
If $A$ is a bialgebra, by Remark \ref{xxrem5.8}(4),
$\mathcal{A}$ is $\fpv$-positive. Therefore
the hypothesis of Theorem \ref{xxthm5.10} is
satisfied. Now the assertion follows from the
uniqueness of hereditary $mtt$-structure with
deviation zero in Theorem \ref{xxthm5.10}.
\end{proof}

\begin{proof}[Proof of Corollary \ref{xxcor0.6}]
Let $Q$ and $Q'$ be two quivers such that
$D^b(\Repr(Q))$ and $D^b(\Repr(Q'))$ are
equivalent as monoidal triangulated categories.
By Theorem \ref{xxthm0.5}, this equivalent induces
an equivalence between $\Repr(Q)$ and $\Repr(Q')$.

Recall that $Q$ and $Q'$ are acyclic. For each acyclic
quiver there are finitely many simple representations,
say $\{S_i\}_{i=1}^n$, that are associated to vertices
$\{1,\cdots,n\}$ of the quiver. The correspondence
between those simple representations gives rise to a
bijective map $f: Q_0 \rightarrow Q'_0$. By
\cite[Lemma 2.12, p.84]{ASS2006}, $\dim \Ext^1 (S_i, S_j)$
is the number of arrows from vertex $i$ to vertex $j$.
Therefore the number of arrows from $f(i)$ to $f(j)$ is
the same as that from $i$ to $j$. Thus $Q\cong Q'$.
\end{proof}

\section{Proof of Theorem \ref{xxthm0.3}}
\label{xxsec6}

The proof of Theorem \ref{xxthm0.3} uses several results
about weighted projective lines and takes several pages
in total. The final step of the proof is given
at the end of this section. First we recall some basic
definitions concerning weighted projective lines. Details
can be found in \cite[Section 1]{GL1987}.

For $t\geq 1$, let ${\bf p}:=(p_0,p_1,\cdots,p_t)$ be a
$(t+1)$-tuple of positive integers, called the {\it weight}
or {\it weight sequence}. Let
${\bf D}:=(\lambda_0, \lambda_1,\cdots, \lambda_t)$ be a
sequence of distinct points of the projective line
${\mathbb P}^1$ over $\Bbbk$. We normalize ${\bf D}$
so that $\lambda_0=\infty$, $\lambda_1=0$ and
$\lambda_2=1$ (if $t\geq 2$). Let $R$ denote the commutative
algebra
\begin{equation}
\label{E6.0.1}\tag{E6.0.1}
\Bbbk[X_0,X_1,\cdots,X_t]/(X_i^{p_i}-X_1^{p_1}+\lambda_i X_0^{p_0},
i=2,\cdots,t).
\end{equation}
The image of $X_i$ in $R$ is denoted by $x_i$ for all $i$.
Let ${\mathbb L}$ be the abelian group of rank 1
generated by $\overrightarrow{x_i}$ for $i=0,1,\cdots,t$
and subject to the relations
$$p_0 \overrightarrow{x_0}= \cdots =p_i \overrightarrow{x_i}=\cdots
=p_t \overrightarrow{x_t}=: \overrightarrow{c}.$$
The algebra $R$ is ${\mathbb L}$-graded by setting $\deg x_i=
\overrightarrow{x_i}$. The corresponding
{\it weighted projective line}, 
denoted by ${\mathbb X}({\bf p},{\bf D})$ or simply ${\mathbb X}$,
is a noncommutative space whose category of coherent sheaves is
given by the quotient category
$$coh({\mathbb X}):=\frac{\gr^{\mathbb L}-R}{\gr_{f.d.}^{\mathbb L}-R},$$
see \cite[p.155]{Le2011}.

The weighted projective lines are classified into the following
three classes:
\begin{equation}
\label{E6.0.2}\tag{E6.0.2}
{\mathbb X} \;\; {\rm{is}}\;\;
\begin{cases} domestic \;\; & {\rm{if}} \;\; {\bf p}
\;\; {\rm{is}}\; (p, q), (2,2,n), (2,3,3), (2,3,4), (2,3,5);\\
tubular \;\; & {\rm{if}} \;\; {\bf p}
\;\; {\rm{is}}\; (2,3,6), (3,3,3), (2,4,4), (2,2,2,2);\\
wild \;\; & {\rm{otherwise}}.
\end{cases}
\end{equation}
In \cite[Section 4.4]{Sc2012}, domestic (respectively, tubular,
wild) weighted projective lines are called {\it parabolic}
(respectively, {\it elliptic, hyperbolic}). Let ${\mathbb X}$
be a weighted projective line. A sheaf $F\in coh({\mathbb X})$
is called {\it torsion} if it is of finite length in
$coh({\mathbb X})$. Let $Tor({\mathbb X})$ denote the full
subcategory of $coh({\mathbb X})$ consisting of all torsion
objects. By \cite[Lemma 4.16]{Sc2012}, the category $Tor({\mathbb X})$
decomposes as a direct product of orthogonal blocks
\begin{equation}
\label{E6.0.3}\tag{E6.0.3}
Tor({\mathbb X})=\prod_{x\in {\mathbb P}^1\setminus
\{\lambda_0,\lambda_1,\cdots,\lambda_{t}\}} Tor_{x}
\; \times \; \prod_{i=0}^{t} Tor_{\lambda_i}
\end{equation}
where $Tor_{x}$ is equivalent to the category of nilpotent
representations of the Jordan quiver (with one vertex and
one arrow) over the residue field $\Bbbk_{x}$ and where
$Tor_{\lambda_i}$ is equivalent to the category of nilpotent
representations over $\Bbbk$ of the cyclic quiver of length
$p_i$. A simple object in
$coh({\mathbb X})$ is called {\it ordinary simple} (see
\cite{GL1987}) if it is the skyscraper sheaf ${\mathcal O}_x$ of
a closed point $x\in {\mathbb P}^1\setminus
\{\lambda_0,\lambda_1,\cdots,\lambda_{t}\}$.

Let $Vect({\mathbb X})$ be the full subcategory of
$coh({\mathbb X})$ consisting of all vector bundles. Similar
to the elliptic curve case \cite[Section 4]{BB2007}, one can
define the concepts of {\it degree}, {\it rank} and
{\it slope} of a vector bundle on a weighted projective
line ${\mathbb X}$; details are given in \cite[Section 4.7]{Sc2012}
and \cite[Section 2]{LM1994}.  For each
$\mu\in {\mathbb Q}$, let $Vect_{\mu}({\mathbb X})$ be the
full subcategory of $Vect({\mathbb X})$ consisting of all
semistable vector bundles of slope $\mu$. By convention,
$Vect_{\infty}({\mathbb X})$ denotes $Tor({\mathbb X})$.
By \cite[Comments after
Corollary 4.34]{Sc2012}, every indecomposable object in
$coh({\mathbb X})$ is in
\begin{equation}
\notag
\bigcup_{\mu\in {\mathbb Q}\cup\{\infty\}} Vect_{\mu}({\mathbb X}).
\end{equation}

The {\it dualizing element} of $\mathbb{X}$ is denoted by
\begin{equation}
\label{E6.0.4}\tag{E6.0.4}
\omega_0:= (t-2)\overrightarrow{c}
-\sum_{i=1}^n \overrightarrow{x}_i \in \mathbb{L}.
\end{equation}

Below we collect some nice properties of weighted projective lines.
The definition of a stable tube (or simply tube) was introduced
in \cite{Ri1984}.

\begin{lemma} \cite[Lemma 7.9]{CGWZZZ2017}
\label{xxlem6.1}
Let ${\mathbb X}={\mathbb X}({\bf p}, {\bf D})$ be a weighted
projective line.
\begin{enumerate}
\item[(1)]
$coh({\mathbb X})$ is noetherian and hereditary.
\item[(2)]
$$D^b(coh({\mathbb X})) \cong
\begin{cases}
D^b(\Repr( \widetilde{\mathbb A}_{p, q})) & {\rm{if}}\;\; {\bf p}=(p,q),\\
D^b(\Repr( \widetilde{\mathbb D}_n)) & {\rm{if}}\;\; {\bf p}=(2,2,n),\\
D^b(\Repr( \widetilde{\mathbb E}_6)) & {\rm{if}}\;\; {\bf p}=(2,3,3),\\
D^b(\Repr( \widetilde{\mathbb E}_7)) & {\rm{if}}\;\; {\bf p}=(2,3,4),\\
D^b(\Repr( \widetilde{\mathbb E}_8)) & {\rm{if}}\;\; {\bf p}=(2,3,5).
\end{cases}
$$
\item[(3)]
Let ${\mathcal S}$ be an ordinary simple
object in $coh({\mathbb X})$.
Then $\Ext^1_{\mathbb X}({\mathcal S},{\mathcal S})=\Bbbk$.
\item[(4)]
If ${\mathbb X}$ is tubular or domestic, then $\Ext^1_{\mathbb X}(X,Y)=0$
for all $X\in Vect_{\mu'}({\mathbb X})$ and $Y\in Vect_{\mu}({\mathbb X})$
with $\mu'< \mu$.
\item[(5)]
If ${\mathbb X}$ is domestic, then $\Ext^1_{\mathbb X}(X,Y)=0$
for all $X\in Vect_{\mu'}({\mathbb X})$ and $Y\in Vect_{\mu}({\mathbb X})$
with $\mu'\leq \mu<\infty$.
\item[(6)]
Suppose ${\mathbb X}$ is tubular or domestic.
Then every indecomposable vector bundle ${\mathbb X}$
is semistable.
\item[(7)]
Suppose ${\mathbb X}$ is tubular
and let $\mu\in {\mathbb Q}$.
Then each $Vect_{\mu}({\mathbb X})$ is a uniserial category.
Accordingly indecomposables in $Vect_{\mu}({\mathbb X})$
lies in Auslander-Reiten components, which all are
stable tubes of finite rank. In fact, for every
$\mu\in {\mathbb Q}$, $$Vect_{\mu}({\mathbb X})\cong
Vect_{\infty}({\mathbb X})=Tor({\mathbb X}).$$
\end{enumerate}
\end{lemma}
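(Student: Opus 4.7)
The plan is to assemble the proof by citing the classical references on weighted projective lines, since each of (1)--(7) is either a foundational result of Geigle--Lenzing or a standard consequence of the slope formalism for tubular/domestic cases. First I would note that the statement is essentially recorded as \cite[Lemma 7.9]{CGWZZZ2017}, so the task is to trace each clause back to its original source and to compile a coherent sketch that makes the dependencies transparent.

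For part (1), I would appeal directly to \cite[Proposition 1.8 and Theorem 2.2]{GL1987}, where Geigle--Lenzing verify that $coh(\mathbb{X})$ is noetherian, hereditary, and $\Hom$-finite. Part (2) is \cite[Theorem 5.1 and Corollary 5.5]{GL1987}: the tilting bundle constructed from the structure sheaf twists yields an equivalence $D^b(coh(\mathbb{X}))\cong D^b(\text{mod-}\End(T)^{op})$, and in the domestic cases the endomorphism algebra is Morita equivalent to the path algebra of the extended Dynkin quiver indicated. For part (3), I would use the description \eqref{E6.0.3} of the torsion subcategory: at an ordinary closed point $x$, the block $Tor_x$ is equivalent to $\Bbbk_x$-nilpotent representations of the Jordan quiver, and $\mathcal{S}={\mathcal O}_x$ corresponds to the unique simple in that block, for which $\Ext^1=\Bbbk$ follows from the loop.

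The meat of the lemma lives in (4)--(7), all of which are standard in the slope-stability setup. For (4) and (5) I would cite \cite[Corollary 4.34]{Sc2012} together with the vanishing principle: when $\mathbb{X}$ is tubular or domestic, semistable bundles of strictly descending slope have no $\Ext^1$ between them; in the domestic case one further gets equality thanks to Lemma~\ref{xxlem6.1}(6) and the absence of isotropic slope directions, which lets one strengthen $\mu'<\mu$ to $\mu'\leq\mu<\infty$ as recorded in \cite[Section 4.7]{Sc2012} and \cite[Section 2]{LM1994}. Part (6) is \cite[Theorem 4.33]{Sc2012}: in domestic and tubular types the Harder--Narasimhan filtration of an indecomposable collapses to a single semistable piece. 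Finally, part (7) is the Atiyah-type structure theorem for tubular weighted projective lines: \cite[Theorem 4.35]{Sc2012} (and the exposition in \cite[Section 4.7]{Sc2012}) shows that for each $\mu\in\mathbb{Q}\cup\{\infty\}$ the category $Vect_\mu(\mathbb{X})$ is uniserial and isomorphic to $Tor(\mathbb{X})$, so its indecomposables live in stable tubes of finite rank.

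The only nontrivial synthesis step is reconciling notation between \cite{GL1987}, \cite{LM1994}, and \cite{Sc2012}, especially the slope normalization (degree relative to the dualizing element $\omega_0$ of \eqref{E6.0.4}) and the identification $Vect_\infty(\mathbb{X})=Tor(\mathbb{X})$. The main potential obstacle is that the vanishing in (5) requires genuine care: one must avoid the half-plane of slopes where $\Ext^1$ can fail to vanish in the tubular case, and it is precisely the absence of a self-extension direction in the domestic root system that secures $\mu'\leq\mu$. Once these conventions are aligned, the lemma is a direct recitation of known results, so the final write-up will be a compact list of citations with brief indications of how each statement is extracted.
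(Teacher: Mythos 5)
The paper offers no proof of this lemma; it is imported verbatim as \cite[Lemma 7.9]{CGWZZZ2017}, so there is no argument to compare against. Your proposal is a reasonable reconstruction that traces each clause to its classical source (Geigle--Lenzing for (1)--(3), the slope formalism of Schiffmann and Lenzing--Meltzer for (4)--(7)), which is consistent with the citation-based treatment in the paper and in fact more informative. One small point worth making precise: the reason (5) sharpens $\mu'<\mu$ to $\mu'\leq\mu<\infty$ in the domestic case is that the dualizing element $\omega_0$ of \eqref{E6.0.4} has $\nu(\omega_0)<0$ there (whereas $\nu(\omega_0)=0$ in the tubular case), so Serre duality $\Ext^1_{\mathbb X}(X,Y)\cong\Hom_{\mathbb X}(Y,X(\omega_0))^\ast$ gives vanishing even when slopes tie; your phrase about ``absence of isotropic slope directions'' gestures at this but could be replaced by this explicit computation.
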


\begin{lemma}
\label{xxlem6.2}
Let ${\mathbb X}={\mathbb X}({\bf p}, {\bf D})$ be a weighted
projective line.
\begin{enumerate}
\item[(1)]
\cite[Theorem 2.2(ii)]{Le2011}
Let $\mathcal{T}$ be $D^b(coh(\mathbb{X}))$. Then
$\mathcal{T}$ has Serre duality in the form of
$$\Hom_{\mathcal{T}}(X,Y)^{\ast}
\cong \Hom_{\mathcal{T}}(Y,S(X)),$$
where the Serre functor $S$ is $-(\omega_0)[1]$
and where the dualizing element $\omega_0$ is
in \eqref{E6.0.4}.
\item[(2)]
\cite[Proposition 1.10]{LR2006}
Each indecomposable vector bundle has a nonzero
morphism to $Tor_{x}$ for every point $x$ in
$\mathbb{P}^1$.
\end{enumerate}
\end{lemma}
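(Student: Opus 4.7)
My plan is to treat the two parts separately, since they rely on different techniques; the paper cites them to \cite{Le2011} and \cite{LR2006}, but I would sketch self-contained routes using results already in the excerpt.

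For part (1), I would reduce the derived-category Serre duality to an Auslander--Reiten-type statement on the hereditary abelian heart $coh(\mathbb{X})$, using Lemma \ref{xxlem6.1}(1). First, I would construct the degree-shift autoequivalence $-(\omega_0)\colon coh(\mathbb{X}) \to coh(\mathbb{X})$ coming from the $\mathbb{L}$-graded structure on the ring $R$ in \eqref{E6.0.1}, where $\omega_0$ is as in \eqref{E6.0.4}. Next, for sheaves $X,Y$ in the heart, I would establish the natural isomorphism
$$\Ext^1_{coh(\mathbb{X})}(Y, X(\omega_0)) \cong \Hom_{coh(\mathbb{X})}(X,Y)^{\ast}$$
by applying $\mathbb{L}$-graded local duality to $R$, identifying the dualizing module's internal degree as $\omega_0 = (t-2)\overrightarrow{c} - \sum_{i=1}^t \overrightarrow{x}_i$ (this degree is essentially the sum of the degrees of the generators $x_0,\ldots,x_t$ minus the degrees of the defining relations $x_i^{p_i} - x_1^{p_1} + \lambda_i x_0^{p_0}$). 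Because $coh(\mathbb{X})$ is hereditary, every indecomposable object of $\mathcal{T}=D^b(coh(\mathbb{X}))$ is a shift of an object in the heart; so Serre duality propagates to $\mathcal{T}$ additively and cohomologically, yielding the Serre functor $S = -(\omega_0)[1]$.

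For part (2), I would proceed by a direct, uniform construction that uses the $\mathbb{L}$-grading. Given an indecomposable vector bundle $E$ and a point $x \in \mathbb{P}^1$, I would pick a nonzero homogeneous element $f \in R$ whose vanishing scheme cuts out the fibre over $x$, and then consider the canonical quotient $E \to E/fE$. Because $E$ is locally free of positive rank, the fibre of $E$ at $x$ is nonzero, hence $E/fE$ is a nonzero torsion sheaf supported at $x$, i.e. an object of $Tor_x$ in the decomposition \eqref{E6.0.3}. For a non-exceptional point one just uses a linear form cutting out $x$; for an exceptional point $\lambda_i$ one uses the homogeneous generator $x_i$, and the quotient lands in the cyclic-quiver tube of length $p_i$. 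Indecomposability of $E$ is not actually used beyond ensuring that $E \neq 0$.

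The main obstacle is in part (1): the graded local-duality computation identifying the dualizing degree as exactly $\omega_0$ demands careful bookkeeping of how the ramification data $p_0,\ldots,p_t$ contribute to the canonical module of the complete intersection $R$, since $R$ is not a polynomial ring but a quotient by $t-1$ relations, each of degree $\overrightarrow{c}$. Part (2) is comparatively routine once the $\mathbb{L}$-graded framework and the decomposition \eqref{E6.0.3} are available.
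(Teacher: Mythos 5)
The paper supplies no proof here: both parts of Lemma~\ref{xxlem6.2} are quoted directly from the literature, part~(1) from \cite[Theorem 2.2(ii)]{Le2011} and part~(2) from \cite[Proposition 1.10]{LR2006}. Your self-contained sketches are sound and in spirit follow the original Geigle--Lenzing arguments. For part~(1), reducing to $\mathbb{L}$-graded local duality on the complete intersection $R$ of \eqref{E6.0.1}, then passing to the hereditary heart $coh(\mathbb{X})$ and assembling the Serre functor $-(\omega_0)[1]$ on $D^b(coh(\mathbb{X}))$, is the right strategy; one small slip is that your parenthetical describing $\omega_0$ has the sign reversed --- the $a$-invariant of a graded complete intersection, which gives the internal degree of its canonical module, is the sum of the degrees of the relations minus the sum of the degrees of the generators, not the opposite --- though this does not alter the displayed formula for $\omega_0$ that you quote from \eqref{E6.0.4}. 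For part~(2), forming the cokernel of the degree-shifted multiplication by a homogeneous element $f$ cutting out the fibre over $x$ is a direct, valid argument: since a nonzero vector bundle has full support, the cokernel is a nonzero torsion sheaf supported at $x$; your observation that indecomposability of $E$ is never actually used is also correct.
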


The following linear algebra lemma is needed to estimate
the spectral radius of some matrices.

\begin{lemma}
\label{xxlem6.3}
Let $\Gamma$ be the $n\times n$-matrix $(a_{ij})_{n\times n}$
where
\begin{equation}
\label{E6.3.1}\tag{E6.3.1}
a_{ij}=\begin{cases} 1 & {\text{if $i=1$, or $j=1$,}}\\
0 & {\text{otherwise.}}\end{cases}.
\end{equation}
Then the spectral radius $\rho(\Gamma)\geq \sqrt{n}$.
\end{lemma}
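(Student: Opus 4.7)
The plan is to exploit the symmetry of $\Gamma$ together with the variational characterization of the spectral radius of a symmetric non-negative matrix, applied to $\Gamma^2$ rather than to $\Gamma$ directly. First I would observe that $\Gamma$ is symmetric, since the condition ``$i=1$ or $j=1$'' defining the nonzero entries of \eqref{E6.3.1} is symmetric in $i$ and $j$; hence $\Gamma^2$ is symmetric and positive semidefinite, and its spectral radius is
\begin{equation*}
\rho(\Gamma^2)=\max_{x\neq 0}\frac{x^{T}\Gamma^{2}x}{x^{T}x}.
\end{equation*}

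Next, I would compute only one entry of $\Gamma^2$, namely
\begin{equation*}
(\Gamma^{2})_{11}=\sum_{k=1}^{n}a_{1k}a_{k1}=\sum_{k=1}^{n}1\cdot 1=n,
\end{equation*}
since by \eqref{E6.3.1} every $a_{1k}$ and every $a_{k1}$ equals $1$. Taking the test vector $x=e_{1}$ in the Rayleigh quotient gives the bound
\begin{equation*}
\rho(\Gamma^{2})\;\geq\;e_{1}^{T}\Gamma^{2}e_{1}\;=\;(\Gamma^{2})_{11}\;=\;n.
\end{equation*}

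Finally, the eigenvalues of $\Gamma^{2}$ are precisely the squares of the eigenvalues of $\Gamma$, so $\rho(\Gamma^{2})=\rho(\Gamma)^{2}$, and combining this identity with the previous inequality yields $\rho(\Gamma)\geq\sqrt{n}$, as required. There is no real obstacle here: the content of the argument is the single observation that the $(1,1)$-entry of $\Gamma^{2}$ equals $n$, and the estimate $\rho(\Gamma)^{2}=\rho(\Gamma^{2})\geq(\Gamma^{2})_{11}$ is then immediate from the Rayleigh characterization for symmetric matrices. (Alternatively, one could diagonalize $\Gamma$ explicitly by noting that it has rank at most $2$, with the two nonzero eigenvalues being the roots of $\lambda^{2}-\lambda-(n-1)=0$, and then check $\tfrac{1}{2}(1+\sqrt{4n-3})\geq\sqrt{n}$ by squaring; but the Rayleigh-quotient route above is shorter and avoids any case analysis.)
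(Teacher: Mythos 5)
Your argument is correct, and it takes a genuinely different route from the paper's. The paper computes the characteristic polynomial of $\Gamma$ explicitly, namely $f(x)=x^{n-2}\bigl(x^{2}-x-(n-1)\bigr)$, reads off $\rho(\Gamma)=\tfrac{1}{2}\bigl(1+\sqrt{4n-3}\bigr)$, and then verifies the elementary inequality $\tfrac{1}{2}\bigl(1+\sqrt{4n-3}\bigr)\geq\sqrt{n}$. You instead use the symmetry of $\Gamma$ and the Rayleigh-quotient characterization of the top eigenvalue of the symmetric matrix $\Gamma^{2}$, reducing the whole argument to the one-line computation $(\Gamma^{2})_{11}=n$ and the identity $\rho(\Gamma^{2})=\rho(\Gamma)^{2}$. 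Your approach buys robustness and brevity: you never need the full characteristic polynomial, only a single diagonal entry, and there is no closing inequality to verify; it also generalizes painlessly to any symmetric nonnegative matrix with a large diagonal entry in its square. The paper's approach, by contrast, yields the exact value of $\rho(\Gamma)$ (not merely a lower bound), which is mildly more information, at the cost of having to justify the factorization of $f(x)$ and check the final inequality by squaring. One small point worth recording if you write this up: the step $\rho(\Gamma^{2})=\rho(\Gamma)^{2}$ uses that $\Gamma$ is symmetric (hence has real eigenvalues), which you do note, but it is the hinge of the argument and should stay explicit.
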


\begin{proof}
It is not hard to check that the characteristic polynomial
of $\Gamma$ is
\[
f(x)=x^n-x^{n-1}-(n-1)x^{n-2}=x^{n-2}(x^2-x-(n-1)).
\]
Then
$$\rho(\Gamma)=\frac{1+\sqrt{4n-3}}{2}\geq \sqrt{n}.$$
\end{proof}

\begin{lemma}
\label{xxlem6.4}
Suppose $\mathcal{T}$ be a triangulated category
satisfying
\begin{enumerate}
\item[(a)]
there is an infinite brick set $\phi$,
\item[(b)]
there is a brick object $B$ in $\mathcal{T}$ such that
$\Hom_{\mathcal{T}}(B,X)\neq 0$ for all $X\in \phi$,
\item[(c)]
there is an integer $m$ such that
$\Hom_{\mathcal{T}}(B[s],X)=\Hom_{\mathcal{T}}(X, B[s])= 0$
for all $X\in \Phi$ and for all $|s|\geq m$,
\item[(d)]
$\mathcal{T}$ has a Serre functor $S$, and
\item[(e)]
there is an integer $m_0$ such that
$\Hom_{\mathcal{T}}(B[m_0], S(X))\neq 0$ for all
$X\in \phi$.
\end{enumerate}
Let $\mathcal{C}$ be a monoidal triangulated category
acting on $\mathcal{T}$. Then there is an object
$M\in \mathcal{C}$ such that $\fpd(M)=\infty$.
\end{lemma}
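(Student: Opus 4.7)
The plan is to apply Lemma \ref{xxlem6.3}: for each positive integer $N$ I will construct a finite brick set $\phi_N \subset \mathcal{T}$ of cardinality $N+1$ together with a single object $M \in \mathcal{C}$, \emph{independent} of $N$, such that the adjacency matrix $A(\phi_N, M\odot -)$ dominates entrywise the ``star'' matrix $\Gamma$ of Lemma \ref{xxlem6.3}. Monotonicity of the spectral radius for non-negative matrices then forces $\rho(A(\phi_N, M\odot -)) \geq \sqrt{N+1}$, and Definition \ref{xxdef1.3}(6) yields $\fpd(M) = \infty$.

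First I would build $\phi_N$: fix an integer $\ell$ with $|\ell| \geq m$, draw $N$ pairwise distinct objects $X_1,\ldots,X_N$ from the infinite brick set supplied by (a), and set
\[
\phi_N := \{B[\ell],\, X_1,\,\ldots,\, X_N\}.
\]
That $\phi_N$ is a brick set follows from: (a) for the mutual orthogonality of the $X_i$; the bricknesss of $B$ transported along the suspension for $B[\ell]$; and (c) for the vanishing $\Hom_{\mathcal{T}}(B[\ell],X_i)=\Hom_{\mathcal{T}}(X_i,B[\ell])=0$. Next I would convert (e) into a statement about morphisms from $\phi$ into a shift of $B$: by (d) and Serre duality, $\Hom_{\mathcal{T}}(B[m_0], S(X))^{\ast}\cong \Hom_{\mathcal{T}}(X, B[m_0])$, so (e) is equivalent to $\Hom_{\mathcal{T}}(X, B[m_0])\neq 0$ for every $X\in \phi$. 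Paired with (b), this supplies non-zero morphisms $B\to X_i$ \emph{and} $X_i\to B[m_0]$ for every $1\leq i\leq N$.

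Finally I would pick $M \in \mathcal{C}$ so that the action $M\odot -$ turns the two Hom-families of the previous paragraph into non-zero entries in the row and column of $B[\ell]$ in $A(\phi_N, M\odot -)$: concretely, one wants $\dim\Hom_{\mathcal{T}}(B[\ell], M\odot X_i)\geq 1$ and $\dim\Hom_{\mathcal{T}}(X_i, M\odot B[\ell])\geq 1$ simultaneously for every $i$. In the principal intended application, where $\mathcal{C}=\mathcal{T}$ and $S\cong -\otimes K[1]$ for an invertible object $K$, the natural choice is a suitable shift of $K$ (or of $K^{-1}$), after which the two Hom computations collapse, via $S$-invariance, to the morphisms provided by Step~2. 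The main obstacle will be exactly this last step: extracting from the abstract monoidal action of $\mathcal{C}$ on $\mathcal{T}$ a single $M$ whose action links $B[\ell]$ to every $X_i$ in both directions, while simultaneously verifying that the remaining entries of $A(\phi_N, M\odot -)$ do not obstruct the entrywise domination by $\Gamma$ required by Lemma \ref{xxlem6.3}. Hypotheses (d) and (e) are tailored precisely so that the Serre auto-equivalence supplies the ``return'' link $X_i\to B[\ell]$ that (b) alone cannot furnish, and matching the integers $m_0$ and $\ell$ correctly is the delicate bookkeeping that makes the construction work uniformly in $N$.
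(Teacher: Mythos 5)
Your setup of the argument is structurally the same as the paper's: fix a shift $\ell$ of $B$ orthogonal to $\phi$ (using (c)), build a ``star''-shaped finite brick set $\phi_N=\{B[\ell],X_1,\dots,X_N\}$, translate (e) by Serre duality into the nonvanishing of $\Hom_{\mathcal T}(X,B[m_0])$, and then try to exhibit an $M\in\mathcal C$ whose adjacency matrix on $\phi_N$ dominates the star matrix of Lemma \ref{xxlem6.3}. All of this is correct as far as it goes. But you then stop and say that producing such an $M$ for an abstract monoidal action is ``the main obstacle,'' and you only sketch how it works in the special case $\mathcal C=\mathcal T$ with an invertible Serre kernel. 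That is precisely the gap: the lemma is stated for an arbitrary monoidal triangulated $\mathcal C$ acting on $\mathcal T$, and the argument must produce $M$ in that generality.

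The missing idea is that $M$ can be built entirely from the unit object ${\bf 1}$ of $\mathcal C$. Because the action is compatible with suspension (Convention \ref{xxcon1.2}(3c)), one has ${\bf 1}[s]\odot X\cong({\bf 1}\odot X)[s]\cong X[s]$ for every $X\in\mathcal T$. Taking $M:={\bf 1}[m]\oplus{\bf 1}\oplus{\bf 1}[m_0-m]$ and the brick set $\phi_n=\{X_1:=B[m],X_2,\dots,X_n\}$, the three summands contribute $\Hom(B[m],{\bf 1}\odot B[m])\cong\Hom(B,B)=\Bbbk$ for the $(1,1)$ entry, $\Hom(B[m],{\bf 1}[m]\odot X_i)\cong\Hom(B,X_i)\neq 0$ for the first row by (b), and $\Hom(X_j,{\bf 1}[m_0-m]\odot B[m])\cong\Hom(X_j,B[m_0])\cong\Hom(B[m_0],S(X_j))^{\ast}\neq 0$ for the first column by (d) and (e). Lemma \ref{xxlem6.3} then gives $\rho(A(\phi_n,M\odot-))\geq\sqrt n$, and $\fpd(M)=\infty$. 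Note also that ``entrywise domination'' of $\Gamma$ needs no control on the remaining entries: all entries of $A$ are nonnegative, and any nonnegative matrix that dominates $\Gamma$ has spectral radius at least $\rho(\Gamma)$, so there is nothing to ``verify'' about the rest of the matrix. Your proposal was on the right track but incomplete without this construction of $M$.
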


\begin{proof} In the following proof let $\odot$ denote the
action of $\mathcal{C}$ on $\mathcal{T}$ and $\Hom$ denote
$\Hom_{\mathcal{T}}$.

By condition (d), $\mathcal{T}$ has a Serre functor
$S:\mathcal{T}\to \mathcal{T}$ such that
\begin{equation}
\label{E6.4.1}\tag{E6.4.1}
\Hom(X,Y)^{\ast} \cong \Hom(Y,S(X))
\end{equation}
for all $X,Y$ in $\mathcal{T}$.

Let ${\bf 1}\in \mathcal{C}$ be the unit object with
respect to the monoidal tensor of $\mathcal{C}$. Let $m$
and $m_0$ be the integers given in conditions (c) and (e),
and let $M$ be the object ${\bf 1}[m]\oplus {\bf 1}\oplus {\bf 1}[m_0-m]$ in
$\mathcal{C}$. It is enough to show that $\fpd(M)=\infty$.
Let $\phi_n$ be a brick set consisting of $(n-1)$ objects in
$\phi$ and one extra special object, namely $B[m]$, where $m$
is in condition (c). Write
$$\phi_n=\{X_1:=B[m], X_2, X_3,\cdots,X_n\}$$
where $X_i\in \phi$ for all $i=2,3,\cdots,n$. Let
$A:=(a_{ij})$ denote the adjacency matrix
$A(\phi_n, M\odot -)$.
We claim that $a_{1i}\neq 0$ and $a_{j1}\neq 0$ for all $i,j$.

Case 1:
$$\begin{aligned}
a_{11}&= \dim \Hom(B[m], M\odot B[m])\\
&\geq \dim \Hom(B[m], {\bf 1}\odot B[m])\\
&= \dim \Hom(B, B)\\
&=\dim \Bbbk=1  \qquad\qquad\qquad\qquad {\text{by condition (b)}}.
\end{aligned}
$$

Case 2: for every $i\geq 2$,
$$\begin{aligned}
a_{1i}&= \dim \Hom(B[m], M\odot X_i)\\
&\geq \dim \Hom(B[m], {\bf 1}[m]\odot X_i)\\
&= \dim \Hom(B[m], X_i[m])\\
& \geq \dim \Bbbk=1 \qquad\qquad\qquad\qquad {\text{by condition (c)}}.
\end{aligned}
$$

Case 3: for every $j\geq 2$,
$$\begin{aligned}
a_{j1}&= \dim \Hom(X_j, M\odot B[m])\\
&\geq \dim \Hom(X_j, {\bf 1}[m_0-m]\odot B[m])\\
&= \dim \Hom(X_j, B[m_0])\\
&= \dim \Hom(B[m_0], S(X_j)) \quad\;\; {\text{by \eqref{E6.4.1}}}\\
& \geq \dim \Bbbk=1 \qquad\qquad\qquad\qquad {\text{by condition (e)}}.
\end{aligned}
$$

Therefore we proved the claim. This means that
every entry in $A$ is larger than or equal to
the corresponding entry in $\Gamma$ as given in
Lemma \ref{xxlem6.3}. By linear algebra,
$$\rho(A)\geq \rho(\Gamma)\geq \sqrt{n}$$
where the last inequality is Lemma \ref{xxlem6.3}.
Then, by definition, $\fpd(M)\geq \sqrt{n}$ for all $n$.
Thus $\fpd(M)=\infty$ as desired.
\end{proof}

Now we are ready to show that every monoidal structure
on weighted projective line is $\fpd$-infinite.

\begin{proposition}
\label{xxpro6.5}
Let $\mathbb{X}$ be a weighted projective line and
let $\mathcal{T}$ be $D^b(coh(\mathbb{X}))$.
\begin{enumerate}
\item[(1)]
Let $\mathcal{C}$ be a monoidal triangulated category
acting on $\mathcal{T}$. Then there is an object
$M\in \mathcal{C}$ such that $\fpd(M)=\infty$.
\item[(2)]
Every monoidal structure on $\mathcal{T}$ is
$\fpd$-infinite.
\end{enumerate}
\end{proposition}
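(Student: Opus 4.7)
The plan is to derive both parts from Lemma \ref{xxlem6.4}: part (2) is the special case of part (1) where $\mathcal{C}=\mathcal{T}$ acts on itself via $\odot=\otimes$, so the whole content reduces to verifying the five hypotheses of Lemma \ref{xxlem6.4} for a suitable brick set $\phi$ and brick object $B$ in $\mathcal{T}=D^b(coh(\mathbb{X}))$.

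My choice is $B:=\mathcal{O}_{\mathbb{X}}$ (the structure sheaf, an indecomposable line bundle with endomorphism ring $\Bbbk$, hence a brick) together with
\[
\phi:=\{\mathcal{O}_x : x\in \mathbb{P}^1\setminus\{\lambda_0,\lambda_1,\ldots,\lambda_t\}\},
\]
the infinite family of ordinary simple sheaves at regular points. Since $\Bbbk$ is algebraically closed, $\phi$ is infinite; each $\mathcal{O}_x$ is simple in $coh(\mathbb{X})$ and distinct ones are non-isomorphic, so $\phi$ is an infinite brick set in $\mathcal{T}$, giving hypothesis (a). For (b), Lemma \ref{xxlem6.2}(2) yields a nonzero map $\mathcal{O}_{\mathbb{X}}\to Tor_x$, which composed with a projection onto the unique simple of the block $Tor_x$ (see \eqref{E6.0.3}) produces a nonzero morphism $B\to\mathcal{O}_x$. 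Hypothesis (c) follows from the hereditary property of $coh(\mathbb{X})$ (Lemma \ref{xxlem6.1}(1)), since both $\Hom_{\mathcal{T}}(B[s],X)=\Ext^{-s}(B,X)$ and $\Hom_{\mathcal{T}}(X,B[s])=\Ext^s(X,B)$ vanish for $|s|\geq 2$; thus we may take $m=2$. Hypothesis (d) is Lemma \ref{xxlem6.2}(1).

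The subtle step, and the main obstacle I anticipate, is hypothesis (e), where one needs to control $S(\mathcal{O}_x)=\mathcal{O}_x(\omega_0)[1]$. I would establish that $\mathcal{O}_x(\omega_0)\cong \mathcal{O}_x$ (so $S(\mathcal{O}_x)\cong \mathcal{O}_x[1]$) by bootstrapping from Lemma \ref{xxlem6.1}(3) via Serre duality:
\[
\Bbbk \;=\; \Ext^1_{\mathbb{X}}(\mathcal{O}_x,\mathcal{O}_x) \;=\; \Hom_{\mathcal{T}}(\mathcal{O}_x,S(\mathcal{O}_x)[-1])^{\ast} \;=\; \Hom_{coh(\mathbb{X})}(\mathcal{O}_x,\mathcal{O}_x(\omega_0))^{\ast}.
\]
A nonzero morphism between simple objects of $coh(\mathbb{X})$ forces them to be isomorphic, so $S(\mathcal{O}_x)\cong \mathcal{O}_x[1]$. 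Taking $m_0=1$ then yields $\Hom_{\mathcal{T}}(B[1],S(\mathcal{O}_x))=\Hom(B,\mathcal{O}_x)\neq 0$ by (b), verifying (e).

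With all five hypotheses in place, Lemma \ref{xxlem6.4} produces an object $M=\mathbf{1}[2]\oplus\mathbf{1}\oplus\mathbf{1}[-1]\in\mathcal{C}$ with $\fpd(M)=\infty$, establishing part (1). Part (2) follows at once by specializing to $\mathcal{C}=\mathcal{T}$ equipped with its given monoidal triangulated structure.
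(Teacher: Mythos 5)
Your proof is correct and follows the same route as the paper: both invoke Lemma \ref{xxlem6.4} with $B=\mathcal{O}_{\mathbb{X}}$ and $\phi$ the family of ordinary skyscraper sheaves, and both reduce part (2) to part (1) by taking $\mathcal{C}=\mathcal{T}$. The only divergence is that the paper simply asserts $S(\mathcal{O}_x)\cong\mathcal{O}_x[1]$, whereas you supply a short justification of $\mathcal{O}_x(\omega_0)\cong\mathcal{O}_x$ via Serre duality and Lemma \ref{xxlem6.1}(3) -- a welcome, correct elaboration rather than a different approach.
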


\begin{proof} Since part (2) is a special case of
part (1), it suffices to show part (1).
We need to verify hypotheses (a)-(e) in Lemma
\ref{xxlem6.4}.

Let $\phi$ be the set $\{\mathcal{O}_x\mid x\in
\mathbb{P}^1\setminus\{\lambda_0,\cdots,\lambda_t\}\}$
and let $B$ be the trivial bundle $\mathcal{O}_{\mathbb{X}}$.
It is clear that $\phi$ is infinite, so (a) holds.
By Lemma \ref{xxlem6.2}(2), (b) holds. Since $coh(\mathbb{X})$
has global dimension 1, (c) holds. By Lemma
\ref{xxlem6.2}(1), $D^b(coh(\mathbb{X}))$ has a Serre
functor $S$ which is
$\mathcal{O}_{\mathbb{X}}(\omega_0)[1]\otimes_{\mathbb{X}}-$.
Then $S(\mathcal{O}_x)=\mathcal{O}_x[1]$ for all
$x\in \mathbb{P}^1\setminus\{\lambda_0,\cdots,\lambda_t\}$.
Therefore (e) holds. Finally the assertion follows from
Lemma \ref{xxlem6.4}.
\end{proof}

It is not hard to check that Proposition \ref{xxpro6.5} also
holds if $\mathbb{X}$ is an irreducible smooth projective
scheme of dimension at least 1.

We still need quite a few lemmas before we can prove
Theorem \ref{xxthm0.3}. Recall that the definition of
$\fpd$-wild is given in Definition \ref{xxdef0.2}(3).

\begin{lemma}
\label{xxlem6.6}
Let $\mathcal{T}$ be a triangulated category. Suppose that,
for each $n$, there is a connected brick set $\phi$ with
$|\phi|>n$.
\begin{enumerate}
\item[(1)]
Let $\mathcal{C}$ be a $\Hom$-finite Krull-Schmidt
monoidal triangulated category acting on $\mathcal{T}$.
Then there is an indecomposable object $M\in \mathcal{C}$
such that $\fpd(M)=\infty$.
\item[(2)]
Suppose further that $\mathcal{T}$ is $\Hom$-finite
Krull-Schmidt. Then every monoidal triangulated structure
on $\mathcal{T}$ is $\fpd$-wild.
\end{enumerate}
\end{lemma}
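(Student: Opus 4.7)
The plan is to prove part (1) by constructing $M$ as a shift of an indecomposable summand of the unit object of $\mathcal{C}$, and then deduce part (2) by applying part (1) to $\mathcal{C} = \mathcal{T}$ acting on itself.

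First I would decompose the unit $\mathbf{1}_{\mathcal{C}} = \bigoplus_{l=1}^{k} N_l$ into finitely many indecomposables, using that $\mathcal{C}$ is $\Hom$-finite Krull-Schmidt. Since the action $\odot$ is additive in each argument and $\mathbf{1}_{\mathcal{C}} \odot X \cong X$, every object $X \in \mathcal{T}$ admits a natural decomposition $X \cong \bigoplus_{l=1}^{k} (N_l \odot X)$. The crucial observation will be that when $X$ is a brick, $\End_{\mathcal{T}}(X) = \Bbbk$ forces all but one of these summands to vanish: the projectors onto the $k$ summands are pairwise orthogonal idempotents in the field $\Bbbk$ summing to $1$, so exactly one equals $1$ (giving $N_l \odot X \cong X$) and the others vanish. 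This assigns to each brick $X$ a unique label $l(X) \in \{1,\ldots,k\}$ with $N_{l(X)} \odot X \cong X$ and $N_l \odot X = 0$ for $l \neq l(X)$.

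Next I would apply pigeonhole twice. For each $n$, the hypothesis provides a connected brick set $\phi_n$ with $|\phi_n| > n$. Partitioning $\phi_n$ by the label $l(-)$, pigeonhole yields a subset $\phi'_n \subseteq \phi_n$ of size at least $n/k$ with a constant label $l_n$. Since $l_n$ takes values in the finite set $\{1,\ldots,k\}$, a second pigeonhole on $n$ produces a fixed $l_0$ attained by $l_n$ for infinitely many $n$. Taking $M := N_{l_0}[1]$, which is indecomposable in $\mathcal{C}$ because $[1]$ is an autoequivalence, I would then compute for $X_i, X_j \in \phi'_n$ (with $l_n = l_0$) that $M \odot X_j = (N_{l_0} \odot X_j)[1] = X_j[1]$, whence $\dim \Hom_{\mathcal{T}}(X_i, M \odot X_j) = \dim \Ext^1_{\mathcal{T}}(X_i, X_j) \geq 1$ by connectedness of $\phi_n$. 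The matrix $A(\phi'_n, M \odot -)$ therefore dominates entrywise the all-ones matrix on $\phi'_n$, and by Perron-Frobenius $\rho(A(\phi'_n, M \odot -)) \geq |\phi'_n| \geq n/k$; since this holds for infinitely many $n$, $\fpd(M) = \infty$, establishing (1). For part (2), applying (1) with $\mathcal{C} = \mathcal{T}$ and $\odot = \otimes$ produces an indecomposable $M \in \mathcal{T}$ with $\fpd(M) = \infty$, which rules out both $\fpd$-finiteness and $\fpd$-tameness, so every monoidal triangulated structure on $\mathcal{T}$ is $\fpd$-wild.

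The main obstacle, I expect, will be justifying the label construction: passing from a direct-sum decomposition $X \cong \bigoplus_l (N_l \odot X)$ to the vanishing of all but one summand using only the brick property. This is the step where the brick hypothesis (rather than merely ``connectedness'') is essential, and it relies on the additive-category fact that orthogonal idempotents in a field are trivial. Once this is in place, the rest of the argument is a clean pigeonhole together with the elementary bound $\rho(A) \geq \rho(J) = |\phi'_n|$ for $A$ dominating the all-ones matrix.
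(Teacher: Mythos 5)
Your proof is correct and follows essentially the same strategy as the paper: decompose the unit object $\mathbf{1}_{\mathcal{C}}$ into finitely many indecomposables, use the brick property (so that $\End(X) = \Bbbk$ admits no nontrivial orthogonal idempotents) to show that exactly one summand $N_l \odot X$ is nonzero and equals $X$, apply a double pigeonhole to fix a summand $N_{l_0}$ and pass to large sub-brick-sets with that label, take a shift of that summand, and conclude via the all-ones lower bound on the spectral radius. Part (2) then follows by specializing $\mathcal{C} = \mathcal{T}$, again as in the paper.

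One small point of divergence worth recording: you take $M := N_{l_0}[1]$ and compute $\Hom_{\mathcal{T}}(X_i, M \odot X_j) = \Hom_{\mathcal{T}}(X_i, X_j[1]) = \Ext^1(X_i, X_j) \neq 0$, which is exactly the form of the adjacency-matrix entry $a_{ij} = \dim\Hom(X_i, \sigma(X_j))$ prescribed by Definition \ref{xxdef1.3}(3). The paper instead uses $M_j[-1]$ and writes $\Hom_{\mathcal{T}}(M_j[-1]\odot X, Y) = \Hom_{\mathcal{T}}(X, Y[1]) \neq 0$, which is the $(j,i)$ entry rather than the $(i,j)$ entry; the conclusion survives because $\rho(A) = \rho(A^{T})$, but this step is implicit there. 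Your choice of the $[1]$-shift makes the bookkeeping match the definition directly, which is arguably cleaner.
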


\begin{proof} Since part (2) is a special case of
part (1), it suffices to show part (1).

Let $(\mathcal{C},\otimes, \mathbf{1})$ be a monoidal
triangulated category acting on $\mathcal{T}$
where $\mathbf{1}$ is the unit object of $\mathcal{C}$.
Write $\mathbf{1}$ as a direct sum of indecomposable objects
$$\mathbf{1}=\bigoplus_{i=1}^d M_i.$$
By hypothesis, for each $n$, there is a connected brick set
$\phi^n$ with $|\phi^n|>dn$. Define
$$\phi^n_i:=\{ X\in \phi^n \mid M_i\odot X\neq 0\}.$$
Since $X=\mathbf{1}\odot X=\bigoplus_{i=1}^d (M_i\odot X)$
and $X$ is indecomposable, there is exactly one $i$ such that
$M_i\odot X\neq 0$, and for that $i$, we have $M_i\odot X=X$.
Hence, for each $n$, $\phi^n$ is a disjoint union of $\phi^n_i$
for $i=1,\cdots, d$. By the pigeonhole principle, there is at least
$i$ such that $|\phi^n_i|>n$. This implies that there is at least
one $j$ such that, with this fixed $j$, there is an infinite
sequence $n_j$ such that $|\phi^{n_j}_j|>n_j$. Using this
sequence of brick sets, one sees that
$$\Hom_{\mathcal{T}}(M_j[-1]\odot X,Y)=\Hom_{\mathcal{T}}(X,Y[1])
\neq 0$$
for all $X,Y\in \phi^{n_j}_j$.
By definition, $\fpd(M_j[-1])\geq n_j$ as $|\phi^{n_j}_j|\geq n_j$.
Since $n_j$ goes to infinity, $\fpd(M_j[-1])=\infty$ as desired.
\end{proof}

Next we recall more detailed structures concerning
weighted projective lines.
Let ${\bf p}$ be the weight of $\mathbb{X}$ and
$B_0=\gcd(p_i\in {\bf p})$.
Define $\nu$ to be the group homomorphism from $\mathbb{L}$ to
$\mathbb{Z}$ such that
$\nu(\overrightarrow{x}_i)=\prod_{s\neq i} p_s$. It is easy to
see that the image of $\nu$ is $B_0\mathbb{Z}$. In fact,
we can assume that $B_0=1$, so $\nu: \mathbb{L}\to \mathbb{Z}$
is a surjective morphism. Since $\rank(\ker(\nu))=0$,
the kernel of $\nu$ is finite.

\begin{lemma}
\label{xxlem6.7}
Let $\mathbb{X}$ be a weighted projective line and let
$\mathcal{T}$ be $D^b(coh(\mathbb{X}))$.
\begin{enumerate}
\item[(1)]
There is a positive integer $B_1$, only dependent on
$\mathbb{X}$, such that, if $\omega_1,\omega_2$ are in
$\mathbb{L}$ satisfying $\nu(\omega_2-\omega_1)\geq B_1$,
then $\Hom_{\mathbb{X}}(\mathcal{O}_{\mathbb{X}}(\omega_1),
\mathcal{O}_{\mathbb{X}}(\omega_2)) \neq 0$.
\item[(2)]
For every $N$, there is a positive integer $B_3(N)$,
only dependent on $\mathbb{X}$ and $N$ such that
$$\dim
\Hom_{\mathbb{X}}(\mathcal{O}(\omega_1),\mathcal{O}(\omega_2))
\leq B_3(N)$$
for all $\omega_1,\omega_2$ in
$\mathbb{L}$ satisfying $0\leq \nu(\omega_2-\omega_1)\leq N$.
\end{enumerate}
\end{lemma}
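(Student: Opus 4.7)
The plan is to translate both statements into assertions about the homogeneous components of the $\mathbb{L}$-graded algebra $R$ from \eqref{E6.0.1}, via the standard Geigle--Lenzing identification
\[
\Hom_{\mathbb{X}}(\mathcal{O}_{\mathbb{X}}(\omega_1),\mathcal{O}_{\mathbb{X}}(\omega_2))\;\cong\; R_{\omega_2-\omega_1}.
\]
After this identification, (1) reduces to showing that $R_\omega\neq 0$ once $\nu(\omega)$ is sufficiently large, and (2) reduces to bounding $\dim_{\Bbbk} R_\omega$ when $0\leq\nu(\omega)\leq N$.

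The main tool I would invoke is the normal form for elements of $\mathbb{L}$: every $\omega\in\mathbb{L}$ has a unique expression $\omega=\sum_{i=0}^{t}l_i\overrightarrow{x_i}+l\overrightarrow{c}$ with $0\leq l_i\leq p_i-1$ and $l\in\mathbb{Z}$. Using the defining relations of $R$ to replace each $x_i^{p_i}$ ($i\geq 2$) by $x_1^{p_1}-\lambda_i x_0^{p_0}$, a short computation shows that $R_\omega$ has a $\Bbbk$-basis consisting of the monomials $x_0^{l_0}x_1^{l_1}\cdots x_t^{l_t}\cdot (x_0^{p_0})^{a}(x_1^{p_1})^{b}$ with $a,b\in\mathbb{Z}_{\geq 0}$ and $a+b=l$, giving the clean formula
\[
\dim_{\Bbbk} R_\omega \;=\; \max\{l+1,\,0\}.
\]
Applying $\nu$ to the normal form yields $\nu(\omega)=\sum_{i=0}^{t} l_i(P/p_i)+lP$, where $P:=\prod_{s=0}^{t} p_s$.

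From this pair of formulas both parts become elementary bookkeeping. For (1), the term $\sum_{i} l_i(P/p_i)$ is bounded above by $(t+1)P-\sum_i (P/p_i)$ since each $l_i\leq p_i-1$, so as soon as $\nu(\omega)\geq (t+1)P$ one is forced to have $l\geq 1$ and hence $R_\omega\neq 0$; taking $B_1:=(t+1)P$ and $\omega:=\omega_2-\omega_1$ settles (1). For (2), the non-negativity of $\sum_i l_i(P/p_i)$ forces $lP\leq\nu(\omega)\leq N$, so $l\leq\lfloor N/P\rfloor$ and $\dim_{\Bbbk}R_\omega\leq\lfloor N/P\rfloor+1$; taking $B_3(N):=\lfloor N/P\rfloor+1$ settles (2).

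The only step requiring genuine care is the dimension formula $\dim_{\Bbbk} R_\omega=\max\{l+1,0\}$, which needs a short Gr\"obner-style reduction using the defining relations together with the fact that $R$ is a normal domain. This is classical Geigle--Lenzing material, so I do not anticipate a real obstacle; everything else is arithmetic on the normal form.
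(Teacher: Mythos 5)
Your proof is correct, and it takes a genuinely more explicit route than the paper's. For part (1) both arguments use a normal form for $\mathbb{L}$ and observe that when $\nu(\omega_2-\omega_1)$ is large the ``free'' coefficient in the normal form must be nonnegative, which produces a nonzero monomial in $R_{\omega_2-\omega_1}$; the paper bounds the coefficients $a_0,\dots,a_{t-1}$ and leaves $a_t$ free, whereas you bound all of $l_0,\dots,l_t$ and leave the $\overrightarrow{c}$-multiple $l$ free, a cosmetic difference. The real divergence is in part (2): the paper's proof is soft, observing that $\ker\nu$ is finite, hence there are only finitely many $\omega_2\in\mathbb{L}$ with $0\le\nu(\omega_2)\le N$, and simply taking the maximum of $\dim R_{\omega_2}$ over that finite set; you instead derive the closed-form dimension formula $\dim_{\Bbbk}R_\omega=\max\{l+1,0\}$ and read off the explicit bound $B_3(N)=\lfloor N/P\rfloor+1$. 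Your route buys you explicit constants (and in fact a sharper $B_1$ could be extracted from your inequality), at the cost of needing to justify that the stated monomials really do form a $\Bbbk$-basis of $R$, a standard but nontrivial Geigle--Lenzing fact that the paper's proof of (2) sidesteps entirely. You correctly flagged this as the one step requiring care, so the proposal is sound as written.
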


\begin{proof} (1) We may assume that $\omega_1=0$. Let
$B_1=(t-1)\prod_{s=0}^t p_i$. For $\omega_2 \in \mathbb{L}$
with $\nu(\omega_2)\geq B_1$, write $\omega_2=\sum_{s=0}^{t-1} a_s
\overrightarrow{x}_s + a_t \overrightarrow{x}_t$ where
$0\leq a_s\leq p_s$ for all $0\leq s\leq t-1$. Since
$\nu(\omega_2)\geq B_1$, $a_t\geq 0$. Then the $\omega_2$-degree
component of $R$ (see \eqref{E6.0.1}) is not zero and hence
$\Hom_{\mathbb{X}}(\mathcal{O}_{\mathbb{X}},
\mathcal{O}_{\mathbb{X}}(\omega_2))=R_{\omega_2}\neq 0$.

(2) Again we can assume that $\omega_1=0$. Since there are
only finitely many $\omega_2$ such that $\nu(\omega_2)$ is
in between $0$ and $N$. Let $B_3(N)$ be the
maximum of all possible
$$\dim \Hom_{\mathbb{X}}(\mathcal{O},\mathcal{O}(\omega_2))$$
where $\omega_2$ runs over all $\omega_2\in \mathbb{L}$
such that $0\leq \nu(\omega_2)\leq N$. Then the
assertion follows.
\end{proof}

The next lemma concerns domestic weighted projective lines.
Some un-defined terms can be found in \cite{KLM2013}. Let
$\omega_0$ be the dualizing element defined in \eqref{E6.0.4}.

\begin{lemma}
\label{xxlem6.8}
Let $\mathbb{X}$ be a weighted projective line.
\begin{enumerate}
\item[(1)]
\cite[Proposition 5.1(ii)]{KLM2013}
Suppose that the weight ${\bf p}$ is either $(2, 2, n)$, or
$(2, 3, 3)$, or $(2, 3, 4)$ or $(2, 3, 5)$. Let $\Delta$
be the attached Dynkin diagram and $\widetilde{\Delta}$
its extended Dynkin diagram. The Auslander-Reiten quiver
$\Gamma(Vect(\mathbb{X}))$ of $Vect(\mathbb{X})$ consists
of a single standard component having the form
$\mathbb{Z} \widetilde{\Delta}$. Moreover, the category of
indecomposable vector bundles on $\mathbb{X}$, denoted by
$ind(Vect(\mathbb{X}))$, is equivalent to the mesh category
of $\Gamma(Vect(\mathbb{X}))$.
\item[(2)]
Under the hypotheses of part {\rm{(1)}}, there is a finite
set of indecomposable vector bundles $\{V_i\}_{i\in I}$ such
that every indecomposable vector bundle is of the form $V_i(n
\omega_0)$ for some $n\in \mathbb{Z}$ and some $i\in I$.
\item[(3)]
\cite[Sect. 5.1, page 217]{KLM2013}
If the weight ${\bf p}$ is of the form $(p,q)$, then each
indecomposable vector bundle is a line bundle $\mathcal{O}(\omega)$
for $\omega\in \mathbb{L}$.
\item[(4)]
Under the hypotheses of part {\rm{(3)}}, there is a finite set of
indecomposable vector bundles $\{V_i\}_{i\in I}$ such that
every indecomposable vector bundle is of the form $V_i(n
\omega_0)$ for some $n\in \mathbb{Z}$.
\end{enumerate}
\end{lemma}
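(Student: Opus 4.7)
The plan is to derive parts (2) and (4) from the cited structural facts (1) and (3), together with the standard identification of the Auslander-Reiten translate on $coh(\mathbb{X})$ with the twist by $\omega_0$; in both cases the work is just to exhibit a finite indexing set for the relevant orbits.

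For part (2), the key identity is $\tau = (\omega_0)$ on indecomposable vector bundles. This follows from Lemma \ref{xxlem6.2}(1) (the Serre functor on $\mathcal{T} = D^b(coh(\mathbb{X}))$ is $(\omega_0)[1]$) combined with the hereditary property from Lemma \ref{xxlem6.1}(1), which gives $\tau = S \circ [-1] = (\omega_0)$ in the abelian category $coh(\mathbb{X})$; this identification on $Vect(\mathbb{X})$ is standard for weighted projective lines. Granting it, part (1) of the present lemma says $\Gamma(Vect(\mathbb{X}))$ is a single standard component of shape $\mathbb{Z}\widetilde{\Delta}$, so its $\tau$-orbits are in bijection with the finite vertex set $I$ of $\widetilde{\Delta}$. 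Choosing one indecomposable vector bundle $V_i$ from each orbit then yields a finite indexing set such that every indecomposable vector bundle appears as $\tau^n V_i = V_i(n\omega_0)$ for some $(i,n) \in I \times \mathbb{Z}$.

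For part (4), by (3) every indecomposable vector bundle equals $\mathcal{O}(\omega)$ for some $\omega \in \mathbb{L}$, and the twist satisfies $\mathcal{O}(\omega)(n\omega_0) = \mathcal{O}(\omega + n\omega_0)$. So the claim reduces to showing that $\mathbb{L}/\mathbb{Z}\omega_0$ is finite: one then picks a representative $\omega_i$ of each coset and sets $V_i := \mathcal{O}(\omega_i)$. Since $\mathbb{L}$ has rank $1$ (noted after \eqref{E6.0.1}), finiteness is equivalent to $\omega_0$ being nontorsion, which can be verified through the surjection $\nu : \mathbb{L} \to \mathbb{Z}$ introduced before Lemma \ref{xxlem6.7}: a direct substitution into \eqref{E6.0.4} with $t = 1$ produces $\nu(\omega_0) \neq 0$.

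The main technical point will be the clean justification of $\tau = (\omega_0)$ on the whole of $Vect(\mathbb{X})$; the general formula $\tau = S \circ [-1]$ holds in any hereditary Ext-finite $\Bbbk$-linear category with Serre duality, but one should cite or briefly verify that it applies uniformly to every indecomposable vector bundle on $\mathbb{X}$ rather than only to the generic ones. Once that identification is in place, both parts collapse to the finiteness of an orbit set which is transparent from (1) and from the rank-one description of $\mathbb{L}$ in (3).
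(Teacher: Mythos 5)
Your proposal is correct and follows essentially the same route as the paper, deducing (2) from the $\mathbb{Z}\widetilde{\Delta}$-shape of $\Gamma(Vect(\mathbb{X}))$ and (4) from finiteness of $\ker\nu$ together with $\nu(\omega_0)\neq 0$. For (2) you frame the twist $-(\omega_0)$ as the AR translate $\tau=S\circ[-1]$ acting with orbits indexed by the vertex set of $\widetilde{\Delta}$, whereas the paper describes $-(\omega_0)$ as inducing a shift by $\nu(\omega_0)$ in the translation quiver; your phrasing is arguably cleaner, but both rest on the same fact that this automorphism of $\mathbb{Z}\widetilde{\Delta}$ has finitely many orbits. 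For (4) your quotient-group formulation ($\mathbb{L}/\mathbb{Z}\omega_0$ finite, using $\rk\mathbb{L}=1$ and $\nu(\omega_0)\neq 0$) is an equivalent repackaging of the paper's explicit choice of representatives $\{\mathcal{O}(\omega)\mid 0\le\nu(\omega)\le -\nu(\omega_0)-1\}$. No gaps; the "technical point" you flag about $\tau=-(\omega_0)$ on all of $Vect(\mathbb{X})$ is indeed a standard consequence of Serre duality for the hereditary category $coh(\mathbb{X})$ and the paper likewise treats it as known.
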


\begin{proof}
(2) There is a ($[-1]$-shifted) Serre functor $F:=-(\omega_0)$ which
is also a functor from $ind(Vect(\mathbb{X}))$ to itself. It is easy
to check that $\nu(\omega_0)<0$. Then $F$ induces an automorphism of the
Auslander-Reiten quiver $\Gamma(Vect(\mathbb{X}))$ by shifting
forward a distance $\nu(\omega_0)$.
Therefore there is a finite set of indecomposable vector bundles
$\{V_i\}_{i\in I}$ such that every indecomposable vector bundle
is of the form $V_i(n\omega_0)$ for some $n\in \mathbb{Z}$
and some $i\in I$.


(4) Since the map $\nu: \mathbb{L}
\to \mathbb{Z}$ is a group homomorphism with finite
kernel, there are only finitely many
$\omega$ such that $\nu(\omega)=0$.
Similarly, there are only finitely many $\omega\in \mathbb{L}$
such that $\nu(\omega)=0,1,\cdots,-\nu(\omega_0)-1$. Then
the set $\{\mathcal{O}(\omega)\mid 0\leq \nu(\omega)\leq -\nu(\omega_0)-1\}$
has the desired property.
\end{proof}

We introduce some temporary notation. By Lemma
\ref{xxlem6.8}(2,4), if $\mathbb{X}$ is domestic,
then there is a finite set of indecomposable vector
bundles, say $\mathbb{K}:=\{K_1, \cdots K_{B_4}\}$, such that
every indecomposable vector bundle is of the form
$K_s(n\omega_0)$ for some $1\leq s\leq B_4$ and
some $n\in \mathbb{Z}$. (Here $\omega_0 \in \mathbb{L}$ is
the dualizing element given in \eqref{E6.0.4}.) For
each $K_s$ we fix a sequence of sub-bundles
\begin{equation}
\label{E6.8.1}\tag{E6.8.1}
0=:V_{s,0}\subset V_{s,1}\subset V_{s,2}\subset \cdots
\subset V_{s,Y_s}:=K_s
\end{equation}
such that each subquotient $V_{s,i}/V_{s,i-1}$ is a line
bundle of the form $\mathcal{O}_{\mathbb{X}}(\omega_{s,i})$
for some $\omega_{s,i}\in \mathbb{L}$. Let $\Omega(\mathbb{X})$
be the collection of all such $\omega_{s,i}$'s. Hence
$\Omega(\mathbb{X})$ is finite. Let
$$\begin{aligned}
\max(\Omega)&= \max\{\nu(\omega)\mid \omega\in \Omega(\mathbb{X})\},\\
\min(\Omega)&= \min\{\nu(\omega)\mid \omega\in \Omega(\mathbb{X})\}.
\end{aligned}
$$
For every vector bundle $V$, we write $V=K_s(n\omega_0)$ for some
$s$ and $n$. Then we fix a sequence of sub-bundles of
$V:=K_s(n\omega_0)$ by applying $-(n\omega_0)$ to \eqref{E6.8.1}.
We have a series of subquotients
$$V_{s,i}(n\omega_0)/V_{s,i-1}(n\omega_0)
\cong \mathcal{O}_{\mathbb{X}}(\omega_{s,i}+n\omega_0)$$
induced by \eqref{E6.8.1}. Let $\nu(V)$ denote the positive
different between the largest of all $\nu(\omega_{s,i}+n\omega_0)$
and the smallest of all $\nu(\omega_{s,i}+n\omega_0)$. Then it is
clear that $\nu(V)\leq \max(\Omega)-\min(\Omega)$. So we have
proved part (1) of the follows proposition.

\begin{proposition}
\label{xxpro6.9}
Let $\mathbb{X}$ be a domestic weighted projective line.
\begin{enumerate}
\item[(1)]
Let $V$ be an indecomposable vector bundle on $\mathbb{X}$.
Then the $\nu(V)$ is uniformly bounded by
$B_5:=\max(\Omega)-\min(\Omega)$.
\item[(2)]
Let $V$ be an indecomposable vector bundle on $\mathbb{X}$.
Then the rank $V$ is uniformly bounded by an integer $B_6$
{\rm{(}}only dependent on $\mathbb{X}${\rm{)}}.
\item[(3)]
Suppose $\phi$ is a brick set consisting of vector bundles
on $\mathbb{X}$. Then the size of $\phi$ is uniformly bounded
by $B_7$ {\rm{(}}only dependent on $\mathbb{X}${\rm{)}}.
\item[(4)]
Suppose $\phi$ is a brick set consisting of vector bundles
on $\mathbb{X}$. Then, up to a degree shift, $\phi$ is a
subset of $\bigcup_{n=-N}^{N} \mathbb{K}(n\omega_0)$ for some
integer $N$. As a consequence, $\sum_{V\in \phi} \nu(V)$ is
uniformly bounded, say, by $B_8$ {\rm{(}}only dependent on
$\mathbb{X}${\rm{)}}.
\item[(5)]
Fix a vector bundle $V$ on $\mathbb{X}$. For every
brick set consisting of vector bundles
$\{X_1,\cdots,X_n\}$,
$\dim \Hom_{\mathbb{X}}(X_i, V\otimes_{\mathbb{X}} X_j)$
is uniformly bounded by $B_9(V)$ for all $i,j$
{\rm{(}}only dependent on $V$ and $\mathbb{X}${\rm{)}}.
\item[(6)]
Fix a vector bundle $V$ on $\mathbb{X}$. For every
brick set consisting of vector bundles
$\{X_1,\cdots,X_n\}$,
$\dim \Hom_{\mathbb{X}}(V\otimes_{\mathbb{X}} X_i,  X_j)$
is uniformly bounded by $B_{10}(V)$ for all $i,j$
{\rm{(}}only dependent on $V$ and $\mathbb{X}${\rm{)}}.
\end{enumerate}
\end{proposition}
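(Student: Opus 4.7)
The plan is as follows. \textbf{Part (2)} is immediate from Lemma \ref{xxlem6.8}(2,4): rank is invariant under the degree shift $-(n\omega_0)$, so every indecomposable vector bundle has rank at most $B_6:=\max_s \rank K_s$.

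\textbf{Part (4)} is the crux. A direct computation from \eqref{E6.0.4}, applied to each weight in the domestic list \eqref{E6.0.2}, yields $\nu(\omega_0)<0$. Fix a brick set $\phi$ of vector bundles and write each $X\in \phi$ as $K_{s(X)}(n(X)\omega_0)$ using Lemma \ref{xxlem6.8}(2,4). I claim that $\max_X n(X) - \min_X n(X)$ is bounded by some $N_0$ depending only on $\mathbb{X}$. Indeed, given $X_1,X_2\in\phi$, twisting the filtration \eqref{E6.8.1} by $n_i\omega_0$ presents $\mathcal{O}(\omega_{s_1,1}+n_1\omega_0)$ as a subobject of $X_1$ and $\mathcal{O}(\omega_{s_2,Y_{s_2}}+n_2\omega_0)$ as a quotient of $X_2$. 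I apply Lemma \ref{xxlem6.7}(1) with $\omega_2-\omega_1 = (\omega_{s_1,1}-\omega_{s_2,Y_{s_2}}) + (n_1-n_2)\omega_0$: the first summand is uniformly bounded in $\nu$-value (e.g.\ by $2B_5$), while $\nu(\omega_0)<0$ forces the full quantity to exceed $B_1$ once $n_2-n_1$ is sufficiently large. For such $n_2-n_1$, Lemma \ref{xxlem6.7}(1) supplies a nonzero line-bundle morphism, and composing with the quotient and inclusion produces a nonzero map $X_2\to X_1$; brickness then forces $X_1=X_2$. Finally, degree-shifting by $\tau_0:=m\omega_0$ with $m:=\min_X n(X)$ embeds the shifted set into $\bigcup_{n=0}^{N_0}\mathbb{K}(n\omega_0)$, giving part (4) with $N=N_0$.

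\textbf{Part (3)} and the ``as a consequence'' assertion of part (4) follow at once: $|\phi|\leq (N_0+1)B_4=:B_7$, and combining with part (1) yields $\sum_{V\in\phi}\nu(V)\leq B_5 B_7=:B_8$. For \textbf{parts (5) and (6)}, I combine part (4) with $\mathbb{L}$-shift equivariance. Writing $X_i = K_{s_i}(n_i\omega_0+\tau_0)$ with $|n_i|\leq N$, and using that $-\otimes \mathcal{O}(-\tau_0)$ is an auto-equivalence of $coh(\mathbb{X})$ together with the symmetric monoidal identity $V\otimes X_j(-\tau_0) = (V\otimes X_j)(-\tau_0)$, one obtains
$$\dim\Hom_{\mathbb{X}}(X_i, V\otimes X_j) = \dim\Hom_{\mathbb{X}}(K_{s_i}(n_i\omega_0), V\otimes K_{s_j}(n_j\omega_0)).$$
The right-hand side depends only on $V$ and on $(s_i,s_j,n_i,n_j)\in\{1,\dots,B_4\}^2\times[-N,N]^2$, and so takes only finitely many values; the maximum is the required bound $B_9(V)$. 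Part (6) is entirely symmetric. The main obstacle is part (4): one must both pin down the sign $\nu(\omega_0)<0$ in the domestic case and engineer the correct pair of sub/quotient line bundles so that Lemma \ref{xxlem6.7}(1) converts a large twist gap into a Hom-nonvanishing in the direction required by brickness.
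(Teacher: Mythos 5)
Your argument is correct and follows essentially the same route as the paper's: part (2) from the finite fundamental domain $\mathbb{K}$, the twist-gap bound via Lemma \ref{xxlem6.7}(1) applied to sub/quotient line bundles of the chosen filtrations \eqref{E6.8.1} (using $\nu(\omega_0)<0$), and parts (5), (6) by reducing to a fixed finite set via a global $\omega_0$-shift. The only organizational difference is that you establish the twist-gap bound (your $N_0$, the paper's $N_1$) directly as part (4) and then read off part (3), whereas the paper proves the pigeonhole bound for (3) first and then deduces (4) from the same estimate (E6.9.1); the underlying computation is identical, and your phrasing with $\omega_2-\omega_1=(\omega_{s_1,1}-\omega_{s_2,Y_{s_2}})+(n_1-n_2)\omega_0$ in fact tracks the signs more transparently than the paper's displayed inequality in the proof of (3).
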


\begin{proof}
(2) This is part of \cite[Theorem 6.1]{LR2006}. It also can be shown
directly as follows.

Since every indecomposable vector bundle $V$ is of the form
$K_s(\omega)$ for $1\leq s\leq B_4$, the rank of $V$ is uniformly
bounded, say by $B_6$.

(3) Since $\nu(\omega_0)$ is negative, there is an $N_1$ such that
for all $n\geq N_1$ and for all $s_1,s_2$,
$$\nu(\omega_{s_2,Y_{s_2}})-\nu(\omega_{s_1,1}-n \omega_0)\geq B_1$$
where $B_1$ is the constant given in Lemma \ref{xxlem6.7}(1).
By Lemma \ref{xxlem6.7}(1), for such $n$, $s_1,s_2$,
$$\Hom_{\mathbb{X}}(\mathcal{O}_{\mathbb{X}}(\omega_{s_2,Y_{s_2}})),
\mathcal{O}_{\mathbb{X}}(\omega_{s_1,1}-n\omega_0))\neq 0.$$
By \eqref{E6.8.1},
\begin{equation}
\label{E6.9.1}\tag{E6.9.1}
\Hom_{\mathbb{X}}(K_{s_2},K_{s_1}(-n\omega_0))\neq 0
\end{equation}
for all $s_1,s_2$ and all $n\geq N_1$.

Let $\phi$ be a brick set of vector bundles. We claim that
$|\phi|\leq N_1|\mathbb{K}|=:B_7$. If not, by the pigeonhole
principle, there is an $s$ such that $\phi$ contains
a subset
$$\{K_s(n_1\omega_0),\cdots,K_s(n_q\omega_0)\}$$
for some $q> N_1$ where $n_1<n_1<\cdots <n_q$. Then,
by \eqref{E6.9.1},
$$
\Hom_{\mathbb{X}}(K_s(n_q\omega_0),K_s(n_1\omega_0))
=\Hom_{\mathbb{X}}(K_s,K_s((n_1-n_q)\omega_0))
\neq 0.$$
This contradicts that $\phi$ is a brick set. Therefore
we proved the claim.

(4) Without loss of generality, we may assume that
$\phi$ contains $K_1$. Let $K_s(n\omega_0)$ be
any other object in $\phi$. By \eqref{E6.9.1},
$|n|< N_1$ where $N_1$ is given in the proof of
part (3). Therefore $\phi$ is a subset of
$\bigcup_{n=-N_1}^{N_1} \mathbb{K}(n\omega_0)$.
As a consequence, $\sum_{X\in \phi} \nu(X)$ is
uniformly bounded, say by $B_8$.

(5) By part (4), up to a degree shift, we can assume that
$\phi$ is a subset of $\bigcup_{n=-N}^{N} \mathbb{K}
(n\omega_0)$ for a fixed integer $N$. Note that
the global degree shift will not change the assertion.
Then the assertion follows by the fact that
$\bigcup_{n=-N}^{N} \mathbb{K}(n\omega_0)$ is a fixed
set.

(6) Similar to the proof of part (5).
\end{proof}

\begin{lemma}
\label{xxlem6.10}
Let $\mathbb{X}$ be a weighted projective line.
Let $\mathcal{T}$ be $D^b(coh(\mathbb{X}))$.
\begin{enumerate}
\item[(1)]
Let $M$ be a brick object in $\mathcal{T}$.
Then $M\cong N[n]$ where $n\in \mathbb{Z}$ and there
$N\in coh(\mathbb{X})$ is either a vector bundle, or
an ordinary simple $\mathcal{O}_x$, or an indecomposable
object in $Tor_{\lambda_i}$.
\item[(2)]
If a brick set $\phi$ consists of indecomposable
objects in $Tor_{\lambda}$ for some $\lambda\in \mathbb{P}^1$,
then $|\phi|$ is uniformly bounded by $B_{11}$
{\rm{(}}only dependent on $\mathbb{X}${\rm{)}}.
\item[(3)]
If $M$ is a brick object in $Tor(\mathbb{X})$, then
$\dim M$ is uniformly bounded by $B_{12}$
{\rm{(}}only dependent on $\mathbb{X}${\rm{)}}.
\end{enumerate}
\end{lemma}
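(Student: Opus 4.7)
To prove part (1), I would first invoke Lemma \ref{xxlem6.1}(1), which says $coh(\mathbb{X})$ is hereditary. A standard consequence \cite[Section 2.5]{Ke2007} is that every indecomposable object of $\mathcal{T}=D^b(coh(\mathbb{X}))$ has the form $N[n]$ for a unique $n\in\mathbb{Z}$ and a unique indecomposable $N\in coh(\mathbb{X})$; since a brick is indecomposable, the given $M$ has this shape. Next, the structural result recalled just before Lemma \ref{xxlem6.1} shows that every indecomposable in $coh(\mathbb{X})$ lies in some $Vect_{\mu}(\mathbb{X})$ with $\mu\in\mathbb{Q}\cup\{\infty\}$, and by convention $Vect_{\infty}(\mathbb{X})=Tor(\mathbb{X})$. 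If $\mu\neq\infty$ then $N$ is a vector bundle, and we are done. Otherwise, the block decomposition \eqref{E6.0.3} locates $N$ in exactly one block, either $Tor_x$ for a generic closed point $x$ or $Tor_{\lambda_i}$ for some $i$. In the generic case, $Tor_x$ is equivalent to finite-dimensional nilpotent representations of the Jordan quiver over $\Bbbk_x=\Bbbk$; the Jordan block $J_n$ has $\dim\mathrm{End}(J_n)=n$, so the only brick is $J_1=\mathcal{O}_x$, giving the third alternative.

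For part (2), I would split according to whether $\lambda$ is exceptional. If $\lambda\notin\{\lambda_0,\ldots,\lambda_t\}$, part (1) forces $\phi\subseteq\{\mathcal{O}_\lambda\}$, so $|\phi|\leq 1$. If $\lambda=\lambda_i$, then $Tor_{\lambda_i}$ is a stable tube of rank $p_i$, whose indecomposables are $X_{j,k}$ with $j\in\mathbb{Z}/p_i$ and $k\geq 1$; a standard calculation gives $\dim\mathrm{End}(X_{j,k})=\lceil k/p_i\rceil$, so $X_{j,k}$ is a brick precisely when $k\leq p_i$. The classical tube calculus (Ringel) yields that any semibrick inside a tube of rank $r$ has at most $r$ members, with the maximum realized for example by the quasi-simples $\{X_{j,1}\}_{j\in\mathbb{Z}/r}$. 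Applying this with $r=p_i$ and taking $B_{11}:=\max\{p_0,\ldots,p_t\}$ delivers the uniform bound.

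Part (3) is then essentially a corollary. Since $M$ is a brick in $Tor(\mathbb{X})$, it is indecomposable, and \eqref{E6.0.3} places it in a single block. If $M\in Tor_x$ for generic $x$, part (1) forces $M\cong\mathcal{O}_x$ of $\Bbbk$-dimension one. If $M\in Tor_{\lambda_i}$, part (2) gives $M\cong X_{j,k}$ with $k\leq p_i$, and the composition-series description of an object in a rank-$p_i$ tube gives $\dim_{\Bbbk} X_{j,k}=k\leq p_i$. Hence $B_{12}:=\max\{1,p_0,\ldots,p_t\}$ works.

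The main obstacle is the structural input used in part (2): the characterization of bricks as the quasi-length-$\leq r$ indecomposables inside a stable tube of rank $r$, together with the sharp semibrick bound of $r$ in such a tube. Although these facts are well-known in the tame setting, they require a careful use of the explicit $\mathrm{Hom}(X_{j,k},X_{j',k'})$ computations in the tube, and they are what make the resulting bounds in parts (2) and (3) depend only on the weight sequence $\mathbf{p}$ of $\mathbb{X}$; the remaining steps are routine reductions via Lemma \ref{xxlem6.1}(1) and the block decomposition \eqref{E6.0.3}.
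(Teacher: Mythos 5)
Your proof is correct and follows the same overall scheme as the paper's: reduce via hereditariness and Krull--Schmidt to a shift $N[n]$ with $N$ indecomposable in $coh(\mathbb{X})$, use the torsion/torsion-free dichotomy together with the block decomposition \eqref{E6.0.3} to localize the torsion case into a single $Tor_\lambda$, and then appeal to the structure of stable tubes for the bounds. The principal difference is in how you control the bounds. In part (2), the paper simply bounds $|\phi|$ by the total number of bricks in the rank-$p_i$ tube, namely $p_i^2$, citing \cite{CGWZZZ2019}; you instead invoke the sharper tube fact that a semibrick in a rank-$r$ stable tube has at most $r$ elements (the quasi-socle positions must be pairwise distinct, since if two bricks share a quasi-socle position the shorter one embeds in the longer), giving $|\phi|\leq p_i$. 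Both yield a constant depending only on $\mathbf{p}$, so both serve the lemma equally well; yours is a mild refinement at the cost of having to justify the Hom-calculus in the tube that you abbreviate as ``classical tube calculus,'' which the paper avoids by just citing the precomputed count of bricks. In parts (1) and (3) the two arguments land in the same place -- $N$ is a vector bundle, $\mathcal{O}_x$, or lives in $Tor_{\lambda_i}$, and $\dim M\leq p_i$ -- but you spell out the $\dim\mathrm{End}(J_n)=n$ computation for the homogeneous tube and the length computation $\dim_\Bbbk X_{j,k}=k$ where the paper points to \cite[Corollary~2.8]{CGWZZZ2019}. So the substance matches; you have traded two citations for explicit tube arithmetic, and incidentally improved the constant in (2) from $p_i^2$ to $p_i$.
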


\begin{proof}
(1) It is well-known that every indecomposable object in
$coh(\mathbb{X})$ is either a vector bundle or a torsion
sheaf. The assertion follows by \eqref{E6.0.3} and the fact that
$coh(\mathbb{X})$ is hereditary.

(2) This is trivial if $\lambda \in \mathbb{P} \setminus
\{\lambda_0,\cdots,\lambda_t\}$. If $\lambda=\lambda_i$ for some $i$,
$Tor_{\lambda_i}$ is a standard tube of
rank $p_i$ with $p_i^2$ brick objects, see
\cite[Section 2.2]{CGWZZZ2019}. So the assertion follows.

(3) By \eqref{E6.0.3}, $M\in Tor_{\lambda}$ for
some $\lambda \in \mathbb{P}$. It is trivial if
$\lambda \in \mathbb{P} \setminus
\{\lambda_0,\cdots,\lambda_t\}$. Now assume that $\lambda=\lambda_i$.
All brick objects in $Tor_{\lambda_i}$ are given in
\cite[Corollary 2.8]{CGWZZZ2019}. As a consequence,
$\dim M\leq p_i$. The assertion follows.
\end{proof}

Since $R$ in \eqref{E6.0.1} is commutative, there is a
natural tensor product on $coh(\mathbb{X})$, denoted
by $\otimes_{\mathbb{X}}$. Note that $\otimes_{\mathbb{X}}$
is not (bi)exact. The derived category $\mathcal{T}:=
D^b(coh(\mathbb{X}))$ has a canonical monoidal structure
where the tensor functor is defined by
$$ - \otimes_{\mathcal{T}} - : = -\otimes_{\mathbb{X}}^L -$$
(the derived tensor product). Note that
$\otimes_{\mathcal{T}}$ is biexact so that $\mathcal{T}$ is
a monoidal triangulated category. Next we show that
this monoidal triangulated structure is $\fpd$-tame
when $\mathbb{X}$ is domestic.

\begin{theorem}
\label{xxthm6.11}
Retain the notation introduced above. If $\mathbb{X}$
is domestic, then the canonical monoidal triangulated
structure on $D^b(coh(\mathbb{X}))$ is
$\fpd$-tame.
\end{theorem}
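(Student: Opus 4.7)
The $\fpd$-infinite half is already at hand: Proposition~\ref{xxpro6.5}(2) gives $\fpd$-infiniteness for every monoidal structure, including the canonical one. The real work is to show $\fpd(M)<\infty$ for every indecomposable $M\in\mathcal{T}$. Since $coh(\mathbb{X})$ is hereditary by Lemma~\ref{xxlem6.1}(1), any such $M$ has the form $V[s]$ with $V\in coh(\mathbb{X})$ indecomposable, and a direct brick-set computation shows $\fpd(F[s])=\fpd(F)$ for any endofunctor $F$, so we may take $s=0$.

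Given a finite brick set $\phi\subset\mathcal{T}$, partition it by cohomological degree, $\phi=\bigsqcup_{n\in\mathbb{Z}}\phi_n$, where each $X\in\phi_n$ has the form $N[n]$ with $N\in coh(\mathbb{X})$ indecomposable. Because $coh(\mathbb{X})$ has global dimension one, $V\otimes^L_{\mathbb{X}}(N[n])\cong (V\otimes_{\mathbb{X}}N)[n]\oplus\Tor_1(V,N)[n+1]$, so ordering $\phi$ by increasing $n$ makes $A(\phi,V\otimes^L_{\mathbb{X}}-)$ block lower triangular, with spectral radius $\max_n\rho(A_n)$. Each $A_n$ coincides with the matrix $B(\psi,V):=(\dim\Hom_{coh(\mathbb{X})}(N_i,V\otimes_{\mathbb{X}}N_j))$ indexed by the brick set $\psi:=\phi_n[-n]\subset coh(\mathbb{X})$, since the $\Tor_1$-contribution lands in a different degree. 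It therefore suffices to bound $\rho(B(\psi,V))$ uniformly over all brick sets $\psi\subset coh(\mathbb{X})$.

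By Lemma~\ref{xxlem6.10}(1), split $\psi=\psi_{vb}\cup\psi_{ord}\cup\bigcup_i\psi_{\lambda_i}$ according to whether each element is a vector bundle, an ordinary simple $\mathcal{O}_x$, or an indecomposable in some $Tor_{\lambda_i}$. Proposition~\ref{xxpro6.9}(3) and Lemma~\ref{xxlem6.10}(2) bound $|\psi_{vb}|$ and each $|\psi_{\lambda_i}|$ uniformly, so $\psi':=\psi_{vb}\cup\bigcup_i\psi_{\lambda_i}$ has bounded cardinality. The main obstacle is that $\psi_{ord}$ can be arbitrarily large: the family of ordinary simples is itself an infinite brick set. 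The key observation that breaks the obstacle is that $V\otimes_{\mathbb{X}}-$ acts essentially diagonally on $\{\mathcal{O}_x:x\text{ ordinary}\}$. More precisely: if $V$ is a vector bundle, then $V\otimes\mathcal{O}_x\cong\mathcal{O}_x^{\rank V}$ and $\Hom(\mathcal{O}_x,V\otimes W)=0$ for every $W\in\psi'$ (since $V\otimes W$ is either locally free or supported away from the ordinary locus), so $B(\psi,V)$ is block upper triangular with the large diagonal block equal to $(\rank V)\cdot I$; if $V=\mathcal{O}_y$ is an ordinary simple, then $V\otimes\mathcal{O}_x=\delta_{x,y}\mathcal{O}_x$, confining all nonzero entries to the bounded subblock indexed by $\psi'\cup\{\mathcal{O}_y\}$; and if $V\in Tor_{\lambda_i}$, then $V\otimes\mathcal{O}_x=0$ for every ordinary $x$, so the $\psi_{ord}$-rows and $\psi_{ord}$-columns vanish. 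In each case $\rho(B(\psi,V))$ is controlled by $\rank V\leq B_6$ (Proposition~\ref{xxpro6.9}(2)) and the spectral radius of a bounded-size subblock whose entries are uniformly bounded via Proposition~\ref{xxpro6.9}(5), Lemma~\ref{xxlem6.10}(3), and parallel estimates for the mixed vector bundle/torsion $\Hom$'s; Gershgorin (Lemma~\ref{xxlem4.8}) then delivers the desired uniform bound and finishes the proof.
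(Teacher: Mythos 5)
Your proof proposal has two genuine gaps, and they are not minor.

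\textbf{First gap: shift invariance of $\fpd$ is false.} You claim ``a direct brick-set computation shows $\fpd(F[s])=\fpd(F)$ for any endofunctor $F$, so we may take $s=0$.'' This is wrong. For a brick set $\phi$, shifting the whole set gives $A(\phi[-s],F)=A(\phi,F)$, but this does not relate $A(\phi,F[s])$ to $A(\phi,F)$, since the target gets shifted while the source does not. Concretely, $\fpd(\mathbf{1}\otimes-)\geq 1$ while $\fpd(\mathbf{1}[2]\otimes -)=0$ by Lemma~\ref{xxlem4.11}(1), so the invariance already fails for the unit object. In fact the paper's Theorem~\ref{xxthm0.8}(2,3) exhibits indecomposables with $\fpd(M[0])\neq\fpd(M[1])$. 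The correct reduction is Lemma~\ref{xxlem4.11}: the shifts $s\neq 0,1$ (for a vector bundle $V$) give $\fpd=0$, but $s=0$ and $s=1$ must be analyzed separately (and for torsion $V$, $s=-1,0,1$), which is exactly what the paper's Cases~1 and~2 do --- the $s=1$ analysis uses Serre duality and is not a formal consequence of the $s=0$ one.

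\textbf{Second gap: the block reduction ignores $\Tor_1$ when $V$ is torsion.} You write $V\otimes^L_{\mathbb{X}}(N[n])\cong (V\otimes_{\mathbb{X}}N)[n]\oplus\Tor_1(V,N)[n+1]$ and then assert the resulting $A(\phi,V\otimes^L-)$ is block triangular with diagonal block $A_n=B(\psi,V)$ ``since the $\Tor_1$-contribution lands in a different degree.'' It does not. For $X_i=N_i[n]$ in the same degree block, $\Hom_{\mathcal T}(N_i[n],\Tor_1(V,N_j)[n+1])=\Ext^1_{coh}(N_i,\Tor_1(V,N_j))$, which is generically nonzero since $coh(\mathbb{X})$ has global dimension one. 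So the diagonal block is $B(\psi,V)$ \emph{plus} an $\Ext^1$-of-$\Tor_1$ term, and moreover the off-diagonal structure is block \emph{tridiagonal} (degrees $n-1$, $n$, $n+1$ all contribute), not triangular, so $\rho\neq\max_n\rho(A_n)$. Your block identification is only correct for $V$ a vector bundle, where $\Tor_1=0$. Your final paragraph then bounds only $B(\psi,V)$ --- the underived matrix --- which is the wrong object when $V$ is torsion. The paper instead handles torsion $N$ in Case~2 by working directly with $N\otimes^L_{\mathbb X}-$ and proving (a) uniform entry bounds and (b) a uniform bound on the number of nonzero entries per row/column, then applying Gershgorin; this accounts for the $\Tor_1$-component $H^s_N(X_j)$ explicitly.

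Fixing both gaps would essentially reproduce the paper's case split (vector bundle with $s\in\{0,1\}$ via (upper/lower) triangular block reduction plus Serre duality, torsion with $s\in\{-1,0,1\}$ via the Gershgorin entry-and-support estimate), so as written the proposal is incomplete rather than a genuine alternative route.
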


\begin{proof} Let $\mathcal{T}$ denote
$D^b(coh(\mathbb{X}))$. By Proposition \ref{xxpro6.5},
$\mathcal{T}$ is $\fpd$-infinite. By definition, it
remains to show that $\fpd(M)<\infty$ for every
indecomposable object $M$ in $\mathcal{T}$.

Since $M$ is indecomposable and $coh(\mathbb{X})$ is
hereditary, by \cite[Lemma 3.3]{CGWZZZ2017}, $M$ is of
the form $N[n]$ for some $N\in coh(\mathbb{X})$ and
$n\in \mathbb{Z}$. By Lemma \ref{xxlem6.10}(1), $N$ is
either a vector bundle or a torsion. So we fix an $N$
and consider the following two cases.

Case 1: $N$ is a vector bundle. In this case
$N\otimes_{\mathbb{X}}-$ is exact and
$N\otimes_{\mathcal{T}} Y=N\otimes_{\mathbb{X}} Y$
for all $Y\in coh(\mathbb{X})$.

If $n\neq 0,1$, by the proof of Lemma \ref{xxlem4.11},
$\fpd(N[n]\otimes_{\mathcal{T}}-)=0$. Now we deal with the case
$n=0$ or $M=N$.
Let $\phi$ be a brick set. By Lemma \ref{xxlem6.10}(1),
we can write $\phi=\bigcup_{\delta \in \mathbb{Z}} \phi_{\delta}$,
with $\delta$ integers ranging from small to large, where
$\phi_{\delta}$ is either empty or of the form
$$\{X_{\delta,1}[\delta],X_{\delta,2}[\delta],\cdots,
X_{\delta,t_\delta}[\delta]\}$$
for some $X_{\delta,s}\in coh(\mathbb{X})$. Since
$$\Hom_{\mathcal{T}}(X_{\delta,s}[\delta],N\otimes_{\mathcal{T}}
X_{\delta',s'}[\delta'])=0$$
for all $\delta>\delta'$, the adjacency matrix
$A(\phi, N\otimes_{\mathcal{T}}-)$ is a upper triangular
block matrix. Now the idea of \cite[Lemma 6.1]{CGWZZZ2017} implies
that we only need to consider blocks, namely, we can assume that $\phi=
\phi_{\delta}$ for some $\delta$. For each block associated to
$\phi_{\delta}$, we can further assume that $\delta=0$ and
$\phi_0=\{X_1,\cdots,X_t\}$ for some $X_s\in coh(\mathbb{X})$.
Without loss of generality, we assume that
$$\phi=\phi_0=\{X_1,\cdots,X_t\}$$
for some $X_1,\cdots, X_t\in coh(\mathbb{X})$. If $\phi$ contains
an ordinary simple $\mathcal{O}_x$, then, by Lemma \ref{xxlem6.2}(2),
$\phi$ does not contain any vector bundle. In this case, one can
further decompose $\phi$ according to \eqref{E6.0.3} so that
$A(\phi, N\otimes_{\mathcal{T}}-)$ is a block diagonal matrix.
For each block, $\phi$ is either $\{\mathcal{O}_x\}$ or consisting of
objects in $Tor_{\lambda_i}$. So we consider these two subcases.
If $\phi=\{\mathcal{O}_x\}$, it is easy to see that
$\Hom_{\mathcal{T}}(\mathcal{O}_x, N\otimes_{\mathcal{T}}
\mathcal{O}_x)$ has dimension bounded by the rank of $N$.
This is uniformly bounded. If $\phi$ is a subset of
$Tor_{\lambda_i}$, then there are only finitely many
possibilities [Lemma \ref{xxlem6.10}(2)]. Hence entries and size of
the $A(\phi, N\otimes_{\mathcal{T}}-)$ is uniformly bounded.
Therefore $\rho(A(\phi, N\otimes_{\mathcal{T}}-))$ is
uniformly bounded. The second case is when $\phi$
does not contain any ordinary simple $\mathcal{O}_x$.
Then the size of $\phi$ is uniformly bounded by
Proposition \ref{xxpro6.9}(3) and Lemma \ref{xxlem6.10}(2).
We claim that each entry in $A(\phi, N\otimes_{\mathcal{T}}-)$
is uniformly bounded, or $d_{ij}:=\dim \Hom_{\mathbb{X}}(X_i,N
\otimes_{\mathbb{X}} X_j)$ is uniformly
bounded for all $X_i, X_j$ in $\phi$. If both
$X_i$ and $X_j$ are vector bundles, the assertion follows
from Proposition \ref{xxpro6.9}(5). If $X_i$ is in $Tor_{\lambda_i}$
and $X_j$ is a vector bundle, then $d_{ij}=0$. If
$X_i$ is a vector bundle and $X_j$ is in $Tor_{\lambda_i}$,
then $d_{ij}$ is bounded by $\rank(X_i)\rank(N)\dim X_j$,
which is uniformly bounded by Proposition \ref{xxpro6.9}(2)
and Lemma \ref{xxlem6.10}(3). If $X_i$ and $X_j$ are
both in  $Tor_{\lambda_i}$, then $d_{ij}$ is bounded by
$(\dim X_i)\rank(N) (\dim X_j)$ which is uniformly bounded.
Combining all these cases, one proves that $\fpd(N)$ is finite
by Lemma \ref{xxlem4.8} (Gershgorin Circle Theorem).

Next we deal with the case $n=1$ (namely, $M=N[1]$) and re-cycle
some notation used in the previous paragraphs. By Lemma
\ref{xxlem6.10}(1), we can write
$\phi=\bigcup_{\delta \in \mathbb{Z}} \phi_{\delta}$,
with $\delta$ being integers ranging from small to large, where
$\phi_{\delta}$ is either empty or of the form
$\{X_{\delta,1}[\delta],X_{\delta,2}[\delta],\cdots,
X_{\delta,t_\delta}[\delta]\}$.
Since $coh(\mathbb{X})$ is hereditary,
$$\Hom_{\mathcal{T}}(X_{\delta,s}[\delta],N[1]\otimes_{\mathcal{T}}
X_{\delta',s'}[\delta'])=0$$
for all $s,s'$ and all $\delta<\delta'$. Therefore the adjacency
matrix $A(\phi, N[1]\otimes_{\mathcal{T}}-)$ is a lower triangular
block matrix. For each block we can assume that $\delta=0$
and $\phi=\{X_1,\cdots,X_t\}$ as in the case $n=0$. If $\phi$
contains an  ordinary simple $\mathcal{O}_x$, then, by
Lemma \ref{xxlem6.2}(2), $\phi$ does not contain any
vector bundle. In this case, one can further decompose $\phi$
according to \eqref{E6.0.3} so that
$A(\phi, N[1]\otimes_{\mathcal{T}}-)$ is a block
diagonal matrix. For each block, $\phi$ is either
$\{\mathcal{O}_x\}$ or consisting of objects in $Tor_{\lambda_i}$.
So we consider these two subcases. If $\phi=\{\mathcal{O}_x\}$,
then
$$\Hom_{\mathcal{T}}(\mathcal{O}_x, N[1]\otimes_{\mathcal{T}}
\mathcal{O}_x)=\Ext^1_{\mathbb{X}}(\mathcal{O}_x, N\otimes_{\mathcal{T}}
\mathcal{O}_x)$$
which is bounded by the $\rank(N)$. If $\phi$ is a subset of
$Tor_{\lambda_i}$, then there are only finitely many
possibilities, see the proof of Lemma \ref{xxlem6.10}(2).
Hence the entries and the size of the $A(\phi, N[1]\otimes_{\mathcal{T}}-)$
are uniformly bounded. Therefore
$\rho(A(\phi, N[1]\otimes_{\mathcal{T}}-))$ is uniformly bounded.
The second case is when $\phi$ does not contain any ordinary
simple $\mathcal{O}_x$. Then the size of $\phi$ is uniformly
bounded by Proposition \ref{xxpro6.9}(3) and Lemma
\ref{xxlem6.10}(2). We claim that each entry in
$A(\phi, N[1]\otimes_{\mathcal{T}}-)$
is uniformly bounded, or
$$\begin{aligned}
d_{ij}:&=\dim \Hom_{\mathbb{X}}(X_i,N[1]\otimes_{\mathbb{X}} X_j)
=\dim \Ext^1_{\mathbb{X}}(X_i,N\otimes_{\mathbb{X}} X_j)\\
&=\dim \Hom_{\mathbb{X}}(N\otimes_{\mathbb{X}} X_j, X_i(\omega_0))
=\dim \Hom_{\mathbb{X}}(N(-\omega_0)\otimes_{\mathbb{X}} X_j, X_i)
\end{aligned}
$$
is uniformly bounded for all $X_i, X_j$ in $\phi$. Note that
the third equality is Serre duality. If both
$X_i$ and $X_j$ are vector bundles, the assertion follows
and Proposition \ref{xxpro6.9}(6). If $X_i$ is in $Tor_{\lambda_i}$
and $X_j$ is a vector bundle, we obtain
that
$$d_{ij}\leq \rank(X_j)\rank(N(-\omega_0))\dim X_i,$$
which is uniformly bounded by Proposition \ref{xxpro6.9}(2)
and Lemma \ref{xxlem6.10}(3). If
$X_i$ is a vector bundle and $X_j$ is in $Tor_{\lambda_i}$,
then $d_{ij}=0$. If $X_i$ and $X_j$ are
both in  $Tor_{\lambda_i}$, then
$$d_{ij}\leq \dim(X_j)\rank(N(-\omega_0))\dim X_i,$$
which is uniformly bounded.
Combining all these cases, one proves that $\fpd(N[1])$ is finite
by Lemma \ref{xxlem4.8} (Gershgorin Circle Theorem).

Case 2: $N$ is a torsion. By definition,
$N\otimes_{\mathcal{T}}-=N\otimes_{\mathbb{X}}^L -$.
If $n\neq -1, 0, 1$, a proof similar to Lemma \ref{xxlem4.11}(1)
shows that $\fpd(N[n])=0$. We need to analyze the cases
$n=-1,0,1$. The following proof is independent of $n$.

Since $N$ is torsion and indecomposable, by \eqref{E6.0.3},
$N$ is either in $Tor_{x}$ or $\Tor_{\lambda_i}$. We will use
Gershgorin Circle Theorem [Lemma \ref{xxlem4.8}].
Let $\phi=\{X_1,\cdots,X_m\}$ be any brick set in
$\mathcal{T}$ and let $(d_{ij})_{m\times m}$ denote
the adjacency matrix $A(\phi, N[n]\otimes_{\mathcal{T}}-)$
where
$$d_{ij}=\dim \Hom_{\mathcal{T}}(X_i, N[n]\otimes_{\mathcal{T}} X_j).$$
By Lemma \ref{xxlem4.8}, it suffices to show
\begin{enumerate}
\item[(a)]
each $d_{ij}$ is uniformly bounded (only dependent on $M:=N[n]$).
\item[(b)]
For each $j$, there are only uniformly-bounded-many $i$ such that
$d_{ij}\neq 0$.
\end{enumerate}

\noindent
{\bf Proof of (a):} For each $j$, write $X_j=Y_j[s_j]$ for some
$Y_j\in coh(\mathbb{X})$ and $s_j\in \mathbb{Z}$. Since
$N\in Tor_{\lambda}$, $H^s_N(X_j):=H^s(N[n]\otimes_{\mathcal{T}} X_j)$
is zero for $s\neq n+s_j-1,n+s_j$ and $H^s_N(X_j)$ is in
$Tor_{\lambda}$ for $s=n+s_j-1,n+s_j$. Since $coh(\mathbb{X})$ is
hereditary,
$$N[n]\otimes_{\mathcal{T}} X_j
=\sum_{s} H^s(N[n]\otimes_{\mathcal{T}} X_j)[-s],$$
see \cite[Lemma 2.1]{CR2018}. If $Y_j$ is a vector
bundle, then
$$\dim H^s_N(X_j)\leq (\dim N)(\rank(Y_j))$$
for all $s$. If $X_j$ is torsion, then
$$\dim H^s_N(X_j)\leq (\dim N)(\dim Y_j)$$
for all $s$. In both cases, $\dim H^s(X_j)$ is uniformly
bounded by Proposition \ref{xxpro6.9}(2) and Lemma
\ref{xxlem6.10}(3). Using the Serre duality and Proposition
\ref{xxpro6.9}(2) and Lemma \ref{xxlem6.10}(3) again, one sees that
$$\sum_{s,t\in \mathbb{Z}}\dim \Hom_{\mathcal{T}}(X_i[t], H^s(N[n]
\otimes_{\mathcal{T}}X_j)[s])=
\sum_{s,t}\dim \Hom_{\mathcal{T}}(X_i[t], H^s_N(X_j)[s])$$
is uniformly bounded.  Hence
$$d_{ij}=\Hom_{\mathcal{T}}(X_i, N[n]\otimes_{\mathcal{T}} X_j)=
\sum_{s}\Hom_{\mathcal{T}}(X_i, H^s(N[n]\otimes_{\mathcal{T}} X_j)[s])$$
is uniformly bounded.

\noindent
{\bf Proof of (b):} As noted before, $\fpd(N[n])=0$ when
$n\neq -1,0,1$. So, in this proof, we assume that $n$ is
$-1$ or $0$ or $1$. Without loss of generality, we only prove
that there are only uniformly-bounded-many $i$ such that
$d_{i1}\neq 0$. By a complex shift, we can assume that
$X_1\in coh(\mathbb{X})$. Since $coh(\mathbb{X})$ is hereditary,
one can check that, if $X_i\in coh(\mathbb{X})[m]$ for $|m|\geq 3$,
then $d_{i1}=0$.

For each $m$ with $|m|\leq 2$, let $\phi_m$ consist of
$Y_i\in coh(\mathbb{X})$ such that $X_i=Y_i[m]\in \phi$
and $d_{i1}\neq 0$. If $\phi_m$ does not contain any
ordinary simple $\mathcal{O}_x$, then, by
Proposition \ref{xxpro6.9}(3) and Lemma 6.10(2), $|\phi_m|$
is uniformly bounded. If $\phi_m$ contain an
ordinary simple $\mathcal{O}_x$, then $d_{i1}\neq 0$
implies that $x$ is in the support of $N\otimes_{\mathbb{X}}
X_1$. Therefore there are only finitely many
possible $x$. Further, $X_1$ is either $\mathcal{O}_x$
or a vector bundle, and in the latter case, $d_{i1}\neq 0$
implies that $N$ must be $\mathcal{O}_x$.
In both case, $N[n]\otimes_{\mathcal{T}} X_1$ is
supported at $x$. Therefore $\phi_m$ consists of
a single element $\mathcal{O}_x$. Combining above, we obtain
that $\sum_{|m|\leq 2} |\phi_m|$ is uniformly bounded. As a consequence,
(b) holds.

Now it follows by Lemma \ref{xxlem4.8}, $\fpd(N[n])<\infty$.
Combining Cases 1 and 2, we finish the proof.
\end{proof}

Now we are ready to prove Theorem \ref{xxthm0.3}.

\begin{proof}[Proof of Theorem \ref{xxthm0.3}]
(1) If $Q$ is of finite type, by
Corollary \ref{xxcor4.10} every monoidal
triangulated structure on $D^b(\Repr(Q))$ is
fpd-finite. The converse follows from
Lemmas \ref{xxlem6.1}(2), \ref{xxlem4.3} and \ref{xxlem4.5}
and Proposition \ref{xxpro6.5}.

(2) Suppose $Q$ is tame. By Lemma
\ref{xxlem6.1}(2) and Theorem \ref{xxthm6.11},
there is a $\fpd$-tame monoidal structure on
$\mathcal{T}$. Applying Lemma \ref{xxlem4.6}
to $\mathcal{A}=\mathcal{C}=\Repr(Q)$,
there is a $\fpd$-wild monoidal structure.

(3) This follows from parts (1,2), Lemmas \ref{xxlem4.3} and
\ref{xxlem6.6}.

(4) This follows from part (1).
\end{proof}

\begin{corollary}
\label{xxcor6.12}
Let $Q$ be a finite acyclic quiver.
\begin{enumerate}
\item[(1)]
$Q$ is of finite type if and only if $\Repr(Q)$ does not
contain an infinite brick set.
\item[(2)]
$Q$ is of tame type if and only if $\Repr(Q)$
contains an infinite brick set and does not contain an
infinite connected brick set.
\item[(3)]
$Q$ is of wild type if and only if $\Repr(Q)$
contains an infinite connected brick set.
\end{enumerate}
\end{corollary}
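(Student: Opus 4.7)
My plan is to treat this as a packaged consequence of Drozd's trichotomy (finite/tame/wild) together with the preparatory results already in the paper, and prove the three parts in the order (1), (3), (2), since (2) will fall out by elimination.

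For part (1), the forward direction is essentially tautological: if $Q$ is of finite representation type then $\Repr(Q)$ contains only finitely many indecomposables up to isomorphism, so every brick set is finite. The reverse direction is the contrapositive of Lemma~\ref{xxlem4.5} applied to $A=\Bbbk Q$: if $Q$ is not of finite type, then $\Repr(Q)$ contains an infinite brick set. This part is routine.

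For part (3), the forward direction uses Lemma~\ref{xxlem4.3}: every wild path algebra is strictly wild (Gabriel's observation recalled after \eqref{E4.2.1}), and strict wildness produces the infinite connected brick set $\{N_c\mid c\in\Bbbk\}$ obtained by embedding the family $\{\Bbbk\langle x_1,x_2\rangle/(x_1-c,x_2)\}_{c\in\Bbbk}$ into $\Repr(Q)$. The reverse direction is the step I expect to be the main obstacle, and I would handle it by eliminating the finite and tame alternatives. If $Q$ were of finite type, part (1) already gives a contradiction. If $Q$ were tame, then by Theorem~\ref{xxthm0.3}(2) there exists a monoidal triangulated structure on $\mathcal T=D^b(\Repr(Q))$ which is $\fpd$-tame, in particular not $\fpd$-wild; on the other hand, an infinite connected brick set in $\Repr(Q)\subseteq \mathcal T$ yields, by taking finite subsets, connected brick sets of arbitrarily large cardinality inside $\mathcal T$, so Lemma~\ref{xxlem6.6}(2) forces every monoidal triangulated structure on $\mathcal T$ to be $\fpd$-wild, a contradiction. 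Thus only the wild alternative remains.

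For part (2), I would simply invoke Drozd's trichotomy: since every finite-dimensional algebra is exactly one of finite, tame, or wild type, the equivalence in (2) is the Boolean complement of (1) and (3) and therefore follows by pure logic — the conjunction ``infinite brick set exists and no infinite connected brick set exists'' is exactly the negation of the two extreme conditions characterized in (1) and (3). The only subtlety is that I am using Theorem~\ref{xxthm0.3} to prove a corollary that appears in the same section; this is legitimate because Theorem~\ref{xxthm0.3} is proved immediately above Corollary~\ref{xxcor6.12} and does not depend on it. The conceptual content of the corollary is therefore the translation of the representation-type trichotomy into a purely homological condition on $\Repr(Q)$ (existence of large brick sets connected by $\Ext^1$), with the bridge between the two worlds being the Frobenius-Perron machinery of Lemmas~\ref{xxlem4.3}, \ref{xxlem4.5}, \ref{xxlem6.6} and Theorem~\ref{xxthm0.3}.
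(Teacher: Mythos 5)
Your proposal is correct and follows essentially the same route as the paper's own proof: part (1) via Lemma~\ref{xxlem4.5} and the fact that a finite-type quiver has only finitely many indecomposables; part (3) via Lemma~\ref{xxlem4.3} (forward) and Lemma~\ref{xxlem6.6} plus Theorem~\ref{xxthm0.3} (reverse); part (2) by Drozd's trichotomy and elimination from (1) and (3). The only cosmetic difference is in the reverse direction of (3): the paper cites Theorem~\ref{xxthm0.3}(3) directly after invoking Lemma~\ref{xxlem6.6}, whereas you run the elimination argument (ruling out finite and tame via parts (1) and Theorem~\ref{xxthm0.3}(2)) inline — but since the paper's proof of Theorem~\ref{xxthm0.3}(3) is itself by elimination from parts (1),(2), this is the same argument with one reference unrolled, not a genuinely different route.
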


\begin{proof} (1) If $Q$ is of finite type,
$\Repr(Q)$ contains only finitely many indecomposable
objects. So $\Repr(Q)$ does not contains an infinite
brick set.

For the converse, we assume that $\Bbbk Q$ is of tame or
wild type. By Lemmas \ref{xxlem4.3} and \ref{xxlem4.5},
$\Repr(Q)$ contains an infinite brick set. This yields
a contradiction. Therefore the assertion follows.

(3) If $Q$ is of wild type, by Lemmas \ref{xxlem4.3},
$\Repr(Q)$ contains an infinite connected brick set.
Conversely suppose $\Repr(Q)$ contains an infinite connected
brick set. By Lemma \ref{xxlem6.6}, every monoidal
triangulated structure on $D^b(\Repr(Q))$ is
$\fpd$-wild. By Theorem \ref{xxthm0.3}(3),
$Q$ is of wild type.

(2) Follows from parts (1,3).
\end{proof}

\section{Examples}
\label{xxsec7}

The natural construction of weak bialgebras associated
to quivers, given in Lemma \ref{xxlem2.1}, produces many
monoidal triangulated categories by Lemma \ref{xxlem1.9}(2).
The main goal of this section is to construct other
examples of (weak) bialgebras most of which are related
to finite quivers. We will see that, given a quiver $Q$,
there are different weak bialgebra structures on $\Bbbk Q$
such that the induced tensor products over $\Repr(Q)$ are
different from \eqref{E2.1.1}. As a consequence, there are
several different monoidal abelian structures on $\Repr(Q)$
generally. We will also see that there are monoidal
triangulated structures on derived categories
associated to noncommutative projective schemes.
The first example comes from \cite{HT2013}.

\begin{example}
\label{xxex7.1}
This example follows some ideas from \cite[Theorem 3.2]{HT2013}.
Let $Q$ be a quiver with $n$ vertices.
We label vertices of $Q$ as $1,2,\cdots,n$.
Suppose that $1$ is either a source or a sink, namely, $Q$
satisfies the following condition, either
\begin{enumerate}
\item[(1)]
there is no arrows from $1$ to $j$ for every $j$, or
\item[(2)]
there is no arrows from $j$ to $1$ for every $j$.
\end{enumerate}
Let $e_i$ be the idempotent corresponding
to the vertex $i$, and we use $p$ for a path of length
at least 1.

First we define a bialgebra structure on $\Bbbk Q$ by
$$\begin{aligned}
\varepsilon(e_1)&=1,\quad \Delta(e_1)=e_1\otimes e_1,\\
\varepsilon(e_i)&=0, \quad \Delta(e_i)=
\sum_{s<i} (e_i\otimes e_s+e_s\otimes e_i) +e_i\otimes e_i,\\
\varepsilon(p)&=0,\quad \Delta(p)=e_1\otimes p+p\otimes e_1\\
\end{aligned}
$$
for all $i>1$ and all paths $p$ of length at least 1.
It is routine to check that this defines a cocommutative
bialgebra structure on $\Bbbk Q$.

By the above definition, $\Delta(x)=e_1\otimes x+x\otimes e_1$
for all $x$ in the ideal $J$ generated by arrows of $Q$ (this
is also the graded Jacobson radical of $\Bbbk Q$). Let $I$ be any
sub-ideal of $J$. Then it is clear that $I$ is a bialgebra
ideal of $\Bbbk Q$. Therefore there is an induced bialgebra
structure on $\Bbbk Q/I$.
\end{example}

Let $Q$ is a finite acyclic quiver. Let $(\Delta,\varepsilon)$
be a coalgebra structure on $\Bbbk Q$. Suppose $|Q_0|=n$,
then $\Delta$ is called a {\it partitioning morphism}
(cf. \cite[p.460]{He2008a}) if
\begin{enumerate}
\item[(1)]
there are $E_1,\cdots,E_n$ which are subsets of
$E=\{(i,j)\mid1\leq i,j\leq n\}$,
\item[(2)]
$E_i\cap E_j=\emptyset$ if $i\neq j$, and
\item[(3)]
for every $1\leq k\leq n$,
$\Delta(e_k)= \sum\limits_{(i,j)\in E_k} e_i\otimes e_j$.
\end{enumerate}

Let $Q(i,j)$ be the set of paths from vertex $i$ to
vertex $j$, then:

\begin{proposition}\cite[Proposition 4]{He2008a}
\label{xxpro7.2}
Let $Q$ be a finite acyclic quiver. Suppose $\Bbbk Q$ has
a coalgebra structure $(\Bbbk Q, \Delta, \varepsilon)$. Then
$\Bbbk Q_0$ is a subcoalgebra of $\Bbbk Q$ and
$\Delta$ is a prealgebra map if and only if
\begin{enumerate}
\item[(1)]
$\Delta$ is a partitioning morphism,
\item[(2)]
$\Delta(\alpha_1\cdots\alpha_m)=\Delta(\alpha_1)\cdots\Delta(\alpha_m)$
where $\alpha_i\in Q_1$,
\item[(3)]
$\Delta(\alpha)\in
\bigoplus\limits_{(i,j)\in E_k,(i',j')\in E_l}
\Bbbk Q(i,i')\otimes \Bbbk Q(j,j')$
for any $\alpha:k\rightarrow l$.
\end{enumerate}
\end{proposition}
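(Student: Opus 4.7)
The strategy is to test the prealgebra condition $\Delta(ab)=\Delta(a)\Delta(b)$ on a basis of paths of $\Bbbk Q$, exploiting the decomposition $\Bbbk Q\otimes\Bbbk Q=\bigoplus_{i,i',j,j'}\Bbbk Q(i,i')\otimes\Bbbk Q(j,j')$. Left multiplication by $e_{i'}\otimes e_{j'}$ on a tensor $p\otimes q$ acts as the projection picking out those summands whose target indices are $(i',j')$; similarly right multiplication by $e_i\otimes e_j$ projects onto prescribed source indices. Conditions (1)--(3) simply record combinatorially which corners $\Delta(e_k)$ and $\Delta(\alpha)$ are allowed to occupy, so both implications reduce to bookkeeping with this projection.

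For the direction $(\Leftarrow)$, (1) immediately gives $\Delta(e_k)\in\Bbbk Q_0\otimes\Bbbk Q_0$, so $\Bbbk Q_0$ is a subcoalgebra. I then verify $\Delta(ab)=\Delta(a)\Delta(b)$ case by case on basis paths. If both $a,b$ are vertices, the identity reduces to pairwise disjointness of the $E_k$ together with $e_ie_{i'}=\delta_{ii'}e_i$. If $a=e_k$ and $b$ has positive length (symmetrically on the other side), condition (2) replaces $\Delta(b)$ by a product of $\Delta(\alpha_i)$, and (3) pins down each factor to a definite corner; the projection remark then shows that left-multiplying by $\Delta(e_k)=\sum_{(i,j)\in E_k}e_i\otimes e_j$ returns $\Delta(b)$ when $k=t(b)$ and zero otherwise, matching $\Delta(e_k b)$. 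If $a,b$ both have positive length, the case $ab\neq 0$ is immediate from (2), while $ab=0$ is the junction case below.

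For $(\Rightarrow)$, assume $\Bbbk Q_0$ is a subcoalgebra and $\Delta$ is prealgebra. Writing $\Delta(e_k)=\sum_{i,j}c^k_{ij}\,e_i\otimes e_j$, the identity $\Delta(e_k e_l)=\Delta(e_k)\Delta(e_l)$ combined with $e_i e_{i'}=\delta_{ii'}e_i$ yields $c^k_{ij}c^l_{ij}=\delta_{kl}c^k_{ij}$, forcing each $c^k_{ij}\in\{0,1\}$ and making the sets $E_k:=\{(i,j)\mid c^k_{ij}=1\}$ pairwise disjoint; this is (1). Condition (2) is an immediate induction from the prealgebra identity. For (3), fix $\alpha:k\to l$; then $\alpha=e_l\alpha e_k$, so the prealgebra property gives $\Delta(\alpha)=\Delta(e_l)\Delta(\alpha)\Delta(e_k)$. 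By the opening projection remark the right-hand side is exactly the projection of $\Delta(\alpha)$ onto $\bigoplus_{(i,j)\in E_k,\,(i',j')\in E_l}\Bbbk Q(i,i')\otimes\Bbbk Q(j,j')$, so $\Delta(\alpha)$ lies in that subspace.

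The main obstacle is the $ab=0$ subcase of $(\Leftarrow)$: writing $a=\alpha_1\cdots\alpha_m$ and $b=\beta_1\cdots\beta_n$ with $s(\alpha_m)\neq t(\beta_1)$, I must show $\Delta(\alpha_m)\Delta(\beta_1)=0$, after which (2) factors $\Delta(a)\Delta(b)$ through this zero. By (3), every summand of $\Delta(\alpha_m)$ carries its source-index pair in $E_{s(\alpha_m)}$, and every summand of $\Delta(\beta_1)$ carries its target-index pair in $E_{t(\beta_1)}$; for a product $(u\otimes v)(u'\otimes v')=uu'\otimes vv'$ to be nonzero, the sources of $u,v$ must match the targets of $u',v'$, forcing a common element in $E_{s(\alpha_m)}\cap E_{t(\beta_1)}$, which is empty. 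This single matching argument, the one place where disjointness from (1) is essential on top of the corner constraint (3), is the crux; everything else is routine.
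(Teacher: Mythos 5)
Your proof is correct. Note, however, that the paper does not actually supply an argument here: it cites Herschend's \cite[Proposition 4]{He2008a} and only records the one observation that ``$\Bbbk Q_0$ a subcoalgebra and $\Delta$ a prealgebra morphism'' already forces $\Delta$ to be a partitioning morphism (which accounts for the slight difference between the paper's formulation and Herschend's, where the partitioning hypothesis is built in). Your $(\Rightarrow)$ derivation of that same fact --- the identity $c^k_{ij}c^l_{ij}=\delta_{kl}c^k_{ij}$ forcing $c^k_{ij}\in\{0,1\}$ and disjointness of the $E_k$ --- is exactly what that remark is pointing at, and the rest of your argument (the corner-projection interpretation of left/right multiplication by $\Delta(e_k)$, the reduction of $(\Leftarrow)$ to the arrow level via (2)--(3), and the junction disjointness argument for the $ab=0$ subcase) is the standard Herschend-style bookkeeping. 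So you have, in effect, reconstructed the proof the paper declines to write out; nothing is missing, and correctly identifying the $ab=0$ matching step as the only place where (1) interacts nontrivially with (3) is the right thing to flag.
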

\begin{proof}
Note the fact that if $\Bbbk Q_0$ is a subcoalgebra of $\Bbbk Q$ and $\Delta$
is a prealgebra morphism, then $\Delta$ is a partitioning
morphism. The rest of the proof is similar to \cite[Proposition 4]{He2008a}, and
we omit it here.
\end{proof}

\begin{remark}
\label{xxrem7.3}
\begin{enumerate}
\item[(1)]
Following Proposition \ref{xxpro7.2}, our first step is to
understand all weak bialgebra structures on $\Bbbk^{\oplus n}$.
This is already a non-trivial task and we post it as a question.
$$\text{\it Can we classify all weak bialgebra structures
on $\Bbbk^{\oplus n}$?}$$
When $n=2$, see Lemma \ref{xxlem7.5} below.
\item[(2)]
There are algebras $A$ which do not admit any weak bialgebra 
structure. Let $A$ be the algebra $\Bbbk[x]/(x^n)$ for some $n$. 
Then $A$ admits a (weak) bialgebra structure if and only if 
$n=p^t$ where $p={\rm{char}}\; \Bbbk>0$ and $t\geq 1$. We give 
a sketch proof of one implication. Suppose that $A:=\Bbbk[x]/(x^n)$ 
is a weak bialgebra. Note that $A$ is local which implies that 
both the target and source counital subalgebras of $A$ are 
$\Bbbk$. As a consequence, $A$ is a bialgebra. So the augmentation 
ideal $J:=\ker \epsilon$ is the Jacobson radical of $A$. So the 
associated graded Hopf algebra ${\text{gr}}_{J} A$, which is 
isomorphic to $A$ as an algebra, is the restricted enveloping 
algebra of a restricted Lie algebra. Therefore the 
$\Bbbk$-dimension of $A$ is $p^t$ for some $t\geq 1$. The 
assertion follows.
\item[(3)]
Suppose ${\textrm{char}}\; \Bbbk=p>0$.
Let $A$ be the finite dimensional Hopf algebra
$$\Bbbk [x_1,\cdots,x_n]/(x_1^p,\cdots,x_n^p)$$
for some $n\geq 2$. The coalgebra structure of $A$ is
determined by
$$\Delta(x_i)=x_i\otimes 1+1\otimes x_i$$
for all $i$. Since $A$ is local, the only
brick object in $\mathcal{A}$ is the
trivial module $\Bbbk$. Therefore $\fpd(M)<\infty$
for every object in $\mathcal{A}$.
On the other hand, $A$ is wild when $n\geq 2$.
Therefore conditions (a) and (b) in Theorem \ref{xxthm0.4}
are not equivalent
if we remove the hereditary hypothesis.
\end{enumerate}
\end{remark}

\begin{definition}
\label{xxdef7.4}
Let $A$ be an algebra. Two (weak) bialgebra structures $(\Delta_1,
\varepsilon_1)$ and $(\Delta_2,\varepsilon_2)$ on $A$ are called
{\it equivalent} if there is an algebra automorphism
$\sigma$ of $A$ such that $\Delta_1 \sigma =(\sigma\otimes \sigma)
\Delta_2$ and $\varepsilon_1 \sigma=\varepsilon_2$.
\end{definition}

\begin{lemma}
\label{xxlem7.5}
Let $B=\Bbbk^{\oplus 2}=\Bbbk e_1\oplus \Bbbk e_2$. Then there are
five different weak bialgebra structures on $B$:
\begin{enumerate}
\item[(a)]
$\Delta(e_1)=e_1\otimes e_1, \Delta(e_2)=e_2\otimes e_2+e_1\otimes e_2
+e_2\otimes e_1$, $\varepsilon(e_1)=1$ and $\varepsilon(e_2)=0$.
\item[(b)]
$\Delta(e_1)=e_1\otimes e_1+e_2\otimes e_2, \Delta(e_2)=e_2\otimes e_1
+e_1\otimes e_2$, $\varepsilon(e_1)=1$ and $\varepsilon(e_2)=0$.
\item[(c)]
$\Delta(e_2)=e_2\otimes e_2, \Delta(e_1)=e_1\otimes e_1+e_1\otimes e_2
+e_2\otimes e_1$, $\varepsilon(e_1)=0$ and $\varepsilon(e_2)=1$.
\item[(d)]
$\Delta(e_2)=e_2\otimes e_2+e_1\otimes e_1, \Delta(e_1)=e_1\otimes e_2
+e_2\otimes e_1$, $\varepsilon(e_1)=0$ and $\varepsilon(e_2)=1$.
\item[(e)]
$\Delta(e_1)=e_1\otimes e_1, \Delta(e_2)=e_2\otimes e_2$,
$\varepsilon(e_1)=1$ and $\varepsilon(e_2)=1$.
\end{enumerate}
Note that {\rm{(a)}} and {\rm{(c)}} are equivalent bialgebra
structures {\rm{(}}and so are {\rm{(b)}} and {\rm{(d)}}{\rm{)}}.
The fifth one is a weak bialgebra, but not a bialgebra.
\end{lemma}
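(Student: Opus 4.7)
The plan is to classify all weak bialgebra structures on $B$ by directly solving the axioms. Since $B\otimes B\cong \Bbbk^{\oplus 4}$ has the four primitive orthogonal idempotents $e_{ij}:=e_i\otimes e_j$ for $i,j\in\{1,2\}$, the prealgebra condition $\Delta(e_k)^2=\Delta(e_k)$ and $\Delta(e_1)\Delta(e_2)=0$ forces
$$
\Delta(e_k)=\sum_{(i,j)\in S_k}e_{ij}\quad(k=1,2)
$$
for disjoint subsets $S_1,S_2\subseteq\{1,2\}^2$. Setting $a:=\varepsilon(e_1)$, $b:=\varepsilon(e_2)$, the four counit identities $(\varepsilon\otimes\mathrm{id})\Delta(e_k)=e_k$ and $(\mathrm{id}\otimes\varepsilon)\Delta(e_k)=e_k$ produce eight scalar relations among $a,b$ and the characteristic functions of $S_1,S_2$. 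A short inspection shows these relations are consistent only when $(a,b)\in\{(1,0),(0,1),(1,1)\}$, and for each such value most entries of $S_1,S_2$ are pinned down.

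Next I would enumerate the remaining freedom case by case. When $(a,b)=(1,1)$, the axioms force $S_1=\{(1,1)\}$ and $S_2=\{(2,2)\}$, giving (e). When $(a,b)=(1,0)$, every entry is fixed except whether $(2,2)$ lies in $S_1$, in $S_2$, or in neither; the three choices yield (b), (a), and one extra candidate $S_1=\{(1,1)\}$, $S_2=\{(1,2),(2,1)\}$, to be eliminated below. The case $(a,b)=(0,1)$ is symmetric under $e_1\leftrightarrow e_2$, giving (c), (d), and a second extra candidate. Coassociativity for each surviving combination is a direct expansion of $(\Delta\otimes\mathrm{id})\Delta(e_k)$ and $(\mathrm{id}\otimes\Delta)\Delta(e_k)$ along the $S_k$'s.

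Finally I verify \eqref{E1.7.1} and \eqref{E1.7.2}. In (a)--(d) one checks $\Delta(1)=\Delta(e_1)+\Delta(e_2)=1\otimes 1$, so $\Delta$ is in fact an algebra morphism and both weak bialgebra axioms hold automatically; these four are genuine bialgebras. For (e), $\Delta(1)=e_{11}+e_{22}$ is a sum of orthogonal idempotents and both $(\Delta(1)\otimes 1)(1\otimes\Delta(1))$ and $(\Delta\otimes\mathrm{id})\Delta(1)$ expand directly to $e_1\otimes e_1\otimes e_1+e_2\otimes e_2\otimes e_2$, while \eqref{E1.7.2} reduces to $\varepsilon(e_ie_je_k)=\delta_{ij}\delta_{jk}$ on basis triples, which is immediate. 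For the two extra candidates, a direct expansion shows that $(\Delta(1)\otimes 1)(1\otimes\Delta(1))$ acquires the extra basis term $e_2\otimes e_1\otimes e_2$ (respectively $e_1\otimes e_2\otimes e_1$) not present in $(\Delta\otimes\mathrm{id})\Delta(1)$, so \eqref{E1.7.1} fails and they are excluded. The swap automorphism $\sigma\colon e_1\leftrightarrow e_2$ then identifies (a) with (c) and (b) with (d) in the sense of Definition \ref{xxdef7.4}.

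The main obstacle I foresee is this last step: two of the combinatorial candidates satisfy the coassociativity and counit axioms and look like they ought to yield weak bialgebras, so one must actually compute both sides of \eqref{E1.7.1} to rule them out. That is the only place where the weak bialgebra axiom does work beyond the coalgebra and prealgebra conditions, and it is what distinguishes the genuine weak bialgebra (e) from the four bialgebras (a)--(d).
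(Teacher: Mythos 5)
Your proposal is correct, but it follows a genuinely different route from the paper's argument. The paper invokes the structure theory of weak bialgebras: it introduces the target and source counital subalgebras $B_t$ and $B_s$, splits on $\dim B_t$, reduces the case $\dim B_t = 1$ to the bialgebra case via Nill's lemma (so $\Delta(1)=1\otimes 1$ and the analysis is short because $\Delta(e_2)$ is forced), and in the case $\dim B_t = 2$ cites an external lemma to conclude $\Delta(e_i)=e_i\otimes e_i$. Your proof instead sidesteps $B_t$ and $B_s$ entirely: you decompose $B\otimes B\cong\Bbbk^{\oplus 4}$ into orthogonal idempotents, observe that the prealgebra condition makes $\Delta(e_k)$ a sum over a subset $S_k$ with $S_1\cap S_2=\emptyset$, solve the eight counit constraints for $(\varepsilon(e_1),\varepsilon(e_2))$, and then enumerate. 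This is more self-contained and avoids importing results about counital subalgebras, but it pays a price: you must actually compute both sides of \eqref{E1.7.1} to eliminate the two spurious candidates (the disjoint pairs with $(2,2)$ in neither $S_1$ nor $S_2$), whereas the paper never encounters them because the hypothesis $\Delta(1)=1\otimes 1$ in Case~1 removes the extra freedom before the enumeration begins. You are right to flag this as the crux; your explicit computation that the products $(\Delta(1)\otimes 1)(1\otimes\Delta(1))$ and $(\Delta\otimes\mathrm{id})\Delta(1)$ differ by $e_2\otimes e_1\otimes e_2$ (resp.\ $e_1\otimes e_2\otimes e_1$) is correct. One small point worth making explicit: the "short inspection" ruling out $(a,b)\notin\{(1,0),(0,1),(1,1)\}$ is not purely a reading of the eight linear relations; for instance $(a,b)=(-1,1)$ survives the counit equations until the disjointness of $S_1$ and $S_2$ is brought in, so that constraint should be named in the argument.
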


Note that (e) in the above lemma is the direct sum of two 
copies of trivial Hopf algebra $\Bbbk$. Consequently, 
it is a weak Hopf algebra. Other bialgebra algebras in
the above lemma are not (weak) Hopf algebras.

\begin{proof} Fix a (weak) bialgebra structure $(\Delta,\varepsilon)$
on $B$. Let $B_t$ and $B_s$ be target and source counital
subalgebras  of $B$, see \cite[Definition 2.2.3]{NV2002}.

Case 1: $\dim B_t=1$, then $B_t=B_s=\Bbbk 1_B$. In this case,
$B$ is a bialgebra. As a consequence,
$\varepsilon(e_1)+\varepsilon(e_2)=\varepsilon(e_1+e_2)=\varepsilon(1)=1$.
Since $e_i$ are idempotents, $\varepsilon(e_i)$ is 1 or 0.
First we assume that $\varepsilon(e_1)=1$ and $\varepsilon(e_2)=0$.
Write $\Delta(e_1)=\sum\limits_{i,j} a_{ij} e_i\otimes e_j$.
By the counital axiom, we obtain that $\Delta(e_1)=e_1\otimes e_1$ or
$\Delta(e_1)=e_1\otimes e_1+e_2\otimes e_2$. If
$\Delta(e_1)=e_1\otimes e_1$, we obtain case (a); if
$\Delta(e_1)=e_1\otimes e_1+e_2\otimes e_2$, we obtain
case (b). The other situation is $\varepsilon(e_1)=0$ and
$\varepsilon(e_2)=1$. By symmetric, we have (c) and (d).

Case 2: $\dim B_t=2$. Then $B_t=B_s=B$. By
\cite[Lemma 2.7]{BXYZZ2020}, $\Delta(e_1)=e_1\otimes e_1$
and $\Delta(e_2)=e_2\otimes e_2$. Then it is easy to check
that we obtain (e).
\end{proof}

\begin{lemma}
\label{xxlem7.6}
Let $A$ be a bialgebra and $J$ be its Jacobson radical. Suppose
that $J$ is nilpotent. If $B:=A/J\cong \Bbbk^{\oplus n}$ as an
algebra for some positive integer $n$, then $B$ is a quotient
bialgebra of $A$.
\end{lemma}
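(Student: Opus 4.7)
The plan is to show directly that the Jacobson radical $J$ is a bi-ideal of $A$, from which the quotient bialgebra structure on $B = A/J$ follows automatically. We need to verify two things: $\varepsilon(J) = 0$ and $\Delta(J) \subseteq J \otimes A + A \otimes J$.

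For the counit condition, observe that $\varepsilon \colon A \to \Bbbk$ is an algebra morphism and $\Bbbk$ has no nonzero nilpotent elements. Since $J$ is nilpotent by hypothesis, every element $x \in J$ satisfies $x^m = 0$ for some $m$, so $\varepsilon(x)^m = \varepsilon(x^m) = 0$, forcing $\varepsilon(x) = 0$.

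For the coproduct condition, I would exploit that $B \cong \Bbbk^{\oplus n}$ and therefore $B \otimes B \cong \Bbbk^{\oplus n^2}$ is commutative and reduced (has no nonzero nilpotents). Let $\pi \colon A \to B$ denote the quotient algebra map, and consider the composite algebra morphism
\[
\Phi := (\pi \otimes \pi) \circ \Delta \colon A \longrightarrow B \otimes B.
\]
For any $x \in J$ with $x^m = 0$, one has $\Phi(x)^m = \Phi(x^m) = 0$, so $\Phi(x)$ is nilpotent in the reduced algebra $B \otimes B$, whence $\Phi(x) = 0$. This means $\Delta(J) \subseteq \ker(\pi \otimes \pi)$, and since $\Bbbk$ is a field, the kernel of $\pi \otimes \pi$ is exactly $J \otimes A + A \otimes J$.

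Combining these two facts, $J$ is a bi-ideal of $A$, so the coalgebra structure $(\Delta, \varepsilon)$ descends to $B = A/J$, making it a quotient bialgebra of $A$ via $\pi$. The main (and essentially only) technical point is the observation that $B \otimes B$ is reduced, which is what makes the nilpotence of $J$ sufficient to conclude; this is where the hypothesis $B \cong \Bbbk^{\oplus n}$ (as opposed to a more general semisimple algebra, which could contain matrix blocks and hence nilpotent elements in $B \otimes B$ if $\Bbbk$ is not perfect, though over our algebraically closed $\Bbbk$ any finite-dimensional semisimple commutative quotient would do).
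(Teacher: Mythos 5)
Your proof is correct and follows essentially the same route as the paper's: the crux in both is that $B\otimes B\cong\Bbbk^{\oplus n^2}$ is reduced, so the algebra map $(\pi\otimes\pi)\circ\Delta$ annihilates the nilpotent ideal $J$. The only cosmetic difference is packaging: you verify directly that $J$ is a coideal (computing $\ker(\pi\otimes\pi)=J\otimes A+A\otimes J$ and checking $\varepsilon(J)=0$), whereas the paper phrases the same fact as a factorization of $(\pi\otimes\pi)\circ\Delta$ through $\pi$ and then transports coassociativity and the counit axiom along the uniqueness of the factorization. Both are complete; yours is slightly shorter since it leans on the standard fact that a bi-ideal yields a quotient bialgebra, while the paper re-derives that fact in situ.
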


\begin{proof}
Let $\pi$ be the canonical quotient map from $A$ to $B$. It's
clear that $\pi$ is an algebra map. Consider the composition
of algebra maps:
$$A \xrightarrow{\Delta} A\otimes A
\xrightarrow{\pi \otimes \pi} B\otimes B.$$
Since $B\otimes B$ doesn't have nilpotent elements and $J$ is
nilpotent, the above algebra map from $A\to B\otimes B$ factors
through the quotient map $\pi$, that is, there exists a unique
algebra map $\Delta_B$ from $B\to B\otimes B$, such that the
following diagram commutes
$$\xymatrix{
A \ar[rr]^{\Delta}\ar[d]_{\pi} & & A\otimes A\ar[d]^{\pi\otimes \pi}\\
B \ar[rr]^{\Delta_{B}} & & B\otimes B.}$$
Furthermore, $(\Delta_B\otimes Id)\Delta_B$ and
$(Id\otimes \Delta_B)\Delta_B$ are the algebra maps induced
by algebra maps
$(\pi\otimes \pi\otimes \pi)(\Delta\otimes Id)\Delta$
and $(\pi\otimes \pi\otimes \pi)(Id\otimes \Delta)\Delta$
respectively from $A\to B\otimes B\otimes B$. Then $\Delta_B$ is
coassociative since $\Delta$ is coassociative.

Similarly, let $\varepsilon_B: B\to \Bbbk$ be the algebra map
induced by $\varepsilon:A\to \Bbbk$. It is not hard to verify
that $\varepsilon_B$ satisfies the counital axiom. Consequently,
$B$ is a quotient bialgebra of $A$ and $J$ is a bi-ideal.
\end{proof}

Now we are ready to classify (weak) bialgebras on a small quiver.

\begin{proposition}
\label{xxpro7.7}
Suppose $Q$ is the quiver with two vertices $\{1,2\}$ and
$w$ arrows from $1$ to $2$ with $w\geq 1$. Let $A$ be the
path algebra $\Bbbk Q$. Then there are 5 types of weak bialgebra
structures on $A$ up to equivalences.
\begin{enumerate}
\item[(a)]
$\Delta(e_1)=e_1\otimes e_1, \Delta(e_2)=e_2\otimes e_2+e_1\otimes e_2
+e_2\otimes e_1$, $\varepsilon(e_1)=1, \varepsilon(e_2)=0$, and for any
arrow $r$ from 1 to 2, $\Delta(r)= e_1\otimes r+r\otimes e_1$ and
$\varepsilon(r)=0$.
\item[(b)]
$\Delta(e_1)=e_1\otimes e_1+e_2\otimes e_2, \Delta(e_2)=e_2\otimes e_1
+e_1\otimes e_2$, $\varepsilon(e_1)=1, \varepsilon(e_2)=0$,
and for any arrow $r$ from 1 to 2, $\Delta(r)=r\otimes e_1
+e_1\otimes r$ and $\varepsilon(r)=0$.
\item[(c)]
$\Delta(e_2)=e_2\otimes e_2, \Delta(e_1)=e_1\otimes e_1+e_1\otimes e_2
+e_2\otimes e_1$, $\varepsilon(e_2)=1, \varepsilon(e_1)=0$, and for any
arrow $r$ from 1 to 2, $\Delta(r)= e_2\otimes r+r\otimes e_2$ and
$\varepsilon(r)=0$.
\item[(d)]
$\Delta(e_2)=e_1\otimes e_1+e_2\otimes e_2, \Delta(e_1)=e_2\otimes e_1
+e_1\otimes e_2$, $\varepsilon(e_2)=1, \varepsilon(e_1)=0$,
and for any arrow $r$ from 1 to 2, $\Delta(r)=r\otimes e_2
+e_2\otimes r$ and $\varepsilon(r)=0$.
\item[(e)]
$\Delta(e_i)=e_i\otimes e_i$, $\varepsilon(e_i)=1$ for $i=1,2$,
and the Jacobson radical $J$ of $A$ is a subcoalgebra of $A$.
\end{enumerate}
\end{proposition}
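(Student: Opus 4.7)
The plan is to bootstrap from the classification of weak bialgebra structures on $\Bbbk^{\oplus 2}$ given in Lemma \ref{xxlem7.5} up to the path algebra $A := \Bbbk Q$, using the Jacobson radical $J := \bigoplus_{r \in Q_1} \Bbbk r$. Since $Q$ has no paths of length at least $2$, one has $J^2 = 0$ and $A/J \cong \Bbbk e_1 \oplus \Bbbk e_2$; in particular $J$ is nilpotent.

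First I would show that, for every weak bialgebra structure $(\Delta,\varepsilon)$ on $A$, the subalgebra $\Bbbk Q_0 = \Bbbk e_1 \oplus \Bbbk e_2$ inherits a weak bialgebra structure and $J$ is a coideal. For bialgebra structures this is an immediate application of Lemma \ref{xxlem7.6}: the algebra map $(\pi\otimes\pi)\circ\Delta: A \to (A/J)\otimes(A/J)$ lands in an algebra with no nonzero nilpotents, so it factors through $\pi$. For the genuinely weak case, I would exploit the fact that the only idempotents of $A$ are $0, e_1, e_2, 1$, so the target and source counital subalgebras of $A$ must lie inside $\Bbbk Q_0$; a short direct check then forces $\Bbbk Q_0$ to be a subcoalgebra and $J$ a coideal.

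Next I would apply Lemma \ref{xxlem7.5} to the induced structure on $\Bbbk Q_0$. This produces exactly the five listed coproducts on $\{e_1, e_2\}$ and pins down $\varepsilon(e_i)$. Cases (a)--(d) correspond to bialgebra structures on $A$. Note that although (a) and (c) (respectively (b) and (d)) are equivalent on $\Bbbk^{\oplus 2}$ via the swap $e_1 \leftrightarrow e_2$, they remain inequivalent on $A$ because no algebra automorphism of $A$ can swap $e_1$ and $e_2$ (such a swap is incompatible with all arrows going $1 \to 2$); case (e) is the weak but not genuine bialgebra. Given the coalgebra structure on $\Bbbk Q_0$, Proposition \ref{xxpro7.2} determines the partition $E_1, E_2$ and constrains $\Delta(r)$, for each arrow $r \in Q_1$, to lie in a specific finite-dimensional subspace of $A \otimes A$. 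A direct calculation with the counital axiom together with \eqref{E1.7.2} forces $\varepsilon(r) = 0$, and in each of (a)--(d) forces $\Delta(r)$ into the claimed primitive-like shape $e_i \otimes r + r \otimes e_i$ (with $i = 1$ or $i = 2$ depending on the case), modulo a $\mathrm{GL}_w(\Bbbk)$-gauge acting on the $w$-dimensional arrow space.

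\textbf{The hard part} will be the final rigidification: after Proposition \ref{xxpro7.2} cuts down the ambient subspace containing $\Delta(r)$, one must check that the residual freedom --- a change of basis of the arrow space, together with possible extra terms in $J \otimes J$ --- is either vacuous (thanks to $J^2 = 0$) or can be absorbed by an algebra automorphism of $A$, so that each orbit under the equivalence relation of Definition \ref{xxdef7.4} contains exactly one representative of the form listed in (a)--(e). In case (e) no such normalization is claimed, and the full freedom of making $J$ into a subcoalgebra is simply packaged as ``type (e)''.
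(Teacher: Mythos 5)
Your overall plan follows the paper's structure closely: use $J^2 = 0$ and $A/J \cong B = \Bbbk^{\oplus 2}$, reduce to the classification of weak bialgebra structures on $B$ in Lemma \ref{xxlem7.5} (via Lemma \ref{xxlem7.6} in the bialgebra case), and then lift and rigidify. The observation that (a) and (c) stay inequivalent on $A$ because no algebra automorphism of $A$ can swap $e_1$ and $e_2$ (the idempotents have different left-module sizes when $w \geq 1$) is also correct and needed.

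However, there is a concrete gap in the step handling the genuinely weak (non-bialgebra) case. You assert that the only idempotents of $A$ are $0, e_1, e_2, 1$, and use this to conclude that the target and source counital subalgebras lie in $\Bbbk Q_0$. That claim is false: for any arrow $r$, $e_1 + r$ is idempotent (since $r e_1 = r$, $e_1 r = 0$, $r^2 = 0$), and more generally $e_1 + p$ is idempotent for every $p \in J$. So the counital subalgebra $A_s$ may \emph{a priori} be $\mathrm{span}\{1, e_1 + p\}$ for some $p \neq 0$, and does not have to sit inside $\Bbbk Q_0$. The paper instead shows: (i) since $A_s$ is separable, $A_s \cap J = 0$, so $\pi: A_s \to B$ is injective and thus an isomorphism; hence $A_s = \mathrm{span}\{1, l\}$ with $l = e_1 + p$; (ii) the set of elements commuting with $l$ is exactly $A_s$, whence $A_t = A_s$; (iii) a short analysis of the possible values of $\Delta(1)$ on this two-dimensional algebra shows $\Delta(1) = (1-l)\otimes(1-l) + l\otimes l$, forcing type (e). Then an algebra automorphism of $A$ sending $l \mapsto e_1$ normalizes. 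You would need this more careful argument in place of the idempotent claim.

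You are also correct to flag the rigidification in cases (a)--(d) as the hard part, but the proposal defers it rather than doing it. The paper executes it directly: after imposing the counital axiom, the idempotent relations $e_i^2 = e_i$, $e_1 e_2 = 0$, and coassociativity, the residual terms ($t_i \in J$ and $T \in J\otimes J$) are shown to vanish, and in case (a) the remaining $J\otimes J$-freedom in $\Delta(r)$ is absorbed via \cite[Lemma 3.1]{HT2013}, which is the specific tool that makes the ``modulo a $\mathrm{GL}_w$-gauge'' statement precise. Without naming that input, your sketch does not close the argument.
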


\begin{proof}
Let $J$ be the Jacobson radical of $A$, which is the ideal
generated by the arrows from $1$ to $2$. It is clear that
$J^2=0$ and $A/J\cong B$ where $B$ is as given in Lemma
\ref{xxlem7.5}.

We first consider bialgebra structures on $A$.

By Lemma \ref{xxlem7.6}, $A/J$ is a quotient bialgebra of
$A$ and $J$ is a bi-ideal of $A$. All bialgebra structures
on $B\cong A/J$ are classified in Lemma \ref{xxlem7.5}. We
will use this classification to analyze the bialgebra
structures on $A$.

Case 1: Suppose the bialgebra structure on $B$ is as in
Lemma \ref{xxlem7.5}(a). Lifting the bialgebra structure on
$B$ to $A$, we have
$$\Delta(e_1)=e_1\otimes e_1+e_1\otimes t_1+ e_2\otimes t_2
+t_3\otimes e_1+t_4\otimes e_2+ T,$$
$$\Delta(e_2)=e_1\otimes e_2+e_2\otimes e_1+e_2\otimes e_2-
e_1\otimes t_1-e_2\otimes t_2
-t_3\otimes e_1-t_4\otimes e_2- T,$$
where $T\in J\otimes J$ and $t_i\in J$ for $1\leq i\leq 4$, and
$$\varepsilon (e_1)=1, \quad \varepsilon(e_2)=0,
\quad \varepsilon(r)=0 {\text{ for all }} r\in J.$$
By counital axiom, we have $t_1=t_3=0$. By using the equation
$\Delta(e_1e_2)=0$, we have $t_2=t_4=0$.
In the bialgebra structure of $A$, we have, for every arrow
$r$ from $1$ to $2$,
$$\Delta(r)=e_1\otimes r+r\otimes e_1+ f(r)\otimes e_2+e_2\otimes
g(r)+w(r)$$
where $f(r),g(r)\in J$ and $w(r)\in J\otimes J$. Using the fact
that $r=re_1$, we obtain that $f(r)=g(r)=0$ for all $r$.

Pick any $\Bbbk$-basis of $J$, say $\{r_i\}$, we can write,
\begin{eqnarray*}
\Delta(e_1)&= &e_1\otimes e_1+\sum_{i,j} a_{ij} r_i \otimes r_j,\\
\Delta(e_2)&= &e_1\otimes e_2+e_2\otimes e_1
+e_2\otimes e_2-\sum_{i,j} a_{ij} r_i \otimes r_j,\\
\Delta(r_i)&= &e_1\otimes r_i+r_i\otimes e_1+\sum_{j,k} c^{jk}_i r_j\otimes r_k.
\end{eqnarray*}

Suppose $\deg (e_1)=\deg(e_2)=0$ and $\deg(r_i)=1$.
Let $\equiv$ denote $=$ modulo higher degree terms. Then the
coalgebra structure above can be written as
$$\begin{aligned}
\Delta(e_1)&\equiv  e_1\otimes e_1 \\
\Delta(e_2)&\equiv e_1\otimes e_2+e_2\otimes e_1+ e_2\otimes e_2\\
\Delta(r_i)&\equiv e_1\otimes r_i +r_i \otimes e_1.
\end{aligned}
$$
By \cite[Lemma 3.1]{HT2013}, if two different bialgebra structures
on $A$ both satisfy the above equations, then they are isomorphic.

Therefore, in this case, there exists a unique bialgebra structure
on $A$ up to isomorphism, that is,
\begin{eqnarray*}
\Delta(e_1)&= &e_1\otimes e_1,\\
\Delta(e_2)&= &e_1\otimes e_2+e_2\otimes e_1+e_2\otimes e_2,\\
\Delta(r_i)&= &e_1\otimes r_i+r_i\otimes e_1,
\end{eqnarray*}
which is exactly  (a).

Case 2: Suppose the bialgebra structure on $B$ is as in
Lemma \ref{xxlem7.5}(b). Lifting the bialgebra structure on
$B$ to $A$, we have
$$\Delta(e_1)=e_1\otimes e_1+ e_2\otimes e_2+ e_1\otimes t_1+
e_2\otimes t_2+ t_3\otimes e_1+t_4\otimes e_2+ T,$$
where $T\in J\otimes J$ and $t_i\in J$ for $1\leq i\leq 4$, and
$$\Delta(e_2)=e_1\otimes e_2+e_2\otimes e_1-e_1\otimes t_1-
e_2\otimes t_2-t_3\otimes e_1-t_4\otimes e_2- T,$$
$$\varepsilon (e_1)=1, \quad \varepsilon(e_2)=0,
\quad \varepsilon(r)=0 {\text{ for all }} r\in J.$$
By counital axiom, we have $t_1=t_3=0$.
By the fact $e_i$ is an idempotent, we have $T=0$.
In the bialgebra structure of $A$, for every arrow $r$ from
1 to 2, we have
$$\Delta(r)=e_1\otimes r+r\otimes e_1+ f(r)\otimes e_2+e_2\otimes
g(r)+w(r)$$
where $f(r),g(r)\in J$ and $w(r)\in J\otimes J$.
Using the fact
that $e_1r=0$, we obtain that $f(r)=g(r)=0$ for all $r$ and
$w(r)+r\otimes t_2+t_4\otimes r=0$.
Hence, for all $t\in J$,
$$\Delta(t)=e_1\otimes t+ t\otimes e_1- t\otimes t_2-
t_4\otimes t.$$
Moreover, the coassociative axiom, $(Id\otimes \Delta)\Delta(e_2)
=(\Delta\otimes Id)\Delta(e_2)$, implies $t_2=t_4=0.$
We obtain (b).

Case 3 and 4: When the bialgebra structure on $B$ is as in
Lemma \ref{xxlem7.5}(c) and (d), it's similarly to case 1 and case 2
respectively, and we obtain (c) and (d).

Next, we consider weak bialgebra, but not bialgebra, structures on $A$.

Let $A_t$ and $A_s$ be the target and source counital subalgebras.
By \cite[(2.1) and Proposition 2.4]{BNS1999},
$\dim A_t=\dim A_s$ and $A_s$ commutes with $A_t$.
If $\dim A_t=\dim A_s=1$, by \cite[Lemma 8.2]{N1998},
$A$ is a bialgebra since $\Delta(1)=1\otimes 1$, which is the case
we have just finished above. So $\dim A_t=\dim A_s\geq 2$.
Since $A_s$ is separable (hence semisimple), $A_s\cap J=\{0\}$.
Thus there is an injective map
$$A_s\longrightarrow  A \xrightarrow{\pi} B$$
which implies that $\dim A_t=\dim A_s=2$ and
that $\pi(A_s)=B$.

Now we claim that $A_t=A_s\cong B$. Since $\pi(A_s)=B$,
we can write $A_s=span\{1,e_1+p\}$ where $p\in J$. In this case,
$A_t=A_s$ since the space of elements which commute with $e_1+p$
is $A_s$ itself, and they both are weak bialgebras. Let $l$
denote the idempotent $e_1+p$. Assume that
$$\Delta(1)=a_1 1\otimes 1+a_2 l\otimes 1+a_3 1\otimes l+a_4 l\otimes l$$
where $a_i\in \Bbbk$.
By \cite[Equations (2.7a) and (2.7b)]{BNS1999}, $a_1+a_2=0$ and
$\Delta(l)=(a_2+a_4)l\otimes l$.
By $\Delta(1)=\Delta(1^2)$, $a_2=a_3$ and one of following equalities hold:
\begin{enumerate}
\item[(i)]
$\Delta(1)=l\otimes l$,
\item[(ii)]
$\Delta(1)=1\otimes 1- l\otimes 1- 1\otimes l+ l\otimes l$,
\item[(iii)]
$\Delta(1)=1\otimes 1- l\otimes 1- 1\otimes l+2 l\otimes l$.
\end{enumerate}
However, (i) implies that $l$ is a scalar multiple of $1$ and
(ii) implies that $1-l$ is a scalar multiple of $1$,
which both are impossible. So (iii) holds and
$\Delta(1)=(1-l)\otimes (1-l)+ l\otimes l$,
which means $A_t=A_s\cong B$ as weak bialgebra,
where the weak bialgebra structure on $B$
as in Lemma \ref{xxlem7.5}(e).

Re-write $l_1=l$ and $l_2=1-l$.  Then
$\Delta(l_1)=l_1\otimes l_1$ and $\Delta(l_2)=l_2\otimes l_2$.
Note that $A=A_t\oplus J$ as vector space.
Then for any arrow $r$ from 1 to 2, we have
$$\Delta(r)=f(r)\otimes l_1+g(r)\otimes l_2+
l_1\otimes p(r)+ l_2\otimes q(r)+w(r),$$
where $f(r),g(r),p(r),q(r)\in J$ and $w(r)\in J\otimes J$.
By $rl_1=r$ and $l_2r=r$, $f(r)=g(r)=p(r)=q(r)=0$ for all $r$.
That is $J$ is a subcoalgebra of $A$.
It's not hard to check any coalgebras structure over $J$
satisfy conditions in Definition \ref{xxdef1.7}.

Moreover, let $\sigma: (A,\Delta,\varepsilon)\to (A,\Delta',\varepsilon')$
via $\sigma(l_i)=e_i$ and $\sigma(r)=r$ for $r\in J$, where
$(A,\Delta',\varepsilon')$ is the weak bialgebra as in (e).
Then $\sigma$ is an algebra automorphism and $(\Delta,\varepsilon)$
is equivalent to $(\Delta',\varepsilon')$.
\end{proof}

We finish this section with examples related to both
commutative projective varieties and noncommutative
projective schemes in the sense of \cite{AZ1994}.

\begin{definition} \cite[p. 1230]{HP2011}
\label{xxdef7.8}
Let ${\mathbb X}$ be a smooth projective scheme.
\begin{enumerate}
\item[(1)]
A coherent sheaf ${\mathcal E}$ on ${\mathbb X}$ is called
{\it exceptional} if $\Hom_{\mathbb X}({\mathcal E},{\mathcal E})
\cong \Bbbk$ and $\Ext^i_{\mathbb X}({\mathcal E},{\mathcal E}) =0$
for every $i \geq  0$.
\item[(2)]
A sequence ${\mathcal E}_1, \cdots, {\mathcal E}_n$ of exceptional
sheaves is called an {\it exceptional sequence} if $\Ext^k_{\mathbb X}
({\mathcal E}_i,{\mathcal E}_j) = 0$ for all $k$ and for all $i > j$.
\item[(3)]
If an exceptional sequence generates $D^b(coh({\mathbb X}))$, then
it is called {\it full}.
\item[(4)]
If an exceptional sequence satisfies
$$\Ext^k_{\mathbb X}({\mathcal E}_i,{\mathcal E}_j) = 0$$
for all $k > 0$ and all $i, j$, then it is called a {\it strongly
exceptional sequence}.
\end{enumerate}
\end{definition}

The above concepts are extended to an arbitrary triangulated category
in \cite[Definition 4.1]{Mo2013}. The existence of a full (strongly)
exceptional sequence has been proved for many smooth projective schemes.
However, on Calabi-Yau varieties there are no exceptional sheaves.
When ${\mathbb X}$ has a full exceptional sequence
${\mathcal E}_1, \cdots, {\mathcal E}_n$, then there is a triangulated
equivalence
\begin{equation}
\label{E7.8.1}\tag{E7.8.1}
\RHom_{\mathbb{X}}(\oplus_{i=1}^n {\mathcal E}_i,-):\quad
D^b(coh({\mathbb X}))\cong D^b(\Modfd-A)
\end{equation}
where $A$ is the finite dimensional algebra
$\End_{\mathbb X}(\oplus_{i=1}^n {\mathcal E}_i)$, see
\cite[Theorem 4.2]{Mo2013} (or \cite[Theorem 3.1.7]{BVdB2003}).
By Example \ref{xxex5.3}(2), there is a canonical
monoidal triangulated structure on $D^b(coh({\mathbb X}))$
induced by $\otimes_{\mathbb{X}}$. Then we obtain a monoidal
triangulated structure on $D^b(\Mod_{f.d}-A)$ via
\eqref{E7.8.1}. By Example \ref{xxex5.3}(1), if $A$ is a
weak bialgebra, there is a (different) canonical monoidal
triangulated structure on $D^b(\Mod_{f.d}-A)$ (or
equivalently, on $D^b(coh({\mathbb X}))$). In short, there
are possibly many different monoidal triangulated structures
on a given triangulated category.

Next we give an explicit example related to noncommutative
projective schemes.

\begin{example}
\label{xxex7.9}
Let $T$ be a connected graded noetherian Koszul
Artin-Schelter regular algebra of global dimension at least
2. If $T$ is commutative, then $T$ is the polynomial ring
$\Bbbk[x_0,x_1,\cdots,x_n]$ for some $n\geq 1$. Let
$\mathbb{X}$ be the noncommutative projective scheme
associated to $T$ in the sense of \cite{AZ1994}. In
\cite{AZ1994} $\proj T$ denotes the category of coherent
sheaves on $\mathbb{X}$, but here we use $coh(\mathbb{X})$
instead. When $T$ is the commutative polynomial ring
$\Bbbk[x_0,x_1,\cdots,x_n]$, then $\mathbb{X}$ is the
commutative projective $n$-space $\mathbb{P}^n$. On the
other hand, there are many noetherian Koszul Artin-Schelter
regular algebras $T$ that are not commutative. Let $r$ be
the global dimension of $T$ and $\mathcal{O}$ be the
structure sheaf of $\mathbb{X}$. Then
$$\{\mathcal{O}(-(r-1)),\mathcal{O}(-(r-2)),
\cdots,\mathcal{O}(-1), \mathcal{O}\}$$
is a full strongly exceptional sequence for $\mathbb{X}$
in the sense of \cite[Definition 4.1]{Mo2013}.
By \eqref{E7.8.1} or \cite[Theorem 4.2]{Mo2013},
\begin{equation}
\label{E7.9.1}\tag{E7.9.1}
D^b(coh(\mathbb{X}))\cong D^b(A-\Modfd)
\end{equation}
where $A$ is the opposite ring of
$\End_{\mathbb X}(\oplus_{i=1}^n {\mathcal O}_i)$.
By \cite[Definition 4.6 and Theorem 4.7]{Mo2013},
$A$ is the opposite ring of the Beilinson algebra
(which is denoted by $R$ in \cite[Definition 4.6]{Mo2013}).
By the description in \cite[Definition 4.7]{MM2011}, the
Beilinson algebra is an upper triangular matrix with
diagonal entries being $\Bbbk$. Then $A$ can be written as
$\Bbbk Q/I$ where $Q$ is a quiver with $r$ vertices and
the number of arrows from vertex $i$ to vertex $j$ equals the
dimension of $T_{j-i}$. It is clear that $Q$ satisfies
condition (2) in Example \ref{xxex7.1}. By Example
\ref{xxex7.1}, there is a cocommutative bialgebra structure
on $A$. Similarly, vertex $r$ in $Q$ satisfies condition
(1) in Example \ref{xxex7.1}, which implies that there is
another cocommutative bialgebra structure on $A$. Via
\eqref{E7.9.1}, $D^b(coh(\mathbb{X}))$ has at least two
different monoidal triangulated structures induced by
two different bialgebra structures on $A$.

Now let $T$ be the polynomial ring $\Bbbk[x_0,x_1]$.
Then $\mathbb{X}=\mathbb{P}^1$ and
\begin{equation}
\notag
D^b(coh(\mathbb{P}^1))\cong D^b((B)^{op}-\Modfd)
\end{equation}
where $B$ is the Beilinson algebra associated to $T$.
By \cite[Definition 4.7]{MM2011},
$$B=\begin{pmatrix} \Bbbk & \Bbbk x+\Bbbk y\\
0& \Bbbk\end{pmatrix}.$$
It is clear that $B$ is the path algebra of the Kronecker
quiver given in Example \ref{xxex2.7}. In this case we have
two monoidal triangulated structures on
$D^b(coh(\mathbb{P}^1))$. One is the monoidal structure
induced by $\otimes_{\mathbb{P}^1}$, and the other comes
from the canonical weak bialgebra structure of $B=\Bbbk Q$
[Lemma \ref{xxlem2.1}(1)]. Together with two bialgebra
structures on $B$, see the above paragraph, we obtain four
different monoidal triangulated structures on
$D^b(coh(\mathbb{P}^1))$. To show these monoidal triangulated
structures are not equivalent, one need to use some arguments
in the proof of Lemma \ref{xxlem5.9} (details are omitted).
\end{example}

\section{Proof of Theorems \ref{xxthm0.8}}
\label{xxsec8}

It is important and interesting to calculate explicitly
$\fpd(M)$ of some objects $M$ in
a monoidal abelian (or triangulated) category. Generally
this is very difficult task and dependent on complicated
combinatorial structures of the brick sets. In this section
we will work out one example. Note that some non-essential
details are omitted.

A type $\mathbb{A}_n$ quiver is defined to be a quiver of
form \eqref{E0.7.1}:
\begin{equation}
\notag
\xymatrix{
1 \ar@{-}[r]^{\alpha_1}&2\ar@{-}[r]^{\alpha_2}
&\cdots\ar@{-}[r]^{\alpha_{i-1}}&i\ar@{-}[r]^{\alpha_i}
&\cdots\ar@{-}[r]^{\alpha_{n-1}}&n}
\end{equation}
where each arrow $\alpha_i$ is either $\longrightarrow$ or
$\longleftarrow$. For each $n\geq 3$, there are
more than one isomorphism classes of type $\mathbb{A}_n$
quivers with $n$ vertices, though we denote all of them
by $\mathbb{A}_n$. In this section we provide
fairly detailed computation of $\fpd(M)$ for every indecomposable
object in the monoidal abelian category $\Repr(\mathbb{A}_n)$.
Using Lemma \ref{xxlem4.11}, we obtain $\fpd(M)$ for every
indecomposable object $M$ in the monoidal triangulated
category $D^b(\Repr (\mathbb{A}_n))$. The result is
summarized in Theorem \ref{xxthm0.8}. Throughout this
section, the tensor product is defined as in \eqref{E2.1.1}.

First we try to understand brick sets in $\Repr(\mathbb{A}_n)$.
Recall that $M\{i,j\}$, for $i\leq j$, denotes the representation
of $\mathbb{A}_n$ defined by
\begin{align}
\notag
(M\{i,j\})_s&=\begin{cases} \Bbbk & i\leq s\leq j,\\
0 & {\text{otherwise,}}\end{cases} \\
\notag
(M\{i,j\})_{\alpha_s}&=\begin{cases} Id_{\Bbbk} & i\leq s<j,\\
0& {\text{otherwise}}.\end{cases}
\end{align}

We start with easy observations.

\begin{lemma}
\label{xxlem8.1}
If $\{M\{1,m\},M\{k,l\}\}$ is a brick set and $m\geq k \geq 3$,
then $\{M\{2,m\}$, $M\{k,l\}\}$ also is a brick set.
\end{lemma}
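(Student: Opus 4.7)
The plan is to reduce the lemma to a direct comparison of Hom spaces between thin interval modules. Since $\mathbb{A}_n$ is Dynkin, every indecomposable $M\{i,j\}$ is already a brick by Lemma \ref{xxlem4.2}, so the brick-set condition for $\{M\{2,m\},M\{k,l\}\}$ amounts to showing the off-diagonal vanishing $\Hom(M\{2,m\},M\{k,l\})=0$ and $\Hom(M\{k,l\},M\{2,m\})=0$.

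First I would describe morphisms between arbitrary interval modules explicitly. Given $M\{a,b\}$ and $M\{c,d\}$, any morphism $f=(f_s)$ must vanish at each vertex outside the support intersection $I:=[\max(a,c),\min(b,d)]$, and on $I$ the identity interior arrow maps force all $f_s$ to agree with a single scalar $\lambda$. Whether $\lambda$ is free or forced to be zero is then controlled by at most two boundary compatibility relations, one at each of the arrows $\alpha_{\max(a,c)-1}$ and $\alpha_{\min(b,d)}$. Crucially, each relation depends only on the orientation of the corresponding boundary arrow and on which of source and target is nonzero at the adjacent vertex, not on anything further away.

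Next I would invoke the hypothesis $k\geq 3$: this ensures both that the support intersections of $M\{1,m\}$ and of $M\{2,m\}$ with $M\{k,l\}$ coincide as $[k,\min(m,l)]$, and that the two boundary vertices $k-1$ and $\min(m,l)+1$ lie in $\{2,\ldots,n\}$. Since $M\{1,m\}$ and $M\{2,m\}$ agree at every vertex other than $1$, the boundary compatibility conditions for the two Hom-space computations are literally identical, yielding canonical isomorphisms $\Hom(M\{1,m\},M\{k,l\})\cong\Hom(M\{2,m\},M\{k,l\})$ and $\Hom(M\{k,l\},M\{1,m\})\cong\Hom(M\{k,l\},M\{2,m\})$. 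The desired vanishing on the right then follows immediately from the brick-set hypothesis on $\{M\{1,m\},M\{k,l\}\}$. The only place requiring real bookkeeping is the small case analysis at each endpoint of $I$ --- depending on the orientation of the boundary arrow and on whether the source or target extends past that endpoint, $\lambda$ is either unconstrained or forced to be zero --- but this is not a substantive obstacle, since one only needs to observe that the same subcase occurs in both computations rather than to resolve any individual subcase in detail.
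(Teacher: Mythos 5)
Your argument is correct and amounts to filling in the details behind the paper's one-line justification (``This is clear since $k\geq 3$''): the key observation in both cases is that, because $k\geq 3$, the support intersection $[k,\min(m,l)]$ and the adjacent boundary vertices all lie in $\{2,\ldots,n\}$, where $M\{1,m\}$ and $M\{2,m\}$ are identical, so the $\Hom$-computations against $M\{k,l\}$ cannot distinguish them. The only tiny imprecision is that $\min(m,l)+1$ may equal $n+1$, in which case there is no right boundary arrow, but that situation is also the same for both modules so the conclusion stands.
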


\begin{proof}
This is clear since $k\geq 3$.
\end{proof}

\begin{lemma}
\label{xxlem8.2}
For any $1\leq i<j\leq n$, $\{M\{1,i\},M\{1,j\}\}$ is not a
brick set.
\end{lemma}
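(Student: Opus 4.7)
The plan is to produce a nonzero morphism between $M\{1,i\}$ and $M\{1,j\}$ in one of the two directions. Since both are indecomposable representations of a type $\mathbb{A}_n$ quiver, they are bricks by Lemma \ref{xxlem4.2}, so the only way $\{M\{1,i\},M\{1,j\}\}$ can fail to be a brick set is if $\Hom_{\Repr(Q)}$ between them is nonzero in at least one direction.

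Both interval modules have support containing vertex $1$, and their supports overlap on exactly $[1,i]$, which is the full support of $M\{1,i\}$. On this overlap each module is $\Bbbk$ at every vertex and $Id_\Bbbk$ on every internal arrow $\alpha_s$ with $s<i$. The natural candidate is the ``identity on $[1,i]$, zero elsewhere'' family of $\Bbbk$-linear maps; the point is that compatibility with $\alpha_s$ for $s<i$ is automatic by \eqref{E0.7.3}, so the only condition that can fail is compatibility with the boundary arrow $\alpha_i$.

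I split into the two orientations of $\alpha_i$. If $\alpha_i$ points $i\to i+1$, then $(M\{1,i\})_{\alpha_i}=0$ while $(M\{1,j\})_{\alpha_i}=Id_\Bbbk$, so the candidate maps, read as $g\colon M\{1,j\}\to M\{1,i\}$, satisfy $g_{i+1}\circ(M\{1,j\})_{\alpha_i}=0=(M\{1,i\})_{\alpha_i}\circ g_i$ because $g_{i+1}=0$, giving a nonzero morphism $M\{1,j\}\to M\{1,i\}$. If $\alpha_i$ points $i+1\to i$, the same family of maps runs the other way: as $f\colon M\{1,i\}\to M\{1,j\}$ one checks $f_i\circ(M\{1,i\})_{\alpha_i}=0=(M\{1,j\})_{\alpha_i}\circ f_{i+1}$ since $f_{i+1}=0$, giving a nonzero morphism $M\{1,i\}\to M\{1,j\}$. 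In either case one of the two Hom spaces is nonzero, so the brick set condition fails.

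There is no real obstacle here; the only thing to be careful about is to match the direction of the candidate morphism to the orientation of $\alpha_i$ so that the constraint at the boundary is ``$0=0$'' rather than forcing the identity component to vanish.
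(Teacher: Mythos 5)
Your proof is correct and is essentially the same as the paper's: both split on the orientation of the boundary arrow $\alpha_i$ and exhibit the same ``identity on $[1,i]$, zero beyond'' morphism, directed so that the compatibility square at $\alpha_i$ reads $0=0$. The only cosmetic difference is your preliminary remark (via Lemma~\ref{xxlem4.2}) that both objects are bricks, which is not needed to conclude since a nonzero Hom already violates the brick-set condition.
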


\begin{proof}
There are two cases.

Case 1: $s(\alpha_i)=i$.
Let $f:M\{1,j\}\rightarrow M\{1,i\}$ be
$(f)_k=\begin{cases}
Id & k\leq i\\
0 & k>i
\end{cases}.$
Then it is clear that $f\in \Hom(M\{1,j\},M\{1,i\})$
and $\Hom(M\{1,j\},M\{1,i\})\neq 0$.

Case 2: $t(\alpha_i)=i$.
Let $g:M\{1,i\}\rightarrow M\{1,j\}$ be
$(g)_k=\begin{cases}
Id & k\leq i\\
0 & k>i
\end{cases}.$
Then $g\in \Hom(M\{1,i\},M\{1,j\})$ and $\Hom(M\{1,i\},M\{1,j\})\neq 0$.

Combining these two cases, one sees that $\{M\{1,i\},M\{1,j\}\}$
is not a brick set.
\end{proof}

In the above, we can replace $1$ by any positive integer
no more that $i$.

\begin{lemma}
\label{xxlem8.3}
Suppose $i\leq j\leq k$. Then one of spaces
$\Hom(M\{i,j\},M\{i,k\})$ and $\Hom(M\{i,k\},M\{i,j\})$ is
isomorphic to $\Bbbk $ while the other is zero.
\end{lemma}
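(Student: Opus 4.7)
The plan is to compute both Hom spaces directly from the quiver-representation definition of a morphism. Without loss of generality assume $j < k$ (if $j = k$ the two representations coincide and the claim is vacuous); in particular the arrow $\alpha_j$ between vertices $j$ and $j+1$ exists.

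First, I would analyze morphisms on the common support. Both $M\{i,j\}$ and $M\{i,k\}$ are one-dimensional at every vertex $s$ with $i \le s \le j$, and by \eqref{E0.7.3} every arrow $\alpha_s$ with $i \le s < j$ acts as the identity on both representations. Hence for any morphism $f$ (in either direction), the commutativity at $\alpha_i, \alpha_{i+1}, \ldots, \alpha_{j-1}$ forces $f_i = f_{i+1} = \cdots = f_j =: c \in \Bbbk$. Outside the interval $[i,j]$, one of the two representations vanishes, so all remaining components of $f$ are forced to be zero. Thus each Hom space is at most one-dimensional, parametrized by the scalar $c$.

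The decisive constraint comes from the square at $\alpha_j$. On $M\{i,j\}$ this arrow is sent to $0$, while on $M\{i,k\}$ (since $j < k$) it is sent to $Id_{\Bbbk}$. I would split into the two possible orientations of $\alpha_j$. If $\alpha_j$ points from $j$ to $j+1$, then the compatibility square for a morphism $M\{i,j\} \to M\{i,k\}$ reads $Id \circ c = 0 \circ 0$, forcing $c=0$, whereas for a morphism $M\{i,k\} \to M\{i,j\}$ it reads $0 \circ c = 0 \circ Id$, which is automatic; so $\Hom(M\{i,j\},M\{i,k\})=0$ and $\Hom(M\{i,k\},M\{i,j\})\cong \Bbbk$. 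If $\alpha_j$ points from $j+1$ to $j$, the roles of the two squares swap and the opposite conclusion holds.

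In either case, exactly one Hom space is $\Bbbk$ and the other is $0$, proving the lemma. The argument is essentially bookkeeping; the only potential obstacle is to treat the boundary case $j=n$ cleanly (there is no arrow $\alpha_j$, forcing $k=n$ and reducing to the trivial case), and to confirm that the "identifying scalar" $c$ propagates unambiguously across the interval $[i,j]$, which it does because both representations agree as $\Bbbk$-valued local systems on that subquiver.
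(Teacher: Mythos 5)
Your proof is correct and follows essentially the same route as the paper's: both arguments hinge on the commutativity square at $\alpha_j$, which determines the direction of the nonzero map according to the orientation of that arrow (the paper defers this orientation case split to the argument it gave for Lemma \ref{xxlem8.2}). The one point where you are more explicit is the dimension bound: you derive that each Hom space is at most one-dimensional directly from the commuting squares on the overlap $[i,j]$, whereas the paper cites Lemma \ref{xxlem4.2}; your self-contained derivation is if anything cleaner, since it makes the bound visible without an appeal to brick-ness.
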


\begin{proof}
An idea similar to the proof of Lemma \ref{xxlem8.2} shows that
one of spaces is nonzero and the other is zero. For the one that
is nonzero, it must be $\Bbbk$ by Lemma \ref{xxlem4.2}.
\end{proof}

\begin{lemma}
\label{xxlem8.4}
If $f:M\{i,k\}\rightarrow M\{i,l\}$ is a non-zero morphism and $k\neq l$,
then for any $j\leq i$, $\Hom(M\{i,l\}, M\{j,k\})=0$ and
$\Hom(M\{j,l\}, M\{i,k\})=0$.
\end{lemma}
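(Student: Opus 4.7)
My plan is to rely on a boundary-orientation criterion for nonzero homomorphisms between interval modules over a type $\mathbb{A}_n$ quiver, which I would first establish by a direct compatibility calculation. Fix two interval modules $M\{a,b\}$ and $M\{c,d\}$ whose supports overlap in the interval $[e,f]:=[\max(a,c),\min(b,d)]$. At the two vertices $e-1$ and $f+1$ lying just outside the overlap, exactly one of the two modules has value $\Bbbk$ while the other has value $0$, and the arrow $(M\{a,b\})_{\alpha}$ (or $(M\{c,d\})_{\alpha}$) attached to the boundary arrow is either the identity or zero. Working out the square $g_{\bullet} \circ (M\{a,b\})_{\alpha_{\bullet}} = (M\{c,d\})_{\alpha_{\bullet}} \circ g_{\bullet}$ in each of the four cases shows that a nonzero morphism $M\{a,b\} \to M\{c,d\}$ can exist only if the orientation at each boundary is forced as follows: at the right boundary, if $b>d$ then $\alpha_f$ must point $f\to f+1$, while if $b<d$ then $\alpha_f$ must point $f+1\to f$; and symmetrically at the left.

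With this criterion in hand, the hypothesis that $f\colon M\{i,k\}\to M\{i,l\}$ is nonzero with $k\neq l$ pins down the orientation of a single arrow. Without loss of generality assume $k<l$ (the case $k>l$ is symmetric). The overlap of $[i,k]$ and $[i,l]$ is $[i,k]$; the source has the smaller right endpoint, so the criterion forces $\alpha_k$ to point from $k+1$ to $k$. The left boundary of the overlap is $e=i=a=c$, where there is no constraint, so this is the only orientation datum extracted from $f$.

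Finally, I would apply the same criterion to each of the two asserted vanishings. For $\Hom(M\{i,l\}, M\{j,k\})$ with $j\leq i$, the overlap of $[i,l]$ and $[j,k]$ is $[i,k]$ (using $j\leq i\leq k<l$), and now the source has the \emph{larger} right endpoint ($l>k$); the criterion would therefore require $\alpha_k\colon k\to k+1$, directly contradicting the orientation fixed in the previous step. The same overlap $[i,k]$ and the same inequality $l>k$ arise for $\Hom(M\{j,l\}, M\{i,k\})$, yielding the same contradiction at $\alpha_k$. The parallel argument with the arrow $\alpha_l$ disposes of the case $k>l$. The only real obstacle is the first step: getting the arrow-direction bookkeeping straight in the four boundary subcases, which amounts to comparing the two maps $0\to\Bbbk$ and $\Bbbk\to 0$ against identities in a small commutative square.
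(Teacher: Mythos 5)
Your proof is correct, but it takes a genuinely different route from the paper's. The paper's argument is a quick reduction: given a nonzero $g\colon M\{i,l\}\to M\{j,k\}$, it truncates $g$ to a nonzero morphism $\hat g\colon M\{i,l\}\to M\{i,k\}$ (by zeroing it outside the common support $[i,\min(k,l)]$, where $g$ is a nonzero constant), and then appeals to Lemma~\ref{xxlem8.3} — which already encodes the needed dichotomy via the brick property of Lemma~\ref{xxlem4.2} — to contradict the nonvanishing of $f$. Your proof instead builds, from scratch, the boundary-orientation criterion for nonzero maps between interval modules, applies it to $f$ to pin down the direction of $\alpha_{\min(k,l)}$, and then shows that either nonvanishing in the conclusion would force the opposite orientation at that same arrow. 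Both arguments ultimately rest on the same elementary boundary-compatibility computation (indeed, the well-definedness of $\hat g$ in the paper's proof secretly uses exactly your orientation constraint at $\alpha_{\min(k,l)}$), but yours is more self-contained: it does not invoke Lemma~\ref{xxlem8.3} or the fact that indecomposables over Dynkin quivers are bricks. The paper's proof is shorter because it leverages those previously established facts; yours is more explicit and would port unchanged to any setting where the thin-module boundary calculus applies. One small caveat to record: your criterion as stated only covers the case $b\neq d$ (respectively $a\neq c$) at the relevant boundary, since it presupposes that exactly one of the two modules has value $\Bbbk$ at the vertex just outside the overlap; that is harmless here because the applications all have $b\neq d$ at the right boundary, but it is worth noting when stating the criterion as a stand-alone lemma.
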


\begin{proof}
Assume that $g:M\{i,l\}\rightarrow M\{j,k\}$ is non-zero morphism, then
it can induce a non-zero morphism $\hat g:M\{i,l\}\rightarrow M\{i,k\}$.
By Lemma \ref{xxlem8.3}, $f=0$ which contradicts the assumption.
Therefore, $\Hom(M\{i,l\}, M\{j,k\})=0$. Similarly,
$\Hom(M\{j,l\}, M\{i,k\})=0$.
\end{proof}

Next we define a binary relation, denoted by $\succ$, that
does not necessarily satisfy the usual axioms of an order.

\begin{definition}
{\label{xxdef8.5}}
For $N,N'\in \Repr(\mathbb{A}_n)$, we write $N\succ N'$ if
$\Hom(N,N')\cong \Bbbk$. Usually we only consider
indecomposable objects $N,N'$.
\end{definition}

Another easy observation, following from Lemma \ref{xxlem8.3}, is

\begin{lemma}
\label{xxlem8.6}
Let $I\subset \{1,2,\cdots,n\}$ and
$\mathcal{S}_I=\{X_i\mid X_i=M\{1,i\}, i\in I\}$.
Then  $(\mathcal{S}_I,\succ)$ is a totally ordered set.
Similarly, $\{Y_i\mid Y_i=M\{i,n\}, i\in I\}$
is a totally ordered set.
\end{lemma}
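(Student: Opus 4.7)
The plan is to verify that $\succ$ defines a total order on $\mathcal{S}_I$ by checking reflexivity, antisymmetry, transitivity, and totality. Reflexivity is immediate because each $M\{1,i\}$ is a brick by Lemma~\ref{xxlem4.2}, so $\Hom(M\{1,i\},M\{1,i\})\cong \Bbbk$. Antisymmetry and totality both come directly from Lemma~\ref{xxlem8.3}, which applies verbatim to any two elements of $\mathcal{S}_I$ since they share the first index $1$: for distinct $i,j\in I$, precisely one of $\Hom(M\{1,i\},M\{1,j\})$ and $\Hom(M\{1,j\},M\{1,i\})$ is isomorphic to $\Bbbk$, and the other vanishes.

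The heart of the proof is transitivity. Given $M\{1,a\}\succ M\{1,b\}$ and $M\{1,b\}\succ M\{1,c\}$, I would pick nonzero morphisms $f: M\{1,a\}\to M\{1,b\}$ and $g: M\{1,b\}\to M\{1,c\}$ and show that $g\circ f\neq 0$; Lemma~\ref{xxlem8.3} then upgrades this to $\Hom(M\{1,a\},M\{1,c\})\cong \Bbbk$, which is exactly $M\{1,a\}\succ M\{1,c\}$. The key technical point is that, for any nonzero morphism $h: M\{1,x\}\to M\{1,y\}$, the scalar $(h)_1$ at vertex $1$ is nonzero. To see this, note that for every $s<\min(x,y)$ the maps $(M\{1,x\})_{\alpha_s}$ and $(M\{1,y\})_{\alpha_s}$ are both identities on $\Bbbk$, so the morphism compatibility at $\alpha_s$ (regardless of its orientation) reads $(h)_s=(h)_{s+1}$; hence $(h)_1=(h)_2=\cdots=(h)_{\min(x,y)}$. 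If $(h)_1$ were zero, then $(h)_k$ would vanish throughout the overlap, and $(h)_k$ is automatically zero for $k>\min(x,y)$ since one of the vector spaces there is zero, forcing $h=0$. Applying this to both $f$ and $g$, we get $(g\circ f)_1=(g)_1(f)_1\neq 0$, so $g\circ f\neq 0$, as required.

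The statement for $\{Y_i=M\{i,n\}:i\in I\}$ is handled by the symmetric argument, using the common vertex $n$ in place of vertex $1$. One can either rerun the chain-propagation inward from $n$ to obtain the analogue of Lemma~\ref{xxlem8.3} for modules sharing a last index, or invoke the duality of Lemma~\ref{xxlem2.4} to transport the argument for $\mathcal{S}_I$ over $Q$ into one for the analogous set in $\Repr(Q^{op})$. The only delicate ingredient throughout is the chain-propagation step establishing nonvanishing of the boundary scalar $(h)_1$ (respectively $(h)_n$); this is the main obstacle, but it is a short structural check given that all interior maps in a thin interval module are identities.
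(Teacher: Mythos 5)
Your proof is correct, and it supplies the one ingredient the paper itself glosses over. The paper merely states that the lemma ``follows from Lemma \ref{xxlem8.3}'' and gives no argument; but Lemma \ref{xxlem8.3} by itself only yields antisymmetry and totality (comparability) of $\succ$ on $\mathcal{S}_I$, not transitivity, which is the nontrivial content of ``totally ordered.'' Your transitivity step is the right fix: the observation that the chain of commutativity squares over $\alpha_1,\dots,\alpha_{\min(x,y)-1}$ forces $(h)_1=(h)_2=\cdots=(h)_{\min(x,y)}$, so that a nonzero $h\colon M\{1,x\}\to M\{1,y\}$ must have $(h)_1\neq 0$, is precisely the invariant that makes composition $(g\circ f)_1=(g)_1(f)_1$ nonzero and hence, via Lemma \ref{xxlem8.3} again, establishes $M\{1,a\}\succ M\{1,c\}$. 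Reflexivity via Lemma \ref{xxlem4.2} and the symmetric treatment of $\{M\{i,n\}\}$ (either by redoing the chain argument from vertex $n$ or by the duality of Lemma \ref{xxlem2.4}) are both fine. One could alternatively prove transitivity by the explicit orientation dichotomy — for $p<q$ the direction of $\succ$ between $M\{1,p\}$ and $M\{1,q\}$ is determined solely by the orientation of $\alpha_p$, reducing transitivity to a short case check — but your composition argument is cleaner and avoids case analysis.
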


\begin{lemma}
{\label{xxlem8.7}}
Let $N=M\{i,j\}$, $N'=M\{k,l\}$ and $k\leq j< l$.
\begin{enumerate}
\item[(1)]
If $s(\alpha_j)=j$ and $i\leq k$,
then $\Hom(N',N\otimes N')\cong \Bbbk$ where
$N\otimes N'=M\{k,j\}$.
\item[(2)]
If $t(\alpha_j)=j$,
then for all $m\leq j$, $\Hom(N', M\{m,j\})=0.$
\end{enumerate}
\end{lemma}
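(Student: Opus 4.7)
My plan is to handle both parts by unpacking the vertex-wise tensor product from \eqref{E2.1.1} and then writing out morphisms explicitly on the thin representations.

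First I would compute $N\otimes N'$ in the setting of part (1). Since $(N\otimes N')_s = (N)_s\otimes_\Bbbk (N')_s$, the support of $N\otimes N'$ is $[i,j]\cap[k,l]$, which under the hypotheses $i\le k\le j<l$ equals $[k,j]$. The arrow $\alpha_s$ acts by identity on the tensor precisely when it acts by identity on both factors, i.e.\ for $k\le s<j$. Hence $N\otimes N'=M\{k,j\}$. The task in part (1) therefore reduces to showing that $\Hom(M\{k,l\},M\{k,j\})\cong \Bbbk$. I would exhibit the map $f=(f_s)$ with $f_s=\mathrm{Id}_\Bbbk$ for $k\le s\le j$ and $f_s=0$ otherwise, and verify commutativity at each arrow. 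The only potentially problematic arrow is $\alpha_j$: under $s(\alpha_j)=j$ the arrow points $j\to j+1$, so on $M\{k,j\}$ the map $(\alpha_j)_{M\{k,j\}}$ vanishes (its target is zero) while $f_{j+1}=0$, so both sides of the naturality square are zero. For all other $\alpha_s$ with $k\le s<j$ the square is $\mathrm{Id}\circ\mathrm{Id}=\mathrm{Id}\circ\mathrm{Id}$. This produces a nonzero morphism, and Lemma \ref{xxlem8.3} (applied with the common left endpoint $k$) forces the Hom-space to be exactly one-dimensional, giving the claim.

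For part (2) I would argue by a direct propagation argument. Let $f:M\{k,l\}\to M\{m,j\}$ be any morphism; outside $[\max(k,m),j]$ the component $f_s$ is forced to be zero because one of the two vertex spaces vanishes. The key input is the arrow $\alpha_j$: since $t(\alpha_j)=j$ it points $j+1\to j$. In $N'=M\{k,l\}$ this map is $\mathrm{Id}_\Bbbk$ (as $k\le j<l$ gives $(N')_{j+1}=(N')_j=\Bbbk$ with identity between them), whereas in $M\{m,j\}$ the source $(M\{m,j\})_{j+1}=0$, so the map is zero. The naturality square at $\alpha_j$ then reads $f_j\circ\mathrm{Id}=0\circ f_{j+1}=0$, forcing $f_j=0$. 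Propagating this through the arrows $\alpha_{j-1},\alpha_{j-2},\ldots$: at each step $\alpha_s$ with $\max(k,m)\le s<j$, both $(N')_{\alpha_s}$ and $(M\{m,j\})_{\alpha_s}$ are identities, so commutativity forces $f_s=f_{s+1}$ regardless of the orientation of $\alpha_s$. Inductively $f_s=0$ for all $s$ in the overlap, hence $f=0$.

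I do not expect any serious obstacle: both assertions are elementary naturality computations on thin interval modules, and the role of the orientation hypotheses on $\alpha_j$ is exactly to decide whether the naturality square at $\alpha_j$ permits a nonzero $f_j$ or forces it to vanish. The only mild subtlety is being careful with the orientation convention when writing naturality, since \eqref{E0.7.3} assigns identity to $\alpha_s$ irrespective of whether $\alpha_s$ points $s\to s+1$ or $s+1\to s$; I would handle both orientations of the intermediate arrows uniformly, as above.
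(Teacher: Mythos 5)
Your proposal is correct and follows essentially the same route as the paper: in part (1) you unpack the vertex-wise tensor to identify $N\otimes N'=M\{k,j\}$, exhibit the explicit morphism on the overlap, and conclude one-dimensionality (you invoke Lemma~\ref{xxlem8.3} whereas the paper argues directly that any morphism is a uniform scalar multiple of $\mathrm{Id}$ along $[k,j]$; these are interchangeable), and in part (2) you use the naturality square at $\alpha_j$ to force $f_j=0$ and then propagate by induction through the intermediate identity arrows, exactly as the paper does.
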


\begin{proof}
(1) In this case, we have $i\leq k\leq j$. By definition,
$N'=M\{k,l\}$ and $N\otimes N'=M\{k,j\}$.
Let $f:N'\rightarrow N\otimes N$ be defined by
$(f)_s=\begin{cases}
Id & \mathrm{if} ~k\leq s\leq j\\
0 & \mathrm{otherwise}
\end{cases}.$
Then it is not hard to check $0\neq f\in \Hom(N', N\otimes N')$.

If $f'\in \Hom(N',N\otimes N')$, then there is a scalar $c\in \Bbbk$
such that $(f')_s=c Id$ for all $k\leq s\leq j$.
Then $f'=c f$ and $\Hom(N', N\otimes N')\cong \Bbbk$.

(2) Since $k\leq j<l$, $(N')_{j+1}=\Bbbk$. Let $f\in
\Hom(N',M\{m,j\})$. Then, for every $s>j$, $f_{s}=0$
as $(M\{m,j\})_s=0$. So we have
$$(f)_j (N')_{\alpha_j}=(M\{m,j\})_{\alpha_j} (f)_{j+1}=0.$$
Since $(N')_{\alpha_j}=Id_{\Bbbk}$, we obtain $(f)_j=0$.
Using a similar equation as above and induction, one sees
that $f_s=0$ for all $s<j$. Therefore $f=0$ as desired.
\end{proof}

For the rest of this section we use $\phi$ for a brick set in
$\Repr (\mathbb{A}_n)$. Given a brick set $\phi$ and an
indecomposable representation $M\{i,j\}$, we define three subsets
of $\phi$ according to $\{i,j\}$:
\begin{enumerate}
\item[(1)]
$\phi_i=\{N\in \phi \mid (N)_i\cong \Bbbk, (N)_j=0\},$
\item[(2)]
$\phi_j=\{N\in \phi \mid (N)_i=0, (N)_j\cong \Bbbk\},$
\item[(3)]
$\phi_{ij}=\{N\in \phi \mid (N)_i\cong \Bbbk, (N)_j\cong \Bbbk\}.$
\end{enumerate}
It is clear that $\phi$ contain the disjoint union of $\phi_i$,
$\phi_j$ and $\phi_{ij}$. Note that $\phi_l$, for $l$ being either
$i$ or $j$, can be divided into the following two parts:
\begin{equation*}
\begin{split}
\hat\phi_l  &= \{N\in \phi_l \mid M\{i, j\}\otimes N\succ M\{i, j\}\},\\
\tilde\phi_l &= \{N\in \phi_l \mid M\{i, j\}\succ M\{i, j\}\otimes N\}.
\end{split}
\end{equation*}

\begin{lemma}
\label{xxlem8.8}
Let $N$ be an object in $\phi$ that satisfies either
$M\{i,j\}\otimes N=0$ or $M\{i,j\}\otimes N=N$. Then
$$\rho(A(\phi,M\{i,j\}\otimes -))=
\max\{a, \quad \rho(A(\phi\setminus \{N\},M\{i,j\}\otimes -))\}$$
where $a=\begin{cases}
0 & \text{if } M\{i,j\}\otimes N=0,\\
1 & \text{if } M\{i,j\}\otimes N=N.
\end{cases}$
\end{lemma}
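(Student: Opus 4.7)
The plan is to exploit the special structure of the column of the adjacency matrix indexed by $N$ and reduce to a block upper triangular form. By the definition of the adjacency matrix, the entries of the column corresponding to $N$ are
\[
a_{XN} \;=\; \dim \Hom_{\Repr(Q)}(X,\, M\{i,j\}\otimes N),
\]
for $X\in \phi$. Under the hypothesis, $M\{i,j\}\otimes N$ is either $0$ or $N$ itself; in either case I can compute this column entirely using only the brick-set property of $\phi$.

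First I would consider the two cases. If $M\{i,j\}\otimes N=0$, then the whole $N$-column is zero, so $a_{XN}=0$ for all $X\in\phi$ (including $a_{NN}=0$). If $M\{i,j\}\otimes N=N$, then $a_{XN}=\dim\Hom(X,N)$, which equals $1$ when $X=N$ and $0$ otherwise, because $\phi$ is a brick set (recall Definition \ref{xxdef1.3}(2)). In both cases the off-diagonal part of the $N$-column vanishes, and the diagonal entry $a_{NN}$ equals exactly $a$ as defined in the statement.

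Next, ordering the rows and columns of $A:=A(\phi, M\{i,j\}\otimes-)$ so that the index $N$ comes first, the previous step shows that $A$ takes the block upper triangular form
\[
A \;=\; \begin{pmatrix} a & r \\ 0 & B \end{pmatrix},
\]
where $r$ is the row vector with entries $\dim\Hom(N, M\{i,j\}\otimes Y)$ for $Y\in\phi\setminus\{N\}$, and $B=A(\phi\setminus\{N\},\, M\{i,j\}\otimes-)$. The spectrum of a block upper triangular matrix is the union of the spectra of its diagonal blocks, so the spectral radius satisfies
\[
\rho(A) \;=\; \max\{|a|,\, \rho(B)\} \;=\; \max\{a,\, \rho(A(\phi\setminus\{N\},\, M\{i,j\}\otimes-))\},
\]
which is exactly the claimed identity.

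There is no real obstacle here; the only thing to verify carefully is that the $N$-column off the diagonal really is zero, and this is where the brick-set hypothesis on $\phi$ is essential. The rest is a standard linear-algebra observation about block triangular matrices.
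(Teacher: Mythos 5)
Your proof is correct and takes essentially the same approach as the paper: both arguments observe that, with $N$ placed first, the column of the adjacency matrix indexed by $N$ has all off-diagonal entries zero (by the brick-set property and the hypothesis on $M\{i,j\}\otimes N$), yielding a block upper triangular matrix whose spectral radius is computed blockwise.
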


\begin{proof}
Write $\phi=\{N_1, \cdots, N_m\}$ where $N_1=N$. By the
hypothesis on $N$,
$$\dim \Hom(N_k,M\{i,j\}\otimes N)=0$$
for $2\leq k\leq m$. Hence, in the matrix
$A(\phi,M\{i,j\}\otimes -)$, $a_{k1}=0$ for all $k\geq 2$.
As a consequence,
$$\rho(A(\phi,M\{i,j\}\otimes -))=
\max\{a, \quad \rho(A(\phi\setminus \{N_1\},M\{i,j\}\otimes -))\}$$
where $a:=a_{11}$ is the $(1,1)$-entry in $A(\phi,M\{i,j\}\otimes -)$.
Clearly $a$ has the desired property.
\end{proof}

\begin{lemma}
{\label{xxlem8.9}}
Let $N\in \phi_i$ and $N'\in \phi_j$. Then
$\{N, M\{i,j\}\otimes N'\}$ and $\{M\{i,j\}\otimes N, N'\}$ are
brick sets.
\end{lemma}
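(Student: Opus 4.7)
The plan is to identify $M\{i,j\} \otimes N$ and $M\{i,j\} \otimes N'$ explicitly as interval modules, and then show that the Hom-vanishing between the entries of each candidate set reduces to the Hom-vanishing between $N$ and $N'$, which is part of the brick-set hypothesis on $\phi$. Write $N = M\{k, l\}$ and $N' = M\{k', l'\}$; the definitions of $\phi_i$ and $\phi_j$ force $k \leq i \leq l < j$ and $i < k' \leq j \leq l'$. By the vertex-wise tensor formula \eqref{E2.1.1} one computes directly
\[
M\{i, j\} \otimes N = M\{i, l\}, \qquad M\{i, j\} \otimes N' = M\{k', j\},
\]
both of which are interval modules and therefore bricks by Lemma \ref{xxlem4.2}. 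Hence only the vanishing of $\Hom$ in both directions within each of the two candidate pairs remains to be verified.

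To carry that out I would use the standard description of morphisms between interval modules in $\Repr(\mathbb{A}_n)$: any $f : M\{a, b\} \to M\{c, d\}$ is zero outside the intersection $[\max(a, c), \min(b, d)]$ and equal to a single scalar $\xi \in \Bbbk$ on that intersection, and $\xi$ is forced to vanish unless the boundary arrows $\alpha_{\max(a, c) - 1}$ and $\alpha_{\min(b, d)}$ point in the directions compatible with which of the two intervals is strictly wider at that end. This is the same kind of bookkeeping already used in Lemmas \ref{xxlem8.2}, \ref{xxlem8.3}, and \ref{xxlem8.7}. In particular, whether $\Hom(M\{a, b\}, M\{c, d\})$ vanishes depends only on the intersection interval and on the widening pattern at its two boundary arrows, not on the precise endpoints outside that intersection.

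The crucial observation is that for each of the four pairs
\[
(N, N'), \ (N, M\{i,j\} \otimes N'), \ (M\{i,j\} \otimes N, N'), \ (M\{i,j\} \otimes N, M\{i,j\} \otimes N'),
\]
the intersection of supports equals $[k', l]$ (or is empty, in which case all Homs trivially vanish), the two boundary arrows are $\alpha_{k'-1}$ and $\alpha_l$, and the widening pattern is identical: at $\alpha_{k'-1}$ the first entry always contains the vertex $k'-1$ (using $k \leq i < k'$) while the second entry does not (both $N'$ and $M\{i,j\}\otimes N'$ start at $k'$), and at $\alpha_l$ the second entry always contains $l+1$ (using $l < j \leq l'$) while the first entry does not. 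Consequently the Hom-vanishing condition is identical across all four pairs, so $\Hom(N, N') = \Hom(N', N) = 0$ immediately forces the analogous vanishing for the three remaining pairs, proving both brick-set claims. The main obstacle is the orientation case analysis at the two boundary arrows, but this is a routine finite check of the kind already carried out in the earlier lemmas, and presumably what the author has in mind when noting that non-essential details are omitted.
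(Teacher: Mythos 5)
Your proof is correct. The paper's own argument is terser: it identifies $M\{i,j\} \otimes N'$ as the interval module obtained from $N'$ by moving the far (right) endpoint inward, invokes \emph{a version of Lemma~\ref{xxlem8.1}} -- the observation that shrinking an interval on the side away from the overlap preserves the brick-set property -- to conclude that $\{N, M\{i,j\}\otimes N'\}$ is a brick set, and then says the second claim follows similarly. Your argument makes that observation explicit instead of citing it: you characterize when $\Hom$ between two interval modules vanishes purely in terms of the intersection of supports $[k',l]$ and the orientations of the two boundary arrows $\alpha_{k'-1}$, $\alpha_l$, and then observe that this data is literally unchanged across all four pairs obtained from $(N,N')$ by optionally tensoring each side with $M\{i,j\}$, covering both claims of the lemma (plus a third redundant one) simultaneously. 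The payoff of your route is a uniform, self-contained verification that spells out exactly why the needed ``version'' of Lemma~\ref{xxlem8.1} holds, and it handles the degenerate case $k'>l$ cleanly; the paper's route is shorter but leaves that generalization implicit.
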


\begin{proof}
Write $N$ as $M\{i',j'\}$. Then $i'\leq i$ and $j'<j$.
Similarly, $N'=M\{k, l\}$ for some $k>i$ and $l\geq j$,
and consequently, $M\{i,j\}\otimes N'=M\{k, j\}$. A version
of Lemma \ref{xxlem8.1} shows that $\{M\{i',j'\},M\{k,l\}\}$
being a brick set implies that $\{M\{i',j'\}, M\{k,j\}\}$ is a
brick set. Therefore $\{N, M\{i,j\}\otimes N'\}$ is a
brick set. A similar argument shows that
$\{M\{i,j\}\otimes N, N'\}$ is brick set.
\end{proof}

\begin{lemma}
\label{xxlem8.10}
Let $j$ be a positive integer no more than $n$. If
$j=n$ or $s(\alpha_j)=j$, then $A(\phi_j, M\{i,j\}\otimes -)$ is similar
to an upper triangular matrix in which all diagonal entries are 1.
\end{lemma}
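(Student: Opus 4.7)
The plan is to compute the tensor product $M\{i,j\}\otimes N$ explicitly for every $N\in\phi_j$, reduce the matrix entries to certain $\Hom$-spaces of interval modules, perform a boundary commuting-square analysis, and then exhibit an explicit linear ordering of $\phi_j$ under which $A(\phi_j,M\{i,j\}\otimes-)$ is literally upper triangular with $1$'s on the diagonal.

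First I would record that every $N\in\phi_j$ has the form $M\{k,l\}$ with $i<k\le j\le l$, so by the vertex-wise tensor formula \eqref{E2.1.1} one has $M\{i,j\}\otimes M\{k,l\}=M\{k,j\}$; in particular each column of the adjacency matrix is determined solely by $k$. Combining Lemma \ref{xxlem8.3} with the fact that $\phi_j$ is a brick set shows that two distinct elements of $\phi_j$ must have distinct left endpoints, so $\phi_j$ is naturally indexed by a subset $K\subseteq\{i+1,\dots,j\}$ and everything reduces to computing $\dim\Hom(M\{k',l'\},M\{k,j\})$ for $M\{k,l\},M\{k',l'\}\in\phi_j$.

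The heart of the argument is a short boundary commuting-square analysis, which is where the hypothesis $j=n$ or $s(\alpha_j)=j$ intervenes. On the diagonal ($k=k'$), any morphism $M\{k,l\}\to M\{k,j\}$ is a single scalar on the common support $[k,j]$, and the hypothesis on $\alpha_j$ is exactly what is needed to make the commuting square at $\alpha_j$ vacuous; hence each diagonal entry equals $1$. For $k<k'$, a direct check at the square involving $\alpha_{k'-1}$ yields the dichotomy
$$\Hom(M\{k',l'\},M\{k,j\})\ne 0\iff s(\alpha_{k'-1})=k'-1,$$
$$\Hom(M\{k,l\},M\{k',j\})\ne 0\iff t(\alpha_{k'-1})=k'-1,$$
so exactly one of the two off-diagonal entries associated with any unordered pair vanishes, and which one it is depends solely on the orientation of $\alpha_{k'-1}$, i.e., on an intrinsic property of the element with the larger left endpoint.

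To finish, I would classify each $N=M\{k,l\}\in\phi_j$ as \emph{type S} when $s(\alpha_{k-1})=k-1$ and \emph{type T} when $t(\alpha_{k-1})=k-1$, and linearly order $\phi_j$ by listing all type-$S$ elements first in decreasing order of $k$, followed by all type-$T$ elements in increasing order of $k$. A case check on the four possibilities (both $S$, both $T$, or mixed with smaller/larger $k$) shows that every nonzero off-diagonal entry lies strictly above the diagonal in this ordering, which together with the diagonal computation gives the claim. The main obstacle is the bookkeeping of the boundary squares in the second step: one has to carefully track which of the two Hom spaces $\Hom(N,M\{i,j\}\otimes N')$ and $\Hom(N',M\{i,j\}\otimes N)$ is killed by the orientation of $\alpha_{k'-1}$; once that dichotomy is in hand, the combinatorial sorting step is essentially immediate.
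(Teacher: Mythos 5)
Your proof is correct, and it follows the same basic strategy as the paper: reduce each $N\in\phi_j$ to $M\{k,l\}$ with $i<k\le j\le l$, use $M\{i,j\}\otimes M\{k,l\}=M\{k,j\}$ and distinct left endpoints, and then exhibit a linear ordering of $\phi_j$ making the adjacency matrix upper triangular with $1$'s on the diagonal. The difference is one of presentation rather than substance. The paper orders $\phi_j$ by applying Lemma~\ref{xxlem8.6} to the totally ordered set $(\{M\{i,j\}\otimes N\mid N\in\phi_j\},\succ)$, invokes Lemma~\ref{xxlem8.7}(1) for the diagonal entries, and then ``combines'' the two displayed equations to read off the upper-triangular shape; the commuting-square analysis that justifies the combining step is left implicit. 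Your version makes that analysis explicit: the dichotomy governed by the orientation of $\alpha_{k'-1}$ is exactly what underlies the paper's $\succ$-relation on $\{M\{k,j\}\}_k$, and your type-S/type-T ordering (type-S decreasing in $k$, then type-T increasing in $k$) coincides with the paper's $\succ$-descending ordering, as a short check shows (the $\succ$-maximum is the type-S element with largest $k$, the $\succ$-minimum is the type-T element with largest $k$, and the direction of domination between any pair is decided by the type of the element with the larger left endpoint). So what you buy is self-containedness — no appeal to Lemmas~\ref{xxlem8.6} and~\ref{xxlem8.7}, and a concrete combinatorial description of the ordering — at the cost of a slightly longer case check; the paper's version is shorter but relies on the earlier lemmas and a terse ``combine'' step.
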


\begin{proof}
If $j=n$, then $|\phi_j|=1$ and $A(\phi_j, M\{i,j\}\otimes -)=(1)_{1\times 1}$
by Lemma \ref{xxlem8.2}.

If $j<n$ and $s(\alpha_j)=j$, by Lemma \ref{xxlem8.6}, the set
$(\{M\{i,j\}\otimes N \mid N\in \phi_j\}, \succ)$ is a totally
ordered set. Let $|\phi_j|=m$ and we can label the objects in
$\phi_j$ so that
$$M\{i,j\}\otimes N_1\succ \cdots\succ M\{i,j\}\otimes N_{m}.$$
By Definition \ref{xxdef8.5},
\begin{equation}
\label{E8.10.1}\tag{E8.10.1}
\Hom(M\{i,j\}\otimes N_k, M\{i,j\}\otimes N_l)\cong
\begin{cases}
0 & \mathrm{if} ~l< k,\\
\Bbbk & \mathrm{if} ~l\geq k.
\end{cases}
\end{equation}
And, by Lemma \ref{xxlem8.7}(1),
\begin{equation}
\label{E8.10.2}\tag{E8.10.2}
\Hom(N_k, M\{i,j\}\otimes N_k)\cong \Bbbk.
\end{equation}

Combine \eqref{E8.10.1} and \eqref{E8.10.2}, then
$$\dim \Hom(N_k,M\{i,j\}\otimes N_l)=
\begin{cases}
0 & \mathrm{if} ~l<k,\\
1 & \mathrm{if} ~l\geq k.
\end{cases}$$
The assertion follows.
\end{proof}

The next theorem is Theorem \ref{xxthm0.8}(2).

\begin{theorem}
\label{xxthm8.11}
Let $Q$ be a quiver of type $\mathbb{A}_n$ given in
\eqref{E0.7.1} for some $n\geq 2$. Then the following
hold in $\Repr(Q)$:
$$\fpd(M\{i,j\})=\begin{cases}
1 & {\text{if $M\{i,j\}$ is a sink}},\\
\min\{i, n-j+1\} & {\text{if $M\{i,j\}$ is a source}},\\
1 &{\text{if $M\{i,j\}$ is a flow}}.
\end{cases}$$
\end{theorem}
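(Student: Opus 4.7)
The proof of Theorem~\ref{xxthm8.11} proceeds case by case, estimating $\rho(A(\phi, M\{i,j\} \otimes -))$ over all brick sets $\phi$ in $\Repr(Q)$. Two general reductions apply throughout. First, the vertex-wise tensor gives $M\{i,j\} \otimes M\{k,l\} = M\{\max(i,k), \min(j,l)\}$ when this interval is non-empty and $0$ otherwise, so any $N \in \phi$ whose support is disjoint from $[i,j]$ contributes a zero row and column and may be discarded via Lemma~\ref{xxlem8.8}. Second, any $N$ with support contained in $[i,j]$ satisfies $M\{i,j\} \otimes N = N$, producing at most a $1$ on the diagonal and again amenable to Lemma~\ref{xxlem8.8}. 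The remaining substance concerns elements whose support properly overlaps $[i,j]$ on one or both sides, captured by the decomposition $\phi = \phi_\emptyset \cup \phi_i \cup \phi_j \cup \phi_{ij}$ from Section~8.

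For the sink case, the hypothesis ensures both boundary arrows point \emph{into} $[i,j]$. Applying Lemma~\ref{xxlem8.7}(2) at each inward endpoint, one orders $\phi$ so that $A(\phi, M\{i,j\} \otimes -)$ is block upper triangular, with the diagonal blocks falling under Lemma~\ref{xxlem8.10} and hence similar to upper-triangular matrices with all diagonal entries equal to $1$. Thus $\rho \leq 1$, and equality is attained by the trivial brick set $\{M\{i,j\}\}$. The flow case is parallel: exactly one boundary arrow points inward, providing the triangularization of Lemma~\ref{xxlem8.7}(2), while the other boundary contributes at-most-$1$-dimensional Hom spaces via Lemmas~\ref{xxlem8.3} and~\ref{xxlem8.4}. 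Again $\rho \leq 1$ with equality by the trivial brick set.

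The source case is the substantive one. Put $m = \min\{i, n-j+1\}$. For the lower bound, I construct an explicit brick set $\phi_m = \{N_t = M\{k_t, l_t\}\}_{t=1}^{m}$ of intervals strictly containing $[i,j]$, arranged as a shifted chain with $1 \leq k_1 < \cdots < k_m \leq i$ and $j \leq l_1 < \cdots < l_m \leq n$. A direct morphism calculation shows $\Hom(N_s, N_t) = 0$ for $s \neq t$ exactly when $\alpha_{k_{t+1}-1} \neq \alpha_{l_t}$ at each step, and the source conditions $\alpha_{i-1} = \leftarrow$ and $\alpha_j = \to$ provide enough slack at the inner boundaries of $[i,j]$ to choose the breakpoints $k_t$ and $l_t$ within the permitted ranges. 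Since each $N_t$ contains $[i,j]$, we have $M\{i,j\} \otimes N_t = M\{i,j\}$, and the source hypothesis yields $\Hom(N_t, M\{i,j\}) = \Bbbk$. Thus $A(\phi_m, M\{i,j\} \otimes -)$ is the $m \times m$ all-ones matrix, with spectral radius exactly $m$. For the upper bound, Lemmas~\ref{xxlem8.9} and~\ref{xxlem8.10} bound the contributions from $\phi_i$ and $\phi_j$ by $1$ each, while a combinatorial analysis of pairwise Hom-vanishing inside $\phi_{ij}$ shows every brick subset of intervals containing $[i,j]$ must form a strictly increasing chain in both coordinates, whose length is at most $m$. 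The Gershgorin circle theorem (Lemma~\ref{xxlem4.8}) assembles these bounds into $\rho \leq m$.

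The principal obstacle lies in the combinatorial bound $|\phi_{ij}| \leq m$ for the source case. Two intervals in $\phi_{ij}$ are related either by ``shifting'' (both coordinates strictly increase) or by ``strict nesting'', and in each arrangement the requirement $\Hom(N, N') = 0$ in both directions translates into precise equality/inequality conditions on the arrow directions at the inner boundary positions. One must verify that these pointwise constraints collectively forbid every brick configuration outside a strictly monotone chain, independently of the internal arrow pattern of $Q$ away from the boundary of $[i,j]$. The source hypothesis is indispensable here, as it fixes $\alpha_{i-1}$ and $\alpha_j$ and therefore the Hom-vanishing behavior at the innermost boundaries, which is precisely where the chain structure is forced.
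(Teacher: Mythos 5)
Your lower bound construction for the source case matches the paper's in spirit and is correct. The gap lies in the upper bound argument, specifically in how you dispose of the elements of $\phi_i$ and $\phi_j$ when bounding $\rho(A(\phi, M\{i,j\}\otimes -))$ for a general brick set $\phi$.

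For the source case, you propose that Lemmas~\ref{xxlem8.9} and~\ref{xxlem8.10} control the $\phi_i$ and $\phi_j$ parts and that Gershgorin then yields $\rho\le m$. This does not work: Lemma~\ref{xxlem8.9} only kills the cross terms \emph{between} $\phi_i$ and $\phi_j$, not those between $\phi_i\cup\phi_j$ and $\phi_{ij}$, and the latter can indeed be nonzero (for $N_k=M\{i_k,j_k\}\in\phi_i$ one has $\Hom(N_k,M\{i,j\}\otimes N_l)=\Hom(N_k,M\{i,j\})\cong\Bbbk$ whenever the internal arrow $\alpha_{j_k}$ points inward, which is not controlled by the source hypothesis on $\alpha_{i-1},\alpha_j$). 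Consequently a row indexed by an element of $\phi_{ij}$ (or of $\phi_i$) can have row sum exceeding $m$, so Gershgorin only gives $\rho\le|\phi|$, not $\rho\le m$. The paper instead runs an induction on $|\phi|+n$: when $\phi_j\neq\emptyset$, it locates an extremal element $N_w$ (or $N_1$) of $\phi_j$ with respect to the $\succ$-order on $\{N\otimes M\{i,j\}: N\in\phi_j\cup\phi_{ij}\}$; the corresponding column (resp.\ row) of the adjacency matrix then has a single nonzero entry, equal to $1$, so $\rho(A(\phi,\cdot))=\max\{1,\rho(A(\phi\setminus\{N_w\},\cdot))\}$ and the induction proceeds. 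Only once $\phi=\phi_{ij}$ does the direct count $|\phi_{ij}|\le m$ (all left endpoints distinct by Lemma~\ref{xxlem8.3}, all right endpoints distinct) finish the argument. This $\succ$-elimination step is the missing ingredient in your proposal.

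Your sink and flow cases also do not go through as written: Lemma~\ref{xxlem8.10} requires $j=n$ or $s(\alpha_j)=j$, but a sink has $\alpha_j=\longleftarrow$ (so $s(\alpha_j)=j+1$), and the dual version at the $i$-endpoint requires $\alpha_{i-1}=\longleftarrow$ whereas a sink has $\alpha_{i-1}=\longrightarrow$; so Lemma~\ref{xxlem8.10} is simply unavailable on both sides. The paper handles these cases not by a direct triangularization but by reducing to a quiver on $n-1$ vertices: if $t(\alpha_{i-1})=i$ one first removes from $\phi$ any $N$ with $(N)_{i-1}\neq0$ (its entire row vanishes since $\Hom(N,M\{i,j\})=0$), and if nothing remains with $(N)_{i-1}\neq0$ one passes to $Q\setminus\{1\}$ and invokes the induction on $|\phi|+n$. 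Your appeal to Lemma~\ref{xxlem8.7}(2) to produce a block-upper-triangular matrix does address rows coming from $\phi_j\cup\phi_{ij}$ with support past $j$, but it leaves uncontrolled the rows whose support does not extend past $j$ (or past $i-1$), which can contribute nontrivially to the spectral radius.
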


\begin{proof} First we show that
\begin{equation}
\label{E8.11.1}\tag{E8.11.1}
\fpd(M\{i,j\})\geq \begin{cases}
1 & {\text{if $M\{i,j\}$ is a sink}},\\
\min\{i, n-j+1\} & {\text{if $M\{i,j\}$ is a source}},\\
1 &{\text{if $M\{i,j\}$ is a flow}}.
\end{cases}
\end{equation}
Let $\phi$ be the singleton consisting of $M\{i,j\}$.
It is clear that $A(\phi, M\{i,j\}\otimes -)$ is $(1)_{1\times 1}$.
Hence $\fpd(M\{i,j\})\geq 1$. Now suppose that
$M\{i,j\}$ is a source. Let $d=\min\{i, n-j+1\}$.
We construct a brick set with $d$ elements as follows.
By Lemma \ref{xxlem8.6},
$(\{M\{k,i\}\mid 1\leq k\leq i\},\succ)$ and
$(\{M\{j,m\}\mid j\leq m\leq n\}, \succ)$
are two totally ordered sets. We list elements
in these two sets as
\begin{equation}
\label{E8.11.2}\tag{E8.11.2}
{\text{
$M\{k_1,i\}\succ  \cdots\succ M\{k_i,i\}$ and
$M\{j,m_1\}\succ \cdots\succ M\{j,m_{n-j+1}\}$}}
\end{equation}
where $\{k_l\}_{l=1}^i$ and $\{m_l\}_{l=1}^{n-j+1}$
are distinct integers from $1$ to $i$ and
from $j$ to $n$ respectively.
Since $d=\min\{i,n-j+1\}$, we have a set of $d$ elements
$$\phi=\{M\{k_1,m_{n-j+1}\}, M\{k_2,m_{n-j}\}, \cdots,
M\{k_d, m_{n-j+2-d}\}\}.$$
We claim that $\phi$ is a brick set. If there is a nonzero
map from $M\{k_s,m_{n-j+2-s}\}$ to $M\{k_t,m_{n-j+2-t}\}$
for some $s<t$, then, when restricted to vertices
$\{j,j+1,\cdots,n\}$, we obtain a nonzero map from
$M\{j,m_{n-j+2-s}\}$ to $M\{j,m_{n-j+2-t}\}$. This
contradicts the second half of \eqref{E8.11.2}. Therefore
there is no nonzero morphism from $M\{k_s,m_{n-j+2-s}\}$ to
$M\{k_t,m_{n-j+2-t}\}$ for $s<t$. Similarly, there is
no nonzero morphism from $M\{k_t,m_{n-j+2-t}\}$ to
$M\{k_s,m_{n-j+2-s}\}$ for $s<t$, by using the first
half of \eqref{E8.11.2}. Thus we prove our claim.
Using this brick set, one see that every entry in
the matrix $A(\phi, M\{i,j\}\otimes -)$ is $1$,
consequently, $\rho(A(\phi, M\{i,j\}\otimes -))=d$.
Therefore $\fpd(M\{i,j\})\geq d$ if $M\{i,j\}$ is a
source. Combining with the inequality $\fpd(M\{i,j\})\geq 1$,
we obtain \eqref{E8.11.1}.

It remains to show the opposite inequality of
\eqref{E8.11.1}, or equivalently, to show that
\begin{equation}
\label{E8.11.3}\tag{E8.11.3}
\rho(A(\phi, M\{i,j\}\otimes -))\leq \begin{cases}
1 & {\text{if $M\{i,j\}$ is a sink}},\\
\min\{i, n-j+1\} & {\text{if $M\{i,j\}$ is a source}},\\
1 &{\text{if $M\{i,j\}$ is a flow}},
\end{cases}
\end{equation}
for every brick set $\phi$ in $\Repr(Q)$. We use induction
on the integer $|\phi|+n$. If $|\phi|+n$ is 1, nothing needs to
be proved. So we assume that $|\phi|+n\geq 2$.
If $|\phi|=1$, then $A(\phi, M\{i,j\}\otimes -)$
is either $(0)_{1\times 1}$ or $(1)_{1\times 1}$.
It is clear that the assertion holds. Now we assume that
$|\phi|\geq 2$. This forces that $n\geq 3$ (but we will
not use this fact directly). If there is an object $N\in
\phi$ such that either $M\{i,j\}\otimes N=0$ or
$M\{i,j\}\otimes N=N$, then \eqref{E8.11.3} follows
from Lemma \ref{xxlem8.8} and the induction
hypothesis.

For the rest of the proof we can assume that
$$N\not\cong M\{i,j\}\otimes N\neq 0$$
for every object $N\in \phi$. Note that the
above condition implies that $\phi$ is the disjoint
of $\phi_i$, $\phi_j$ and $\phi_{ij}$. Now it
suffices to consider $\phi$ satisfying the following
conditions:
\begin{enumerate}
\item[(*)] $\phi=\phi_i\cup \phi_j\cup \phi_{ij}$,
\item[(**)] for every $N\in \phi$, $M\{i,j\}\otimes N\not\cong N.$
\end{enumerate}

Let $w$ be the number of objects in $\phi$. Suppose that
$\phi_j$ is not empty. If there is an $N\in \phi_j$ such that
$N\otimes M\{i,j\} \succ M\{i,j\}$, we let $N_w$ be the object in
$\phi_j\cup \phi_{ij}$ such that $N_w\otimes M\{i,j\}$
is largest in the set
$$\{ N\otimes M\{i,j\}\mid N\in \phi_j \cup \phi_{ij}\}.$$
Such an object $N_w$ exists by a version of Lemma
\ref{xxlem8.6}. It is easy to see that $N_w\in \phi_j$.
By the choice of $N_w$, one can show that,
for every $N_k\in \phi_j \cup \phi_{ij}$ with $k\neq w$,
$$\Hom(N_k, N_w\otimes M\{i,j\})=0.$$
If $N_k\in \phi\setminus (\phi_j\cup\phi_{ij})$,
then, by Lemma \ref{xxlem8.9},
$$\Hom(N_k, N_w\otimes M\{i,j\})=0.$$
Therefore
$a_{kw}=0$ for all $k<w$ as an entry in the adjacency
matrix $A(\phi, M\{i,j\}\otimes -)$. As a consequence,
$$\rho(A(\phi, M\{i,j\}\otimes -))=\max\{1, \rho(A(\phi\setminus\{N_w\},
M\{i,j\}\otimes -))\}.$$
Assertion \eqref{E8.11.3} follows by induction hypothesis.
The other possibility is that for every $N\in \phi_j$ we
have $M\{i,j\}\succ N\otimes M\{i,j\}$. Now let $N_1$ be the object in
$\phi_j\cup \phi_{ij}$ such that $N_1\otimes M\{i,j\}$
is smallest in the set
$$\{ N\otimes M\{i,j\}\mid N\in \phi_j \cup \phi_{ij}\}.$$
Such an object $N_1$ exists by a version of Lemma
\ref{xxlem8.6}. It is easy to see that $N_1\in \phi_j$.
By the choice of $N_1$, one sees,
for every $N_k\in \phi_j \cup \phi_{ij}$ with $k\neq 1$,
$$\Hom(N_1, N_k\otimes M\{i,j\})=0.$$
If $N_k\in \phi\setminus (\phi_j\cup\phi_{ij})$,
then, by Lemma \ref{xxlem8.9}
$$\Hom(N_1, N_k\otimes M\{i,j\})=0.$$
Therefore
$a_{1k}=0$ for all $k>1$ as an entry in the adjacency
matrix $A(\phi, M\{i,j\}\otimes -)$. As a consequence,
$$\rho(A(\phi, M\{i,j\}\otimes -))=\max\{1, \rho(A(\phi\setminus\{N_1\},
M\{i,j\}\otimes -))\}.$$
Assertion \eqref{E8.11.3} follows by induction hypothesis.
Combining these two cases, we show that \eqref{E8.11.3}
holds by induction when $\phi_j$ is not empty.

Similarly, \eqref{E8.11.3} holds by induction when $\phi_i$
is not empty. The remaining case is when $\phi_i$ and
$\phi_j$ are empty, or
\begin{enumerate}
\item[$(^{\ast\ast\ast})$]
$\phi=\phi_{ij}$.
\end{enumerate}
We divide the rest of the proof into 5 small subcases.

Subcase 1: $t(\alpha_{i-1})=i$.
Pick any object in $\phi$, say $N$.
Suppose that $(N_1)_{i-1}\neq 0$. Then
$\Hom(N_1, M\{i,j\})=0$. Note that in
this case $M\{i,j\}=M\{i,j\}\otimes N$
for all $N\in \phi$. Therefore
$a_{1k}=0$ for all $k>1$ as an entry in the adjacency
matrix $A(\phi, M\{i,j\}\otimes -)$. As a consequence,
$$\rho(A(\phi, M\{i,j\}\otimes \rho(A(\phi\setminus\{N_1\},
M\{i,j\}\otimes -)).$$
Assertion \eqref{E8.11.3} follows by induction hypothesis.
Therefore, without loss of generality, we can assume that
$(N_1)_{i-1}=0$ for all $N_1\in \phi$. Now everything can be computed
in the subquiver quiver $Q\setminus\{1\}$. Then we reduce the number
of vertices from $n$ to $n-1$. Again the assertion follows from the
induction hypothesis.

Subcase 2: $t(\alpha_j)=j$. This is equivalent to Subcase 1 after one
relabels vertices of $Q$ by setting $i'=n+1-i$ for all
$1\leq i\leq n$.

Subcase 3: $i=1$. Since $\phi=\phi_{ij}$, by Lemma \ref{xxlem8.6},
$\phi$ consists of single object. As a consequence,
$A(\phi, M\{i,j\}\otimes -)$ is either $(0)_{1\times 1}$
or $(1)_{1\times 1}$. Then $\rho(A(\phi, M\{i,j\}\otimes -))
\leq 1$ and the assertion follows trivially.

Subcase 4: $j=n$. This is equivalent to Subcase 3 after one
re-labels vertices of $Q$ by setting $i'=n+1-i$ for all
$1\leq i\leq n$.

Subcase 5: Not cases 1-4, namely, $i>1$, $j<n$, $t(\alpha_{i-1})=i-1$
and $t(\alpha_j)=j+1$.
In this case $M\{i,j\}$ is a source. We list all objects in
$\phi=\phi_{ij}$ as
$$M\{i_1,j_1\}, \cdots, M\{i_w,j_w\}$$
where $1\leq i_s\leq i$ and $j\leq j_s\leq n$.
By Lemma \ref{xxlem8.3}, all $i_s$ are distinct.
The same holds true for $j_s$. Therefore $|\phi|=w\leq
d:= \min\{i,n-j+1\}$. Since every
entry of $A(\phi, M\{i,j\}\otimes -)$ is
at most 1, we obtain that
$\rho(A(\phi, M\{i,j\}\otimes -))\leq |\phi|\leq d$
as desired.

Combining \eqref{E8.11.1} with \eqref{E8.11.3},
we finish the proof.
\end{proof}

Note that, for $M,N\in \Repr(Q)$,
$$\Hom_{D^b(\Repr(Q))}(M[0], N[1])\cong
\Ext^1_{\Repr(Q)}(M,N).$$
For the rest of this section we use
$\Ext^1(M,N)$ instead of $\Ext^1_{\Repr(Q)}(M,N)$.
The {\it Euler characteristic} of two representations
$M$ and $N$ of $Q$ is defined to be
$$\langle\mathbf{dim} M,\mathbf{dim} N\rangle_{Q}
=\sum_{v\in Q_0}x_v y_v-\sum_{\alpha\in Q_1}
x_{s(\alpha)} y_{t(\alpha)}$$
where $\mathbf{dim}$ denotes the dimension vector and
$x_v=\dim((M)_v)$, $y_v=\dim ((N)_v)$ for any
$v\in Q_0$. By \cite[p.65]{GR1992}, we have
\begin{equation}
\label{E8.11.4}\tag{E8.11.4}
\dim \Hom(M,N)-\dim \Ext^1(M,N)=
\langle\mathbf{dim} M,\mathbf{dim} N\rangle_{Q}.
\end{equation}

One can verify the following.

\begin{lemma}
\label{xxlem8.12}
Assume $Q$ is of type $\mathbb{A}_n$. Let
$N=M\{i_1,j_1\},N'=M\{i_2,j_2\}$ and $i_1\leq i_2$.
\begin{enumerate}
\item[(1)]
If $j_1\leq i_2-2$, then $\Ext^1(N,N')=\Ext^1(N',N)=0.$
\item[(2)]
Suppose that $j_1=i_2-1$.
\begin{enumerate}
\item
If $s(\alpha_{j_1})=j_1$, then $\Ext^1(N,N')\cong\Bbbk$,  $\Ext^1(N',N)=0$.
\item
If $s(\alpha_{j_1})=i_2$, then $\Ext^1(N,N')=0$,  $\Ext^1(N',N)\cong\Bbbk$.
\end{enumerate}
\item[(3)]
Suppose either $i_1<i_2\leq j_1<j_2$ or $i_1<i_2\leq j_2<j_1$.
\begin{enumerate}
\item
If $\Hom(N,N')\cong\Bbbk$, then $\Ext^1(N,N')=0$, $\Ext^1(N',N)\cong\Bbbk$.
\item
If $\Hom(N',N)\cong\Bbbk$, then $\Ext^1(N,N')\cong\Bbbk$, $\Ext^1(N',N)=0$.
\item
If $\{N,N'\}$ is a brick set, then $\Ext^1(N,N')=\Ext^1(N',N)=0$.
\end{enumerate}
\item[(4)]
If $i_1=i_2$ or $j_1=j_2$, then $\Ext^1(M,N)=\Ext^1(N,M)=0$.
\end{enumerate}
\end{lemma}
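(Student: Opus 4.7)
The plan is to reduce the entire lemma to the Euler characteristic identity \eqref{E8.11.4}, which, since $\Repr(Q)$ is hereditary of type $\mathbb{A}_n$, takes the form
\[
\dim \Ext^1(N,N') \;=\; \dim \Hom(N,N') \;-\; \langle \mathbf{dim}\,N,\,\mathbf{dim}\,N'\rangle_Q.
\]
For a pair $N = M\{i_1,j_1\}$, $N' = M\{i_2,j_2\}$ the right-hand side splits into two easily handled ingredients. The Hom-dimensions are already controlled by Lemmas \ref{xxlem8.2}, \ref{xxlem8.3} and \ref{xxlem8.4} (together with the trivial vanishing when the supports are disjoint), and the Euler pairing reduces to the purely combinatorial formula
\[
\langle \mathbf{dim}\,N,\,\mathbf{dim}\,N'\rangle_Q \;=\; \bigl|[i_1,j_1]\cap[i_2,j_2]\bigr| \;-\; \#\bigl\{\alpha\in Q_1 : s(\alpha)\in[i_1,j_1],\ t(\alpha)\in[i_2,j_2]\bigr\},
\]
whose value depends only on the orientations of the finitely many arrows $\alpha_k$ whose endpoints lie in or straddle the two intervals.

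First I would dispense with parts (1) and (2). In both, the supports are disjoint, so $\Hom$ vanishes in each direction. Part (1) then reduces to observing that no arrow of $Q$ can connect the two intervals when a gap vertex exists between $j_1$ and $i_2$, so the Euler form is zero on both sides. Part (2) reduces to the single bridging arrow $\alpha_{j_1}$: its orientation decides whether it contributes $-1$ to $\langle N, N' \rangle_Q$ or to $\langle N', N \rangle_Q$, yielding the dichotomy of sub-cases (a) and (b) immediately from the identity above.

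For parts (3) and (4) the intervals overlap, so the Hom side is genuinely non-trivial. I would first read off the Hom dimensions from Lemma \ref{xxlem8.3} (supplemented by Lemma \ref{xxlem8.4} in the flanking cases and by the hypothesis of being a brick set in (3c)), then evaluate the Euler pairing directly. The arrows entirely inside the overlap cancel against the interior vertices of the intersection, leaving only a small number of boundary arrows near the endpoints of the intersection whose contributions depend on orientation. A short case-by-case verification then shows that $\langle N,N'\rangle_Q$ matches $\dim \Hom(N,N') - \dim \Ext^1(N,N')_{\text{predicted}}$ (and similarly in the opposite direction) in every configuration.

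The main obstacle is the bookkeeping in part (3): there are two geometric configurations (nested endpoints $i_1 < i_2 \leq j_1 < j_2$ versus crossing endpoints $i_1 < i_2 \leq j_2 < j_1$) and two orientation choices for each of the two boundary arrows $\alpha_{i_2-1}$ and $\alpha_{j_1}$ (or $\alpha_{j_2}$) to track. Each individual sub-case is a finite combinatorial check, but full coverage must be verified so that the three alternatives (3a), (3b), (3c) exhaust the possibilities consistent with the hypothesized Hom-data. Part (4) is handled by essentially the same argument, simplified by the fact that $i_1 = i_2$ (or $j_1 = j_2$) removes one boundary arrow from consideration.
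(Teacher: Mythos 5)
Your approach — reducing everything to the Euler form identity \eqref{E8.11.4}, reading off the Hom dimensions from Lemmas \ref{xxlem8.2}--\ref{xxlem8.4}, and computing the Euler pairing combinatorially by counting overlapping vertices and bridging arrows — is essentially the same as the paper's proof, which does the $j_1\leq i_2-2$ case explicitly via \eqref{E8.11.4} and declares parts (2), (3), (4) analogous. Your combinatorial formula for $\langle\mathbf{dim}\,N,\mathbf{dim}\,N'\rangle_Q$ in terms of interval overlaps is correct and faithfully spells out the ``it is easy to see'' step, so the plan is sound.
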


\begin{proof}
When $j_1\leq i_2-2$, it is easy to see
\[
\dim \Hom(N,N')=\dim \Hom(N',N)=0
\]
and
\[
\langle\mathbf{dim} N,\mathbf{dim} N'\rangle_{Q}=
\langle\mathbf{dim} N',\mathbf{dim} N\rangle_{Q}=0.
\]
Therefore, $\Ext^1(N,N')=\Ext^1(N',N)=0.$

As for (2), (3) and (4), the proofs are similar and we omit them here.
\end{proof}

A direct corollary of Lemma \ref{xxlem8.12} is

\begin{corollary}
\label{xxcor8.13}
Assume $Q$ is of type $\mathbb{A}_n$.
If $\Hom(M\{i_1,j_1\},M\{i_2,j_2\})\cong \Bbbk$,
then \[\Ext^1(M\{i_1,j_1\},M\{i_2,j_2\})=0.\]
\end{corollary}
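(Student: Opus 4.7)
The plan is to reduce Corollary~\ref{xxcor8.13} to a case analysis governed by Lemma~\ref{xxlem8.12}. Set $N=M\{i_1,j_1\}$ and $N'=M\{i_2,j_2\}$ and suppose $\Hom(N,N')\cong \Bbbk$. Because Lemma~\ref{xxlem8.12}(1),(2),(3) are stated under the convention $i_1\leq i_2$, the first observation should be that the argument splits into the two situations $i_1\leq i_2$ and $i_2<i_1$, the second being handled by renaming $N,N'$ and quoting the part of Lemma~\ref{xxlem8.12}(3) that controls $\Ext^1(N,N')$ (not $\Ext^1(N',N)$) under the swap.

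Within the normalized regime $i_1\leq i_2$, I would step through the mutually exclusive cases that cover every pair of intervals. If $i_1=i_2$ or $j_1=j_2$, Lemma~\ref{xxlem8.12}(4) delivers $\Ext^1(N,N')=0$ directly. If the intervals are separated, i.e.\ $j_1\leq i_2-2$, Lemma~\ref{xxlem8.12}(1) applies. The boundary case $j_1=i_2-1$ cannot actually occur, since then the supports of $N$ and $N'$ are disjoint and $\Hom(N,N')=0$, contradicting the hypothesis; this preliminary check eliminates the awkward Lemma~\ref{xxlem8.12}(2) entirely. The remaining possibility is genuine overlap $i_1<i_2\leq j_1$ with $j_1\neq j_2$, which falls into one of the two configurations $i_1<i_2\leq j_1<j_2$ or $i_1<i_2\leq j_2<j_1$ considered in Lemma~\ref{xxlem8.12}(3); in either configuration, the hypothesis $\Hom(N,N')\cong\Bbbk$ puts us in part~(a) of (3), which gives exactly $\Ext^1(N,N')=0$.

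For the complementary regime $i_2<i_1$, I would apply Lemma~\ref{xxlem8.12} to the ordered pair $(\widetilde N,\widetilde N'):=(N',N)$. The hypothesis now reads $\Hom(\widetilde N',\widetilde N)\cong \Bbbk$, and the same dichotomy (separated vs.\ overlapping, with the $j_2=i_1-1$ boundary ruled out as before) puts us in Lemma~\ref{xxlem8.12}(1) or (3)(b). In (3)(b) the conclusion is that $\Ext^1(\widetilde N,\widetilde N')=0$, which translates back to $\Ext^1(N,N')=0$, matching the claim.

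There is no real obstacle here beyond bookkeeping; the only subtlety worth flagging is the preliminary observation that $\Hom(N,N')\neq 0$ forces the supports of $N$ and $N'$ to have non-trivial interaction, so the ambiguous boundary case $|i_2-j_1|=1$ of Lemma~\ref{xxlem8.12}(2) is excluded a priori, leaving every surviving case to be settled by either part (1), (3)(a)/(3)(b), or (4) of Lemma~\ref{xxlem8.12}.
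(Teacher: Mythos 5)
Your proposal correctly unpacks what the paper records only as ``a direct corollary of Lemma \ref{xxlem8.12}''; the case analysis (normalize to $i_1\leq i_2$, then run through Lemma \ref{xxlem8.12}(1),(3),(4), with the boundary case $j_1=i_2-1$ ruled out \emph{a priori} because disjoint supports force $\Hom(N,N')=0$) is exactly the bookkeeping the authors left to the reader, so the route is the same. One slip in the final sentence for the regime $i_2<i_1$: after setting $(\widetilde N,\widetilde N')=(N',N)$ so that the hypothesis reads $\Hom(\widetilde N',\widetilde N)\cong\Bbbk$, the relevant conclusion of Lemma \ref{xxlem8.12}(3)(b) is $\Ext^1(\widetilde N',\widetilde N)=0$ (the other half, $\Ext^1(\widetilde N,\widetilde N')$, is in fact $\cong\Bbbk$, not $0$), and it is $\Ext^1(\widetilde N',\widetilde N)=\Ext^1(N,N')$ that translates back to the claim; you wrote the Ext arguments in the opposite order. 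The final conclusion is right, but the cited piece of (3)(b) should be corrected.
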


Any brick set $\phi$ in $\Repr(Q)$ is also a brick set in $D^b(\Repr(Q))$.
In the next lemma we are working with the category $D^b(\Repr(Q))$
and $\phi$ (respectively, $\phi_i$ and $\phi_j$) still denotes a brick
set in $\Repr(Q)$.

\begin{lemma}
\label{xxlem8.14}
Retain the notation above. Then $A(\phi_i, M\{i,j\}[1]\otimes-)$
is similar to a strictly lower triangular matrix.
\end{lemma}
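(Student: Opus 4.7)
The entry $a_{kl}$ of $A(\phi_i,M\{i,j\}[1]\otimes-)$ equals $\dim\Ext^1(M\{i_k,j_k\},M\{i,j_l\})$, since the vertex-wise tensor gives $M\{i,j\}\otimes N_l=M\{i,j_l\}$ for any $N_l=M\{i_l,j_l\}\in\phi_i$ (where $i_l\leq i\leq j_l<j$). My plan is to show that the support digraph of this non-negative $(0,1)$-matrix (with an edge $l\to k$ whenever $a_{kl}\neq 0$) is acyclic; a topological sort then furnishes a permutation putting the matrix into strictly lower triangular form.

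First I would reduce to the sub-block indexed by $\phi_i^<:=\{N\in\phi_i:i_N<i\}$. By Lemma \ref{xxlem8.12}(4) the entry $a_{kl}$ vanishes whenever $i_k=i$ or $j_k=j_l$; this annihilates the rows indexed by $\phi_i^=:=\{N\in\phi_i:i_N=i\}$. For a column indexed by $N_l\in\phi_i^=$ one has $M\{i,j\}\otimes N_l=N_l$, so $a_{kl}=\dim\Ext^1(N_k,N_l)$, which vanishes by Lemma \ref{xxlem8.12}(3)(c) (or (4) if $j_k=j_l$) applied to the brick pair $\{N_k,N_l\}\subset\phi$. Only the block on $\phi_i^<\times\phi_i^<$ remains.

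On that block, Lemma \ref{xxlem8.12}(3)(b) together with the standard subrepresentation/quotient criterion for interval modules on an $\mathbb{A}_n$ quiver identifies $a_{kl}\neq 0$ (for $j_k\neq j_l$) with the simultaneous conditions $\alpha_{i-1}:i-1\to i$, together with either $\alpha_{j_k}:j_k\to j_k+1$ (when $j_k<j_l$) or $\alpha_{j_l}:j_l+1\to j_l$ (when $j_k>j_l$). If $\alpha_{i-1}$ points the other way the block is identically zero and we are done; otherwise assign $\sigma_k=+1$ when $\alpha_{j_k}:j_k\to j_k+1$ and $\sigma_k=-1$ when $\alpha_{j_k}:j_k+1\to j_k$.

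With this encoding every edge $l\to k$ is of exactly one of two disjoint types: type I, with $j_k<j_l$ and $\sigma_k=+1$; or type II, with $j_l<j_k$ and $\sigma_l=-1$. The $j$-values telescope around any directed cycle, so both types must appear in such a cycle; but a type-I edge followed by a type-II edge at a common vertex $l_i$ would place $l_i$ simultaneously in $A:=\sigma^{-1}(+1)$ (as the head of the type-I edge) and in $B:=\sigma^{-1}(-1)$ (as the tail of the type-II edge), which is impossible. The forbidden transition $I\to II$ then forces any cycle to be purely type-I or purely type-II; both cases contradict the telescoping, so no cycle exists. The main obstacle is the orientation bookkeeping in the third paragraph---matching the directions of $\alpha_{i-1},\alpha_{j_k},\alpha_{j_l}$ against the Hom criterion for interval modules---since once the $\sigma$-encoding is established, the $A/B$ dichotomy makes acyclicity essentially automatic.
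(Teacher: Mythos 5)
Your proposal is correct, but it takes a genuinely different route from the paper's proof. The paper applies Lemma \ref{xxlem8.6} to get the total order $\succ$ on the set $\{M\{i,j\}\otimes N : N\in\phi_i\}=\{M\{i,j_l\}\}$, pulls that order back to $\phi_i$, and then verifies directly from Lemma \ref{xxlem8.12}(3,4) (after the short observation that $M\{i,j_{s_1}\}\succ M\{i,j_{s_2}\}$ forces $\Hom(M\{i,j_{s_2}\},M\{i_{s_1},j_{s_1}\})=0$, by restricting any such morphism to the subquiver supported on $[i,n]$) that the entries on and above the diagonal vanish; the ordering is handed over by Lemma \ref{xxlem8.6} and never needs to be unpacked in terms of arrow orientations. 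You instead peel off the zero rows and columns coming from $\phi_i^=$, translate the $(0,1)$-valued entries into explicit orientation constraints on $\alpha_{i-1}$, $\alpha_{j_k}$, $\alpha_{j_l}$ via the Hom-criterion for interval modules plus Lemma \ref{xxlem8.12}(3)(b), encode those into a $\pm 1$-coloring $\sigma$, and conclude by showing the support digraph is acyclic (the telescoping/forbidden-transition argument) and topologically sorting. Both proofs are valid; yours is longer and more explicit, and as a side effect exhibits exactly when entries are nonzero (in particular, that the block is identically zero whenever $\alpha_{i-1}$ points away from $i$), whereas the paper's version compresses this into the existence of a total order. One small point worth stating explicitly in your writeup: for two distinct elements of $\phi_i^<$, the brick-set hypothesis already rules out $j_k=j_l$ (and $i_k=i_l$), which is what makes the case split in your third paragraph exhaustive and keeps the ``$j_k=j_l$'' clause confined to the diagonal.
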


\begin{proof}
By Lemma \ref{xxlem8.6}, $(\{M\{i,j\}\otimes N\mid N\in \phi_i\},\succ)$
is a totally ordered set, which can be listed as
\begin{equation}
\label{E8.14.1}\tag{E8.14.1}
M\{i, j_1\}\succ M\{i,j_2\}\succ \cdots M\{i,j_{|\phi|_i}\}.
\end{equation}
When we compute the adjacency matrix $A(\phi_i, M\{i,j\}[1]\otimes-)$,
we order elements in $\phi_i$ according to \eqref{E8.14.1}.
For any two objects $M\{i_{s_1}, j_{s_1}\},M\{i_{s_2}, j_{s_2}\}$ in
$\phi_i$ with $s_1< s_2$, we have $M\{i, j_{s_1}\}\succ M\{i, j_{s_2}\}$.
An easy analysis shows that either
$\{M\{i_{s_1}, j_{s_1}\}, M\{i, j_{s_2}\}\}$ is a brick set or
$\Hom(M\{i_{s_1}, j_{s_1}\}, M\{i, j_{s_2}\})\cong \Bbbk$.
By Lemma \ref{xxlem8.12}(3),
$$\Ext^1 (M\{i_{s_1}, j_{s_1}\}, M\{i,j\}\otimes M\{i_{s_2}, j_{s_2}\})=0.$$
By Lemma \ref{xxlem8.12}(4), we have
$$\Ext^1 (M\{i_{s_1}, j_{s_1}\}, M\{i,j\}\otimes M\{i_{s_1}, j_{s_1}\})=
\Ext^1 (M\{i_{s_1}, j_{s_1}\}, M\{i, j_{s_1}\})=0.$$
As a consequence, $A(\phi_i,M\{i,j\}[1]\otimes -)$
is a strictly lower triangular matrix.
\end{proof}

\begin{lemma}
\label{xxlem8.15}
Let $N\in \hat\phi_j$, $N'\in \phi_i\cup \phi_{ij}$ and
$N''\in \tilde\phi_j$. then
$$\Ext^1 (N, M\{i,j\}\otimes N')=\Ext^1 (N', M\{i,j\}\otimes N'')=
\Ext^1 (N, M\{i,j\}\otimes N'')=0.$$
\end{lemma}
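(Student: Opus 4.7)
The plan is to reduce each of the three $\Ext^{1}$-vanishings in Lemma~\ref{xxlem8.15} to the orientation of the arrow $\alpha_{k-1}$ that defines membership in $\hat\phi_j$ or $\tilde\phi_j$, and then invoke Lemma~\ref{xxlem8.12} case by case.

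The first step is to record the tensor formulas explicitly: for $N=M\{k,l\}\in\phi_j$ (so $k>i$ and $l\geq j$) one has $M\{i,j\}\otimes N=M\{k,j\}$, while for $N'=M\{a,b\}\in\phi_i\cup\phi_{ij}$ (so $a\leq i$) one has $M\{i,j\}\otimes N'=M\{i,b^{*}\}$ with $b^{*}=\min(b,j)$. An analysis of morphisms between interval modules as in Lemma~\ref{xxlem8.7} then shows that $N\in\hat\phi_j$ is equivalent to $\alpha_{k-1}$ pointing $k-1\to k$, while $N\in\tilde\phi_j$ is equivalent to $\alpha_{k-1}$ pointing $k\to k-1$.

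Next, for each of the three Ext-groups I would compare the two interval modules involved, label the one with smaller left endpoint as $M\{i_1,j_1\}$, and match the configuration against the four cases of Lemma~\ref{xxlem8.12}. If $j_1\leq i_2-2$ then part (1) applies; if $j_1=i_2-1$ then part (2) applies, and the orientation of $\alpha_{i_2-1}$ dictated by $\hat\phi_j$ or $\tilde\phi_j$ selects precisely the subcase in which $\Ext^{1}$ vanishes; if $j_1=j_2$ then part (4) applies. The remaining overlapping configuration $i_1<i_2\leq j_1<j_2$ or $i_1<i_2\leq j_2<j_1$ is the substantive case, handled by Lemma~\ref{xxlem8.12}(3) together with the following principle: if $M\{k,l\}\in\hat\phi_j$ and $c<k\leq d$, then $\Hom(M\{c,d\},M\{k,l\})=0$, since any morphism must be constant on the overlap $[k,\min(d,l)]$, and commuting with $\alpha_{k-1}$ (pointing $k-1\to k$) forces this constant to be zero; symmetrically, $\Hom(M\{k,l\},M\{c,d\})=0$ when $M\{k,l\}\in\tilde\phi_j$. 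Applied with $M\{c,d\}$ taken to be the interval module of smaller left endpoint in the relevant Ext-group, this principle kills the single subcase (3)(a) or (3)(b) that would produce a nonzero $\Ext^{1}$ in the desired direction, leaving only the subcases that give $\Ext^{1}=0$.

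The main obstacle is the bookkeeping in part (iii), where both interval modules carry orientation constraints. One must split according to whether $k_1<k_2$ or $k_1>k_2$, noting that $k_1=k_2$ is impossible since $\alpha_{k_1-1}$ cannot satisfy the $\hat\phi_j$ and $\tilde\phi_j$ orientation conditions simultaneously; in each subcase the Hom-vanishing must be applied at the correct boundary vertex, using the appropriate one of the two orientation conditions. Parts (i) and (ii) are formally dual via the left/right symmetry of interval modules. Once the tensor formulas and the Hom-vanishing principle above are in hand, each of the three statements reduces to a routine case-check against Lemma~\ref{xxlem8.12}.
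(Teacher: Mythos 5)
Your proposal is correct and follows essentially the same skeleton as the paper's argument: translate membership in $\hat\phi_j$ (resp.\ $\tilde\phi_j$) into the orientation of the arrow $\alpha_{k-1}$ at the left boundary of the $\phi_j$-module, then run a case-check against Lemma~\ref{xxlem8.12} according to whether the two intervals are separated, adjacent, or overlapping. The paper only writes out the first of the three vanishings and declares the other two similar; you sketch all three.

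There is one genuine local difference. In the overlapping configuration with $N'\in\phi_i$, the paper rules out a nonzero $\Ext^1$ by passing through Lemma~\ref{xxlem8.1}: since $N=M\{i_1,j_1\}$ and $N'=M\{i_2,j_2\}$ lie in the brick set $\phi$, one may shrink $M\{i_2,j_2\}$ to $M\{i,j_2\}$ and keep a brick set, and then Lemma~\ref{xxlem8.12}(3c) applies. You instead prove a Hom-vanishing directly from the orientation: if $\alpha_{k-1}$ points $k-1\to k$ then any morphism $M\{c,d\}\to M\{k,l\}$ with $c<k\le d$ must kill the component at $k$, and dually for the opposite orientation. This eliminates subcase (3a) or (3b) of Lemma~\ref{xxlem8.12} and forces $\Ext^1=0$. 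Your mechanism is more uniform (the same principle handles $\phi_i$, $\phi_{ij}$, and the third equation) and does not lean on the ambient brick set $\phi$ the way Lemma~\ref{xxlem8.1} does; the paper's route is shorter once Lemma~\ref{xxlem8.1} is in hand. Both are valid, and in fact the paper's treatment of the $\phi_{ij}$ case (``either a brick set or $\Hom(N,M\{i,j\})\cong\Bbbk$'') already relies implicitly on the same orientation observation you make explicit.

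One small imprecision worth fixing: you state the principle for a module $M\{k,l\}$ that lies in $\hat\phi_j$ or $\tilde\phi_j$, but in the second and third $\Ext$-groups you must apply it to $M\{i,j\}\otimes N''=M\{k,j\}$, which shares the left endpoint $k$ with $N''$ but has right endpoint $j$ and is not itself an element of $\phi_j$. Since the proof only uses the orientation of $\alpha_{k-1}$ and the value of the left endpoint, the argument does transfer, but the principle should be phrased in terms of the left endpoint and arrow orientation (``if $\alpha_{k-1}$ points $k-1\to k$ and $c<k\le d$, then $\Hom(M\{c,d\},M\{k,*\})=0$ for any $*\ge k$'') so that each application is actually licensed by the statement.
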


\begin{proof}
Similar to the proof of Lemma \ref{xxlem8.12}, we only prove the first
equation and leave out the proof of the last two equations.

Write $N=M\{i_1, j_1\}$ and $N'=M\{i_2, j_2\}$. By definition,
$\hat\phi_j$ is nonempty. This implies that $s(\alpha_{i_1-1})=i_1-1$.

First we suppose that $N'\in \phi_i$. If $j_2\neq i_1-1$, then,
by Lemmas \ref{xxlem8.1} and \ref{xxlem8.12}(1,3c),
$\Ext^1 (N, M\{i,j\}\otimes N')=0$. If $j_2=i_1-1$, then
$\Ext^1 (N, M\{i,j\}\otimes N')=0$ by Lemma \ref{xxlem8.12}(2).
Therefore, $\Ext^1 (N, M\{i,j\}\otimes N')=0$ always holds for
$N'\in \phi_i$.

Next we suppose that $N'\in \phi_{ij}$. Then either $\{N,M\{i,j\}\}$
is a brick set or $\Hom(N,M\{i,j\})\cong \Bbbk$. By Lemma
\ref{xxlem8.12}(3),  $\Ext^1 (N, M\{i,j\}\otimes N')=0$ since
$M\{i,j\}\otimes N'=M\{i,j\}$.

The assertion follows.
\end{proof}

Now, we prove Theorem \ref{xxthm0.8}(3).

\begin{theorem}
\label{xxthm8.16}
Let $Q$ be a quiver of type $\mathbb{A}_n$ given in
\eqref{E0.7.1} for some $n\geq 2$. Then
$$\fpd(M\{i,j\}[1])=\begin{cases}
\min\{i-1, n-j\} & {\text{if $M\{i,j\}$ is a sink}},\\
1 & {\text{if $M\{i,j\}$ is a source}},\\
1 &{\text{if $M\{i,j\}$ is a flow}}.
\end{cases}$$
\end{theorem}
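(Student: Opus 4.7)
The proof proceeds in parallel to that of Theorem~\ref{xxthm8.11}, but with $\Hom$ replaced by $\Ext^1$, since the $(X,Y)$-entry of the adjacency matrix $A(\phi, M\{i,j\}[1]\otimes-)$ computes $\dim \Hom_{D^b(\Repr(Q))}(X,(M\{i,j\}\otimes Y)[1])$. A preliminary reduction decomposes any brick set $\phi \subseteq D^b(\Repr(Q))$ by cohomological shift as $\phi = \bigcup_{s} \phi^{(s)}$ with $\phi^{(s)} \subseteq \Repr(Q)[s]$; since $\Repr(Q)$ is hereditary, $\Ext^n$ vanishes for $n \geq 2$, so after ordering the pieces by $s$ the matrix becomes block lower triangular. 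Its spectral radius equals the maximum of the spectral radii of the diagonal blocks, each of which is $A(\phi_0, \sigma)$ for a brick set $\phi_0 \subseteq \Repr(Q)$ and a functor $\sigma$ whose entries are $\dim \Ext^1(X, M\{i,j\}\otimes Y)$. The problem reduces to controlling this last quantity.

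For the lower bound in the sink case (assuming $i \geq 2$ and $j \leq n-1$; the bound $\min\{i-1,n-j\}=0$ is trivial otherwise), I would construct an explicit brick set of size $d := \min\{i-1, n-j\}$ by mimicking the construction in the proof of Theorem~\ref{xxthm8.11}. Using Lemma~\ref{xxlem8.6} to order the totally ordered sets $\{M\{k,i\} : 1 \leq k < i\}$ and $\{M\{j, m\} : j < m \leq n\}$, I would extract indices and form $\phi = \{M\{k_s, m_{n-j+1-s}\}\}_{s=1}^{d}$ with $k_s < i$ and $m_{n-j+1-s} > j$. A routine check shows $\phi$ is a brick set, and each of its elements contains the interval $[i,j]$, so that $M\{i,j\}\otimes M\{k_s, m_{n-j+1-s}\} = M\{i,j\}$. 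The sink orientation forces $\Hom(M\{p,q\}, M\{i,j\})=0$ and $\Hom(M\{i,j\}, M\{p,q\}) = \Bbbk$ whenever $p<i$ and $q>j$; Lemma~\ref{xxlem8.12}(3b) then yields $\Ext^1(M\{p',q'\}, M\{i,j\}) = \Bbbk$ for each pair, so every entry of the adjacency matrix equals $1$ and its spectral radius equals $d$. For source and flow, the asserted lower bound is verified by direct computation on a small brick set.

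For the upper bound, I would proceed by induction on $|\phi_0| + n$, paralleling the five-subcase analysis in the proof of Theorem~\ref{xxthm8.11}. Decompose $\phi_0 = \phi_i \cup \phi_{ij} \cup \phi_j$ with $\phi_j = \hat\phi_j \cup \tilde\phi_j$. Lemma~\ref{xxlem8.14} shows that $A(\phi_i, M\{i,j\}[1]\otimes-)$ is similar to a strictly lower triangular matrix and contributes nothing to the spectral radius. Lemma~\ref{xxlem8.15} supplies the vanishing of $\Ext^1$ across the blocks $(\hat\phi_j, \phi_i \cup \phi_{ij})$, $(\phi_i \cup \phi_{ij}, \tilde\phi_j)$, and $(\hat\phi_j, \tilde\phi_j)$, so after reordering rows and columns the full adjacency matrix is block triangular. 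Its non-trivial diagonal blocks can then be bounded either by induction (on smaller $\phi_0$ or smaller $n$, after collapsing a vertex as in Subcase~1 of the proof of Theorem~\ref{xxthm8.11}) or by a Gershgorin estimate (Lemma~\ref{xxlem4.8}). In the source and flow subcases, Lemma~\ref{xxlem8.12} forces most $\Ext^1$ entries to vanish due to the orientation, leaving at most a trivial surviving block.

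The main obstacle will be controlling the $\phi_{ij}$ diagonal block, since bricks in $\phi_{ij}$ containing both vertices $i$ and $j$ produce the genuinely non-vanishing $\Ext^1$ terms (in the sink case) that account for the bound $\min\{i-1,n-j\}$. The uniform bound on $|\phi_{ij}|$ coming from Lemma~\ref{xxlem8.3} (which forces the left and right endpoints of its members to be pairwise distinct) is the key ingredient matching the lower bound and ensuring sharpness. Assembling the cases and combining upper and lower bounds then completes the proof.
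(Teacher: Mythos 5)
Your approach mirrors the paper's own proof: you reduce to brick sets inside $\Repr(Q)$ via the cohomological-shift decomposition and heredity, decompose a brick set $\phi$ as $\phi_i \cup \phi_j \cup \phi_{ij}$, invoke Lemmas~\ref{xxlem8.14} and~\ref{xxlem8.15} to reduce to the $\phi_{ij}$ block, and read off the $\Ext^1$-entries via Lemma~\ref{xxlem8.12}. Your treatment of the sink case --- the brick-set construction realizing $\min\{i-1,n-j\}$, the $\Hom$-vanishing analysis coming from the sink orientation, and the use of Lemma~\ref{xxlem8.12}(3b) to make every adjacency entry equal to $1$ --- agrees with what the paper does.

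There is, however, a genuine gap in your handling of the source and flow cases, and you should have flagged it. The correct value of $\fpd(M\{i,j\}[1])$ there is $0$, not $1$: the statement of Theorem~\ref{xxthm8.16} as printed is a misprint (Theorem~\ref{xxthm0.8}(3) gives $0$, and Cases~1 and~2 of the paper's own proof of Theorem~\ref{xxthm8.16} conclude $\fpd(M\{i,j\}[1]) = 0$). Your sentence ``For source and flow, the asserted lower bound is verified by direct computation on a small brick set'' cannot be carried out: once reduced to $\phi = \phi_{ij}$, every adjacency entry is $\dim\Ext^1(N, M\{i,j\})$ with $N\supseteq[i,j]$, and for a source or flow these all vanish by Lemma~\ref{xxlem8.12}(3,4); even the singleton $\{M\{i,j\}\}$ gives $\Ext^1(M\{i,j\},M\{i,j\}) = 0$ since $M\{i,j\}$ is a brick with Euler form $\langle\mathbf{dim} M\{i,j\},\mathbf{dim} M\{i,j\}\rangle_Q = 1$. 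Indeed your own upper-bound sketch (``leaving at most a trivial surviving block'') already points to $0$ rather than $1$, so your proposal is internally inconsistent: a careful write-up would either prove the corrected value $0$ or note that no brick set achieves a spectral radius $\geq 1$ for these orientations.
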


\begin{proof} Since this is a statement about the derived
category $D^b(\Repr(Q))$, we need to consider all brick
objects in this derived category. However, by the
argument given in the proof of Lemma\ref{xxlem4.11} (2), we
only need to consider brick sets of the form
$$\phi=\{N_1, \cdots, N_m\mid N_s\in \Repr(Q)\}$$
which consists of objects in the abelian category $\Repr(Q)$.

The rest of the proof is somewhat similar to the proof of
Theorem \ref{xxthm8.11}.

If there exists an object $N_1=M\{i_0, j_0\}\in \phi$ satisfying
$M\{i,j\}\otimes N_1\cong N_1$, by Lemma \ref{xxlem8.2}, there
exist at most one object $N_2=M\{i_1,j_1\}\in \phi$ satisfying $j_1=i_{0} - 1$
and at most one object $N_3=M\{i_2,j_2\}\in \phi$ satisfying $i_2=j_0+1$.
Then, by Lemmas \ref{xxlem8.1} and \ref{xxlem8.12},
in the first column and the first row of $A(\phi, M\{i, j\}[1]\otimes-)$,
all entries are zero except for $a_{12}, a_{21}, a_{13}, a_{31}$, and
$a_{12} a_{21}=a_{13} a_{31}=0$.
No matter which case is, we always have
$$\rho(A(\phi,M\{i,j\}[1]\otimes -))
=\rho(A(\phi\setminus \{N_1\},M\{i,j\}[1]\otimes -)).$$
Also, if there is an object $N\in \phi$ satisfying
$M\{i,j\}\otimes N=0$, we also have
$$\rho(A(\phi,M\{i,j\}[1]\otimes -))
=\rho(A(\phi\setminus \{N\},M\{i,j\}[1]\otimes -)).$$

Similar to the proof of Theorem \ref{xxthm8.11}, it suffices
to consider the brick set $\phi$ satisfying the following conditions:
\begin{enumerate}
\item[(*)] $\phi=\phi_i\cup \phi_j\cup \phi_{ij}$,
\item[(**)] for every $N\in \phi$, $M\{i,j\}\otimes N\not\cong N.$
\end{enumerate}

By Lemma \ref{xxlem8.14}, if we re-arrange objects in
$\phi$ as $\hat\phi_j$, $\hat\phi_i$, $\phi_{ij}$, $\tilde\phi_i$
and $\tilde\phi_j$, then $A(\phi, M\{i, j\}[1]\otimes-)$ is
a block lower triangular matrix.
By Lemma \ref{xxlem8.15},
$$\rho(\phi, M\{i,j\}[1]\otimes-)=\rho(\phi_{ij}, M\{i,j\}[1]\otimes-).$$
Therefore, for the rest we consider the brick set $\phi$
satisfying $\phi_{ij}=\phi$.

We divide the rest of the proof into 3 small cases.

Case 1: $M\{i,j\}$ is a source. In this case, for any
$N\in \phi$, $\Hom(N, M\{i,j\})\cong \Bbbk$. Then by Lemma
\ref{xxlem8.12}(3), $\Ext^1 (N, M\{i,j\})=0$.
As a consequence, the adjacency matrix
$A(\phi,M\{i,j\}[1]\otimes -)$ is a zero matrix.
Therefore, in this case, $\fpd(M\{i,j\}[1])=0$.

Case 2: $M\{i,j\}$ is a flow, without loss of generality, assume that
$\alpha_{i-1}=\alpha_j=\longleftarrow$.
For any $N=M\{i_1,j_1\}\in \phi$,
if $i_1=i$, by Lemma \ref{xxlem8.12}(4),
$\Ext^1 (N,M\{i,j\})=0$.
If $i_1<i$, either $\Hom(N, M\{i,j\})\cong \Bbbk$ or
$\{N, M\{i,j\}\}$ is a brick set, then by Lemma \ref{xxlem8.12}(3),
$\Ext^1 (N,M\{i,j\})=0.$
As a consequence, the adjacency matrix
$A(\phi,M\{i,j\}[1]\otimes -)$ is a zero matrix.
Therefore, in this case, $\fpd(M\{i,j\}[1])=0$.

Case 3: $M\{i,j\}$ is a sink.
In this case, for any $N=M\{i_1,j_1\}\in \phi$,
if $i_1=i$, by Lemma \ref{xxlem8.12}(4),
$\Ext^1 (N,M\{i,j\})=0$.
If $j_1=j$, by Lemma \ref{xxlem8.12}(4),
$\Ext^1 (N,M\{i,j\})=0$.
Therefore, since $M\{i,j\}\otimes N=M\{i,j\}$,
we can assume that $i_1<i$ and $j_1>j$.
Now, it's easy to see $\Ext^1 (N,M\{i,j\})\cong \Bbbk$
by Lemma  \ref{xxlem8.12}(3).
As a consequence, all entries in the adjacency matrix
$A(\phi,M\{i,j\}[1]\otimes -)$ are 1 and
$\rho(A(\phi,M\{i,j\}[1]\otimes -))=|\phi_{ij}|\leq \min\{i-1,n-j\}$.

On the other hand, by Lemma \ref{xxlem8.6},
$(\{M\{k,i\}\mid 1\leq k\leq i-1\},\succ)$ and
$(\{M\{j,m\}\mid j+1\leq m\leq n\}, \succ)$
are two totally ordered sets. We list elements
in these two sets as
\begin{equation*}
{\text{
$M\{k_1,i\}\succ  \cdots\succ M\{k_{i-1},i\}$ and
$M\{j,m_1\}\succ \cdots\succ M\{j,m_{n-j}\}$}}
\end{equation*}
where $\{k_l\}_{l=1}^{i-1}$ and $\{m_l\}_{l=1}^{n-j}$
are distinct integers from $1$ to $i-1$ and
from $j+1$ to $n$ respectively.
Let $d=\min\{i-1,n-j\}$, then we have a set of $d$ elements
$$\phi=\{M\{k_1,m_{n-j}\}, M\{k_2,m_{n-j-1}\}, \cdots,
M\{k_d, m_{n-j+1-d}\}\}$$ which is a brick set.
Using this brick set, one see every entry in the
matrix $A(\phi, M\{i, j\}[1]\otimes-)$
is 1 by Lemma \ref{xxlem8.12}, consequently,
$\rho(A(\phi, M\{i, j\}[1]\otimes-))=d$.
Hence, in this case, $\fpd(M\{i,j\}[1])=\min\{i-1,n-j\}$.
\end{proof}

\begin{proof}[Proof of Theorem \ref{xxthm0.8}]:
(1) This follows from Lemma \ref{xxlem4.11}(1).

(2) This follows from Lemma \ref{xxlem4.11}(2)
and Theorem \ref{xxthm8.11}.

(3) This follows from Theorem \ref{xxthm8.16}.
\end{proof}

\subsection*{Acknowledgments}
The authors thank the referee for his/her very careful reading and 
valuable comments and thank Professors Jianmin Chen and 
Xiao-Wu Chen for many useful conversations on the subject. 
J.J. Zhang was partially supported by the US National Science 
Foundation (Grant Nos. DMS-1700825 and DMS-2001015). J.-H. Zhou 
was partially supported by Fudan University Exchange Program 
Scholarship for Doctoral Students (Grant No. 2018024).


\end{document}